\newtheorem{thm}{}[section]
\newtheorem{theorem}[thm]{Theorem}
\newtheorem{corollary}[thm]{Corollary}
\newtheorem{lemma}[thm]{Lemma}
\newtheorem{proposition}[thm]{Proposition}
\theoremstyle{remark}
\newtheorem{fact}[thm]{\bf Fact}
\newtheorem{remark}[thm]{Remark}
\newtheorem{question}[thm]{Question}
\newtheorem{example}[thm]{Example}
\numberwithin{equation}{section}
\newcommand{\Env}[2][]{%
\ifthenelse{ \equal{#1}{} }
{\ensuremath{#2_{\mathsf{c}}}}
{\ensuremath{#2_{\mathsf{c},#1}}}
}
\newcommand{\II}{\ensuremath{\mathcal{I}}}
\newcommand{\Ind}{\ensuremath{\mathbf{1}}}
\newcommand{\SL}{\ensuremath{\mathcal{L}}}
\newcommand{\ZZ}{\ensuremath{\mathbb{Z}}}
\newcommand{\NN}{\ensuremath{\mathbb{N}}}
\newcommand{\RR}{\ensuremath{\mathbb{R}}}
\newcommand{\FF}{\ensuremath{\mathbb{F}}}
\newcommand{\XX}{\ensuremath{\mathbb{X}}}
\newcommand{\YY}{\ensuremath{\mathbb{Y}}}
\newcommand{\VV}{\ensuremath{\mathbb{V}}}
\newcommand{\WW}{\ensuremath{\mathbb{W}}}
\newcommand{\SSS}{\ensuremath{\mathbb{S}}}
\newcommand{\Leb}{\ensuremath{\mathbf{L}}}
\newcommand{\BB}{\ensuremath{\mathcal{B}}}
\newcommand{\GG}{\ensuremath{\mathcal{G}}}
\newcommand{\XXB}{\ensuremath{\mathcal{X}}}
\newcommand{\LLB}{\ensuremath{\mathcal{Y}}}
\newcommand{\Fou}{\ensuremath{\mathcal{F}}}
\newcommand{\UU}{\ensuremath{\mathcal{U}}}
\newcommand{\Id}{\ensuremath{\mathrm{Id}}}
\newcommand{\kk}{\ensuremath{k}}
\newcommand{\rr}{\ensuremath{\mathbf{r}}}
\newcommand{\xx}{\ensuremath{\mathbf{x}}}
\newcommand{\yy}{\ensuremath{\mathbf{y}}}
\newcommand{\ee}{\ensuremath{\mathbf{e}}}
\newcommand{\zz}{\ensuremath{\mathbf{z}}}
\newcommand{\vv}{\ensuremath{\mathbf{v}}}
\newcommand{\sgn}{\operatorname{sign}}
\newcommand{\supp}{\operatorname{supp}}
\newcommand{\diam}{\operatorname{diam}}
\newcommand{\Ker}{\operatorname{Ker}}
\begin{document}

\title[On certain subspaces of $\ell_p$ for $0<p\le 1$]{On certain subspaces of $\ell_p$ for $0<p\le 1$\\and their applications to  conditional quasi-greedy bases in $p$-Banach spaces}
\author[F. Albiac]{Fernando Albiac}
\address{Mathematics Department-InaMat\\
Universidad P\'ublica de Navarra\\
Campus de Arrosad\'{i}a\\
Pamplona\\
31006 Spain}
\email{fernando.albiac@unavarra.es}

\author[J.~L. Ansorena]{Jos\'e L. Ansorena}
\address{Department of Mathematics and Computer Sciences\\
Universidad de La Rioja\\
Logro\~no\\
26004 Spain}
\email{joseluis.ansorena@unirioja.es}

\author[P. Wojtaszczyk]{Przemys\l{}aw Wojtaszczyk}
\address{Institute of Mathematics of the Polish Academy of Sciences\\
00-656 Warszawa\\
ul. \'Sniadeckich 8\\
Poland}
\email{wojtaszczyk@impan.pl}

\subjclass[2010]{46B15, 41A65}
\begin{abstract}
We construct for each $0<p\le 1$ an infinite collection of
subspaces of $\ell_p$
that extend the example of Lindenstrauss from \cite{Lin1964} of a subspace of $\ell_{1}$ with no unconditional basis. The structure of this new class of $p$-Banach spaces is analyzed and some applications to the general theory of $\SL_{p}$-spaces for $0<p<1$ are provided. 
The introduction of these spaces serves the purpose to develop the theory of conditional quasi-greedy bases in $p$-Banach spaces for $p<1$. Among the topics we consider are  the existence of infinitely many conditional quasi-greedy bases in the spaces $\ell_{p}$ for $p\le 1$ and
 the careful examination of the conditionality constants of the ``natural basis'' of these spaces.
\end{abstract}

\thanks{F. Albiac acknowledges the support of the Spanish Ministry for Economy and Competitivity under Grant MTM2016-76808-P for \emph{Operators, lattices, and structure of Banach spaces}. F. Albiac and J.~L. Ansorena acknowledge the support of the Spanish Ministry for Science, Innovation, and Universities under Grant PGC2018-095366-B-I00 for \emph{An\'alisis Vectorial, Multilineal y Aproximaci\'on}. P. Wojtaszczyk was supported by National Science Centre, Poland grant UMO-2016/21/B/ST1/00241.}

\maketitle

\section{Introduction}
\noindent  The subject of  finding estimates for the rate of approximation of a function by means of essentially nonlinear algorithms  with respect to biorthogonal systems and, in particular, the greedy approximation algorithm using bases, has attracted much attention for the last twenty years, on the one hand from researchers interested in the applied nature of  non-linear approximation  and, on the other hand from researchers with a more classical Banach space theory background.   Although the basic idea behind the concept of a greedy basis had been around for some time, the formal development of a theory of greedy bases was initiated in 1999 by Konyagin and Temlyakov in the important paper \cite{KoTe1999}. Subsequently, the theory of greedy bases and its derivates  developed very fast as many fundamental results were discovered, and new ramifications branched out. As a result, this is an area with a fruitful interplay between abstract methods from classical Banach space theory and other, more concrete techniques, from approximation theory. 
The reader interested in approximation theory and/or numerical algorithms may consult the paper \cite{Temlyakov2006} or the book \cite{Temlyakov2015}.

 In this article  we will concentrate on the functional analytic aspects of this theory, where, rather unexpectedly, the theory of greedy bases has links to old classical results and also to some open problems. See \cite{AlbiacKalton2016}*{Chapter 10}) for an introductory approach to the subject from this angle.

 To set the mood let us start by recalling the main concepts we will require  from approximation theory in the general setting of quasi-Banach spaces.  Let $\YY$ be a quasi-Banach space and assume that $\BB=(\yy_n)_{n=1}^\infty$ is a semi-normalized fundamental $M$-bounded Markushevich basis in $\YY$, that is, $\BB$ generates the whole space $\YY$ and there is a sequence $(\yy_n^*)_{n=1}^\infty$ in the dual space $\YY^{\ast}$ such that $(\yy_{n}, \yy_{n}^{\ast})_{n=1}^{\infty}$ is a biorthogonal system with $\inf_{n}\Vert \yy_{n}\Vert>0$ and $\sup_{n}\Vert \yy_{n}^{\ast}\Vert<\infty$. From now on we will refer to any such $\BB$ simply as a \emph{basis}. A basic sequence will be a sequence in $\YY$ which is a basis of its closed linear span. Note that semi-normalized Schauder bases are a particular case of bases. Given $A\subseteq\NN$ finite, $S_A=S_{A}[\BB,\YY]\colon\YY\to \YY$ will denote the coordinate projection on $A$ with respect to the basis $\BB$,
\[
S_A(f)=\sum_{n\in A}\yy_{n}^{\ast}(f)\yy_{n}, \quad f\in\YY.
\]
For $f\in \YY$ and $m\in \NN$ we define
\[
\GG_{m}(f)=S_A(f),
\]
where $A\subseteq\NN$ is a set of cardinality $m$ such that $|\yy_{n}^{\ast}(f)|\ge| \yy_{k}^{\ast}(f)|$ whenever $n\in A$ and $k\not\in A$. The set $A$ depends on $f$ and may not be unique; if this happens we take any such set. Thus, the operator $\GG_{m}$ is well-defined, but is not linear nor continuous. The biorthogonal system $(\yy_{n}, \yy_{n}^{\ast})_{n=1}^{\infty}$ (or the basis $\BB$) is said to be \textit{quasi-greedy} provided that there is a constant $C\ge 1$ so that for every $f\in \YY$ and for every $m\in \NN$ we have
\[
\Vert \GG_{m}(f)\Vert \le C\Vert f\Vert.
\]
Equivalently, by \cite{Wo2000}*{Theorem 1}, $(\yy_{n}, \yy_{n}^{\ast})_{n=1}^{\infty}$ is a quasi-greedy system if
\[
\lim_{m\to\infty} \GG_{m}(f)=f\; \text{for each}\; f\in \XX.
\]
Of course, unconditional bases are quasi-greedy, but the converse does not hold in general. Konyagin and Telmyakov provided in \cite{KoTe1999} the first examples of conditional (i.e., not unconditional) quasi-greedy bases. Subsequently, Wojtaszczyk proved in \cite{Wo2000} that, for $1<p<\infty$, the space $\ell_{p}$ has a conditional quasi-greedy basis, and Dilworth and Mitra constructed in \cite{DilworthMitra2001} a conditional quasi-greedy basis of $\ell_{1}$. These papers were the forerunners of an industry devoted to studying the existence of conditional quasi-greedy bases in Banach spaces. The reader will find in the articles \cites{DKK2003, GW2014, AADK, Gogyan2010, Nielsen2007, AAWo, DST2012,DKW2002,BBGHO2018, Wo2000} some of the main achievements in this direction of research. It is important to point out here that not all Banach spaces have a quasi-greedy basis. Indeed, this is the case, for instance, with $\mathcal C([0,1])$ since, by a result of Dilworth et al. \cite{DKK2003} the only $\SL_{\infty}$-space with a quasi-greedy basis is $c_{0}$. (The Banach space old-timers will have made the connection with a classical result of Lindenstrauss and Pe\l czy\'{n}ski \cite{LinPel1968} stating that the canonical $c_{0}$-basis is, up to equivalence, the only unconditional basis of an $\SL_{\infty}$-space.)

From the general point of view of approximation theory, and more specifically the practical implementation of the greedy algorithm for general biorthogonal systems, it is very natural to ask about the existence of conditional quasi-greedy bases in the context of nonlocally convex quasi-Banach spaces. Since $L_{p}([0,1])$ for
$0<p<1$ has trivial dual (making it therefore impossible for $L_{p}$ to have a basis), the first nonlocally convex spaces that come to mind as objects of study for having conditional quasi-greedy bases are the spaces $\ell_{p}$ for $0<p<1$ (see \cite{AABW2019}*{Problem 12.8}). However, the tools that have been developed for building conditional quasi-greedy bases in Banach spaces break down when local convexity is lifted. For instance, the Dilworth-Kalton-Kutzarova method, DKK-method for short, for constructing conditional quasi-greedy bases in a Banach space $\XX$ (\cite{DKK2003}, cf.\ \cite{AADK}) relies on the existence of a complemented subspace $\SSS$ of $\XX$ with a symmetric basis. A careful inspection of the method reveals that the boundedness of the averaging projection with respect to the symmetric basis of $\SSS$ is a key ingredient in the recipe, hence it stops working when $\SSS$ is not locally convex. Since $\ell_p$ for $p<1$ is prime \cite{Stiles1972}, it is hopeless to to try to build a quasi-greedy basis in $\ell_p$ by means of the DKK-method.

These initial drawbacks in making headway create a breeding ground for guesswork. Since quasi-greedy bases in quasi-Banach spaces are not too far from being unconditional (they are unconditional for constant coefficients, see \cite{AABW2019}*{Theorems 3.8 and 3.10}) and the standard unit vector system is the unique, up to equivalence, normalized unconditional basis of $\ell_{p}$ \cite{Kalton1977}, one could be tempted to speculate that it will be the unique quasi-greedy basis in $\ell_{p}$, which would disprove the existence of conditional quasi-greedy bases in the space. In this paper we refute this conjecture and show that indeed such bases of $\ell_{p}$ for $p<1$ exist. In fact, the conditional bases we find belong to the more demanding class of almost greedy bases.

The existence of a conditional quasi-greedy basis of $\ell_p$ shows in particular that $\ell_{p}$ does not have a unique quasi-greedy basis, so we discuss the question of how many mutually non-equivalent quasi-greedy basis there are in $\ell_{p}$, $0<p<1$.

Our construction of conditional almost greedy bases in $\ell_{p}$ for $0<p<1$, is inspired and at the same time extends the example of Dilworth and Mitra from \cite{DilworthMitra2001} of a conditional almost greedy basis in $\ell_{1}$. Their example was derived in turn from the basic sequence constructed by Lindenstrauss \cite{Lin1964} of a monotone, conditional, basic sequence in $\ell_{1}$ whose closed linear span is a $\SL_{1}$-space which is not isomorphic to $\ell_{1}$ and therefore has no unconditional basis. Adapting this script to our context, in Section~\ref{Linpspaces} we manufacture for each $0<p<1$ and each sequence of integers $\delta =(d_{n})_{n=1}^{\infty}$ contained in the interval $[2,\infty)$, a $\SL_{p}$-space denoted $\XX_{p}(\delta)$ which is not isomorphic to $\ell_{p}$; in particular, $\XX_{p}(\delta)$ does not have an unconditional basis. The spaces $\XX_{p}(\delta)$ do have, however, a Schauder basis $\XXB_{p}(\delta)$ whose features are studied in Section~\ref{Linpbases}. We prove that for each $0<p\le 1$ and each sequence $\delta$, the basis $\XXB_{p}(\delta)$ is quasi-greedy and superdemocratic, hence almost greedy. As a by-product of our work we identify the $q$-Banach envelope of the spaces $\XX_{p}$, $0<p< q\le 1$, as being $\ell_{q}$.

To quantify the conditionality of a quasi-greedy basis $\BB$ in a quasi-Banach space $\YY$, in Section~\ref{condisestimates} we study the growth of the constants
\[
k_{m}=\kk_m[\BB,\YY] :=\sup_{|A|\le m} \Vert S_A[\BB,\YY]\Vert, \quad m=1,2.\dots
\]
It follows from a result of Dilworth et al.\ \cite{DKK2003}*{Lemma 8.2} that quasi-greedy bases in Banach spaces cannot be ``too conditional'' in the sense that they satisfy the estimate
\begin{equation}\label{GHO2013ineq}
k_{m}=O(\log m).
\end{equation}
Moreover, there are examples of quasi-greedy bases in certain Banach spaces for which the logarithmic growth is actually attained (see \cite{GHO2013}*{\S 6}). More recently, it was noticed in \cite{AAGHR2015} that there is a entire class of spaces, namely super-reflexive Banach spaces, on which \eqref{GHO2013ineq} can be improved to
\[
k_{m}=O((\log m)^{1-\epsilon})
\]
for some $0<\epsilon <1$. Taking into consideration the role played by the convexity of the space in the proof of inequality \eqref{GHO2013ineq}, it is not surprising that the conditionality constants of quasi-greedy bases in general $p$-Banach spaces satisfy the estimate
\begin{equation}\label{pGHO2013}
k_{m}=O((\log m)^{1/p}).
\end{equation}
We will see this in Corollary~\ref{cor:EstimateCC} after we set in motion the machinery specific to nonlocally convex spaces to prove it. Next, the optimality of \eqref{pGHO2013} within $p$-Banach spaces is established by proving that the reverse inequality, $(\log m)^{1/p}=O(k_{m})$, is attained in $\ell_{p}$ for some conditional quasi-greedy basis. These discussions naturally lead to attempts to construct a conditional quasi-greedy basis $\BB$ with prescribed growth of $k_m$. In Section~\ref{noneqalmostgreedylp} we develop new techniques to produce such bases in $\ell_p$ and $\XX_p$ (see Theorem \ref{thm:twospaces}). Since, roughly speaking, the growth of the sequence $(k_{m})_{m=1}^{\infty}$ is stable under (permutative) equivalence of bases, i.e., $\BB_{1}\sim \BB_{2}$ implies $k_{m}[\BB_{1}]\approx k_{m}[\BB_{2}]$, our result yields the existence of uncountably many (permutatively) non-equivalent quasi-greedy basis in $\ell_{p}$ and $\XX_p$ (Corollary~\ref{dkkplessthan1}). The novelty in our approach has to be seen also in that our techniques are valid for the limit case $p=1$. This nicely complements the main result from \cite{DHK2006}, where Dilworth et al.\ showed that if $1\le p<\infty$ then $\ell_{p}$ has a continuum of permutatively non-equivalent almost greedy bases.

We close in Section~\ref{Lindualbases} with a qualitative and quantitative study of dual Lindenstrauss bases that gives continuity to the results on the subject by Bern\'a et al.\ \cite{BBGHO2018}.

\subsection*{Notation and Terminology.} Throughout this paper we use standard facts and notation from Banach spaces and approximation theory (see e.g.\ \cite{AlbiacKalton2016}). The reader will find the required specialized background and notation on greedy-like bases in quasi-Banach spaces in the recent article \cite{AABW2019}; however a few remarks are in order.

We write $\FF$ for the real or complex scalar field. As is customary, we put $\delta_{k,n}=1$ if $k=n$ and $\delta_{k,n}=0$ otherwise. The unit vector system of $\FF^\NN$ will be denoted by $(\ee_n)_{n=1}^\infty$, i.e., $\ee_n=(\delta_{k,n})_{k=1}^\infty$. For $N\in \NN$, we shall use $S_{m}(x)$ for the projection of $x=(x(n))_{n=1}^{\infty}\in \FF^{\NN}$ onto its first $N$ coordinates, i.e.,
\[
S_{m}(x)= \sum_{n=1}^{m}x(n)\ee_{n}.
\]
and $\supp(x)$ will be the set $\{n\in \NN\colon x(n)\not=0\}$. We will denote by $\langle \cdot,\cdot \rangle$ the natural pairing in $\FF^\NN\times \FF^\NN$, that is, we put
\[
\left\langle (a_n)_{n=1}^\infty, (b_n)_{n=1}^\infty \right\rangle=\sum_{n=1}^\infty a_n b_n
\]
whenever the series $\sum_{n=1}^\infty a_n b_n$ converges.

Given a quasi-Banach space $\YY$ and $\mathcal A\subseteq\YY$, $[\mathcal A]$ denotes the smallest closed subspace of $\YY$ containing $\mathcal A$. Given a Markushevich basis $\BB=(\yy_n)_{n=1}^\infty$ of $\YY$ and $N\in\NN$ we put
\[
\YY^{(N)}[\BB]=[\yy_n \colon 1\le n \le N] \text{ and } \BB^{(N)}=(\yy_n)_{n=1}^N.
\]
If $A\subseteq\NN$ is finite and $\varepsilon=(\varepsilon_n)_{n\in A}$ are signs, we put
\[
\Ind_{\varepsilon,A}=\Ind_{\varepsilon,A}[\BB,\YY]=\sum_{n\in A} \varepsilon_n \yy_n.
\]
If $\varepsilon_n=1$ for all $n\in A$ we denote $\Ind_{\varepsilon,A}$ simply by $\Ind_A$. The same symbol $\Ind_A$ will be used as well to denote the indicator function of a measurable set $A\subseteq [0,1]$.

If $(\BB_n)_{n=1}^\infty$ are Markushevich bases in quasi-Banach spaces $(\YY_n)_{n=1}^\infty$ and $0<q<\infty$,
$\left( \bigoplus_{n=1}^\infty \BB_n\right)_q$ denotes the obvious Markushevich basis in the quasi-Banach space $\left( \bigoplus_{n=1}^\infty \YY_n\right)_p$. If $\BB_n=\BB$ for every $n\in\NN$, we put $\ell_q(\BB)=\left( \bigoplus_{n=1}^\infty \BB_n\right)_q$.

We say that two Markushevich bases $\BB_1=(\yy_j)_{j=1}^\infty$ and $\BB_2=(\zz_j)_{j=1}^\infty$ are equivalent if there is and isomorphism $T$ from $[\BB_1]$ onto $[\BB_2]$ such that $T(\yy_j)=\zz_j$ for all $j\in\NN$. We say that the Markushevich bases $\BB_1$ and $\BB_2$ are permutatively equivalent, and we write $\BB_1\sim \BB_2$, if there is a bijection $\pi\colon\NN\to\NN$ such that $(\yy_{\pi(j)})_{j=1}^\infty$ and $\BB_2$ are equivalent.

The quasi-norm of a linear operator $T$ between two quasi-Banach spaces $\XX$ and $\YY$ will be denoted by $\Vert T\Vert_{\XX\to \YY}$. If $\XX$ and $\YY$ are clear from context, we simply write $\Vert T \Vert=\Vert T\Vert_{\XX\to \YY}$. Note that if $\YY$ is a $p$-Banach space, $0<p\le 1$, then
\begin{equation}\label{eq:lpTopBanach}
\Vert T\Vert_{\ell_p\to \YY}=\sup_{n\in \NN} \Vert T(\ee_n)\Vert_\YY.
\end{equation}

Other more specific notation will be introduced when needed.

\section{Preliminaries on $\SL_p$-spaces for $0<p< 1$}
\noindent $\SL_{p}$-spaces $(1\le p\le \infty)$ were introduced in \cite{LinPel1968} by Lindenstrauss and Pe{\l}czy{\'n}ski as Banach spaces whose local structure resembles that of the spaces $\ell_{p}$. Thus a Banach space $\XX$ is an $\SL_{p}$-space if there is a constant $\lambda$ such that for every finite dimensional subspace $\VV$ of $\XX$ there is a finite dimensional subspace $\WW$ containing $\VV$ and a linear isomorphism $T\colon\WW\to \ell_p^n$ with $\Vert T\Vert \Vert T^{-1}\Vert\le \lambda$.

For $0<p<1$, the theory of $\SL_{p}$-spaces was developed in \cite{Kalton1984} by Kalton, who gave an alternative definition more suitable for $p$-Banach spaces based on the notion of local complementability. Kalton defined a closed subspace $\YY$ of a quasi-Banach space $\XX$ as being \emph{locally complemented} in $\XX$ if there is a constant $\lambda$ such that for every finite-dimensional subspace $\VV$ of $\XX$ and every $\varepsilon>0$ there is a linear operator $T\colon \VV\to \YY$ with $\Vert T \Vert \le \lambda$ and $\Vert T|_{\VV\cap \YY} -\Id_{\VV\cap \YY}\Vert \le\varepsilon$. Then he went on and defined a quasi Banach space to be an \emph{$\SL_p$-space} for $0<p<1$ if it is isomorphic to a locally complemented subspace of $L_p(\mu)$ for some measure $\mu$.
One of the advantages of working with this definition versus adopting the one from the case $p\ge 1$ was that it guarantees that the spaces $L_{p}([0,1])$ for $p<1$ are $\SL_{p}$-spaces.

Let us briefly summarize the relation, depending on the value of $p$, between the classical definition by Lindenstrauss and Pe{\l}czy{\'n}ski of $\SL_{p}$-spaces and that of Kalton's. For $p=1$ both definitions are equivalent. For $1<p<\infty$ the difference is that, with Kalton's definition, Hilbert spaces turn out to be $\SL_{p}$-spaces. For $0<p<1$ its is unknown whether Kalton's definition implies the classical one (see the introductory paragraph of \cite{Kalton1984}*{Section 6}). Nevertheless, the converse in trivially true. Hence, If a $p$-Banach space $\YY$ ($0<p< 1$) possesses an increasing net $(\VV_i)_{i\in I}$ of finite-dimensional subspaces such that $\overline{\cup_{i\in I} \VV_i}=\YY$ and $\VV_i$ is uniformly isomorphic to $\ell_p^{\dim(\VV_i)}$, then $\YY$ is a $\SL_p$-space.

Since Kalton's paper there has been little effort at a systematic treatment of $\SL_{p}$-spaces for $0<p<1$. It is the authors' opinion, however, that these spaces are of interest and therefore deserve such a treatment. The reasons for bringing up $\SL_p$-spaces for $0<p< 1$ here are twofold. Firstly, they provide a very natural general framework for the new $p$-Banach spaces that we will introduce below. Secondly, we will
extend to $p<1$ the classical Lindenstrauss-Pe\l czy\'{n}ski result on unconditional bases in $\SL_1$-spaces \cite{LinPel1968}*{Theorem 6.1} asserting that the only $\SL_{1}$-space with an unconditional basis is $\ell_{1}$.

The proof of Theorem~\ref{thm:UncSLp} relies on the concept of pseudo-dual spaces. Following \cite{Kalton1984}, a quasi-Banach space $\YY$ is said to be a \emph{pseudo-dual} if there is a Hausdorff vector topology on $\YY$ for which the unit ball is relatively compact. By the Banach-Alaoglu theorem, every dual space is a pseudo-dual.

\begin{remark}\label{rmk:lppsd}For every $0<p<\infty$, $\ell_p$ is a separable pseudo-dual space. In fact, every quasi-Banach space $\YY$ with a boundedly complete basis $\BB=(\yy_n)_{n=1}^\infty$ is a pseudo-dual space. To see this, without loss of generality we can assume that $\BB$ is monotone. Then $B_\YY$ is compact with respect to the topology of the coordinate convergence.
\end{remark}

\begin{theorem}\label{thm:UncSLp} Suppose $\YY$ is an $\SL_p$-space ($0<p\le 1$) with an unconditional basis. Then $\YY$ is isomorphic to $\ell_p$.
\end{theorem}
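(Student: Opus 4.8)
The plan is to adapt the classical Lindenstrauss–Pe\l czy\'nski argument to the quasi-Banach setting, with pseudo-duality playing the role that weak-$*$ compactness plays in the Banach case. First I would reduce to the case of a \emph{normalized unconditional Schauder basis} $\BB=(\yy_n)_{n=1}^\infty$ of $\YY$: unconditionality lets me renorm $\YY$ with an equivalent $p$-norm for which $\BB$ is $1$-unconditional, and then the partial-sum projections are uniformly bounded, so $\BB$ is in fact a Schauder basis. The goal is to show $\BB$ is equivalent to the canonical basis of $\ell_p$, equivalently (since $\YY$ is a $p$-Banach space so the lower $\ell_p$-estimate is automatic, cf.\ \eqref{eq:lpTopBanach}) that $\BB$ satisfies an upper $\ell_p$-estimate: $\Vert \sum_n a_n \yy_n\Vert \lesssim (\sum_n |a_n|^p)^{1/p}$.

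The heart of the matter is to produce, from the $\SL_p$-structure, enough functionals to control the $\ell_p$-norm of the coefficients. In the Banach $p=1$ case one uses that an $\SL_1$-space is locally $1$-complemented in an $L_1$-space and hence, by a standard ultraproduct/compactness argument, its bidual embeds as a complemented subspace of an $L_1(\mu)$; cotype-$2$ (or rather the $\ell_1$-local structure together with the dual $\ell_\infty$-structure) then forces the unconditional basis to be equivalent to the $\ell_1$-basis. For $0<p<1$ the dual is too small, so instead I would invoke \emph{Remark~\ref{rmk:lppsd}}: a $p$-Banach space with a boundedly complete basis is a pseudo-dual. Since $\BB$ is $1$-unconditional and $\YY$ is an $\SL_p$-space not containing $c_0$ (it embeds locally complementedly in $L_p(\mu)$, which has no subspace isomorphic to $c_0$ for $p<1$; for $p=1$ one argues that an unconditional basis of a subspace of $L_1$ spanning a space containing $c_0$ is impossible here), the basis $\BB$ is boundedly complete, so $\YY$ is a pseudo-dual. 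Then I would use Kalton's machinery for pseudo-dual $\SL_p$-spaces: combining local complementation in $L_p(\mu)$ with the pseudo-dual topology, one extracts for each finitely supported $x=\sum_{n\in A} a_n\yy_n$ an ``averaging'' type estimate showing $\Vert x\Vert \approx (\sum_{n\in A}|a_n|^p)^{1/p}$ with constants independent of $A$.

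Concretely, I would run the following steps in order. (i) Fix a finite $A$ and the normalized block $x=\sum_{n\in A} a_n\yy_n$. (ii) Use local $\lambda$-complementation: there is a finite-dimensional $\WW\supseteq \mathrm{span}\{\yy_n:n\in A\}$ in $\YY$ (or, after the reduction, one works inside a copy sitting $\lambda$-complementedly in $\ell_p^N$) and a nearly-isometric identification with $\ell_p^{\dim\WW}$. (iii) On $\ell_p^N$, the canonical basis satisfies the sharp upper $\ell_p$-estimate with constant $1$; transporting this back through the $\lambda$-isomorphism and using $1$-unconditionality to absorb the perturbation errors (here the $p$-triangle inequality is essential to keep the error terms from blowing up), one obtains $\Vert x\Vert \le C_\lambda\,(\sum_{n\in A}|a_n|^p)^{1/p}$ for a constant depending only on $\lambda$ and $p$. (iv) Pass to infinite sums using that $\BB$ is a (boundedly complete) Schauder basis. (v) Conclude $\BB\sim(\ee_n)$, hence $\YY\cong\ell_p$.

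The main obstacle, I expect, is step (iii): making the perturbation argument actually work in the nonlocally convex regime. In a Banach space one chains small errors by the ordinary triangle inequality; here each error is only controlled in $p$-norm, and one must be careful that the local isomorphism $T\colon\WW\to\ell_p^n$ together with the almost-identity condition on $T|_{\WW\cap\YY}$ interacts correctly with the (possibly conditional-looking, but unconditionally bounded) basis on $\WW$ — i.e.\ one needs the $\ell_p$-upper estimate to survive the ``local'' to ``global'' transfer without accumulating a constant that depends on $\dim\WW$. The clean way around this is precisely to route the argument through the pseudo-dual topology as Kalton does, so that the relevant compactness furnishes a genuine (not merely finite-dimensional and perturbed) factorization. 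Once that is in place, the rest is the routine $p$-convexity bookkeeping sketched above.
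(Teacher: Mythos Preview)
Your proposal reaches the right intermediate point but then takes a wrong turn, and along the way there is a genuine confusion that would sink the argument as written.

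\textbf{The estimates are reversed.} In a $p$-Banach space with a normalized basis $(\yy_n)$, the $p$-triangle inequality (equivalently \eqref{eq:lpTopBanach}) gives $\Vert \sum_n a_n\yy_n\Vert \le (\sum_n |a_n|^p)^{1/p}$ for free; this is the \emph{upper} $\ell_p$-estimate, and it is precisely the inequality you name as your goal. What is \emph{not} automatic is the lower estimate $(\sum_n |a_n|^p)^{1/p}\lesssim \Vert \sum_n a_n\yy_n\Vert$. Your steps (ii)--(iv) therefore set out to prove the trivial direction and never address the substantive one.

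\textbf{From pseudo-duality, finish structurally, not with coefficients.} Your route to ``$\YY$ is a pseudo-dual'' is essentially correct: unconditionality together with the fact that $c_0$ does not embed in $\YY$ forces bounded completeness, and then Remark~\ref{rmk:lppsd} applies. But at that point you should invoke Kalton's structural theorems rather than improvise a local perturbation argument. First, since $\YY$ has an unconditional basis (hence the bounded approximation property), Kalton's \cite{Kalton1984}*{Theorem~6.4} places $\YY$ as a locally complemented subspace of $\ell_p$ (not merely of $L_p(\mu)$). Second, \cite{Kalton1984}*{Theorem~4.4} says that a locally complemented subspace that is \emph{not} complemented cannot be a pseudo-dual; contrapositively, your pseudo-dual $\YY$ is complemented in $\ell_p$. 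Since $\ell_p$ is prime \cite{Stiles1972}, $\YY\cong\ell_p$. No coefficient estimates are needed.

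\textbf{Comparison with the paper.} The paper runs exactly this chain in contrapositive form: assume $\YY\not\cong\ell_p$; then (primeness) $\YY$ is not complemented in $\ell_p$, hence (Kalton~4.4) not a pseudo-dual, hence (Remark~\ref{rmk:lppsd}) $\BB$ is not boundedly complete; unconditionality then produces a $c_0$-sequence inside $\ell_p$, which is impossible. Your plan and the paper's proof share the same three ingredients (Kalton~6.4, Kalton~4.4, Remark~\ref{rmk:lppsd}), arranged in opposite directions. The gap in your write-up is the attempt to replace the clean ``complemented $\Rightarrow$ $\ell_p$'' step by a hands-on local $\ell_p^N$ perturbation argument aimed at the wrong inequality.
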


\begin{proof}Assume by contradiction that $\BB$ is an unconditional basis of $\YY$ and that $\YY$ is not isomorphic to $\ell_p$. Then, by \cite{Kalton1984}*{Theorem 6.4}, $\YY$ is isomorphic to a locally complemented subspace of $\ell_p$ which cannot be complemented in $\ell_p$ because $\ell_p$ is prime \cite{Stiles1972}. By \cite{Kalton1984}*{Theorem 4.4}, $\YY$ is not a pseudo-dual space, and so by Remark~\ref{rmk:lppsd}, the basis $\BB$ is not boundedly complete. We deduce the existence of an element $f$ in $\YY$ and of pairwise disjoint sets $(A_n)_{n=1}^\infty$ such that
\[
\inf_n \Vert S_{A_n}[\BB,\YY](f)\Vert>0.
\]
By unconditionality it follows that $(S_{A_n}[\BB,\YY](f))_{n=1}^\infty$ is equivalent to the canonical basis of $c_0$ and so $c_0$ would be isomorphic to a subspace of $\ell_p$, which is an absurdity by Stiles' structural results on $\ell_{p}$ from \cite{Stiles1972}.
\end{proof}

\section{A new family of subspaces of $\ell_p$, $0<p\le 1$.}\label{Linpspaces}
\noindent In 1964, Lindenstrauss \cite{Lin1964} proved the existence of a subspace of $\ell_{1}$, namely $\ker(Q)$, where $Q$ is any bounded linear map from $\ell_{1}$ onto $L_{1}([0,1])$, which is not isomorphic to a dual space.
This space provided the first example of a Banach space with a basis but with no unconditional basis, despite being a subspace of a space having an unconditional basis. This section is devoted to generalizing Lindenstrauss example to the range $0<p<1$. Moreover we will rig our construction in such a way that we produce at once, for every $p<1$, an infinite collection of $p$-spaces fulfilling the desired properties.

The natural way to construct the quotient $Q_p$ from $\ell_p$ onto $L_p[0,1]$ is to start with finite dimensional spaces $(\VV_j)_{j=1}^\infty$ of $L_p[0,1]$ such that $\bigcup_{j=1}^\infty \VV_j$ is dense in $L_p[0,1]$ and each $V_j$ is isometric to a space $\ell_p^{n(j)}$. Then we identify $\ell_p$ with $(\bigoplus_{j=1}^\infty \VV_j)_p$ and for $\vv=(v_j)_{j=1}^\infty$ we define $Q_p(\vv)=\sum_{j=1}^\infty v_j\in L_p[0,1]$.

To control the properties of $Q_p$, in particular to be able to handle $\ker Q_p$ we must be more specific. We start our construction with a sequence of integers $\delta=(d_{n})_{n=1}^{\infty}$ in the interval $[2,\infty)$. Using $\delta$ we will construct a suitable sequence $(h_j)_{j=1}^\infty$ in $B_{L_p[0,1]}$ so that $Q_p(e_j)=h_j$ for $j\in\NN$. The map constructed starting with $\delta$ will be denoted $Q_p(\delta)$.

Before we get started, some terminology is in order. Given an increasing map $\sigma\colon [a,\infty)\cap \ZZ \to \ZZ$ there is a unique non-decreasing map $\rho\colon [\sigma(a),\infty)\to [a,\infty)$ such that $\rho(\sigma(j))=j$ for every $n\in[a,\infty)$ and $\rho(k)\le j$ whenever $k< \sigma(j)$. In fact, $\rho$ can be defined by
\[
\rho(k)=\max\{ n\in[a,\infty) \colon \sigma(n)\le k \}
=\min\{ n\in[a,\infty) \colon k<\sigma(n+1) \}.
\]
We will refer to $\rho$ as to the \emph{left inverse} of $\sigma$. Note also that $\rho(k)=j$ if and only if $k\in[\sigma(j),\sigma(j+1))$. It is straightforward to check that if $\rho_i$ is the left inverse of $\sigma_i$, $i=1$, $2$, then $\rho_1\circ\rho_2$ is the left inverse of $\sigma_2\circ\sigma_1$.
Given $n\in\NN$, and a map $\sigma\colon A\subseteq\NN \to \NN$, we will denote by $\sigma^{(n)}$ the $n$th iteration of $\sigma$, and we will use the convention $\sigma^{(0)}=\Id_\NN.$ The domain of $\sigma^{(n)}$ decreases as $n$ increases. Now we start our construction. For a sequence $\delta=(d_n)_{n=1}^\infty$ with $d_n\geq 2$ for $n=1,2,\dots$ we set
\begin{equation}\label{sigmafunction}
\quad \sigma(k)=2+\sum_{j=1}^{k-1} d_j, \quad k\in\NN,
\end{equation}
and for $A\subseteq\NN$ put
\begin{equation}\label{eq:PhiSets}
\Sigma(A)=\bigcup_{k\in A} [\sigma(k),\sigma(k+1)).
\end{equation}
Since $(\sigma(k))_{k=1}^\infty$ is increasing and $\sigma(1)=2>1$, the sequence
\[
\Lambda(n)=\sigma^{(n)}(1), \quad n\in\NN\cup\{0\}.
\]
is increasing as well. We will also consider the left inverse of $\sigma$,
\[
\rho\colon[2,\infty)\to\NN,\]
and the left inverse of $\Lambda$,
\[
\Gamma\colon \NN\to\NN\cup\{0\}.\]
We have $\Lambda(0)=1$ and $\Lambda(1)=2$. Therefore, $\Gamma(1)=0$ and $\Gamma(2)=1$. Let us define a partition $(J_n)_{n=0}^\infty$ of $\NN$ by
\begin{equation}\label{eq:Jn}
J_n=[\Lambda(n), \Lambda(n+1)), \quad n\ge 0.
\end{equation}

Our construction of the sequence of functions $(h_j)_{j=1}^\infty$ begins here with $
h_1=\Ind_{[0,1)}.$
Note that $\{1\}=J_0$ so the first step produces $\Lambda(1)-1$ functions. Our second step will consist in defining $h_j$ for $j\in J_1=[2,2+d_1)$. Let $(I_j)_{j=2}^{1+d_1}$ be the obvious partition of the unit interval $[0,1)$ into intervals of length $d_1^{-1}$. Set $\lambda_j=d_1^{-1/p}$ for $j\in [2,2+d_1)$ and define
\[
h_{j}=\lambda_j \Ind_{I_j}, \quad j=2,\dots, 1+d_1,
\]
thus in the first two steps we defined $\Lambda(2)-1$ functions.

Let us explain the general recursive process. Assume that $h_j$, $I_j$ and $\lambda_j$ with $\lambda_j^p |I_j|=1$ have been defined for $j<\Lambda(n)$. Given
$
j\in J_n,
$
i.e., $n=\Gamma(j)$, there is unique $k\in J_{n-1}$ such that $j\in [\sigma(k), \sigma(k+1))$, that is $k=\rho(j)$. Let $(I_j)_{j=\sigma(k)}^{\sigma(k)+d_k-1}$ be the obvious partition of $I_k$ into intervals of length $d_k^{-1}|I_k|$.
Define
\[
\lambda_j=d_k^{-1/p}\lambda_k=d_{\rho(j)}^{-1/p}\lambda_{\rho(j)},
\]
and, for $j=\sigma(k),\dots, \sigma(k)+d_k-1$ and $k=\Lambda(n),\dots, \Lambda(n+1)-1$,
\[
h_{j}=\lambda_j \Ind_{I_j}.
\]
Now, for $k\in\NN$ we put
\[
\rr_k=\rr_k(\delta)=-d_k^{-1/p}\sum_{j=\sigma(k)}^{\sigma(k+1)-1} \ee_j,\]
and
\begin{equation}\label{eq:LinBasis}
\xx_k=\xx_k(\delta)=\ee_k-\rr_k.
\end{equation}

Let us summarize some properties of this construction and of $Q_p(\delta)$ that will be used below and that can be directly deduced from the definition.

\begin{fact}\label{fact:a}
For each $n\in\NN\cup\{0\}$, $\II_n=(I_j)_{j\in J_n}$ is a partition on $[0,1)$ into intervals. Moreover $\II_{n+1}$ refines $\II_{n}$ and
\[
\diam(\II_n)=\max\{ |I_j| \colon j\in J_n \} \le 2^{-n}.
\]
Hence, $\lim_n \diam(\II_n)=0$.
\end{fact}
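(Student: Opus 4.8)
The plan is to prove the three assertions by a single induction on $n$, the whole argument resting on one combinatorial identity that makes the recursive definition consistent in the first place. I would first record that for every $n\ge 1$ the index set $J_n$ is partitioned by the blocks $\{[\sigma(k),\sigma(k+1))\colon k\in J_{n-1}\}$, that is, $\Sigma(J_{n-1})=J_n$: since $\sigma(k+1)-\sigma(k)=d_k$, the blocks attached to consecutive values of $k$ are themselves consecutive and pairwise disjoint, so
\[
\Sigma(J_{n-1})=\bigcup_{k\in J_{n-1}}[\sigma(k),\sigma(k+1))=\bigl[\sigma(\Lambda(n-1)),\sigma(\Lambda(n))\bigr)=[\Lambda(n),\Lambda(n+1))=J_n,
\]
where I use $\sigma(\Lambda(m))=\sigma(\sigma^{(m)}(1))=\sigma^{(m+1)}(1)=\Lambda(m+1)$ for $m=n-1$ and $m=n$. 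These are exactly the index sets occurring in the recursion, which subdivides each $I_k$ ($k\in J_{n-1}$) into the $d_k$ intervals $(I_j)_{j\in[\sigma(k),\sigma(k+1))}$ of length $d_k^{-1}|I_k|$, so the construction is well defined and exhausts the correct index set at every stage.

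Granting this, I would run the induction as follows. For $n=0$ we have $J_0=\{1\}$ and $I_1=[0,1)$, so $\II_0$ is the trivial partition of $[0,1)$ into a single interval, of diameter $1=2^0$. Assume $\II_{n-1}=(I_k)_{k\in J_{n-1}}$ is a partition of $[0,1)$ into intervals with $\diam(\II_{n-1})\le 2^{-(n-1)}$. The recursive step splits each $I_k$ into the intervals $(I_j)_{j\in[\sigma(k),\sigma(k+1))}$, which form a partition of $I_k$; concatenating these over $k\in J_{n-1}$ and invoking $\Sigma(J_{n-1})=J_n$, the family $\II_n=(I_j)_{j\in J_n}$ is a partition of $[0,1)$ into intervals. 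Moreover $\II_n$ refines $\II_{n-1}$, because every $I_j$ with $j\in J_n$ is contained in $I_{\rho(j)}$ with $\rho(j)\in J_{n-1}$, while every $I_k$ with $k\in J_{n-1}$ is the union of its children $(I_j)_{j\in[\sigma(k),\sigma(k+1))}$.

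For the diameter bound, I would note that for $j\in J_n$ with $k=\rho(j)\in J_{n-1}$ one has $|I_j|=d_k^{-1}|I_k|\le\tfrac12|I_k|\le\tfrac12\diam(\II_{n-1})\le 2^{-n}$, since $d_k\ge 2$; taking the supremum over $j\in J_n$ gives $\diam(\II_n)\le 2^{-n}$, which closes the induction, and then $\lim_n\diam(\II_n)=0$ is immediate. I do not foresee a genuine obstacle here: the one point calling for a little care is the identity $\Sigma(J_{n-1})=J_n$, which ties together the two iteration schemes $\sigma$ and $\Lambda$ (equivalently, their left inverses $\rho$ and $\Gamma$); once that is recorded, the three claims follow by routine inductions that amount to little more than unwinding the definitions.
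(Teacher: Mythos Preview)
Your proof is correct. The paper does not give a proof of this fact, merely stating that it ``can be directly deduced from the definition''; your argument is precisely the routine inductive verification the paper leaves implicit, and the key combinatorial identity $\Sigma(J_{n-1})=J_n$ you record is indeed what makes the recursive indexing consistent.
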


\begin{fact}\label{fact:d} $\Vert Q_p\Vert=1$.
\end{fact}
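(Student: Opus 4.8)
The plan is to read off $\|Q_p\|$ directly from the identity \eqref{eq:lpTopBanach}. Since $0<p\le 1$, the space $L_p[0,1]$, with $\|f\|_{L_p[0,1]}^p=\int_0^1|f|^p$, is a $p$-Banach space, so \eqref{eq:lpTopBanach} applies to $T=Q_p\colon\ell_p\to L_p[0,1]$; under the identification of $\ell_p$ with $\bigl(\bigoplus_{j}\VV_j\bigr)_p$ and of its unit vector basis with $(\ee_j)_{j=1}^\infty$, this yields
\[
\|Q_p\|=\|Q_p\|_{\ell_p\to L_p[0,1]}=\sup_{j\in\NN}\|Q_p(\ee_j)\|_{L_p[0,1]}=\sup_{j\in\NN}\|h_j\|_{L_p[0,1]}.
\]
Note that \eqref{eq:lpTopBanach} is an equality, so this already delivers the lower estimate $\|Q_p\|\ge 1$ for free and there is nothing extra to argue on that side.

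It then remains to check that $\|h_j\|_{L_p[0,1]}=1$ for every $j$. For $j=1$ this is clear, since $h_1=\Ind_{[0,1)}$ and hence $\|h_1\|_{L_p[0,1]}^p=|[0,1)|=1$. For $j\ge 2$ we have $h_j=\lambda_j\Ind_{I_j}$, so $\|h_j\|_{L_p[0,1]}^p=\int_{I_j}\lambda_j^p=\lambda_j^p\,|I_j|$, and the inductive construction of the triples $(h_j,I_j,\lambda_j)$ was arranged precisely so that the normalization $\lambda_j^p|I_j|=1$ is maintained at every stage: it holds in the base step ($\lambda_1=1$, $|I_1|=1$), and it is propagated by the recursion, in which $I_k$ is split into its $d_k$ equal subintervals $I_j$ and the heights $\lambda_j$ are chosen so as to preserve the product $\lambda^p|I|$. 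Consequently $\|h_j\|_{L_p[0,1]}=1$ for all $j\in\NN$, whence $\sup_{j}\|h_j\|_{L_p[0,1]}=1$ and $\|Q_p\|=1$.

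There is no genuine obstacle in this argument: granted \eqref{eq:lpTopBanach}, the statement is a bookkeeping consequence of the construction. The only point worth isolating is that the invariant $\lambda_j^p|I_j|=1$, which the definitions of $I_j$ and $\lambda_j$ are designed to preserve, is exactly what forces $\sup_{j}\|h_j\|_{L_p[0,1]}$ to equal $1$ rather than merely to be at most $1$ (the bound ``$\le 1$'' being already recorded in the remark that each $h_j$ lies in $B_{L_p[0,1]}$).
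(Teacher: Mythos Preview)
Your proof is correct and is precisely the direct verification the paper has in mind: Fact~\ref{fact:d} is listed among the properties that ``can be directly deduced from the definition,'' with no separate proof given, and your argument via \eqref{eq:lpTopBanach} together with the built-in invariant $\lambda_j^p|I_j|=1$ is exactly that deduction.
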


\begin{fact}\label{fact:b}For each $n\in\NN\cup\{0\}$, $Q_p$ is an isometry from
$
[\ee_j \colon j\in J_n]
$
onto the subspace $L_p^{(n)}$ of $L_p$ consisting of all functions which are constant in each interval of the partition $\II_n$.
\end{fact}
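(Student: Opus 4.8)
The plan is a direct computation exploiting the defining property of $Q_p=Q_p(\delta)$, namely that it is the bounded linear operator with $Q_p(\ee_j)=h_j=\lambda_j\Ind_{I_j}$ for all $j$, together with two structural features built into the recursion: for each $n$ the family $(I_j)_{j\in J_n}$ is a partition of $[0,1)$ into intervals (this is part of Fact~\ref{fact:a}), and $\lambda_j^p\,|I_j|=1$ for every $j\in\NN$ (this is carried along as an inductive hypothesis at the time of the construction, and is immediate since $\lambda_j=d_{\rho(j)}^{-1/p}\lambda_{\rho(j)}$ while $|I_j|=d_{\rho(j)}^{-1}|I_{\rho(j)}|$).

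Fix $n\in\NN\cup\{0\}$. Since $J_n=[\Lambda(n),\Lambda(n+1))$ is a finite set of integers, the (finite-dimensional) subspace $[\ee_j\colon j\in J_n]$ consists exactly of the vectors $x=\sum_{j\in J_n}a_j\ee_j$ with $a_j\in\FF$, and $\Vert x\Vert_{\ell_p}^p=\sum_{j\in J_n}|a_j|^p$. By linearity, $Q_p(x)=\sum_{j\in J_n}a_j\lambda_j\Ind_{I_j}$; since the intervals $(I_j)_{j\in J_n}$ are pairwise disjoint and cover $[0,1)$, this function takes the constant value $a_j\lambda_j$ on $I_j$, hence is constant on each cell of $\II_n$ and so belongs to $L_p^{(n)}$. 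This proves $Q_p([\ee_j\colon j\in J_n])\subseteq L_p^{(n)}$. For the reverse inclusion, given $g\in L_p^{(n)}$ let $c_j$ be its constant value on $I_j$; then $g=Q_p\big(\sum_{j\in J_n}(c_j/\lambda_j)\ee_j\big)$, so $Q_p$ maps $[\ee_j\colon j\in J_n]$ onto $L_p^{(n)}$. Finally, using again disjointness of the $I_j$ and the normalization $\lambda_j^p|I_j|=1$,
\[
\Vert Q_p(x)\Vert_{L_p}^p=\int_0^1\Big|\sum_{j\in J_n}a_j\lambda_j\Ind_{I_j}\Big|^p
=\sum_{j\in J_n}|a_j|^p\,\lambda_j^p\,|I_j|=\sum_{j\in J_n}|a_j|^p=\Vert x\Vert_{\ell_p}^p ,
\]
whence $\Vert Q_p(x)\Vert_{L_p}=\Vert x\Vert_{\ell_p}$ for every $x\in[\ee_j\colon j\in J_n]$, i.e.\ $Q_p$ restricted to this subspace is an isometry onto $L_p^{(n)}$.

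I do not expect any genuine obstacle here: the statement is essentially bookkeeping on top of the recursive construction, and the only care needed is to have the two invariants above in place and to dispatch the trivial base case $n=0$, where $J_0=\{1\}$, $I_1=[0,1)$, $\lambda_1=1$, $h_1=\Ind_{[0,1)}$, and $L_p^{(0)}$ is the one-dimensional space of constant functions, so every assertion holds tautologically.
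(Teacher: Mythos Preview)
Your argument is correct and is precisely the direct verification the paper has in mind: Fact~\ref{fact:b} is listed among the properties that ``can be directly deduced from the definition'' and is not given a separate proof, and your computation using the partition property of $(I_j)_{j\in J_n}$ together with the normalization $\lambda_j^p|I_j|=1$ is exactly the intended one-line check.
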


\begin{fact}\label{fact:c} $Q_p(\xx_k)=0$ for every $k\in\NN$.
\end{fact}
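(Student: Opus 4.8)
The plan is to reduce the claim, by linearity of $Q_p$, to a single identity in $L_p$ between $h_k$ and the ``next level'' functions $h_j$ with $j\in[\sigma(k),\sigma(k+1))$, and then to read that identity directly off the recursive construction of the $h_j$. Since $\xx_k=\ee_k-\rr_k$ is finitely supported it belongs to $\ell_p$ and no convergence question arises; using that $Q_p$ is linear and bounded (Fact~\ref{fact:d}) and that $Q_p(\ee_j)=h_j$, I would write
\[
Q_p(\xx_k)=Q_p(\ee_k)-Q_p(\rr_k)=h_k-Q_p(\rr_k),
\]
so that it suffices to prove $Q_p(\rr_k)=h_k$. As $\rr_k$ is a scalar multiple of $\sum_{j=\sigma(k)}^{\sigma(k+1)-1}\ee_j$, linearity gives that $Q_p(\rr_k)$ is the same scalar multiple of $\sum_{j=\sigma(k)}^{\sigma(k+1)-1}h_j$, and the whole statement comes down to evaluating this finite sum.

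For every $j$ in the range $[\sigma(k),\sigma(k+1))$ one has $\rho(j)=k$, since $\rho$ is the left inverse of $\sigma$ and $\rho(j)=k$ is equivalent to $j\in[\sigma(k),\sigma(k+1))$. Hence, by the recursive definition, $I_j$ is one of the $d_k$ subintervals of length $d_k^{-1}|I_k|$ into which $I_k$ is partitioned, and the multiplier $\lambda_j$ takes a single value — a fixed power of $d_k$ times $\lambda_k$ — independent of $j$ within this range. Since the intervals $I_j$, $j\in[\sigma(k),\sigma(k+1))$, form a partition of $I_k$, summing the functions $h_j=\lambda_j\Ind_{I_j}$ over this range produces a fixed scalar multiple of $\Ind_{I_k}$, hence of $h_k=\lambda_k\Ind_{I_k}$. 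The powers of $d_k$ built into the definitions of $\rr_k$ and of $\lambda_j$ are calibrated precisely so that this scalar equals $1$, which yields $Q_p(\rr_k)=h_k$ and therefore $Q_p(\xx_k)=0$.

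There is no real obstacle here: the argument is essentially a one-line computation once the definitions are unwound, and the only point demanding care is the bookkeeping with the exponents $1/p$ (equivalently, with the $L_p$-normalization of the $h_j$ propagated along the refinements starting from $h_1=\Ind_{[0,1)}$). In particular no induction on $n=\Gamma(k)$ is required, because $\rr_k$ only involves the functions one level below $h_k$: the needed identity is the one-step relation between a function and its $d_k$ ``children'' in the nested sequence of partitions $(\II_n)_{n\ge 0}$ of Fact~\ref{fact:a}.
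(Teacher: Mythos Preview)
Your proposal is correct and matches the paper's own treatment: the paper does not give a separate proof of this fact but simply lists it among the properties ``that can be directly deduced from the definition,'' and your argument is exactly that deduction, spelling out the one-step identity $\sum_{j=\sigma(k)}^{\sigma(k+1)-1} h_j = d_k^{1/p} h_k$ coming from the partition $(I_j)_{j=\sigma(k)}^{\sigma(k+1)-1}$ of $I_k$ and the recursive choice of $\lambda_j$. There is nothing to add; your remark that no induction on the level $\Gamma(k)$ is needed is also to the point.
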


\begin{fact}\label{fact:e} For every $k\in\NN$, $\supp(\rr_k)=[\sigma(k),\sigma(k+1))\subseteq[k+1,\infty)$, and $\Vert \rr_k\Vert=1$.
\end{fact}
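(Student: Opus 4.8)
The plan is to read the three assertions off the definitions directly; the hypothesis $d_j\ge 2$ is the only quantitative input, so I do not expect any genuine obstacle. The one point that deserves a sentence of care is why the normalization comes out to exactly $1$, and, beyond that, a brief note on the compatibility of the identification $\ell_p\cong\bigl(\bigoplus_j\VV_j\bigr)_p$ with the unit vector basis (so that $\Vert\cdot\Vert$ on finitely supported sequences is the usual $\ell_p$ quasi-norm).

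\emph{Support.} By the displayed definition of $\rr_k$ (the line just before \eqref{eq:LinBasis}), $\rr_k=-d_k^{-1/p}\sum_{j=\sigma(k)}^{\sigma(k+1)-1}\ee_j$. From \eqref{sigmafunction} one has $\sigma(k+1)-\sigma(k)=d_k\ge 2$, so the index set $\{\sigma(k),\dots,\sigma(k+1)-1\}$ is nonempty, and each coefficient $-d_k^{-1/p}$ is a nonzero scalar. Hence $\supp(\rr_k)=[\sigma(k),\sigma(k+1))$, in the interval notation of \eqref{eq:PhiSets}.

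\emph{Location of the support.} It suffices to show $\sigma(k)\ge k+1$ for every $k\ge 1$. Using \eqref{sigmafunction} and $d_j\ge 2$,
\[
\sigma(k)=2+\sum_{j=1}^{k-1}d_j\ \ge\ 2+2(k-1)=2k\ \ge\ k+1,
\]
so $[\sigma(k),\sigma(k+1))\subseteq[\sigma(k),\infty)\subseteq[k+1,\infty)$, as claimed.

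\emph{Quasi-norm.} Since $\rr_k$ is finitely supported, its norm in $\ell_p$ (identified with $\bigl(\bigoplus_j\VV_j\bigr)_p$ compatibly with the unit vector basis) is the usual $\ell_p$ quasi-norm, and $\rr_k$ has exactly $\sigma(k+1)-\sigma(k)=d_k$ nonzero coordinates, each equal to $-d_k^{-1/p}$. Therefore
\[
\Vert\rr_k\Vert^p=\sum_{j=\sigma(k)}^{\sigma(k+1)-1}\bigl|d_k^{-1/p}\bigr|^p=d_k\cdot d_k^{-1}=1,
\]
whence $\Vert\rr_k\Vert=1$. The only thing worth flagging is conceptual rather than technical: the exponent $-1/p$ in the definition of $\rr_k$ is chosen precisely so that $\Vert\rr_k\Vert=1$, mirroring the normalization $\lambda_j^p|I_j|=1$ of the functions $h_j$; via \eqref{eq:LinBasis} and the $p$-triangle inequality this is also what yields a uniform bound on $\Vert\xx_k\Vert$ later on.
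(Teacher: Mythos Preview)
Your proof is correct and is exactly the direct verification from the definitions that the paper has in mind; the paper itself does not write out a proof but simply lists this among the properties that ``can be directly deduced from the definition.'' Your computation of $\sigma(k)\ge 2k\ge k+1$ and of $\Vert\rr_k\Vert^p=d_k\cdot d_k^{-1}=1$ is precisely what is needed.
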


\begin{fact}\label{fact:q} For every $k\in\NN$, $\supp(\xx_k)=\{k\} \cup[\sigma(k), \sigma(k+1))\subseteq[k,\infty)$, and $\Vert \xx_k\Vert=2^{1/p}$, so from (\ref{eq:PhiSets}) we get $\supp(\sum_{k\in A} a_k\xx_k)\subseteq A\cup \Sigma(A)$ for $A\subset \NN$, finite.
\end{fact}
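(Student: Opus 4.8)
The plan is to unwind the definitions \eqref{sigmafunction}, \eqref{eq:LinBasis} and the formula $\rr_k=-d_k^{-1/p}\sum_{j=\sigma(k)}^{\sigma(k+1)-1}\ee_j$, the only genuine input being the elementary bound $\sigma(k)\ge k+1$. Since every $d_j\ge 2$, we have $\sigma(k)=2+\sum_{j=1}^{k-1}d_j\ge 2+2(k-1)\ge k+1$ for all $k\in\NN$, and as $\sigma$ is strictly increasing this forces $[\sigma(k),\sigma(k+1))\subseteq[\sigma(k),\infty)\subseteq[k+1,\infty)$; in particular $k\notin[\sigma(k),\sigma(k+1))$. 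Now $[\sigma(k),\sigma(k+1))$ is precisely the support of $\rr_k$, a set of $d_k\ge 2$ integers, so the supports $\{k\}$ of $\ee_k$ and $[\sigma(k),\sigma(k+1))$ of $\rr_k$ are disjoint. Consequently no coordinate of $\xx_k=\ee_k-\rr_k$ can cancel, which gives at once the \emph{identity} $\supp(\xx_k)=\{k\}\cup[\sigma(k),\sigma(k+1))$ together with the inclusion $\supp(\xx_k)\subseteq\{k\}\cup[k+1,\infty)=[k,\infty)$.

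For the norm I would use that disjointness of supports turns the $p$-triangle inequality in $\ell_p$ into an equality, so $\Vert\xx_k\Vert^p=\Vert\ee_k\Vert^p+\Vert\rr_k\Vert^p$. Since $\ee_k$ is a unit vector and $\rr_k$ has $d_k$ nonzero coordinates, each of modulus $d_k^{-1/p}$, one gets $\Vert\rr_k\Vert^p=d_k\cdot d_k^{-1}=1$ (as recorded in Fact~\ref{fact:e}), hence $\Vert\xx_k\Vert^p=2$, i.e.\ $\Vert\xx_k\Vert=2^{1/p}$. Finally, for a finite set $A\subset\NN$ and scalars $(a_k)_{k\in A}$, subadditivity of the support operation gives $\supp\bigl(\sum_{k\in A}a_k\xx_k\bigr)\subseteq\bigcup_{k\in A}\supp(\xx_k)=\bigcup_{k\in A}\bigl(\{k\}\cup[\sigma(k),\sigma(k+1))\bigr)=A\cup\Sigma(A)$ by \eqref{eq:PhiSets}.

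There is essentially no obstacle here: every assertion is mechanical once one observes $\sigma(k)\ge k+1$. That inequality is exactly what upgrades the a priori inclusion $\supp(\xx_k)\subseteq\{k\}\cup[\sigma(k),\sigma(k+1))$ to an equality, and what guarantees the additivity of the $p$-th power of the quasi-norm on the disjointly supported vectors $\ee_k$ and $\rr_k$.
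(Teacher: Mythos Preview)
Your argument is correct and is exactly the direct verification from the definitions that the paper has in mind; the paper does not give a separate proof of this fact, merely stating that it ``can be directly deduced from the definition.'' Your key observation $\sigma(k)\ge k+1$ (indeed $\sigma(k)\ge 2k$) is the only nontrivial point, and it is precisely what the paper records one line earlier in Fact~\ref{fact:e}.
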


\begin{fact}\label{fact:r} Since $([\sigma(k), \sigma(k+1)))_{k=1}^\infty$ is a partition of $[2,\infty)$, if $A_1$ and $A_2$ are disjoint subsets of $\NN$, then $\Sigma(A_1)$ and $\Sigma(A_2)$ also are disjoint.
\end{fact}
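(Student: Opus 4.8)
The plan is to reduce the statement to the elementary observation that the half-open intervals $[\sigma(k),\sigma(k+1))$, $k\in\NN$, form a partition of $[2,\infty)$, and then to distribute the intersection $\Sigma(A_1)\cap\Sigma(A_2)$ over the unions defining these sets in \eqref{eq:PhiSets}.

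First I would justify the partition claim. Since $d_j\ge 2$ for every $j$ we have $\sigma(k+1)-\sigma(k)=d_k>0$, so $\sigma$ is strictly increasing; moreover $\sigma(k)\ge 2+2(k-1)=2k\to\infty$. Consequently, for $k<l$ we get $\sigma(k+1)\le\sigma(l)$, so the blocks $[\sigma(k),\sigma(k+1))$ are pairwise disjoint, while every $n\ge 2=\sigma(1)$ belongs to $[\sigma(k),\sigma(k+1))$ for the largest index $k$ with $\sigma(k)\le n$; hence these blocks cover $[\sigma(1),\infty)=[2,\infty)$. This is the companion, on the index side, of the geometric statement already recorded in Fact~\ref{fact:a}.

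With the partition in hand the main assertion follows formally. Assume $A_1\cap A_2=\emptyset$. By \eqref{eq:PhiSets},
\[
\Sigma(A_1)\cap\Sigma(A_2)=\bigcup_{k\in A_1}\,\bigcup_{l\in A_2}\bigl([\sigma(k),\sigma(k+1))\cap[\sigma(l),\sigma(l+1))\bigr).
\]
For any $k\in A_1$ and $l\in A_2$ one has $k\ne l$, so the corresponding pair of blocks is disjoint by the partition property just established; thus every set occurring in the double union is empty, whence $\Sigma(A_1)\cap\Sigma(A_2)=\emptyset$. The argument is entirely combinatorial, so there is no real obstacle; the only feature of $\delta$ that is used is that $\sigma$ is strictly increasing and unbounded, and for this even $d_j\ge 1$ would suffice.
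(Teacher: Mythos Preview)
Your argument is correct; the paper does not give a formal proof of this fact, treating it as immediate from the definition, and your write-up simply spells out that immediate deduction in full. The only minor remark is that the paper already includes the partition assertion in the statement of the fact itself, so the first paragraph of your proof could be shortened.
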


\begin{fact}\label{fact:w} For each coordinate $j\ge 2$ there are exactly two vectors $\xx_k$ with $\xx_k(j)\not=0$. To be precise, $\xx_j(j)=1$, $\xx_{\rho(j)}(j)=-d_{\rho(j)}^{-1/p}<0$, and $\xx_k(j)=0$ otherwise.
\end{fact}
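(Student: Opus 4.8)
The plan is to unwind the definitions and read off the coordinates of each vector $\xx_k$; there is no genuine difficulty beyond careful bookkeeping. First I would pin down the support of an individual $\xx_k$. By \eqref{eq:LinBasis} we have $\xx_k=\ee_k-\rr_k$, where $\supp(\ee_k)=\{k\}$ and, by Fact~\ref{fact:e}, $\supp(\rr_k)=[\sigma(k),\sigma(k+1))$ with $\rr_k$ of constant modulus $d_k^{-1/p}$ on that set. The one elementary input needed is the bound $\sigma(k)\ge 2k>k$ for every $k\in\NN$, immediate from \eqref{sigmafunction} and $d_j\ge 2$; it ensures $k\notin[\sigma(k),\sigma(k+1))$, so the two supports are disjoint, $\supp(\xx_k)=\{k\}\cup[\sigma(k),\sigma(k+1))$, and the entries are $\xx_k(k)=\ee_k(k)-\rr_k(k)=1$ while $\xx_k(i)\ne 0$ with $|\xx_k(i)|=d_k^{-1/p}$ for $i\in[\sigma(k),\sigma(k+1))$.

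Next I would fix a coordinate $j\ge 2$ and determine the indices $k$ for which $\xx_k(j)\ne 0$, equivalently for which $j\in\supp(\xx_k)=\{k\}\cup[\sigma(k),\sigma(k+1))$. This happens in exactly two ways: either $k=j$, which gives $\xx_j(j)=1$; or $j\in[\sigma(k),\sigma(k+1))$, which by the definition of the left inverse $\rho$ of $\sigma$ forces $k=\rho(j)$, this index being unique because the intervals $[\sigma(k),\sigma(k+1))$, $k\in\NN$, partition $[2,\infty)$ (Fact~\ref{fact:r}); a direct computation from \eqref{eq:LinBasis} and the definition of $\rr_{\rho(j)}$ then gives $\xx_{\rho(j)}(j)=-d_{\rho(j)}^{-1/p}<0$. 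For any other index, i.e.\ $k\notin\{j,\rho(j)\}$, we have $j\notin\supp(\xx_k)$ and hence $\xx_k(j)=0$.

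It only remains to note that $\xx_j$ and $\xx_{\rho(j)}$ are genuinely distinct vectors: from $j\ge\sigma(\rho(j))\ge 2\rho(j)$ we get $\rho(j)\le j/2<j$, so $\rho(j)\ne j$, which gives the exact count of two. This step also explains why $j\ge 2$ is assumed: the coordinate $j=1$ belongs to no interval $[\sigma(k),\sigma(k+1))$ since $\sigma(k)\ge 2$, so $\xx_1$ would be the only vector nonvanishing there. The sole point requiring a moment's attention --- playing the role of the ``main obstacle'' in an otherwise routine argument --- is the disjointness of $\{k\}$ and $[\sigma(k),\sigma(k+1))$ inside $\supp(\xx_k)$, which reduces to the trivial estimate $\sigma(k)>k$.
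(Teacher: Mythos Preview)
Your argument is correct and is precisely the routine verification the paper has in mind: this fact is one of a list of items that the authors say ``can be directly deduced from the definition'' and is given no separate proof, so there is nothing to compare beyond noting that you have supplied the expected unwinding. Your use of $\sigma(k)\ge 2k>k$ to separate the two pieces of $\supp(\xx_k)$, the identification of $k=\rho(j)$ via the partition of $[2,\infty)$ by the intervals $[\sigma(k),\sigma(k+1))$, and the check that $\rho(j)<j$ are all exactly right; the paper's Fact~\ref{fact:q} records the support computation you carry out in your first paragraph.
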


\begin{fact}\label{fact:y}From Fact~\ref{fact:w} we easily see that for a linear combination $x=\sum_{k=1}^\infty a_k \xx_k$ we have $x(1)=a_1$ and $x(j)=a_j-a_{\rho(j)} d_{\rho(j)}^{-1/p}$ for $j\ge 2$.
\end{fact}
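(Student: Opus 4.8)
The statement is a direct coordinatewise unpacking of \eqref{eq:LinBasis} together with Fact~\ref{fact:w}, so the plan is simply to evaluate the $j$-th coordinate of $x=\sum_{k=1}^\infty a_k\xx_k$ for each $j\in\NN$. The one point worth keeping in mind is that one should work one coordinate at a time: a coordinate functional on $\FF^\NN$ only ever sees finitely many of the summands $a_k\xx_k$, so the (a priori merely formal) series $\sum_k a_k\xx_k$ never needs to be summed and no convergence issue enters.

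First I would dispose of the coordinate $j=1$. By Fact~\ref{fact:e}, $\supp(\rr_k)\subseteq[k+1,\infty)\subseteq[2,\infty)$ for every $k\in\NN$, so $\rr_k(1)=0$, whence $\xx_k(1)=\ee_k(1)=\delta_{k,1}$ by \eqref{eq:LinBasis}; summing over $k$ gives $x(1)=a_1$.

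For $j\ge 2$ I would quote Fact~\ref{fact:w} verbatim: among all indices $k$, only $k=j$ and $k=\rho(j)$ yield $\xx_k(j)\neq 0$, with $\xx_j(j)=1$ and $\xx_{\rho(j)}(j)=-d_{\rho(j)}^{-1/p}$. (Here $\rho(j)$ is the unique index with $j\in[\sigma(\rho(j)),\sigma(\rho(j)+1))$, which exists because $\sigma(1)=2$ forces the intervals $[\sigma(k),\sigma(k+1))$, $k\in\NN$, to partition $[2,\infty)$.) Hence $x(j)=\sum_k a_k\xx_k(j)=a_j-a_{\rho(j)}d_{\rho(j)}^{-1/p}$, as claimed. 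There is no real obstacle here: the entire content is the bookkeeping already recorded in Fact~\ref{fact:w}, and the only thing to be careful about is the order of quantifiers noted above.
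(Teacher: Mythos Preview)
Your proposal is correct and matches the paper's own approach: the paper offers no proof beyond the phrase ``From Fact~\ref{fact:w} we easily see,'' and your argument is exactly the coordinatewise reading of Fact~\ref{fact:w} (with Fact~\ref{fact:e} handling the degenerate coordinate $j=1$) that this phrase invites.
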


\begin{fact}\label{fact:j}From Fact~\ref{fact:e} we deduce that, if $\supp(x)\subseteq [1,\sigma(k))$, in particular if $x\in [\xx_j \colon 1\le j < k]$, then
\[
\Vert x +\lambda\, \xx_k \Vert^p=\Vert x\Vert^p+|x(k)+\lambda|^p+|\lambda|^p-|x(k)|^p
\]
for any scalar $\lambda$.
\end{fact}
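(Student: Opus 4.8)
The plan is to reduce everything to the additivity of the functional $x\mapsto\Vert x\Vert^p$ over vectors with disjoint supports, which holds in $\ell_p$ for $0<p\le 1$ because $\Vert x\Vert^p=\sum_n|x(n)|^p$. First I would record, from \eqref{eq:LinBasis} together with the definition of $\rr_k$, that $\xx_k=\ee_k-\rr_k$, where by Fact~\ref{fact:e} the vector $\rr_k$ is supported on $[\sigma(k),\sigma(k+1))$ and $\Vert\rr_k\Vert=1$. I would also note that $k<\sigma(k)$ (indeed $\sigma(k)=2+\sum_{j=1}^{k-1}d_j\ge k+1$), so that under the hypothesis $\supp(x)\subseteq[1,\sigma(k))$ the coordinate $x(k)$ is meaningful, and that the index blocks $[1,\sigma(k))$ and $[\sigma(k),\sigma(k+1))$ are disjoint.

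Next I would write $x+\lambda\,\xx_k=(x+\lambda\,\ee_k)-\lambda\,\rr_k$. Assuming $\lambda\neq0$ (the case $\lambda=0$ being trivial), the two summands have disjoint supports, the first contained in $[1,\sigma(k))$ and the second in $[\sigma(k),\sigma(k+1))$, so that
\[
\Vert x+\lambda\,\xx_k\Vert^p=\Vert x+\lambda\,\ee_k\Vert^p+|\lambda|^p\Vert\rr_k\Vert^p=\Vert x+\lambda\,\ee_k\Vert^p+|\lambda|^p .
\]
Then I would isolate the $k$-th coordinate: since $\ee_k$ affects only that coordinate, $\Vert x+\lambda\,\ee_k\Vert^p=\bigl(\Vert x\Vert^p-|x(k)|^p\bigr)+|x(k)+\lambda|^p$. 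Substituting this into the previous identity produces the claimed formula.

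Finally, for the parenthetical ``in particular'' clause I would observe that $[\xx_j\colon 1\le j<k]$, being the span of finitely many vectors, is just their linear span, so every element $x$ of it satisfies $\supp(x)\subseteq\bigcup_{j<k}\supp(\xx_j)$; and by Fact~\ref{fact:q} each $\supp(\xx_j)=\{j\}\cup[\sigma(j),\sigma(j+1))$ is contained in $[1,\sigma(k))$ when $j<k$, using that $j<\sigma(k)$ and that $\sigma(j+1)\le\sigma(k)$ by monotonicity of $\sigma$. There is no genuine obstacle here: the statement is a bookkeeping identity, and the only points that require a moment's care are the disjointness of the relevant index blocks and the harmless degenerate case $\lambda=0$.
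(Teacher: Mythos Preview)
Your proof is correct and follows exactly the deduction the paper intends: the statement is listed as a fact ``deduced from Fact~\ref{fact:e}'' with no further argument, and what you have written is precisely that deduction---split $\xx_k=\ee_k-\rr_k$, use the disjointness of $[1,\sigma(k))$ and $\supp(\rr_k)=[\sigma(k),\sigma(k+1))$ together with $\Vert\rr_k\Vert=1$, and then isolate the $k$-th coordinate. The ``in particular'' clause is handled correctly via Fact~\ref{fact:q} and the monotonicity of $\sigma$; the separate treatment of $\lambda=0$ is unnecessary but harmless.
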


\begin{fact}\label{fact:i}In particular, Fact~\ref{fact:e} yields that, if $\supp(x)\subseteq [1,\sigma(k))$, then $\Vert x\Vert =\Vert x - x(k) \, \xx_k \Vert$.
\end{fact}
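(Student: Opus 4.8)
The plan is to obtain Fact~\ref{fact:i} as the special case $\lambda=-x(k)$ of Fact~\ref{fact:j}. The hypothesis $\supp(x)\subseteq[1,\sigma(k))$ is exactly what Fact~\ref{fact:j} requires, so substituting $\lambda=-x(k)$ into its formula gives
\[
\Vert x-x(k)\,\xx_k\Vert^p=\Vert x\Vert^p+|x(k)+\lambda|^p+|\lambda|^p-|x(k)|^p=\Vert x\Vert^p+|x(k)|^p-|x(k)|^p=\Vert x\Vert^p ,
\]
and taking $p$-th roots finishes the argument.

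If one prefers a self-contained computation (which is essentially the proof of Fact~\ref{fact:j} itself), I would argue as follows. By \eqref{eq:LinBasis} we have $x-x(k)\,\xx_k=(x-x(k)\,\ee_k)+x(k)\,\rr_k$. Fact~\ref{fact:e} tells us that $\supp(\rr_k)=[\sigma(k),\sigma(k+1))$ is disjoint from $[1,\sigma(k))\supseteq\supp(x-x(k)\,\ee_k)$ and that $\Vert\rr_k\Vert=1$. Since the $p$-th power of the $\ell_p$ quasi-norm is additive over disjointly supported vectors, and since deleting the $k$th coordinate of $x$ reduces $\Vert x\Vert^p$ by exactly $|x(k)|^p$, we get
\[
\Vert x-x(k)\,\xx_k\Vert^p=\Vert x-x(k)\,\ee_k\Vert^p+|x(k)|^p\Vert\rr_k\Vert^p=\bigl(\Vert x\Vert^p-|x(k)|^p\bigr)+|x(k)|^p=\Vert x\Vert^p .
\]

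I anticipate no genuine obstacle here: the statement is a bookkeeping identity in $\ell_p$. The only points that deserve a passing word are that the $k$th coordinate of $x$ really lies inside the block $[1,\sigma(k))$, so that $\xx_k$ can cancel it without disturbing the rest of $x$ — this holds because $d_j\ge 2$ forces $\sigma(k)=2+\sum_{j=1}^{k-1}d_j\ge k+1$ — and that $\rr_k$ is supported strictly beyond $\supp(x)$, which is precisely Fact~\ref{fact:e}. Both are immediate from the construction.
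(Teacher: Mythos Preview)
Your proposal is correct and matches the paper's own reasoning: the paper lists Fact~\ref{fact:i} immediately after Fact~\ref{fact:j} with the remark ``In particular, Fact~\ref{fact:e} yields\ldots'', i.e., it is precisely the specialization $\lambda=-x(k)$ of Fact~\ref{fact:j}, which in turn is the disjoint-support computation you spell out in your second paragraph.
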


Next, we define a sequence of functionals $(\xx_j^*)_{j=1}^\infty$ in $\FF^\NN$. Given $j\in\NN$, let $m\in\NN\cup\{0\}$ be such that $j\in J_m=[\Lambda(m),\Lambda(m+1))$, that is, $m=\Gamma(j)$. Then $\rho^{(n)}(j)$ is defined for $0\le n \le m$, and we have $\rho^{(n)}(j)\in J_{m-n}$. In particular, $\rho^{(m)}(j)=1$. Set

\begin{equation}\label{eq:DualLinBasis}
\xx_j^*=\xx_j^*(\delta)=\ee_j+ \sum_{n=1}^m \left( \prod_{r=1}^n d_{\rho^{(r)}(j)}^{-1/p} \right) \, \ee_{\rho^{(n)}(j)}
=\sum_{n=0}^m \left( \prod_{r=1}^n d_{\rho^{(r)}(j)}^{-1/p} \right) \, \ee_{\rho^{(n)}(j)},
\end{equation}
where, by convention, we put $\sum_{j=1}^0 \lambda_j=0$ and $\prod_{j=1}^0 \lambda_j=1$ for every family $(\lambda_{j})$.
Let us next record some elementary properties of $(\xx_j^*)_{j=1}^\infty$. For $k\in\NN$ and $n\in\NN\cup\{0\}$ put
\begin{equation}\label{eq:Intervals}
J_{k,n}=[\sigma^{(n)}(k) , \sigma^{(n)}(k+1)).
\end{equation}

\begin{fact}\label{fact:h}
Since $\sigma(1)=2$ we have that $J_{1,n}=J_n$ for every $n\in\NN$ and so $(J_{1,n})_{n=0}^\infty$ is a partition of $\NN$. For a general $k\in\NN$, since $k+1\le \sigma(k)$ we have $J_{k,n}<J_{k,n+1}$ for every $n\ge 0$. In particular, $(J_{k,n})_{n=0}^\infty$ are pairwise disjoint integer intervals.
\end{fact}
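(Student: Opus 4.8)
The plan is to reduce everything to two elementary arithmetic facts about the map $\sigma$ from \eqref{sigmafunction}: that $\sigma$ is strictly increasing, and that $k+1\le\sigma(k)$ for every $k\in\NN$. Both are read off the defining formula: $\sigma(k+1)-\sigma(k)=d_k\ge 2>0$ gives strict monotonicity, while $\sigma(k)=2+\sum_{j=1}^{k-1}d_j\ge 2+2(k-1)=2k\ge k+1$ gives the second. Since $\sigma$ maps $\NN$ into $[2,\infty)\subseteq\NN$, every iterate $\sigma^{(n)}$ is defined on all of $\NN$, and being a composition of strictly increasing self-maps it is itself strictly increasing; in particular each $J_{k,n}=[\sigma^{(n)}(k),\sigma^{(n)}(k+1))$ is a nonempty integer interval. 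I would record these observations first, since the rest is bookkeeping.

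For the identity $J_{1,n}=J_n$, I would just use $\sigma(1)=2$ to compute $\sigma^{(n)}(2)=\sigma^{(n)}(\sigma(1))=\sigma^{(n+1)}(1)=\Lambda(n+1)$, whence $J_{1,n}=[\sigma^{(n)}(1),\sigma^{(n)}(2))=[\Lambda(n),\Lambda(n+1))=J_n$ for every $n\ge 0$ (the case $n=0$ reading $\{1\}=J_0$). That $(J_n)_{n=0}^\infty$ is a partition of $\NN$ is the content of \eqref{eq:Jn} together with $\Lambda(0)=1$ and the fact that $\Lambda$ is strictly increasing and unbounded; the latter because $\Lambda(n+1)=\sigma(\Lambda(n))\ge\Lambda(n)+1$, so $\Lambda(n)\ge n+1\to\infty$.

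For the relation $J_{k,n}<J_{k,n+1}$ with general $k$, the idea is to feed the inequality $k+1\le\sigma(k)$ through the strictly increasing map $\sigma^{(n)}$, obtaining $\sigma^{(n)}(k+1)\le\sigma^{(n)}(\sigma(k))=\sigma^{(n+1)}(k)$. Hence $\max J_{k,n}=\sigma^{(n)}(k+1)-1<\sigma^{(n+1)}(k)=\min J_{k,n+1}$, which is exactly the assertion $J_{k,n}<J_{k,n+1}$. Pairwise disjointness of $(J_{k,n})_{n=0}^\infty$ then follows by transitivity of the ordering on intervals: for $m<n$ one chains $J_{k,m}<J_{k,m+1}<\cdots<J_{k,n}$, so $\max J_{k,m}<\min J_{k,n}$ and the two intervals are disjoint.

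There is no genuine obstacle here; the only point deserving explicit care — and the reason I would isolate it at the outset — is the strict monotonicity of the iterates $\sigma^{(n)}$, since it is used both to keep each $J_{k,n}$ nonempty and to transport the inequality $k+1\le\sigma(k)$ up to level $n$. Because $\sigma$ is a self-map of $\NN$, no domain restrictions on its iterates arise, so the earlier caveat that the domain of $\sigma^{(n)}$ shrinks with $n$ is irrelevant for this fact.
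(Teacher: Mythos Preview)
Your proposal is correct and is precisely the natural expansion of the hints embedded in the statement itself; the paper offers no separate proof, treating the fact as self-evident from $\sigma(1)=2$ and $k+1\le\sigma(k)$, and your argument fills in exactly those details.
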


\begin{fact}\label{fact:s} $\xx_j^*(k)\ge 0$ for every $j$ and $k\in\NN$.
\end{fact}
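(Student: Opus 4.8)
The plan is to read the claim straight off the closed formula \eqref{eq:DualLinBasis} for $\xx_j^*$. Fix $j\in\NN$ and set $m=\Gamma(j)$, so that
\[
\xx_j^*=\sum_{n=0}^m c_{j,n}\,\ee_{\rho^{(n)}(j)},\qquad c_{j,n}:=\prod_{r=1}^n d_{\rho^{(r)}(j)}^{-1/p},
\]
where the iterates $\rho^{(n)}(j)$ are defined for $0\le n\le m$ because $j\in J_m$. First I would observe that every coefficient $c_{j,n}$ is strictly positive: each factor $d_{\rho^{(r)}(j)}^{-1/p}$ is positive since $\delta=(d_n)_{n=1}^\infty$ takes values in $[2,\infty)$ and $1/p>0$, while the empty product $c_{j,0}$ equals $1$ by the convention adopted just before \eqref{eq:Intervals}. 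Thus $\xx_j^*$ is a \emph{nonnegative} linear combination of unit vectors.

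Next, for a fixed $k\in\NN$ the coordinate $\xx_j^*(k)$ is, by definition of the pairing with $\ee_k$, the sum of those $c_{j,n}$ with $0\le n\le m$ and $\rho^{(n)}(j)=k$. Every such summand is $\ge 0$, and an (possibly empty) sum of nonnegative numbers is nonnegative; hence $\xx_j^*(k)\ge 0$, which is the assertion. If one wants to be more precise, Fact~\ref{fact:h} shows that $\rho^{(0)}(j),\dots,\rho^{(m)}(j)$ lie in the pairwise disjoint intervals $J_m,J_{m-1},\dots,J_0$ respectively and are therefore distinct, so the sum has at most one term: $\xx_j^*(k)$ is either $0$ (when $k\notin\{\rho^{(n)}(j):0\le n\le m\}$) or the single positive number $c_{j,n}$ for the unique $n$ with $\rho^{(n)}(j)=k$.

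There is no genuine obstacle here; the fact is an immediate consequence of the signs of the constants appearing in \eqref{eq:DualLinBasis}, and the disjointness remark via Fact~\ref{fact:h} is only needed if one wishes to identify $\xx_j^*(k)$ exactly rather than merely bound it below by $0$.
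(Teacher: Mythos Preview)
Your argument is correct and is exactly the reasoning the paper has in mind: Fact~\ref{fact:s} is stated without proof among the list of elementary properties that ``can be directly deduced from the definition,'' and your reading of the nonnegative coefficients in \eqref{eq:DualLinBasis} is precisely that deduction. The extra disjointness observation (that at most one term contributes to each coordinate) is a nice sharpening but, as you note, unnecessary for the bare inequality.
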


\begin{fact}\label{fact:k} $\xx_j^*(j)=1$ for every $j\in\NN$.
\end{fact}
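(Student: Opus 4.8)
The plan is to read off the $j$-th coordinate directly from the series expansion \eqref{eq:DualLinBasis} of $\xx_j^*$. Writing $m=\Gamma(j)$, the vector $\xx_j^*$ is the sum over $n=0,1,\dots,m$ of the terms $c_n\,\ee_{\rho^{(n)}(j)}$ with $c_n=\prod_{r=1}^n d_{\rho^{(r)}(j)}^{-1/p}$; hence the coordinate $\xx_j^*(j)$ equals $\sum c_n$, the sum being taken over those indices $n\in\{0,\dots,m\}$ for which $\rho^{(n)}(j)=j$. So the task reduces to identifying exactly which iterates $\rho^{(n)}(j)$ land back on $j$.

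First I would show that $n=0$ is the only such index. By the property recorded just before \eqref{eq:DualLinBasis} (which in turn follows from \eqref{eq:Jn} and Fact~\ref{fact:h}), we have $\rho^{(n)}(j)\in J_{m-n}$ for $0\le n\le m$; since $(J_k)_{k\ge 0}$ is a partition of $\NN$ and $j\in J_m$, the equality $\rho^{(n)}(j)=j$ forces $J_{m-n}=J_m$, i.e.\ $n=0$. (Alternatively, one can observe from \eqref{sigmafunction} and $d_k\ge 2$ that $\sigma(\rho(k))=2+\sum_{i=1}^{\rho(k)-1}d_i\ge 2\rho(k)$, whence $\rho(k)\le k/2<k$ for every $k\ge 2$; thus $\rho$ is strictly decreasing on its domain and $\rho^{(n)}(j)<j$ for all $n\ge 1$.)

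It then only remains to evaluate the $n=0$ term: with the convention $\prod_{r=1}^0 d_{\rho^{(r)}(j)}^{-1/p}=1$ adopted right after \eqref{eq:DualLinBasis}, we get $c_0=1$ and hence $\xx_j^*(j)=c_0=1$, as claimed. There is no real obstacle here; the only point needing a word of justification is the disjointness (equivalently, strict-decrease) observation that rules out the indices $n\ge 1$, and that is immediate from the construction.
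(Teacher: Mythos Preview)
Your argument is correct. The paper does not give an explicit proof of Fact~\ref{fact:k}; it is listed among several ``elementary properties'' of $(\xx_j^*)_{j=1}^\infty$ meant to be read off directly from \eqref{eq:DualLinBasis}, and your write-up is exactly the kind of verification the reader is expected to supply: the only summand in \eqref{eq:DualLinBasis} that can contribute to the $j$-th coordinate is the $n=0$ term (since $\rho^{(n)}(j)\in J_{m-n}$ for $1\le n\le m$, and these sets are disjoint from $J_m\ni j$), and that term equals $1$ by the empty-product convention. Your alternative justification via $\rho(k)\le k/2<k$ is also fine and amounts to the same observation.
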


\begin{fact}\label{fact:f} $\Vert \xx_j^*\Vert_\infty=1$ for every $j\in\NN$.
\end{fact}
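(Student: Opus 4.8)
The plan is to read off the nonzero coordinates of $\xx_j^*$ directly from the defining formula \eqref{eq:DualLinBasis} and to check that they all lie in $(0,1]$ with the value $1$ actually attained. Fix $j\in\NN$ and let $m=\Gamma(j)$. By definition, $\xx_j^*$ is the sum, over $n=0,1,\dots,m$, of the scalar $c_n:=\prod_{r=1}^n d_{\rho^{(r)}(j)}^{-1/p}$ placed at the coordinate $\rho^{(n)}(j)$. As recorded in the paragraph preceding \eqref{eq:Intervals}, one has $\rho^{(n)}(j)\in J_{m-n}$ for $0\le n\le m$, and the blocks $(J_i)_{i=0}^\infty$ form a partition of $\NN$; hence the indices $\rho^{(0)}(j),\rho^{(1)}(j),\dots,\rho^{(m)}(j)$ are pairwise distinct. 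Consequently no cancellation or accumulation occurs in the sum, and $\Vert \xx_j^*\Vert_\infty=\max_{0\le n\le m} c_n$.

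Next I would bound each $c_n$. Since $\delta=(d_k)_{k=1}^\infty$ takes values in $[2,\infty)$, we have $d_{\rho^{(r)}(j)}^{-1/p}\le 2^{-1/p}<1$ for every $r\ge 1$; therefore the sequence $(c_n)_{n=0}^m$ is non-increasing (in fact strictly decreasing) and, using the empty-product convention $\prod_{r=1}^0 d_{\rho^{(r)}(j)}^{-1/p}=1$, we get $c_n\le c_0=1$ for all $n$. This yields $\Vert \xx_j^*\Vert_\infty\le 1$. For the reverse inequality it suffices to note that the $n=0$ term contributes $c_0=1$ at the coordinate $\rho^{(0)}(j)=j$, i.e.\ $\xx_j^*(j)=1$ (Fact~\ref{fact:k}), so $\Vert \xx_j^*\Vert_\infty\ge |\xx_j^*(j)|=1$. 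Combining the two bounds gives $\Vert \xx_j^*\Vert_\infty=1$.

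There is no genuine obstacle here: the only point requiring a trivial amount of care is the observation that the indices $\rho^{(n)}(j)$ appearing in \eqref{eq:DualLinBasis} are distinct, so that the sup-norm is literally the largest of the coefficients $c_n$; this is immediate from the disjointness of the blocks $J_{m-n}$. Everything else is the elementary implication $d_k\ge 2\Rightarrow d_k^{-1/p}<1$ together with the empty-product convention, exactly as announced in the sentence introducing this list of facts.
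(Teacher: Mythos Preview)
Your argument is correct and is precisely the elementary verification the paper has in mind; the paper states Fact~\ref{fact:f} without proof, as one of the properties that ``can be directly deduced from the definition,'' and your computation is exactly that deduction. One minor quibble: the membership $\rho^{(n)}(j)\in J_{m-n}$ is recorded in the paragraph preceding \eqref{eq:DualLinBasis}, not \eqref{eq:Intervals}.
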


\begin{fact}\label{fact:g} $\supp(\xx_j^*)=\{ \rho^{(n)}(j) \colon 0\le n \le \Gamma(j)\} \subseteq[1,j]$ for every $j\in\NN$.
\end{fact}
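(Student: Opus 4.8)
The plan is to read off both assertions directly from the defining formula \eqref{eq:DualLinBasis} together with the structural observations recorded immediately before it. Writing $m=\Gamma(j)$, formula \eqref{eq:DualLinBasis} exhibits $\xx_j^*$ as the finite sum $\sum_{n=0}^{m} c_n\,\ee_{\rho^{(n)}(j)}$ with $c_n=\prod_{r=1}^{n} d_{\rho^{(r)}(j)}^{-1/p}$. First I would note that every coefficient $c_n$ is strictly positive: $c_0=1$ by the empty-product convention, and for $1\le n\le m$ the number $c_n$ is a product of the positive reals $d_{\rho^{(r)}(j)}^{-1/p}$ (here one uses $d_k\ge 2$, so the powers are well defined). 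This already gives the inclusion $\supp(\xx_j^*)\subseteq\{\rho^{(n)}(j):0\le n\le m\}$.

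For the reverse inclusion the point to check is that no cancellation occurs among the $m+1$ terms of the sum. Since all the $c_n$ are strictly positive, for each $i\in\{\rho^{(n)}(j):0\le n\le m\}$ the $i$th coordinate of $\xx_j^*$ is a sum of some of the $c_n$, hence strictly positive; therefore the support is exactly $\{\rho^{(n)}(j):0\le n\le m\}$. (Alternatively, one may observe that the indices $\rho^{(n)}(j)$, $0\le n\le m$, are pairwise distinct, because $\rho^{(n)}(j)\in J_{m-n}$ and $(J_i)_{i\ge 0}$ is a partition of $\NN$; but positivity alone suffices, and it is in any case a refinement of Fact~\ref{fact:s}.) This establishes the equality in Fact~\ref{fact:g}.

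It then remains to prove the inclusion $\{\rho^{(n)}(j):0\le n\le m\}\subseteq[1,j]$. The lower bound $\rho^{(n)}(j)\ge 1$ is automatic since these are positive integers. For the upper bound I would use the already-recorded fact that $\rho^{(n)}(j)\in J_{m-n}=[\Lambda(m-n),\Lambda(m-n+1))$, together with $j\in J_m$ and the monotonicity of $\Lambda$: for $1\le n\le m$ one has $m-n+1\le m$, hence $\rho^{(n)}(j)<\Lambda(m-n+1)\le\Lambda(m)\le j$, while for $n=0$ we have $\rho^{(0)}(j)=j$. I do not anticipate any real obstacle here; the only step requiring a moment's attention is passing from the inclusion $\supp(\xx_j^*)\subseteq\{\rho^{(n)}(j):0\le n\le m\}$ to the equality, which is precisely where the strict positivity of the coefficients $c_n$ is invoked.
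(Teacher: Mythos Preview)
Your argument is correct. Note that in the paper this statement is recorded as one of the ``elementary properties'' of $(\xx_j^*)_{j=1}^\infty$ that ``can be directly deduced from the definition,'' and no proof is given; your write-up simply supplies the routine verification from formula~\eqref{eq:DualLinBasis} together with the observation $\rho^{(n)}(j)\in J_{m-n}$ already recorded just before~\eqref{eq:DualLinBasis}, which is exactly what the paper intends.
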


\begin{fact}\label{fact:n}$(\xx_j^*)_{j=1}^N$ is a basis of $\{ x \in\FF^\NN \colon \supp(x)\subseteq[1,N]\}$. Hence, the linear span of $(\xx_j^*)_{j=1}^\infty$ is $c_{00}$.
\end{fact}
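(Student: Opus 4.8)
The plan is to read off the statement from the triangular structure of the family $(\xx_j^*)_{j=1}^\infty$ relative to the unit vector system. The two inputs are Fact~\ref{fact:g}, which gives $\supp(\xx_j^*)\subseteq[1,j]$, and Fact~\ref{fact:k}, which gives $\xx_j^*(j)=1$. Together these say that for each $N$ the $N\times N$ coordinate matrix $(\xx_j^*(i))_{1\le i,j\le N}$ is triangular with $1$'s on the diagonal, hence invertible; consequently the $N$ vectors $(\xx_j^*)_{j=1}^N$ are linearly independent and lie in the $N$-dimensional subspace $\{x\in\FF^\NN\colon \supp(x)\subseteq[1,N]\}$, and therefore form a (Hamel) basis of it.

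Concretely, linear independence is obtained by a downward induction on the coordinate index: if $\sum_{j=1}^N a_j\xx_j^*=0$, then since $\xx_j^*(N)=0$ for $j<N$ (because $\supp(\xx_j^*)\subseteq[1,j]$) while $\xx_N^*(N)=1$, comparing $N$th coordinates forces $a_N=0$; repeating with coordinates $N-1,N-2,\dots$ yields $a_j=0$ for every $j$. Spanning then follows by a dimension count, or directly by solving for the coefficients in the same triangular system. For the final assertion, note that the argument just given shows $[\xx_i^*\colon 1\le i\le j]=\{x\colon\supp(x)\subseteq[1,j]\}$ for every $j$, so in particular $\ee_j$ lies in the linear span of $(\xx_i^*)_{i=1}^\infty$ for all $j$; hence that span contains $c_{00}$, and since each $\xx_j^*$ is finitely supported it is contained in $c_{00}$, so the two coincide.

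There is no substantial obstacle here: the only points requiring care are the index bookkeeping that justifies $\supp(\xx_j^*)\subseteq[1,j]$ and $\xx_j^*(j)=1$ (both already recorded as Facts~\ref{fact:g} and~\ref{fact:k}, and ultimately traceable to the behaviour of $\Gamma$ and the iterates $\rho^{(n)}$ built in \eqref{eq:DualLinBasis}), and the observation that ``basis'' in this statement means an algebraic basis of the finite-dimensional coordinate subspace (respectively a generating set of the vector space $c_{00}$), not a Schauder basis of any completion.
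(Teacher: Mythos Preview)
Your argument is correct and is precisely the intended one: the paper records this as one of the elementary ``Facts'' without proof, and the triangular structure you exploit (diagonal entries $1$ from Fact~\ref{fact:k}, subdiagonal support from Fact~\ref{fact:g}) is exactly what the surrounding facts are set up to make evident. There is nothing to add or correct.
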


\begin{fact}\label{fact:l} We infer from Fact~\ref{fact:g} that, for every $j\in\NN$,
\[
\supp(\xx_j^*)\setminus \sigma(\supp(\xx_j^*))=\{ 1 \}.
\]
\end{fact}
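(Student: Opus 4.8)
The plan is to unwind the description of $\supp(\xx_{j}^{*})$ provided by Fact~\ref{fact:g} and combine it with the defining property of the left inverse $\rho$ of $\sigma$. Fix $j\in\NN$, put $m=\Gamma(j)$, and for $0\le n\le m$ set $k_{n}=\rho^{(n)}(j)$; then Fact~\ref{fact:g} says $\supp(\xx_{j}^{*})=\{k_{0},k_{1},\dots,k_{m}\}$, where $k_{0}=j$ and $k_{m}=\rho^{(m)}(j)=1$. Here $\sigma(\supp(\xx_{j}^{*}))$ is read as the set $\bigcup_{k\in\supp(\xx_{j}^{*})}[\sigma(k),\sigma(k+1))$, i.e.\ $\Sigma(\supp(\xx_{j}^{*}))$ in the notation of \eqref{eq:PhiSets}. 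Recall also that $\rho(k)=l$ holds precisely when $k\in[\sigma(l),\sigma(l+1))$, and that $(\sigma(k))_{k=1}^{\infty}$ is increasing with $\sigma(1)=2$. I would prove the asserted equality via two inclusions: that $1$ belongs to $\supp(\xx_{j}^{*})\setminus\sigma(\supp(\xx_{j}^{*}))$, and that conversely no element of $\supp(\xx_{j}^{*})$ other than $1$ does.

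For the inclusion $\{1\}\subseteq\supp(\xx_{j}^{*})\setminus\sigma(\supp(\xx_{j}^{*}))$, note that $1=k_{m}\in\supp(\xx_{j}^{*})$ by construction, while every integer in $\sigma(\supp(\xx_{j}^{*}))$ is at least $\sigma(1)=2$; hence $1\notin\sigma(\supp(\xx_{j}^{*}))$. For the reverse inclusion, take $k\in\supp(\xx_{j}^{*})$ with $k\neq1$. Then $k=k_{n}$ for some $0\le n\le m$, and necessarily $n<m$ since $k_{m}=1$. Consequently $k_{n+1}=\rho^{(n+1)}(j)=\rho(k_{n})$ is well defined and lies in $\supp(\xx_{j}^{*})$, and by the characterisation of $\rho$ we get $k=k_{n}\in[\sigma(k_{n+1}),\sigma(k_{n+1}+1))\subseteq\sigma(\supp(\xx_{j}^{*}))$; thus $k\notin\supp(\xx_{j}^{*})\setminus\sigma(\supp(\xx_{j}^{*}))$. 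The two inclusions together give the identity.

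I do not expect a genuine obstacle here: the statement is essentially bookkeeping around the fact that $\supp(\xx_{j}^{*})$ is the $\rho$-orbit of $j$, which terminates at $1$. The one subtlety worth flagging is that $\rho$ is merely the left inverse of $\sigma$ and is far from injective; what makes the argument run is precisely that $\rho(k_{n})=k_{n+1}$ places $k_{n}$ inside the whole integer block $[\sigma(k_{n+1}),\sigma(k_{n+1}+1))$ — which is a subset of $\sigma(\supp(\xx_{j}^{*}))$ by definition — rather than forcing the stronger (and in general false) equality $k_{n}=\sigma(k_{n+1})$.
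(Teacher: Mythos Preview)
Your argument is correct and is exactly the unfolding the paper has in mind: the paper records this as a Fact with no proof beyond the phrase ``We infer from Fact~\ref{fact:g}'', and your two-inclusion computation is precisely how one makes that inference explicit. Your reading of $\sigma(\supp(\xx_j^*))$ as $\Sigma(\supp(\xx_j^*))$ is the right one (the literal image $\{\sigma(k):k\in\supp(\xx_j^*)\}$ would make the identity false, as the example $d_n\equiv 2$, $j=3$ already shows), and it is also the reading required for the application of this Fact in the proof of Proposition~\ref{prop:e}.
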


\begin{fact}\label{fact:x}If we regard $(\xx_j^*)_{j=1}^\infty$ as an infinite matrix, the ``column'' $(\xx_j^*(k))_{j=1}^\infty$ satisfies
\[
\xx_j^*(k)= \prod_{r=1}^{n} d_{\rho^{(r)}(j)}^{-1/p}
\]
if $j\in J_{k,n}$ for some $n\ge 0$, and $\xx_j^*(k)=0$ if $j\notin \cup_{n=0}^\infty J_{k,n}$.
\end{fact}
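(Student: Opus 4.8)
The plan is to read $\xx_j^*(k)$ straight off the defining formula \eqref{eq:DualLinBasis}. By Fact~\ref{fact:g} we have $\supp(\xx_j^*)=\{\rho^{(n)}(j)\colon 0\le n\le\Gamma(j)\}$, and along that support the coefficient attached to $\ee_{\rho^{(n)}(j)}$ is $\prod_{r=1}^n d_{\rho^{(r)}(j)}^{-1/p}$. Hence $\xx_j^*(k)\ne 0$ exactly when $k=\rho^{(n)}(j)$ for some $n$ with $0\le n\le\Gamma(j)$, and then $\xx_j^*(k)=\prod_{r=1}^n d_{\rho^{(r)}(j)}^{-1/p}$. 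So the whole statement reduces to identifying, for a fixed coordinate $k$, the set of indices $j$ with $k=\rho^{(n)}(j)$, and to checking that membership in this set automatically forces $n\le\Gamma(j)$.

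The key step is the equivalence $\rho^{(n)}(j)=k\iff j\in[\sigma^{(n)}(k),\sigma^{(n)}(k+1))=J_{k,n}$. First I would show by induction on $n$ that $\rho^{(n)}$ is the left inverse of $\sigma^{(n)}$: the case $n=1$ is the definition of $\rho$, and for the inductive step I apply the composition rule for left inverses recorded above with $\sigma_1=\sigma^{(n)}$, $\rho_1=\rho^{(n)}$, $\sigma_2=\sigma$, $\rho_2=\rho$, which gives that $\rho^{(n)}\circ\rho=\rho^{(n+1)}$ is the left inverse of $\sigma\circ\sigma^{(n)}=\sigma^{(n+1)}$. The characterization of a left inverse displayed after \eqref{sigmafunction}, namely $\rho(k')=j'\iff k'\in[\sigma(j'),\sigma(j'+1))$, applied to the pair $(\sigma^{(n)},\rho^{(n)})$ then yields precisely $\rho^{(n)}(j)=k\iff j\in[\sigma^{(n)}(k),\sigma^{(n)}(k+1))$, which is $J_{k,n}$ by \eqref{eq:Intervals}.

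It remains to reconcile this with the range of summation in \eqref{eq:DualLinBasis}. If $j\in J_{k,n}$, then, since $\sigma^{(n)}$ is increasing and $k\ge 1$, we get $j\ge\sigma^{(n)}(k)\ge\sigma^{(n)}(1)=\Lambda(n)$; because $\Gamma$ is the left inverse of $\Lambda$ this forces $\Gamma(j)\ge n$, so the index $n$ genuinely occurs in the sum defining $\xx_j^*$, contributing $\prod_{r=1}^n d_{\rho^{(r)}(j)}^{-1/p}$ to $\xx_j^*(k)$. Conversely, if $j\notin\bigcup_{n\ge 0}J_{k,n}$, then $k\ne\rho^{(n)}(j)$ for every admissible $n$, so $k\notin\supp(\xx_j^*)$ and $\xx_j^*(k)=0$. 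Finally, by Fact~\ref{fact:h} the intervals $(J_{k,n})_{n\ge 0}$ are pairwise disjoint, so there is at most one $n$ with $j\in J_{k,n}$ and the asserted formula is unambiguous.

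I expect the only delicate point to be the bookkeeping with the shrinking domains of the iterates $\rho^{(n)}$ and the alignment of the conventions $\Lambda(n)=\sigma^{(n)}(1)$, $\Gamma=$ left inverse of $\Lambda$; once the composition rule for left inverses is invoked, the actual mathematical content is the one-line induction that turns it into the equivalence $\rho^{(n)}(j)=k\iff j\in J_{k,n}$, and everything else is reading coefficients off \eqref{eq:DualLinBasis}.
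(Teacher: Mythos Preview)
Your proposal is correct. The paper states Fact~\ref{fact:x} without proof, treating it as an immediate consequence of the definition \eqref{eq:DualLinBasis} together with the preceding remarks on left inverses; your argument unpacks precisely this, and the key equivalence $\rho^{(n)}(j)=k\iff j\in J_{k,n}$ via the composition rule for left inverses is the intended (and only natural) route.
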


\begin{fact}\label{fact:m}We infer from Fact~\ref{fact:x} that $\xx_j^*(\sigma(k))=d_k^{-1/p} \xx_j^*(k)$.
\end{fact}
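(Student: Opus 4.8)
The plan is to obtain the identity entrywise, reading both $\xx_j^*(\sigma(k))$ and $\xx_j^*(k)$ off the ``column'' formula of Fact~\ref{fact:x} once we understand how the integer intervals $J_{k,n}$ transform under $\sigma$. The combinatorial lemma that drives everything is: for every $k\in\NN$ and every $n\ge 0$ one has $J_{\sigma(k),n}\subseteq J_{k,n+1}$, and whenever $j\in J_{\sigma(k),n}$ one has $\rho^{(n)}(j)=\sigma(k)$ and hence $\rho^{(n+1)}(j)=k$. For the inclusion, $J_{\sigma(k),n}=[\sigma^{(n)}(\sigma(k)),\sigma^{(n)}(\sigma(k)+1))=[\sigma^{(n+1)}(k),\sigma^{(n)}(\sigma(k)+1))$ while $J_{k,n+1}=[\sigma^{(n+1)}(k),\sigma^{(n+1)}(k+1))$; since $d_k\ge 2$ we have $\sigma(k)+1<\sigma(k)+d_k=\sigma(k+1)$, and $\sigma^{(n)}$ is increasing, so $\sigma^{(n)}(\sigma(k)+1)\le\sigma^{(n+1)}(k+1)$, giving the inclusion. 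For the index identities, $j\in J_{\sigma(k),n}=[\sigma^{(n)}(\sigma(k)),\sigma^{(n)}(\sigma(k)+1))$ says exactly that $\rho^{(n)}(j)=\sigma(k)$, because $\rho^{(n)}$ is the left inverse of $\sigma^{(n)}$ (the composition rule for left inverses is noted right after the definition of $\rho$); applying $\rho$ once more, $\rho^{(n+1)}(j)=\rho(\sigma(k))=k$.

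With the lemma in place, the heart of the argument is a comparison of two finite products. By Fact~\ref{fact:x}, $\xx_j^*(\sigma(k))\ne 0$ precisely when $j\in J_{\sigma(k),n}$ for some (by Fact~\ref{fact:h}, unique) $n\ge 0$, in which case $\xx_j^*(\sigma(k))=\prod_{r=1}^{n}d_{\rho^{(r)}(j)}^{-1/p}$. By the lemma this same $j$ lies in $J_{k,n+1}$, so Fact~\ref{fact:x} also gives $\xx_j^*(k)=\prod_{r=1}^{n+1}d_{\rho^{(r)}(j)}^{-1/p}$. The two products differ by the single factor coming from the extra iterate, which by the lemma equals $d_{\rho^{(n+1)}(j)}^{-1/p}=d_k^{-1/p}$, and this yields $\xx_j^*(\sigma(k))=d_k^{-1/p}\,\xx_j^*(k)$. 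For the remaining pairs $(j,k)$, i.e.\ those with $\xx_j^*(\sigma(k))=0$, one verifies that $\xx_j^*(k)=0$ too, so the identity is trivial; here I would use the support description $\supp(\xx_j^*)=\{\rho^{(n)}(j):0\le n\le\Gamma(j)\}$ of Fact~\ref{fact:g} together with the decomposition $J_{k,n}=\bigcup_{l\in[\sigma(k),\sigma(k+1))}J_{l,n-1}$ ($n\ge 1$), which isolates the piece $l=\sigma(k)$ among the $J_{l,n-1}$.

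The step I expect to be the main obstacle is the combinatorial lemma, and within it the precise identification $\rho^{(n+1)}(j)=k$ for $j\in J_{\sigma(k),n}$ — not merely $\rho^{(n+1)}(j)\in[\sigma(k),\sigma(k+1))$ — since it is this exact equality that forces the comparison factor in the middle step to be $d_k^{-1/p}$ rather than some unrelated $d_l^{-1/p}$. Once the interval combinatorics is nailed down, what is left is a routine comparison of finite products of the scalars $d_l^{-1/p}$ and the elementary support bookkeeping of Fact~\ref{fact:g}.
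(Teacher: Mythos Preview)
Your product comparison in the nonzero case is set up correctly, but you drew the conclusion in the wrong direction. From
\[
\xx_j^*(\sigma(k))=\prod_{r=1}^{n}d_{\rho^{(r)}(j)}^{-1/p}
\quad\text{and}\quad
\xx_j^*(k)=\prod_{r=1}^{n+1}d_{\rho^{(r)}(j)}^{-1/p},
\]
with the extra factor $d_{\rho^{(n+1)}(j)}^{-1/p}=d_k^{-1/p}$, what actually follows is $\xx_j^*(k)=d_k^{-1/p}\,\xx_j^*(\sigma(k))$, i.e.\ $\xx_j^*(\sigma(k))=d_k^{1/p}\,\xx_j^*(k)$, the \emph{reverse} of the displayed identity.

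Your zero case is also incorrect: $\xx_j^*(\sigma(k))=0$ does not force $\xx_j^*(k)=0$. With $d_n\equiv 2$ and $p=1$, take $j=3$, $k=1$; then $\supp(\xx_3^*)=\{1,3\}$, so $\xx_3^*(\sigma(1))=\xx_3^*(2)=0$ while $\xx_3^*(1)=2^{-1}\ne 0$. The same pair shows that the identity as printed is false for general $j$ and $k$: the paper's Fact~\ref{fact:m} contains a misprint. The statement that is both true and the one actually used in the proof of Lemma~\ref{lem:blocksum} is the reversed relation
\[
\xx_j^*(k)=d_k^{-1/p}\,\xx_j^*(\sigma(k)),
\]
valid under the hypothesis $\sigma(k)\in\supp(\xx_j^*)$ (equivalently $j\in J_{\sigma(k),n}$ for some $n\ge 0$); more generally, $\xx_j^*(\rho(i))=d_{\rho(i)}^{-1/p}\,\xx_j^*(i)$ whenever $i\in\supp(\xx_j^*)$ and $i\ge 2$. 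Your combinatorial lemma and product comparison establish exactly this corrected version; only the concluding sentences of each case need to be rewritten.
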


\begin{fact}\label{fact:p} Also from Fact~\ref{fact:x}, if $j\in J_{k,n}$ then $|\xx_j^*(k)|\le 2^{-n/p}$.
\end{fact}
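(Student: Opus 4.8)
The plan is to deduce the bound directly from the closed-form description of the ``columns'' $(\xx_j^*(k))_{j=1}^\infty$ recorded in Fact~\ref{fact:x}, using only the standing hypothesis that every $d_m$ lies in $[2,\infty)$.

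First I would observe that, since $j\in J_{k,n}$, Fact~\ref{fact:x} gives
\[
|\xx_j^*(k)|=\xx_j^*(k)=\prod_{r=1}^{n} d_{\rho^{(r)}(j)}^{-1/p},
\]
the first equality being the nonnegativity recorded in Fact~\ref{fact:s}. Now, because $d_m\ge 2$ for every $m\in\NN$ and $1/p>0$, each of the $n$ factors in this product satisfies $d_{\rho^{(r)}(j)}^{-1/p}\le 2^{-1/p}$. Multiplying these $n$ estimates together yields
\[
|\xx_j^*(k)|=\prod_{r=1}^{n} d_{\rho^{(r)}(j)}^{-1/p}\le\prod_{r=1}^{n} 2^{-1/p}=2^{-n/p},
\]
which is precisely the claim. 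When $n=0$ the empty-product convention gives $|\xx_j^*(k)|=1=2^{0/p}$, in agreement with Fact~\ref{fact:k}.

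I do not expect any genuine obstacle here: once Fact~\ref{fact:x} is available, the estimate is an immediate consequence of $d_m\ge 2$. The only point that deserves a moment's care is that the iterated left-inverse values $\rho^{(r)}(j)$ for $1\le r\le n$ are all well defined under the hypothesis $j\in J_{k,n}$. This is guaranteed because $J_{k,n}=[\sigma^{(n)}(k),\sigma^{(n)}(k+1))$ and $\rho^{(n)}$ is the left inverse of $\sigma^{(n)}$ (by iterating the composition rule for left inverses noted after the definition of $\rho$); since $\sigma^{(n)}(k)\ge\sigma^{(n)}(1)=\Lambda(n)$, membership $j\in J_{k,n}$ forces $n\le\Gamma(j)$ and $\rho^{(n)}(j)=k$, so the product above indeed makes sense.
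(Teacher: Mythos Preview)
Your argument is correct and is exactly what the paper intends: the fact is stated as a direct consequence of Fact~\ref{fact:x}, and the bound follows immediately from the product formula there together with the standing assumption $d_m\ge 2$. The paper gives no further detail, so your write-up simply makes explicit the one-line computation implicit in the phrase ``Also from Fact~\ref{fact:x}.''
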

For $0<p\le 1$ and $\delta$ a sequence of integers in $[2,\infty)$, throughout this paper we will use the following notation
\[\XX_p=\XX_p(\delta) = [\xx_k \colon k\in\NN]\subset \ell_p,\]
\[
\XXB_p= \XXB_p(\delta)=(\xx_k)_{k=1}^\infty,\]
and
\[\XXB_p^*= \XXB_p^*(\delta)=(\xx_k^*)_{k=1}^\infty,
\]
where $\xx_k$ is as in \eqref{eq:LinBasis} and $\xx_k^*$ is as in \eqref{eq:DualLinBasis}.

We will also consider finite-dimensional spaces and finite sequences associated to $\XXB_p(\delta)$. With the convention that $\XX_p^{(0)}(\delta)=\{0\}$, for $n\in\NN$ we define
\[
\XXB_p^{(n)} = \XXB_p^{(n)}(\delta) =(\xx_k)_{k=1}^n.
\]
and
\[
\XX_p^{(n)} =\XX_p^{(n)}(\delta) = \XX_p^{(n)}[ \XXB_p(\delta)]=[\xx_k \colon 1\le k \le n],\]

We will denote by $\XX_p^*=\XX_p^*(\delta)$ the dual space of $\XX_p(\delta)$.

\begin{remark}
The only precedent in the literature for the spaces $\XX_p(\delta)$ and the sequences $\XXB_p(\delta)$ is the case when $p=1$ and $d_n=2$ for $n\in\NN$. The resulting space for those values is indeed the Lindesntrauss space from \cite{Lin1964} to which we referred at the beginning of the section. For other sources alluding to this relevant example, see e.g. \cite{Singer1970}*{Proof of Theorem 15.5}, \cite{LinPel1968}*{Example 8.1}, and \cite{HR1970}.

\end{remark}

\begin{proposition}\label{prop:e} For every $0<p\le 1$ and every sequence $\delta$ of integers in $[2,\infty)$, the pair $(\XXB_p,\XXB_p^*)$ is a biorthogonal system in $\ell_p$, i.e., $\langle \xx_j^*,\xx_k \rangle=\delta_{j,k}$ for $(j,k)\in\NN\times \NN$.
\end{proposition}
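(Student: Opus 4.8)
\emph{Strategy.} I would prove the slightly more general statement that for every finite linear combination $x=\sum_{\ell}a_\ell\xx_\ell$ one has $\langle\xx_j^*,x\rangle=a_j$ for all $j\in\NN$; the proposition then follows at once by taking $x=\xx_k$, for which $a_\ell=\delta_{\ell,k}$. Note first that all the pairings involved are genuine finite sums: $\xx_k$ has finite support by Fact~\ref{fact:q} and $\xx_j^*$ has finite support by Fact~\ref{fact:g}.

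\emph{Step 1: a recursion for the $\xx_j^*$.} Directly from \eqref{eq:DualLinBasis} one reads off $\xx_1^*=\ee_1$ (because $\Gamma(1)=0$) and, for $j\ge 2$,
\[
\xx_j^*=\ee_j+d_{\rho(j)}^{-1/p}\,\xx_{\rho(j)}^*.
\]
This is an index-shift identity: writing $m=\Gamma(j)$ and using $\rho^{(n)}(\rho(j))=\rho^{(n+1)}(j)$ together with $\Gamma(\rho(j))=m-1$ (valid since $\rho$ maps $J_m$ into $J_{m-1}$), multiplying the defining sum of $\xx_{\rho(j)}^*$ by $d_{\rho(j)}^{-1/p}$ reindexes each term ($n\mapsto n+1$) and turns the product $\prod_{r=1}^{n}d_{\rho^{(r)}(\rho(j))}^{-1/p}$ into $\prod_{r=1}^{n+1}d_{\rho^{(r)}(j)}^{-1/p}$; adding the missing $n=0$ term $\ee_j$ recovers exactly the defining sum of $\xx_j^*$.

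\emph{Step 2: induction on $m=\Gamma(j)$.} If $m=0$, then $j=1$ and, by Fact~\ref{fact:y}, $\langle\xx_1^*,x\rangle=\langle\ee_1,x\rangle=x(1)=a_1$. If $m\ge 1$, Fact~\ref{fact:y} gives $x(j)=a_j-a_{\rho(j)}d_{\rho(j)}^{-1/p}$, i.e.\ $a_j=x(j)+d_{\rho(j)}^{-1/p}a_{\rho(j)}$; since $\Gamma(\rho(j))=m-1$ the induction hypothesis yields $a_{\rho(j)}=\langle\xx_{\rho(j)}^*,x\rangle$, and Step~1 then gives
\[
\langle\xx_j^*,x\rangle=\langle\ee_j,x\rangle+d_{\rho(j)}^{-1/p}\langle\xx_{\rho(j)}^*,x\rangle=x(j)+d_{\rho(j)}^{-1/p}a_{\rho(j)}=a_j .
\]
This closes the induction, and applying it to $x=\xx_k$ completes the proof.

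\emph{Main obstacle and an alternative.} The only delicate point is the bookkeeping in Step~1 with the iterated left inverses $\rho^{(n)}$ and the shift $n\mapsto n+1$, together with the verification that $\Gamma(\rho(j))=\Gamma(j)-1$ so that the induction is well founded; everything after that is formal. A more computational alternative would be to expand $\langle\xx_j^*,\xx_k\rangle$ termwise using the support of $\xx_k$ (Facts~\ref{fact:q} and~\ref{fact:w}) and the ``column'' description of $\xx_j^*$ (Facts~\ref{fact:x} and~\ref{fact:m}), splitting according to whether $k=j$, $k=\rho^{(n)}(j)$ for some $n\ge1$, or $k\notin\supp(\xx_j^*)$; the cancellation $\xx_j^*(k)-d_k^{-1/p}\sum_{\ell=\sigma(k)}^{\sigma(k+1)-1}\xx_j^*(\ell)=0$ in the middle case is then the heart of the matter. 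I would favor the inversion argument above as cleaner and less prone to sign slips.
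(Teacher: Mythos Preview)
Your proof is correct and takes a genuinely different route from the paper. The paper argues directly on the pairing $\langle\xx_j^*,\xx_k\rangle$: after disposing of $k\ge j$ via Facts~\ref{fact:q}, \ref{fact:w}, \ref{fact:g}, \ref{fact:k}, it splits $k<j$ into the cases $k\notin\supp(\xx_j^*)$ (where a support argument using Facts~\ref{fact:l}, \ref{fact:g}, \ref{fact:r} shows $\supp(\xx_k)\cap\supp(\xx_j^*)=\emptyset$) and $k=\rho^{(m)}(j)$ for some $m\ge1$ (where the two surviving terms cancel exactly); this is essentially the ``computational alternative'' you sketch at the end. Your argument instead proves the stronger inversion statement $\langle\xx_j^*,\sum_\ell a_\ell\xx_\ell\rangle=a_j$ via the recursion $\xx_j^*=\ee_j+d_{\rho(j)}^{-1/p}\xx_{\rho(j)}^*$ combined with Fact~\ref{fact:y} and induction on $\Gamma(j)$. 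Your route is cleaner and more conceptual---it identifies $\xx_j^*$ as the $j$th coefficient functional in one stroke and avoids the support case analysis---while the paper's approach is more hands-on and requires no auxiliary recursion. Both are short; your version has the minor bonus of yielding the coefficient formula for arbitrary finite combinations, which the paper does not state explicitly here.
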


\begin{proof}By Facts~\ref{fact:q}, \ref{fact:w}, \ref{fact:g} and \ref{fact:k}, it suffices to consider the case when
$k<j$. Set $n=\Gamma(j)$.

Assume that $k\notin\supp(\xx_j^*)$. Since $\sigma(k)\not=1$, we infer from Fact~\ref{fact:l} that $\sigma(k)\notin \supp(\xx_j^*)$. Then, by Facts~\ref{fact:g} and \ref{fact:r}, $[\sigma(k), \sigma(k+1))$ and $\supp(\xx_j^*)$ are disjoint. Therefore, $\supp(\xx_k)$ and $\supp(\xx_j^*)$ are disjoint so that $\langle \xx_j^*,\xx_k \rangle=0$. Thus we can also assume that $k\in\supp(\xx_j^*)$, i.e., $k=\rho^{(m)}(j)$ for some $1\le m \le n$. This assumption gives $\supp(\xx_j^*)\cap\supp(\xx_k) = \{ \rho^{(m)}(j), \rho^{(m-1)}(j)\}$. Therefore
\[
\langle \xx_j^*,\xx_k \rangle=\prod_{i=1}^m d_{\rho^{(i)}(j)}^{-1/p} - \left( \prod_{i=1}^{m-1} d_{\rho^{(i)}(j)}^{-1/p} \right) d_{\rho^{(m)}(j)}^{-1/p}=0.\qedhere
\]
\end{proof}

\begin{lemma}\label{lem:2}
Let $0<p\le 1$ and $\delta$ be a sequence of integers in $[2,\infty)$. For all positive integers $j$ and $N$ with $j\le N$, there exists $x\in \XX_p^{(N)}$ with $\Vert x \Vert=2^{1/p}$ such that $S_N(x)=\ee_j$.
\end{lemma}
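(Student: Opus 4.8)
The plan is to exploit the recursive/tree structure encoded in Fact~\ref{fact:w} and Fact~\ref{fact:y}, together with the telescoping identity for $\xx_j^*$ in \eqref{eq:DualLinBasis}, to build $x$ explicitly as a linear combination of the vectors $\xx_{\rho^{(n)}(j)}$ for $0\le n\le \Gamma(j)$. First I would let $m=\Gamma(j)$ and write $k_n=\rho^{(n)}(j)$ for $0\le n\le m$, so that $k_0=j$, $k_m=1$, and each $k_n\in J_{m-n}$; note that all $k_n\le j\le N$, so any combination of the $\xx_{k_n}$ lies in $\XX_p^{(N)}$. The natural guess is $x=\sum_{n=0}^m c_n\,\xx_{k_n}$, where the coefficients $c_n$ are chosen so that the low-coordinate terms cancel and only $\ee_j$ survives after applying $S_N$; since $j\le N$ and $\supp(\xx_{k_n})\subseteq[k_n,\infty)$ with the ``new'' coordinates of $\xx_{k_n}$ (namely $[\sigma(k_n),\sigma(k_n+1))$) possibly exceeding $N$, the projection $S_N$ will kill the $\rr$-tails that stick out past $N$, which is exactly what makes $S_N(x)=\ee_j$ plausible.

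Next I would pin down the coefficients. By Fact~\ref{fact:w}, the coordinate $k_n$ (for $1\le n\le m$) is hit only by $\xx_{k_n}$ (with value $1$) and by $\xx_{\rho(k_n)}=\xx_{k_{n+1}}$ (with value $-d_{k_{n+1}}^{-1/p}$), while coordinate $k_0=j$ is hit only by $\xx_j$. So to have $S_N(x)$ supported on $\{j\}$ I need, reading off Fact~\ref{fact:y}, that the coefficient of $\ee_{k_n}$ in $x$ vanishes for $1\le n\le m$: this gives the recursion $c_n - c_{n-1}\,d_{k_n}^{-1/p}=0$... wait, more carefully, coordinate $k_{n}$ for $1\le n\le m-1$ gets contribution $c_{n}$ from $\xx_{k_n}$ and $-c_{n+1}d_{k_n}^{-1/p}$ from $\xx_{k_{n+1}}$, forcing $c_{n+1}=c_n\,d_{k_n}^{1/p}$, and coordinate $k_m=1$ gets only $c_m$ from $\xx_{1}$ (since $1$ is hit by no $\xx_{\rho(\cdot)}$), so I also need $c_m=0$ — which is impossible unless I drop the term $\xx_1$ and instead only require cancellation at coordinates $k_1,\dots,k_{m-1}$. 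The cleaner route: take $x=\sum_{n=0}^{m-1}c_n\xx_{k_n}$ with $c_0=1$ and $c_n=c_{n-1}d_{k_{n-1}}^{1/p}=\prod_{r=1}^{n}d_{k_{r-1}}^{1/p}$, so that all intermediate coordinates cancel and $S_N(x)=\ee_j$ minus any contributions from $\rr$-tails that land in $[1,N]$; one must check those tails all land beyond $N$ or get re-cancelled. Actually the truly clean observation is the duality one: $S_N(x)=\sum_{k=1}^N\langle\xx_k^*,x\rangle$-type accounting is wrong, but rather $S_N(x)=\ee_j$ is equivalent, by Fact~\ref{fact:y}, to a finite linear system, and one checks the chosen $x$ solves it and then computes $\|x\|^p$.

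The computation of the norm is where I would lean on Fact~\ref{fact:j} (the exact additivity formula $\|x+\lambda\xx_k\|^p=\|x\|^p+|x(k)+\lambda|^p+|\lambda|^p-|x(k)|^p$ when $\supp(x)\subseteq[1,\sigma(k))$). Building $x$ by adding the terms $c_n\xx_{k_n}$ in order of \emph{decreasing} $n$ — i.e., starting from $\xx_{k_{m-1}}$ (the one with smallest index, closest to $1$) and successively adding $\xx_{k_{n}}$ for $n=m-2,\dots,0$ — each new vector $\xx_{k_n}$ has index $k_n$ strictly larger than the support of what came before it lies in $[1,\sigma(k_n))$, so Fact~\ref{fact:j} applies at each step. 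Telescoping these identities, the $|x(k)|^p$ terms cancel against the $|x(k)+\lambda|^p$ terms in a chain (this is precisely the mechanism that zeroes out the intermediate coordinates), and what remains is $\|x\|^p=|x(j)|^p+\sum(\text{tail contributions})$. Since $x(j)=c_0=1$ and, by Fact~\ref{fact:e}, each $\rr_{k_n}$ has $\|\rr_{k_n}\|=1$ but gets multiplied by $c_n$ and then the tail weights combine with the $d$-factors — here is the key cancellation: $c_n^p\cdot\|\rr_{k_n}\|^p=c_n^p$, and $c_n^p=\prod_{r=1}^n d_{k_{r-1}}$, while the telescoping of Fact~\ref{fact:j} contributes exactly the right $-|x(k)|^p$ and $+|\lambda|^p$ to collapse everything down to $\|x\|^p=1+1=2$.

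The main obstacle I anticipate is getting the bookkeeping of the recursion exactly right — in particular making sure the ``extra'' coordinate $1$ (hit only once, by $\xx_1$, per Fact~\ref{fact:w}) is handled correctly, since that is what dictates whether the top term $\xx_1=\xx_{k_m}$ is included or not, and correspondingly whether $S_N(x)=\ee_j$ or $\ee_j$ plus/minus a multiple of $\ee_1$. I suspect the correct construction includes all of $\xx_{k_0},\dots,\xx_{k_m}$ but with the coefficient chain arranged so that the coordinate-$1$ contribution is also killed by one of the $\rr$-tails landing on coordinate $1$ — but coordinate $1$ is never in any $\rr$-support (Fact~\ref{fact:e} gives $\supp(\rr_k)\subseteq[k+1,\infty)$ and actually $\subseteq[2,\infty)$), so coordinate $1$ can only be affected by $\xx_1$ itself via its $\ee_1$ term. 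Hence the resolution is that $x$ must \emph{not} involve $\xx_1$ at all when $j\ge 2$, and when $j=1$ we simply take $x=\xx_1$ (then $S_N(x)=S_N(\ee_1-\rr_1)=\ee_1$ provided $N<\sigma(2)$, and for larger $N$ we also subtract off the appropriate $\xx_k$'s with $k\in\supp(\rr_1)$, $k\le N$, to cancel the tail — iterating this is exactly the general construction read in the other direction). So the cleanest organization is: induct on $N$, or equivalently give the explicit sum $x=\sum_{n}c_n\xx_{k_n}$ over those $k_n\le N$ with the forced coefficients, verify $S_N(x)=\ee_j$ via Fact~\ref{fact:y}, and compute $\|x\|=2^{1/p}$ via the telescoped Fact~\ref{fact:j}; the rest is routine.
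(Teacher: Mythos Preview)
Your explicit ``ancestor sum'' construction does not work, and the claim that it is equivalent to induction on $N$ is false. The indices $k_n=\rho^{(n)}(j)$ all satisfy $k_n\le j$, so the only tails you ever touch are $\rr_{k_n}$, and each such tail is supported on \emph{all} of $[\sigma(k_n),\sigma(k_n+1))$, not merely on the single child $k_{n-1}$. Those sibling coordinates lie in $[1,j]\subseteq[1,N]$ and are never cancelled. Concretely, with $d_n\equiv 2$, $j=2$, $N=4$: your recipe gives $x=\xx_2$ (you correctly exclude $\xx_1$), and $S_4(\xx_2)=\ee_2-2^{-1/p}\ee_4\neq\ee_2$. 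Even including $\xx_1$ makes it worse: the tail of $\xx_1$ hits coordinate $3\le N$. The vectors you actually need are the \emph{descendants} of $j$ up to level $N$, not its ancestors; these are in general many more than $\Gamma(j)+1$ and do not sit on a single root-to-leaf path.

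You do have the right idea buried at the very end: induct on $N$. The paper does exactly this, in two lines. For $N=j$ take $x=\xx_j$; then $S_j(\xx_j)=\ee_j$ by Fact~\ref{fact:e} and $\|\xx_j\|=2^{1/p}$ by Fact~\ref{fact:q}. For the step $N\to N+1$, replace $x$ by $y=x-x(N+1)\,\xx_{N+1}$: since $x\in\XX_p^{(N)}$ one has $\supp(x)\subseteq[1,\sigma(N+1))$, so Fact~\ref{fact:i} gives $\|y\|=\|x\|$, and $S_{N+1}(\xx_{N+1})=\ee_{N+1}$ gives $S_{N+1}(y)=\ee_j$. That is the whole proof; there is no explicit formula to chase and no need for Fact~\ref{fact:j} or the dual functionals.
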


\begin{proof}We proceed by induction on $N$. If $N=j$, put $x=\xx_j$. Assume that the vector $x$ fulfils the desired conditions for some $N\ge j$. Then, by Fact~\ref{fact:i}, $y=x-S_{N+1}(x) \, \xx_{N+1}$ satisfies the condition for $N+1$.
\end{proof}

\begin{lemma}\label{lem:1}Let $0<p\le 1$ and $\delta$ be a sequence of integers in $[2,\infty)$. For each $y\in \ell_p$ and $N\in\NN$ there is $x\in \XX_p^{(N)} $ such that $S_N(y)=S_N(x)$.
\end{lemma}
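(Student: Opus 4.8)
The plan is to deduce this immediately from Lemma~\ref{lem:2} together with the linearity of the coordinate projection $S_N$. Fix $y\in\ell_p$ and $N\in\NN$. Since $S_N(y)=\sum_{j=1}^N y(j)\,\ee_j$ is a linear combination of $\ee_1,\dots,\ee_N$, it suffices to hit each $\ee_j$ with $1\le j\le N$ as the image under $S_N$ of some vector in $\XX_p^{(N)}$, and then take the corresponding linear combination.

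Concretely, I would argue as follows. For each $j$ with $1\le j\le N$, Lemma~\ref{lem:2} provides $x_j\in\XX_p^{(N)}$ with $S_N(x_j)=\ee_j$. Because $\XX_p^{(N)}=[\xx_k\colon 1\le k\le N]$ is a linear subspace of $\ell_p$, the vector
\[
x=\sum_{j=1}^N y(j)\, x_j
\]
again belongs to $\XX_p^{(N)}$. Since $S_N$ is linear,
\[
S_N(x)=\sum_{j=1}^N y(j)\, S_N(x_j)=\sum_{j=1}^N y(j)\,\ee_j=S_N(y),
\]
which is exactly what is claimed.

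There is no real obstacle here: the genuine content has already been extracted in Lemma~\ref{lem:2}, and the present statement is just its linearization. (One could equivalently phrase the proof as the observation that $S_N$ maps the $N$-dimensional space $\XX_p^{(N)}$ onto $[\ee_1,\dots,\ee_N]$, the surjectivity being furnished coordinate-by-coordinate by Lemma~\ref{lem:2}.) If anything merits a word of care, it is only the trivial remark that a finite linear combination of elements of $\XX_p^{(N)}$ stays in $\XX_p^{(N)}$, which holds because $\XX_p^{(N)}$ is by definition a linear span.
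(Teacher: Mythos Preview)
Your proof is correct and is precisely the ``straightforward'' argument the paper has in mind: the paper's own proof of Lemma~\ref{lem:1} consists of the single sentence ``It is straightforward from Lemma~\ref{lem:2},'' and your linearization of Lemma~\ref{lem:2} via $x=\sum_{j=1}^N y(j)\,x_j$ is exactly how one unpacks that.
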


\begin{proof}It is straightforward from Lemma~\ref{lem:2}.
\end{proof}

\begin{theorem}\label{thm:quotient} Let $0<p\le 1$ and let $\delta$ be a sequence of integers in $[2,\infty)$. Then $Q_p(\delta)$ is a quotient map from $\ell_p$ onto $L_p$ with kernel $\XX_p(\delta)$.
\end{theorem}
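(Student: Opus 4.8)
The plan is to split the statement into four assertions and check them in turn: (i) $Q_p:=Q_p(\delta)$ is bounded with $\Vert Q_p\Vert=1$; (ii) $Q_p$ is a quotient map onto $L_p$; (iii) $\XX_p(\delta)\subseteq\ker Q_p$; (iv) $\ker Q_p\subseteq\XX_p(\delta)$. Assertion (i) is exactly Fact~\ref{fact:d} (alternatively, combine \eqref{eq:lpTopBanach} with $\Vert h_j\Vert_{L_p}=1$ for every $j$, which is built into the construction). Assertion (iii) is equally quick: by Fact~\ref{fact:c} every generator $\xx_k$ lies in $\ker Q_p$, and $\ker Q_p$ is closed since $Q_p$ is continuous, so $\XX_p(\delta)=[\xx_k\colon k\in\NN]\subseteq\ker Q_p$.

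For (ii) I would use the finite-dimensional layers. By Fact~\ref{fact:b}, $Q_p$ maps $[\ee_j\colon j\in J_n]$ isometrically onto $L_p^{(n)}$, so $Q_p(B_{\ell_p})\supseteq\bigcup_n B_{L_p^{(n)}}$. Since the partitions $\II_n$ are nested with $\diam(\II_n)\to 0$ (Fact~\ref{fact:a}), every continuous function on $[0,1]$ is a uniform — hence $L_p$ — limit of functions constant on each interval of some $\II_n$; therefore $\bigcup_n L_p^{(n)}$ is dense in $L_p$, and after the obvious rescaling one gets $\overline{Q_p(B_{\ell_p})}\supseteq B_{L_p}$. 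As $\ell_p$ is a complete $p$-normed space, the standard successive-approximation argument (the series involved converge because $\sum_k\varepsilon_k^p<\infty$ once $\varepsilon_k\to0$ fast enough) upgrades this to $Q_p(B_{\ell_p}^{\circ})\supseteq B_{L_p}^{\circ}$; combined with $\Vert Q_p\Vert=1$ this shows $Q_p$ carries the open unit ball of $\ell_p$ exactly onto that of $L_p$, so $Q_p$ is an (isometric) quotient map onto $L_p$.

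The heart of the proof is (iv). Fix $y\in\ell_p$ with $Q_p(y)=0$. For each $N$, Lemma~\ref{lem:1} provides $z_N\in\XX_p^{(N)}$ with $S_N(z_N)=S_N(y)$; by Fact~\ref{fact:q} and \eqref{sigmafunction}, $\supp(z_N)\subseteq[1,\sigma(N+1))$. Hence $y-z_N$ is supported in $[N+1,\infty)$ and decomposes as $y-z_N=u_N+v_N$, where $\supp(u_N)\subseteq B_N:=\{j\colon N<j<\sigma(N+1)\}$ (a finite set) and $v_N=\sum_{j\ge\sigma(N+1)}y(j)\,\ee_j$ is a tail of $y$, so $\Vert v_N\Vert\to0$. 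Because $z_N\in\XX_p(\delta)\subseteq\ker Q_p$, we have $Q_p(u_N)=-Q_p(v_N)$. The decisive combinatorial observation is that the intervals $(I_j)_{j\in B_N}$ are pairwise disjoint: since $\bigcup_n\II_n$ is laminar, $I_j\cap I_l\neq\emptyset$ with $j\neq l$ would force, say, $I_j\subsetneq I_l$, i.e., $l=\rho^{(s)}(j)$ for some $s\ge1$, whence $l\le\rho(j)\le N$ (as $j<\sigma(N+1)$ forces $\rho(j)\le N$), contradicting $l\in B_N$. Consequently the normalized functions $h_j=Q_p(\ee_j)$, $j\in B_N$, have pairwise disjoint supports, so $Q_p$ is isometric on $[\ee_j\colon j\in B_N]$ and $\Vert u_N\Vert=\Vert Q_p(u_N)\Vert=\Vert Q_p(v_N)\Vert\le\Vert v_N\Vert$. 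Therefore $\Vert y-z_N\Vert^p=\Vert u_N\Vert^p+\Vert v_N\Vert^p\to0$, i.e. $z_N\to y$; since each $z_N$ lies in the closed subspace $\XX_p(\delta)$, we conclude $y\in\XX_p(\delta)$.

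The only genuinely delicate point is step (iv), and within it the key idea is that the ``new'' part $u_N$ of $y-z_N$ is carried by the block $B_N$, on which $Q_p$ acts isometrically, so that $\Vert u_N\Vert$ is governed by $\Vert Q_p(v_N)\Vert$ and hence by the vanishing tail of $y$. Step (ii) is routine open-mapping machinery adapted to $p$-convexity, and steps (i) and (iii) are read off directly from the Facts already established.
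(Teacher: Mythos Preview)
Your proof is correct and follows essentially the same route as the paper. The only variation is in step (iv): the paper chooses $N=\Lambda(n)-1$ so that the overflow of $z_N$ is supported exactly in $J_n$, where the isometry of Fact~\ref{fact:b} applies directly, whereas you allow arbitrary $N$ and establish the disjointness of the intervals $(I_j)_{j\in B_N}$ by the laminar-family argument; both routes produce the same bound $\Vert y-z_N\Vert^p\le 2\Vert v_N\Vert^p$.
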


\begin{proof}Let us first prove that $Q_p=Q_p(\delta)$ is onto. Let $f\in B_{L_p}$ and $\varepsilon>0$. By Fact~\ref{fact:a}, $\cup_{n=0}^\infty L_p^{(n)}$ is dense in $L_p$ and so there is $n\in\NN$ and $g\in L_p^{(n)}$ with $\Vert f- g\Vert\le \varepsilon$. By Fact~\ref{fact:b} there is $x\in \ell_p$ such that $\Vert x\Vert=\Vert g\Vert$ and $Q_p(x)=g$. Consequently $\Vert f- Q_p(x)\Vert\le \varepsilon$ and $\Vert x \Vert \le (1+\varepsilon^p)^{1/p}$. We infer that
\[
B_{L_p} \subseteq C\overline{Q_p(B_{\ell_p})}
\]
for every $C>1$. By the Open Mapping theorem $Q_p$ is onto, hence a quotient map. In fact, the map from $\ell_p/\Ker(Q_p)$ onto $L_p$ induced by $Q_p$ is an isometry.

By Fact~\ref{fact:c}, $\XX_p=\XX_p(\delta)\subseteq \Ker(Q_p)$. Let us prove the reverse inclusion. Let $x\in\ell_p$ with $Q_p(x)=0$ and fix $\varepsilon>0$. There is $n\in\NN$ such that
\[
\Vert x-S_{\Lambda(n)-1}(x) \Vert \le 2^{-1/p} \varepsilon.
\]
By Lemma~\ref{lem:1} there is $y\in [\xx_k \colon 1\le k <\Lambda(n)]$ such that
\[
z:=S_{\Lambda(n) -1}(y)=S_{\Lambda(n)-1}(x).
\]
Taking into account that
\[
\supp(y-S_{\Lambda(n)-1} (y) )\subseteq [\Lambda(n) , \Lambda(n+1)-1)=J_n
\]
and using Facts~\ref{fact:b}, \ref{fact:c} and \ref{fact:d} we obtain
\[ \|z-y\|=\|Q_p(z-y)\|=\|Q_p(z)\|=\|Q_p(z-x)\|,\]
so that
\[ \|x-y\|^p\leq \|x-z\|^p+\|z-y\|^p\leq2\|x-z\|^p=\varepsilon^p. \qedhere \]
\end{proof}

\begin{proposition}\label{prop:f} For every $0<p\le 1$ and every sequence $\delta$ of integers in $[2,\infty)$ there is a subspace of $\XX_p(\delta)$ isometric to $\ell_p$ and $2^{1/p}$-complemented in $\ell_p$.
\end{proposition}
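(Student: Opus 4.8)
The plan is to realize the desired copy of $\ell_p$ as the closed linear span of a suitable subsequence of $\XXB_p(\delta)$ whose terms have pairwise disjoint supports, and to project onto it via the corresponding coordinate functionals. The first step is to choose an infinite set $M=\{k_n\colon n\in\NN\}\subseteq\NN$ so that the supports $\supp(\xx_k)=\{k\}\cup[\sigma(k),\sigma(k+1))$ (Fact~\ref{fact:q}) are pairwise disjoint for $k\in M$. This is possible because $\sigma(k)\ge 2k>k$ for every $k$ by \eqref{sigmafunction}, so the block $[\sigma(k),\sigma(k+1))$ sits strictly to the right of the singleton $\{k\}$ and $\max\supp(\xx_k)=\sigma(k+1)-1$. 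Concretely, put $k_1=1$ and $k_{n+1}=\sigma(k_n+1)$; then $\min\supp(\xx_{k_{n+1}})=k_{n+1}=\sigma(k_n+1)=\max\supp(\xx_{k_n})+1$, so the sets $\supp(\xx_{k_n})$, $n\in\NN$, are consecutive pairwise disjoint blocks of integers.

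Since the vectors $(\xx_k)_{k\in M}$ are disjointly supported and $\Vert\xx_k\Vert=2^{1/p}$ by Fact~\ref{fact:q}, for any finitely supported family of scalars $(a_k)_{k\in M}$ we have $\Vert\sum_{k\in M}a_k\xx_k\Vert^p=\sum_{k\in M}|a_k|^p\,\Vert\xx_k\Vert^p=2\sum_{k\in M}|a_k|^p$. Consequently the correspondence $\ee_n\mapsto 2^{-1/p}\xx_{k_n}$ extends to a linear isometry from $\ell_p$ onto $Y:=[\xx_k\colon k\in M]$, which is a subspace of $\XX_p(\delta)$.

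It remains to find a norm-$2^{1/p}$ projection of $\ell_p$ onto $Y$. I would define $P\colon\ell_p\to\ell_p$ by $P(y)=\sum_{k\in M}y(k)\,\xx_k$ for $y=(y(j))_{j=1}^\infty\in\ell_p$; the series converges because $\sum_{k\in M}|y(k)|^p\le\Vert y\Vert^p$ and the $\xx_k$ are disjointly supported, and disjointness together with Fact~\ref{fact:q} gives $\Vert P(y)\Vert^p=2\sum_{k\in M}|y(k)|^p\le 2\Vert y\Vert^p$, so $P$ is bounded with $\Vert P\Vert\le 2^{1/p}$ and $P(\ell_p)\subseteq Y$. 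To see that $P$ fixes $Y$ pointwise it suffices to check $P(\xx_m)=\xx_m$ for every $m\in M$: since $\xx_m=\ee_m-\rr_m$ with $\supp(\rr_m)=[\sigma(m),\sigma(m+1))$ (Fact~\ref{fact:e}), we have $P(\ee_m)=\xx_m$, while for $j\in[\sigma(m),\sigma(m+1))$ we must have $j\notin M$ (otherwise $j\in\supp(\xx_m)\cap\supp(\xx_j)$, which by the choice of $M$ forces $m=j$, contradicting $j\ge\sigma(m)>m$), hence $P(\ee_j)=0$ and $P(\rr_m)=0$. Thus $P(\xx_m)=\xx_m$; by linearity and continuity $P$ restricts to the identity on $Y$, and together with $P(\ell_p)\subseteq Y$ this yields $P^2=P$. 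This shows $Y$ is isometric to $\ell_p$ and $2^{1/p}$-complemented in $\ell_p$, as required.

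No serious difficulty is expected; the one point that needs care is the choice of $M$, where one uses that $\min\supp(\xx_k)=k$ lies strictly below the left endpoint $\sigma(k)$ of the block part of $\supp(\xx_k)$. The very same inequality is what guarantees in the last step that each coordinate functional indexed by $M$ annihilates the tails $\rr_m$, $m\in M$, which is the crux of the argument.
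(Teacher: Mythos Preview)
Your proof is correct and follows essentially the same strategy as the paper: pick a subsequence of $\XXB_p(\delta)$ whose terms are disjointly supported in $\ell_p$, observe that (after rescaling by $2^{-1/p}$) it is isometrically equivalent to the unit vector basis of $\ell_p$, and then exhibit a norm-$2^{1/p}$ projection onto its span.

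The only notable difference is in the details. The paper takes $k_n=\Lambda(n)$ and builds the projection using the dual functionals, via $x\mapsto 2^{1/p}\sum_n\langle \xx_{k_n}^*,S_{\supp(\xx_{k_n})}(x)\rangle\,\ee_n$, followed by the isometric embedding back into $Y$. Your choice $k_1=1$, $k_{n+1}=\sigma(k_n+1)$ makes the supports genuinely consecutive blocks, which lets you use the much simpler projection $P(y)=\sum_{k\in M}y(k)\,\xx_k$: the single coordinate $y(k)$ already recovers the coefficient, and the verification that $P$ fixes $\xx_m$ for $m\in M$ reduces to the disjointness you arranged. This is a pleasant simplification over the paper's argument, but the underlying idea is the same.
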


\begin{proof}
There is a subsequence $(\xx_{k_n})_{n=1}^\infty$ of $\XXB_p(\delta)$ which is a block basic sequence with respect to the unit vector system of $\ell_p$. Indeed, it suffices to choose $k_n=\Lambda(n)$ for every $n\in\NN$.
Let us define maps $J$, $P\colon \ell_p\to \ell_p$ by
\[
J(x)=2^{-1/p} \sum_{k=1}^\infty x(k)\, \xx_{n_k},
\]
and
\[
P(x)=2^{1/p} \sum_{k=1}^\infty \langle \xx_{k_n}^*, S_{A_{k_n}} (x)\rangle \ee_k,\]
where $A_k=\supp(\xx_k)$
By Fact~\ref{fact:e}, $J$ is an isometry. Combining Fact~\ref{fact:f} with the elementary inequality
\begin{equation}\label{eq:lpembedsinl1}
\left|\sum_{i\in I} \alpha_i\right|\le \left(\sum_{i\in I} |\alpha_i|^q\right)^{1/q}, \quad a_i\in\FF, \, q\le 1,
\end{equation}
we have
\[
\Vert P(x)\Vert^p \le 2 \sum_{k=1}^\infty \sum_{j\in A_{n_k}} |\xx_{n_k}^*(j)|^p |x(j)|^p \\
\le 2 \sum_{k=1}^\infty \sum_{j\in A_{n_k}} |x(j)|^p\\
\le 2 \Vert x \Vert^p
\]
for all $x\in\ell_p$. We observe that $J(\ee_j)=\xx_{n_j}$ and $P(\xx_{n_j})=\ee_j$ for all $j\in\NN$. Hence, $P\circ J=\Id_{\ell_p}$.
\end{proof}

The following theorem gathers some structural properties of $\XX_p$-spaces.
\begin{theorem}\label{cor:properties} Let $0<p\le 1$ and $\delta$ be a sequence of integers in $[2,\infty)$. Then:
\begin{enumerate}

\item[(a)] The isomorphic class of $\XX_p(\delta)$ does not depend on the particular choice of $\delta$.
\item[(b)] $\XX_p(\delta)$ is locally complemented in $\ell_p$.
\item[(c)] $\XX_p(\delta)$ is not complemented if $\ell_p$.
\item[(d)] $\XX_p(\delta)$ is an $\SL_p$-space.
\item[(e)] $\XX_p(\delta)$ is not a pseudo-dual space.
\item[(f)] $\XX_p(\delta)$ is not isomorphic to $\ell_p$.
\item[(g)] $\XX_p(\delta)$ does not have an unconditional basis.
\item[(h)] $\XX_p(\delta)$ has a Schauder basis.
\end{enumerate}
\end{theorem}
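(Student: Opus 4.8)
The plan is to set up a single family of ``partial sum'' operators that simultaneously yields (h) and (b), to derive (d) and (c) directly from (b), to peel off (e), (f), (g) as soft consequences of Kalton's theory together with the primeness of $\ell_p$, and to treat (a) — the only genuinely non-formal item — via a pullback/lifting argument.

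\emph{The engine: operators $R_N$.} By Lemma~\ref{lem:1} the coordinate projection $S_N\colon\ell_p\to\ell_p$ maps $\XX_p^{(N)}$ onto $[\ee_j\colon 1\le j\le N]$, and the triangular relations of Fact~\ref{fact:y} (using $\rho(j)<j$, which holds because $\sigma(k)\ge 2k$) show $S_N|_{\XX_p^{(N)}}$ is injective; hence it is a linear isomorphism and $R_N:=\bigl(S_N|_{\XX_p^{(N)}}\bigr)^{-1}\circ S_N\colon\ell_p\to\XX_p^{(N)}\subseteq\XX_p(\delta)$ is a well-defined bounded operator. Since $\XX_p(\delta)$ is a $p$-Banach space, \eqref{eq:lpTopBanach} together with Lemma~\ref{lem:2} gives $\Vert R_N\Vert_{\ell_p\to\XX_p}=\sup_n\Vert R_N(\ee_n)\Vert=2^{1/p}$ for every $N$. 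Because $\supp(\xx_k)\subseteq[k,\infty)$ (Fact~\ref{fact:q}) one has $R_N\bigl(\sum_{k\le M}a_k\xx_k\bigr)=\sum_{k\le N}a_k\xx_k$ whenever $N\le M$, so $\Vert\sum_{k\le N}a_k\xx_k\Vert\le 2^{1/p}\Vert\sum_{k\le M}a_k\xx_k\Vert$ for all scalars and all $N\le M$; as $\bigcup_N\XX_p^{(N)}$ is dense in $\XX_p(\delta)$, the standard basic sequence criterion shows $\XXB_p(\delta)$ is a Schauder basis, which is (h). For (b): $\sup_N\Vert R_N\Vert<\infty$ plus $R_N f\to f$ on the dense set $\bigcup_N\XX_p^{(N)}$ gives $R_N\to\Id$ strongly on $\XX_p(\delta)$, hence uniformly on the compact unit ball of the finite-dimensional space $\VV\cap\XX_p(\delta)$; thus for any finite-dimensional $\VV\subseteq\ell_p$ and $\varepsilon>0$, the operator $T=R_N|_{\VV}$ with $N$ large witnesses local complementation with constant $\lambda=2^{1/p}$.

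\emph{Soft consequences.} For (d): $\ell_p=L_p(\mu)$ for $\mu$ the counting measure on $\NN$, so by (b) the space $\XX_p(\delta)$ is a locally complemented subspace of an $L_p(\mu)$, i.e.\ an $\SL_p$-space. For (c): if $\XX_p(\delta)$ were complemented in $\ell_p$, then by Theorem~\ref{thm:quotient} the space $L_p\cong\ell_p/\XX_p(\delta)$ would be an infinite-dimensional complemented subspace of $\ell_p$, hence isomorphic to $\ell_p$ by primeness \cite{Stiles1972}; but $L_p\not\cong\ell_p$ (for $0<p<1$ because $L_p^\ast=\{0\}$ while $\ell_p^\ast=\ell_\infty$, and for $p=1$ classically), a contradiction. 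For (e): by (b) and (c), $\XX_p(\delta)$ is a locally complemented but non-complemented subspace of $\ell_p$, so it is not pseudo-dual by \cite{Kalton1984}*{Theorem 4.4}. For (f): being pseudo-dual is an isomorphic invariant and $\ell_p$ is pseudo-dual by Remark~\ref{rmk:lppsd}, so (e) rules out $\XX_p(\delta)\cong\ell_p$. For (g): an unconditional basis of the $\SL_p$-space $\XX_p(\delta)$ (by (d)) would force $\XX_p(\delta)\cong\ell_p$ via Theorem~\ref{thm:UncSLp}, contradicting (f).

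\emph{The main point: (a).} Fix sequences $\delta_1,\delta_2$ of integers in $[2,\infty)$ and set $Q_i=Q_p(\delta_i)\colon\ell_p\to L_p$, which by Theorem~\ref{thm:quotient} are quotient maps with $\Ker Q_i=\XX_p(\delta_i)$. Consider the pullback
\[
P:=\bigl\{(a,b)\in\ell_p\oplus_p\ell_p\colon Q_1(a)=Q_2(b)\bigr\}=\Ker\bigl((a,b)\mapsto Q_1(a)-Q_2(b)\bigr),
\]
a closed subspace of $\ell_p\oplus_p\ell_p\cong\ell_p$. The coordinate projections $\pi_i\colon P\to\ell_p$ are surjective (because $Q_2$, resp.\ $Q_1$, is) and open (because $Q_1$ is bounded and $Q_2$ is a metric surjection by Theorem~\ref{thm:quotient}, and symmetrically for $\pi_2$), hence quotient maps, with $\Ker\pi_1=\{0\}\oplus\XX_p(\delta_2)$ and $\Ker\pi_2=\XX_p(\delta_1)\oplus\{0\}$. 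Since $\ell_p$ has the lifting property for quotient maps --- by \eqref{eq:lpTopBanach} a bounded operator on $\ell_p$ is lifted by choosing near-optimal preimages of the images of the unit vectors --- each $\pi_i$ has a bounded right inverse, so $\ell_p\oplus_p\XX_p(\delta_1)\cong P\cong\ell_p\oplus_p\XX_p(\delta_2)$. By Proposition~\ref{prop:f}, $\XX_p(\delta_i)\cong\ell_p\oplus_p Z_i$ for some $p$-Banach space $Z_i$, and since $\ell_p\oplus_p\ell_p\cong\ell_p$ we get $\ell_p\oplus_p\XX_p(\delta_i)\cong\ell_p\oplus_p\ell_p\oplus_p Z_i\cong\ell_p\oplus_p Z_i\cong\XX_p(\delta_i)$; hence $\XX_p(\delta_1)\cong\XX_p(\delta_2)$. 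The delicate points here are checking that the $\pi_i$ are genuine quotient maps and invoking the lifting property of $\ell_p$ in the precise form \eqref{eq:lpTopBanach}; everything else is bookkeeping. I expect (a), together with the construction and analysis of the operators $R_N$ underpinning (b) and (h), to absorb essentially all of the effort, the remaining items being one-line deductions from Kalton's theory and the primeness of $\ell_p$.
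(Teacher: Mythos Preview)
Your proof is correct, but it takes a more hands-on route than the paper on items (a), (b), and (h). The paper dispatches these by citation: (b) via \cite{Kalton1984}*{Theorem~6.1.(5)} (kernels of quotients onto $\SL_p$-spaces are locally complemented), (h) via \cite{Kalton1984}*{Theorem~6.4} (separable $\SL_p$-spaces have the BAP, hence a basis), and (a) via Proposition~\ref{prop:f} together with \cite{Kalton1984}*{Theorem~2.2} (kernels of two quotient maps $\ell_p\twoheadrightarrow Y$, each containing a complemented copy of $\ell_p$, are isomorphic). You instead build the operators $R_N$ explicitly and read off both the Schauder basis property and local complementation from their uniform bound $\Vert R_N\Vert=2^{1/p}$, and for (a) you unpack the pullback/lifting argument that underlies Kalton's Theorem~2.2, using the projectivity of $\ell_p$ via \eqref{eq:lpTopBanach} and the absorption $\ell_p\oplus\XX_p(\delta_i)\cong\XX_p(\delta_i)$ from Proposition~\ref{prop:f}. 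Your approach is self-contained and makes the mechanism visible (in particular it previews Proposition~\ref{prop:monotonebasis}, which sharpens your basis constant $2^{1/p}$ to $1$); the paper's approach is terser and places the result within Kalton's general framework. For (c)--(g) the two proofs coincide.
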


\begin{proof}
(a) follows from Proposition~\ref{prop:f} and \cite{Kalton1984}*{Theorem 2.2}.
Since $L_p$ is an $\SL_p$-space, (b) follows from \cite{Kalton1984}*{Theorem 6.1.(5)}.
If $\XX_p(\delta)$ were a complemented subspace in $\ell_p$ we would have $\ell_p\simeq \XX_p\oplus \YY$ for some $\YY$ so
\[
\YY\simeq \ell_p/\XX_p \simeq L_p.
\]
Since $L_p$ is not a complemented subspace of $\ell_p$ (in fact, $L_p$ is not a subspace of $\ell_p$) we reach a contradiction, thus (c) holds. Since $\ell_p$ is an $\SL_p$-space as well, (d) holds.
(e) follows from (b) and \cite{Kalton1984}*{Theorem 4.4}.
We deduce (f) from (e) and Remark~\ref{rmk:lppsd}.
(g) is now a consequence of Theorem~\ref{thm:UncSLp}.
Finally, (h) is a consequence of \cite{Kalton1984}*{Theorem 6.4}.
\end{proof}
\begin{remark}It is not too difficult to come up with examples of non-locally convex spaces with a Schauder basis but without an unconditional basis. Indeed, the space $L_1\bigoplus \ell_p$, $0<p<1$, for instance, verifies both conditions. This can be deduced simply by noticing that $L_1$ does not embed in any quasi-Banach space with an unconditional basis. Indeed, the proof for Banach spaces (see, e.g., \cite{AlbiacKalton2016}*{Theorem 6.3.3}) remains valid for quasi-Banach spaces. However, one can argue that adding a locally convex component to a nonlocally convex space is cheating. Thus, the question gains interest if we only accept examples within the class of hereditably non-locally convex spaces. Recall that a quasi-Banach space $\YY$ is \emph{hereditably non-locally convex} if every infinite-dimensional subspace of $\YY$ is non-locally convex, and that a quasi-Banach space $\YY$ is said to be \emph{$\WW$-saturated} if every infinite-dimensional subspace of $\YY$ contains a further subspace isomorphic to $\WW$. Since $\ell_p$ is $\ell_p$-saturated (see \cite{Stiles1972}), every $\SL_p$-space with the bounded approximation property, (BAP) for short, is also $\ell_p$-saturated by \cite{Kalton1984}*{Theorem 6.4}, hence hereditably non-locally convex. Therefore any $\SL_p$-space with (BAP) which is not isomorphic to $\ell_p$ (for instance, the space $\XX_p$ for $p<1$) is an example of a hereditably non-locally convex quasi-Banach space with a Schauder basis but without an unconditional basis. To the best of our knowledge, these are the first-known examples of quasi-Banach spaces with these properties.
\end{remark}

\section{Bases in the spaces $\XX_p$}\label{Linpbases}
\noindent
This section focusses on the sequences $\XXB_p(\delta)$ constructed in the previous section for $p\in(0,1]$ and for any integers $\delta=(d_n)_{n=1}^\infty$ in the interval $[2,\infty)$. Thanks to Theorem~\ref{cor:properties} we know that the space $\XX_p(\delta)$ has a Schauder basis. As a matter of fact, the sequence $\XXB_p(\delta)$ is a Schauder basis of $\XX_p(\delta)$ as we will next prove. Thus, we will rightfully say that $\XXB_p(\delta)$ is the \emph{Lindenstrauss $p$-basis} of $\XX_p(\delta)$.
\begin{proposition}\label{prop:monotonebasis} Given $0<p\le 1$ and a sequence $\delta=(d_n)_{n=1}^\infty$ of integers in $[2,\infty)$, $\XXB_p(\delta)$ is a bi-monotone Schauder basis of $\XX_p(\delta)$.
\end{proposition}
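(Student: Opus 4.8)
The plan is to show that $\XXB_p(\delta)=(\xx_k)_{k=1}^\infty$ is a Schauder basis of $\XX_p(\delta)$ with basis constant (and its dual version) equal to $1$, which is precisely bi-monotonicity. The natural strategy is to verify directly that the partial-sum projections $P_N\colon x\mapsto\sum_{k=1}^N a_k\xx_k$ (for $x=\sum_k a_k\xx_k$) are well-defined contractions, together with the complementary projections $x\mapsto\sum_{k>N}a_k\xx_k$. The key structural input is Fact~\ref{fact:j}: if $\supp(x)\subseteq[1,\sigma(k))$ — in particular if $x\in[\xx_j\colon 1\le j<k]$ — then for any scalar $\lambda$,
\[
\Vert x+\lambda\,\xx_k\Vert^p=\Vert x\Vert^p+|x(k)+\lambda|^p+|\lambda|^p-|x(k)|^p.
\]

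First I would establish that $\XXB_p(\delta)$ is a \emph{basic sequence} with bi-monotone constant $1$, i.e.\ that for all scalars $(a_k)_{k=1}^{n}$ and all $0\le m\le n$,
\[
\Bigl\Vert\sum_{k=m+1}^{n}a_k\xx_k\Bigr\Vert\le\Bigl\Vert\sum_{k=1}^{n}a_k\xx_k\Bigr\Vert
\quad\text{and}\quad
\Bigl\Vert\sum_{k=1}^{m}a_k\xx_k\Bigr\Vert\le\Bigl\Vert\sum_{k=1}^{n}a_k\xx_k\Bigr\Vert.
\]
This follows by induction peeling off the top index: write $x=\sum_{k=1}^{n-1}a_k\xx_k$, which has support in $[1,\sigma(n-1+1))\subseteq[1,\sigma(n))$ by Fact~\ref{fact:q} (since $\supp(\xx_j)\subseteq[j]\cup[\sigma(j),\sigma(j+1))$ and the intervals $[\sigma(j),\sigma(j+1))$ are increasing), so Fact~\ref{fact:j} applies with $\lambda=a_n$. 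Since $|x(n)+a_n|^p+|a_n|^p-|x(n)|^p\ge 0$ (by the inequality $|s+t|^p\le|s|^p+|t|^p$ applied to $s=x(n)+a_n$, $t=-a_n$, rearranged... actually one checks $|a|^p+|b|^p\ge |a+b|^p$ with $a=x(n)+a_n$, $b=-a_n$), we get monotonicity of the ``head'' norms, and an entirely symmetric argument (or Fact~\ref{fact:i} applied iteratively) handles the ``tail'' norms; together these give $\Vert P_m^{(n)}\Vert\le 1$ for the restricted projections on $\mathrm{span}(\xx_1,\dots,\xx_n)$. A limiting argument then upgrades this to genuine bi-monotone partial-sum projections on $\XX_p(\delta)$.

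Second, I would verify \emph{completeness}, namely that $[\xx_k\colon k\in\NN]$ is all of $\XX_p(\delta)$ — but this is immediate from the definition $\XX_p(\delta)=[\xx_k\colon k\in\NN]$ in the excerpt. What remains is the genuine Schauder-basis assertion: every $x\in\XX_p(\delta)$ has a \emph{unique} expansion $x=\sum_k a_k\xx_k$ with convergence in norm. Uniqueness of coefficients follows from Proposition~\ref{prop:e} (biorthogonality): necessarily $a_k=\langle\xx_k^*,x\rangle$, noting that $\xx_k^*$ restricted to $\XX_p(\delta)$ is a bounded functional since $\Vert\xx_k^*\Vert_\infty=1$ (Fact~\ref{fact:f}) and $\XX_p\subseteq\ell_p\subseteq\ell_1$ in the relevant pairing sense — more carefully, on a finite linear combination $\langle\xx_k^*,\sum_j a_j\xx_j\rangle=a_k$ and $|a_k|=|\langle\xx_k^*,x\rangle|\le\Vert x\Vert_{\ell_p}$-controlled, and by the uniform bound $\Vert P_N\Vert\le1$ the coefficient functionals extend continuously. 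For convergence, given $x\in\XX_p(\delta)$ and $\varepsilon>0$, pick a finite linear combination $u=\sum_{k\le M}b_k\xx_k$ with $\Vert x-u\Vert^p<\varepsilon^p/2$; then for $N\ge M$, $\Vert x-P_N x\Vert^p\le\Vert x-u\Vert^p+\Vert P_N(u-x)\Vert^p\le 2\Vert x-u\Vert^p<\varepsilon^p$ using $P_N u=u$ and $\Vert P_N\Vert\le1$.

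\textbf{The main obstacle} I anticipate is the bookkeeping needed to justify that Fact~\ref{fact:j} really applies at each inductive step — one must confirm that partial sums $\sum_{k=1}^{n-1}a_k\xx_k$ have support disjoint from the ``new'' support region $[\sigma(n),\sigma(n+1))$ of $\xx_n$ and below it, which rests on the monotonicity of the intervals $[\sigma(k),\sigma(k+1))$ (Fact~\ref{fact:r}) and the containments in Fact~\ref{fact:q}. Once the support combinatorics is pinned down, the norm identity in Fact~\ref{fact:j} does all the work and both monotonicity inequalities fall out; the completeness and uniqueness parts are then routine consequences of the definition of $\XX_p(\delta)$ and of biorthogonality (Proposition~\ref{prop:e}).
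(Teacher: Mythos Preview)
Your head-monotonicity argument via Fact~\ref{fact:j} is exactly the paper's, and your treatment of the Schauder-basis part (coefficient uniqueness from biorthogonality, convergence via density plus $\Vert P_N\Vert\le1$) is fine, in fact more explicit than the paper bothers to be.

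The one genuine gap is in the tail-monotonicity step. You write that ``an entirely symmetric argument (or Fact~\ref{fact:i} applied iteratively) handles the `tail' norms,'' but neither option works as stated. Fact~\ref{fact:j} (and hence Fact~\ref{fact:i}) has the hypothesis $\supp(x)\subseteq[1,\sigma(k))$; for the tail you need $\Vert x\Vert\le\Vert x+a\,\xx_m\Vert$ when $x=\sum_{k>m}a_k\xx_k$, which has $\supp(x)\subseteq[m+1,\infty)$, so those Facts do not apply. And the situation is not symmetric: in the head case $x$ and $\xx_n$ overlap only at the single coordinate $n$, whereas in the tail case they overlap on the whole block $[\sigma(m),\sigma(m+1))$, while coordinate $m$ is new. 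The computation you must do is
\[
\Vert x+a\,\xx_m\Vert^p-\Vert x\Vert^p
=|a|^p+\sum_{j=\sigma(m)}^{\sigma(m+1)-1}\bigl(|x(j)-a\,d_m^{-1/p}|^p-|x(j)|^p\bigr),
\]
and the point (which has no analogue in the head case) is that $|a|^p=\sum_{j=\sigma(m)}^{\sigma(m+1)-1}|a\,d_m^{-1/p}|^p$, so the right-hand side becomes $\sum_j\bigl(|a\,d_m^{-1/p}|^p+|x(j)-a\,d_m^{-1/p}|^p-|x(j)|^p\bigr)\ge0$ by \eqref{eq:lpembedsinl1}. This is precisely what the paper does; once you insert this computation your proof is complete and matches the paper's.
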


\begin{proof}In order to prove that $\XXB_p(\delta)$ is monotone if suffices to see that $\Vert x\Vert \le \Vert x + a\, \xx_{n}\Vert$ whenever $n\in\NN$ and $x\in\XX_p^{(n-1)}(\delta)$. Let $y= x + a \, \xx_n$. From Fact \ref{fact:e} and Fact \ref{fact:q} we have $x(j)=y(j)$ unless $j=n$, in which case $y(j)=x(j)=a$, or $j\in[\sigma(n),\sigma(n+1))$, in which case $x(j)=0$ and $y(j)=ad_n^{-1/p}$. Therefore
\[
\Vert y\Vert^p - \Vert x\Vert^p = |a+x(j)|^p+|a|^p-|x(j)|^p,
\]
which combined with inequality \eqref{eq:lpembedsinl1} yields $\Vert x \Vert \le \Vert y \Vert$.

Now, to finish the proof we need only show that $\Vert x\Vert \le \Vert x + a\, \xx_{n}\Vert$ whenever $\supp(x)\subseteq[n+1,\infty)$. But if $y$ is as before, a similar argument gives
\begin{align*}
\Vert y \Vert^p-\Vert x\Vert^p&=|a|^p + \sum_{j=\sigma(n)}^{\sigma(n+1)-1} |(x(j) - a d_n^{-1/p}|^p- \sum_{j=\sigma(n)}^{\sigma(n+1)-1}|x(j)|^p\\
&=\sum_{j=\sigma(n)}^{\sigma(n+1)-1} \left|a d_n^{-1/p}\right|^p + |(x(j) - a d_n^{-1/p}|^p-|x(j)|^p\ge 0.\qedhere
\end{align*}
\end{proof}

\begin{remark}
By Proposition~\ref{prop:e}, the sequence $\XXB_p^*=\XXB_p^*(\delta)=(\xx_k^*)_{k=1}^\infty$ regarded inside $\XX_p^*=\XX_p^*(\delta)$ via the natural pairing, is the dual basis of $\XXB_p$. Note that Proposition~\ref{prop:monotonebasis} yields, in particular,
\[\Vert \xx_{k}\Vert \Vert \xx_{k}^{\ast}\Vert \le 1,\quad k\in \NN,\] which combined with Fact~\ref{fact:q}, gives
\[\Vert \xx_k^*\Vert_{\XX_p^*}=2^{-1/p},\quad k\in \NN.\]
\end{remark}

Also thanks to Theorem~\ref{cor:properties} we know that the spaces $\XX_p$ are $\SL_p$-spaces for $0<p\le 1$. In hindsight this can be deduced from Proposition~\ref{Banach-Mazur-distance}.

\begin{proposition}\label{Banach-Mazur-distance}Let $0<p\le 1$ and $\delta$ be a sequence of integers in $[2,\infty)$. For every $n\in\NN$, the Banach-Mazur distance from $\XX_p^{(n)}(\delta)$ to $\ell_p^n$ is not larger than $2^{1/p} $.
\end{proposition}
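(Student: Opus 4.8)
The plan is to exhibit explicit isomorphisms between $\XX_p^{(n)}(\delta)$ and $\ell_p^n$ whose norms and inverse norms multiply to at most $2^{1/p}$. The natural candidate for the map $\ell_p^n \to \XX_p^{(n)}(\delta)$ is $T(\ee_k) = \xx_k$ for $1 \le k \le n$, i.e. $T((a_k)_{k=1}^n) = \sum_{k=1}^n a_k \xx_k$; for the inverse we take the coordinate functionals, $T^{-1}(x) = (\langle \xx_k^*, x\rangle)_{k=1}^n$, which is legitimate since $(\XXB_p,\XXB_p^*)$ is a biorthogonal system by Proposition~\ref{prop:e}. Since $\Vert T \Vert \Vert T^{-1}\Vert$ is what we must bound, and the Banach--Mazur distance is the infimum over all isomorphisms, it suffices to control these two quantities.

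First I would bound $\Vert T \Vert_{\ell_p^n \to \ell_p}$. By the $p$-Banach identity \eqref{eq:lpTopBanach}, $\Vert T\Vert_{\ell_p \to \ell_p} = \sup_k \Vert \xx_k\Vert$, and by Fact~\ref{fact:q} each $\Vert \xx_k\Vert = 2^{1/p}$, so $\Vert T\Vert \le 2^{1/p}$; restricting to the first $n$ coordinates only decreases this, so $\Vert T\Vert_{\ell_p^n \to \XX_p^{(n)}} \le 2^{1/p}$. Next I would show $\Vert T^{-1}\Vert \le 1$, i.e. that for every $x \in \XX_p^{(n)}(\delta)$ one has $\left(\sum_{k=1}^n |\langle \xx_k^*, x\rangle|^p\right)^{1/p} \le \Vert x\Vert$. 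Write $x = \sum_{k=1}^n a_k \xx_k$ with $a_k = \langle \xx_k^*, x\rangle$ by biorthogonality. The key observation is Fact~\ref{fact:w} (or Fact~\ref{fact:y}): the vectors $\xx_k$ have an almost-triangular support structure, with $\xx_k(k) = 1$ for each $k$ and $\xx_k(j) = 0$ for $j < k$. Hence the coordinates $x(k)$ for $1 \le k \le n$ are related to the $a_k$ by a triangular (in fact, by Fact~\ref{fact:w}, \emph{bidiagonal}) change of variables. The cleanest route is to use Fact~\ref{fact:j} iteratively: since $\supp(x - a_n\xx_n) \subseteq [1,\sigma(n))$ contains the support of $\sum_{k<n} a_k\xx_k$ when $n$ is the largest index, applying Fact~\ref{fact:j} with $\lambda = a_n$ gives
\[
\Vert x\Vert^p = \Vert x'\Vert^p + |x'(n) + a_n|^p + |a_n|^p - |x'(n)|^p,
\]
where $x' = \sum_{k<n} a_k\xx_k$; since $x'(n) = 0$ (as $\supp(x') \subseteq [1,n)$ up to the $\sigma$-tails, and more carefully $x'$ has no mass at coordinate $n$ only if the tails of lower $\xx_k$ avoid $n$ — here one must check that $n \notin \Sigma(\{1,\dots,n-1\})$, which holds because $\Sigma(\{1,\dots,n-1\}) \subseteq [\sigma(1),\infty) = [2,\infty)$ but its minimum is $\sigma(1) = 2$ and... ), this simplifies to $\Vert x\Vert^p = \Vert x'\Vert^p + 2|a_n|^p$. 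Iterating down yields $\Vert x\Vert^p = \sum_{k=1}^n 2|a_k|^p = 2\sum_{k=1}^n |a_k|^p$, which in particular gives $\left(\sum |a_k|^p\right)^{1/p} = 2^{-1/p}\Vert x\Vert \le \Vert x\Vert$, so $\Vert T^{-1}\Vert \le 2^{-1/p}$.

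Combining, $\Vert T\Vert \Vert T^{-1}\Vert \le 2^{1/p} \cdot 2^{-1/p} = 1$ — which would in fact give distance $1$, not $2^{1/p}$, so the iteration of Fact~\ref{fact:j} must be more delicate than the naive version above: the hypothesis of Fact~\ref{fact:j} requires $\supp(x') \subseteq [1,\sigma(n))$, and the inductive step is \emph{not} literally $x' = \sum_{k<n}a_k\xx_k$ unless one first strips off $S_n$-type corrections as in Lemma~\ref{lem:2}. The main obstacle, and the place to be careful, is therefore bookkeeping the support conditions so that Fact~\ref{fact:j} applies at each stage; the honest computation will produce $\Vert T\Vert \le 2^{1/p}$ on one side and $\Vert T^{-1}\Vert \le 1$ on the other (rather than $2^{-1/p}$), giving exactly the claimed bound $2^{1/p}$. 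Concretely, I would prove $\Vert T^{-1}\Vert \le 1$ by showing $\max_{k\le n}|a_k| \le \Vert x\Vert$ is too weak and instead establish $\sum_{k\le n}|a_k|^p \le \Vert x\Vert^p$ directly: using Fact~\ref{fact:y}, $x(j) = a_j - a_{\rho(j)}d_{\rho(j)}^{-1/p}$ and $|x(j)|^p \ge |a_j|^p - |a_{\rho(j)}|^p d_{\rho(j)}^{-1}$ by the reverse triangle inequality in $\ell_p$ combined with... — summing over a suitable range of $j$ and telescoping the $d_{\rho(j)}^{-1}$ terms (there are exactly $d_k$ values of $j$ with $\rho(j)=k$, so $\sum_{\rho(j)=k} d_k^{-1} = 1$) collapses the sum to $\sum |a_k|^p \le \sum_j |x(j)|^p = \Vert x\Vert^p$. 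This telescoping is the crux. Once $\Vert T^{-1}\Vert \le 1$ and $\Vert T\Vert \le 2^{1/p}$ are in hand, the Banach--Mazur estimate $d(\XX_p^{(n)}(\delta), \ell_p^n) \le 2^{1/p}$ follows immediately.
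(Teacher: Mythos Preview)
Your approach has a genuine gap: the isomorphism $T\colon\ell_p^n\to\XX_p^{(n)}$, $T(\ee_k)=\xx_k$, does \emph{not} witness the bound $2^{1/p}$. While $\Vert T\Vert\le 2^{1/p}$ is correct, the inequality $\sum_{k=1}^n|a_k|^p\le\Vert\sum_{k=1}^n a_k\xx_k\Vert^p$ that you need for $\Vert T^{-1}\Vert\le 1$ is false in general. Take $p=1$, $d_k=2$ for all $k$ (so $\sigma(k)=2k$), fix $m\ge 2$, let $n=2^m-1$, and set $a_k=2^{-\lfloor\log_2 k\rfloor}$. Then for every $2\le j\le n$ one has $a_j-a_{\rho(j)}d_{\rho(j)}^{-1}=a_j-\tfrac12 a_{\rho(j)}=0$, so $x(j)=0$ for $2\le j\le n$ by Fact~\ref{fact:y}, and a direct count gives $\Vert x\Vert_1=|a_1|+\sum_{k=2^{m-1}}^{2^m-1}|a_k|=2$ while $\sum_{k=1}^n|a_k|=m$. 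Hence $\Vert T^{-1}\Vert\ge m/2$, and $\Vert T\Vert\,\Vert T^{-1}\Vert\ge m$ is unbounded as $n$ grows. Your telescoping computation actually reflects this: summing $|x(j)|^p\ge|a_j|^p-|a_{\rho(j)}|^p d_{\rho(j)}^{-1}$ over all relevant $j$ collapses to $\Vert x\Vert^p\ge 0$, not to $\Vert x\Vert^p\ge\sum_k|a_k|^p$, because the positive and negative terms cancel exactly.

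The paper avoids this by using a \emph{different} basis of $\XX_p^{(n)}$. It recursively builds vectors $\xx_{k,n}\in\XX_p^{(n)}$ (via $\xx_{k,n+1}=\xx_{k,n}-\xx_{k,n}(n+1)\,\xx_{n+1}$, essentially the correction step from Lemma~\ref{lem:2}) so that each $\xx_{k,n}=\ee_k+\rr_{k,n}$ with $\Vert\rr_{k,n}\Vert=1$ and $\supp(\rr_{k,n})\subseteq[n+1,\sigma(n+1))$. The point is that the ``head'' coordinates $[1,n]$ and the ``tail'' coordinates $[n+1,\sigma(n+1))$ are now disjoint across all $k$, so
\[
\Bigl\Vert\sum_{k=1}^n a_k\,\xx_{k,n}\Bigr\Vert^p=\sum_{k=1}^n|a_k|^p+\Bigl\Vert\sum_{k=1}^n a_k\,\rr_{k,n}\Bigr\Vert^p,
\]
and the two-sided bound $\Vert\alpha\Vert\le\Vert\sum a_k\xx_{k,n}\Vert\le 2^{1/p}\Vert\alpha\Vert$ follows immediately. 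The obstacle you ran into---that the tails of the $\xx_k$ overlap the heads of later $\xx_j$---is precisely what this modified basis eliminates.
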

\begin{proof} For each $n\in\NN$ we will recursively construct vectors $(\xx_{k,n})_{k=1}^n$ in $\XX_p^{(n)}$ such that, if $\rr_{k,n}=\xx_{k,n}-\ee_k$, then $\| \rr_{k,n} \|_p=1$ and $\supp (\rr_{k,n})\subseteq [n+1, \sigma(n+1))$ for $k=1$, \dots, $n$.

For $n=1$ put $\xx_{1,1}=\xx_1$. Let $n\in\NN$ and asume that $(\xx_{k,n})_{k=1}^n$ has been constructed. We set $\xx_{k,n+1}=\xx_{k,n} - \xx_{k,n}(n+1)\xx_{n+1}$ and $\xx_{n+1,n+1}=\xx_{n+1}$. By Fact~\ref{fact:i}, $(\xx_{k,n+1})_{k=1}^{n+1}$ fulfills the desired properties.

We easily infer that the vectors $(\xx_{k,n})_{k=1}^n$ satisfy
\[
\left\|\sum_{k=1}^n a_k\, \xx_{k,n} \right\|^p=\left\|\sum_{j=1}^n a_k\, \ee_k\right\|^p+\left\|\sum_{k=1}^n a_k\, \rr_{k,n} \right\|^p
\]
for every $\alpha=(a_k)_{k=1}^n\in\FF^n$. Hence, \[
\Vert \alpha\Vert \leq \left\|\sum_{k=1}^n a_k\, \xx_{k,n} \right\| \leq 2^{1/p} \Vert \alpha\Vert,\] which proves the proposition.
\end{proof}

Our construction of conditional almost greedy bases in $\ell_p$ will rely on the following isomorphism.
\begin{corollary}\label{cor:isomorphism}Let $0<p\le 1$ and $\delta$ be a sequence of integers in $[2,\infty)$. Then for any sequence of positive integers $(n_k)_{k=1}^\infty$, the space $\left(\bigoplus_{k=1}^\infty \XX_p^{(n_k)}(\delta)\right)_p$ is $2^{1/p}$-isomorphic to $\ell_p$.
\end{corollary}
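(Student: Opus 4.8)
The plan is to build the isomorphism blockwise from Proposition~\ref{Banach-Mazur-distance} and then invoke the stability of $\ell_p$-sums. First I would record the one-sided version of the distance estimate: for each $k\in\NN$ there is a linear isomorphism $V_k\colon\XX_p^{(n_k)}(\delta)\to\ell_p^{n_k}$ with $\Vert V_k\Vert\le 1$ and $\Vert V_k^{-1}\Vert\le 2^{1/p}$. This is immediate from Proposition~\ref{Banach-Mazur-distance}: the operator quasi-norm is homogeneous, so any isomorphism $T$ with $\Vert T\Vert\,\Vert T^{-1}\Vert\le 2^{1/p}$ can be rescaled to $\Vert T\Vert=1$, $\Vert T^{-1}\Vert\le 2^{1/p}$. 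Concretely, one may take $V_k^{-1}$ to be the map $\ee_j\mapsto\xx_{j,n_k}$, where $(\xx_{j,n_k})_{j=1}^{n_k}$ are the vectors of $\XX_p^{(n_k)}$ produced in the proof of Proposition~\ref{Banach-Mazur-distance}, which satisfy $\Vert\alpha\Vert\le\Vert\sum_{j}a_j\,\xx_{j,n_k}\Vert\le 2^{1/p}\Vert\alpha\Vert$ for every $\alpha=(a_j)_{j=1}^{n_k}$.

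Next I would form the diagonal operator $V=\bigoplus_{k=1}^\infty V_k$ from $\left(\bigoplus_{k=1}^\infty\XX_p^{(n_k)}(\delta)\right)_p$ to $\left(\bigoplus_{k=1}^\infty\ell_p^{n_k}\right)_p$. Since the outer sum is an $\ell_p$-sum, for $x=(x_k)_k$ one has $\Vert V(x)\Vert^p=\sum_k\Vert V_k(x_k)\Vert^p\le\sum_k\Vert x_k\Vert^p=\Vert x\Vert^p$, and symmetrically $\Vert V^{-1}(y)\Vert^p=\sum_k\Vert V_k^{-1}(y_k)\Vert^p\le 2\sum_k\Vert y_k\Vert^p=2\,\Vert y\Vert^p$; hence $\Vert V\Vert\le 1$ and $\Vert V^{-1}\Vert\le 2^{1/p}$, so $V$ is a $2^{1/p}$-isomorphism. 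Finally I would observe that $\left(\bigoplus_{k=1}^\infty\ell_p^{n_k}\right)_p$ is isometric to $\ell_p$ by concatenation of coordinates, because the $p$-th power of its quasi-norm equals the sum of the $p$-th powers of all the scalar coordinates. Composing $V$ with this identification gives the asserted $2^{1/p}$-isomorphism onto $\ell_p$.

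I do not expect a genuine obstacle here: the statement is a soft consequence of Proposition~\ref{Banach-Mazur-distance}. The only two points that warrant care are arranging the one-sided norm bounds so that the two block constants multiply to $2^{1/p}$ rather than to its square, and checking that the operator quasi-norm of a diagonal operator on an $\ell_p$-sum is the supremum of the block quasi-norms — and both of these follow at once from homogeneity of the quasi-norm and the additivity of $p$-th powers in an $\ell_p$-sum.
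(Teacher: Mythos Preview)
Your argument is correct and follows essentially the same route as the paper: use the blockwise isomorphisms provided by Proposition~\ref{Banach-Mazur-distance} to show that the $\ell_p$-sum is $2^{1/p}$-isomorphic to $\left(\bigoplus_{k=1}^\infty \ell_p^{n_k}\right)_p$, and then identify the latter isometrically with $\ell_p$. The paper compresses all of this into two lines, whereas you spell out the one-sided norm bounds (which are indeed explicit in the proof of Proposition~\ref{Banach-Mazur-distance}) and the diagonal-operator estimate; nothing new is needed.
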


\begin{proof}By Proposition~\ref{Banach-Mazur-distance}, the infinite direct sum $\left(\bigoplus_{k=1}^\infty \XX_p^{(n_k)}(\delta)\right)_p$ is $2^{1/p}$-isomorphic to $\left(\bigoplus_{k=1}^\infty \ell_p^{n_k}\right)_p$, which, in turn, is isometric to $\ell_p$.
\end{proof}

\subsection{Quasi-greediness of Lindenstrauss $p$-bases, $0<p<1$.}
Our aim in this section is to extend to Lindenstrauss $p$-bases the main result from \cite{DilworthMitra2001}, where it is proved that the Lindenstrauss basis of the space $\XX_{1}(\delta)$, for $\delta$ the constant sequence $d_{n}=2$, in our notation, is conditional and quasi-greedy.

We use $\GG_m=\GG_m[p,\delta](x)$ for the $m$th greedy projection of $x\in\XX_p(\delta)$ with respect to the basis $\XXB_p(\delta)$.

\begin{theorem}\label{thm:QG} For any $0<p\le 1$ and any sequence $\delta=(d_n)_{n=1}^\infty$ of integers in $[2,\infty)$, the Lindenstrauss $p$-basis $\XXB_p(\delta)$ is a quasi-greedy basis of $\XX_p(\delta)$. Quantitatively, for $x\in\XX_p(\delta)$ and $m\in\NN$,
\[
\Vert x-\GG_m[p,\delta](x)\Vert
\le 2^{1/p} \Vert x \Vert,\]
and
\[\Vert \GG_m[p,\delta](x)\Vert \le \min \left\{ 3^{1/p}, \frac{2^{2/p}}{2^{1/p}-1} \right\} \Vert x \Vert.
\]
\end{theorem}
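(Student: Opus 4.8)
The plan is to reduce the estimate on the greedy projection to a bound on the "tail" operator $x \mapsto x - \GG_m(x)$, and to prove the latter directly from the special structure of the basis $\XXB_p(\delta)$ recorded in Facts~\ref{fact:q}--\ref{fact:i}. The starting point is the observation that once we have $\Vert x - \GG_m[p,\delta](x)\Vert \le 2^{1/p}\Vert x\Vert$ for all $x$ and $m$, the bound on $\Vert \GG_m[p,\delta](x)\Vert$ follows from the $p$-triangle inequality, $\Vert \GG_m(x)\Vert^p \le \Vert x\Vert^p + \Vert x - \GG_m(x)\Vert^p \le (1 + 2)\Vert x\Vert^p = 3\Vert x\Vert^p$, which gives the constant $3^{1/p}$. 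The second constant $2^{2/p}/(2^{1/p}-1)$ should come from iterating the tail estimate along a telescoping decomposition $\GG_m(x) = \sum_j (\GG_{m_{j+1}}(x) - \GG_{m_j}(x))$ with geometrically decaying greedy coefficients (the standard quasi-greedy trick of splitting the greedy set into layers where the coefficients drop by a factor $2$), summing a geometric series with ratio $2^{-1}$ in the $p$-convexified norm; I would take whichever of the two bounds is smaller, hence the $\min$.

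So the crux is: \emph{for every $x \in \XX_p(\delta)$, every finite $A\subseteq\NN$ which is a greedy set for $x$ (i.e. $\min_{n\in A}|\xx_n^*(x)| \ge \max_{k\notin A}|\xx_k^*(x)|$), we have $\Vert x - S_A(x)\Vert \le 2^{1/p}\Vert x\Vert$}. Here I would work with the coordinate representation from Fact~\ref{fact:y}: writing $x = \sum_k a_k\xx_k$ with $a_k = \xx_k^*(x)$, we have $x(1) = a_1$ and $x(j) = a_j - a_{\rho(j)}d_{\rho(j)}^{-1/p}$ for $j\ge 2$, and each coordinate $j$ is touched by exactly the two vectors $\xx_j$ and $\xx_{\rho(j)}$ (Fact~\ref{fact:w}). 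The complement $B = \NN\setminus A$ satisfies $\max_{k\in B}|a_k| \le \min_{n\in A}|a_n|$, and $x - S_A(x) = \sum_{k\in B} a_k\xx_k$. The key structural point is that the "interaction" between $A$ and $B$ in $\ell_p$-coordinates is controlled: a coordinate $j$ receives contributions from both $\xx_j$ and $\xx_{\rho(j)}$ only when one of $j,\rho(j)$ is in $A$ and the other in $B$, i.e. on the "boundary" $\partial A$. On all other coordinates the $\ell_p^p$-norm of $x$ splits cleanly as $\Vert S_A(x)\Vert^p + \Vert x-S_A(x)\Vert^p$ minus/plus boundary corrections, using the parallelogram-type identity of Fact~\ref{fact:j}. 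I would estimate each boundary coordinate $j$ (say $j\in[\sigma(k),\sigma(k+1))$ with $k=\rho(j)$, one of $j,k$ in $A$, the other in $B$) by noting that the coefficient $d_k^{-1/p}|a_k|$ that $\xx_k$ puts on coordinate $j$ is dominated by $d_k^{-1/p}\max_{m\in A}|a_m|$ while the genuine coordinate value $x(j)$ is a combination whose $\ell_p$-mass over $[\sigma(k),\sigma(k+1))$ is comparable to $|a_k|$ (since $d_k\cdot (d_k^{-1/p})^p = 1$), so summing these corrections over the disjoint intervals $[\sigma(k),\sigma(k+1))$ (disjointness from Fact~\ref{fact:r}) produces at most a bounded multiple — the factor $2$ inside the $p$-th root — of $\Vert x\Vert^p$.

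The main obstacle I anticipate is precisely the bookkeeping of these boundary corrections: one must show that replacing $x$ by $x - S_A(x)$ only \emph{increases} the $\ell_p^p$-mass on each genuine coordinate by a controlled amount, and this requires carefully pairing each $k\in A\cap\partial$ (acting on its children interval $[\sigma(k),\sigma(k+1))$) against the portion of $\Vert x\Vert^p$ carried by the parent coordinate $k$ and by that same interval. The Lindenstrauss tree structure helps here: the intervals $[\sigma(k),\sigma(k+1))$ for distinct $k$ are pairwise disjoint (Fact~\ref{fact:r}), so the corrections do not accumulate across different levels of the tree, and the greedy ordering of coefficients guarantees $|a_k| \le |a_n|$ for $k\in B$, $n\in A$, preventing the worst-case cancellations. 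Once the boundary sum is shown to be $\le \Vert x\Vert^p$, the $p$-triangle inequality on "interior $+$ boundary" yields $\Vert x - S_A(x)\Vert^p \le 2\Vert x\Vert^p$, i.e. the constant $2^{1/p}$, and the rest is the geometric-series argument described above. I would also want to double-check the endpoint bookkeeping at coordinate $1$ (which lies in no interval $[\sigma(k),\sigma(k+1))$, cf. Fact~\ref{fact:l}) separately, though that contributes only the single term $|a_1|^p \le \Vert x\Vert^p$ and is harmless.
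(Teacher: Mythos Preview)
Your overall strategy is sound: once $\Vert x-\GG_m(x)\Vert\le 2^{1/p}\Vert x\Vert$ is established, the constant $3^{1/p}$ for $\Vert\GG_m(x)\Vert$ follows exactly as you say. There is, however, a genuine gap in your proposed proof of the tail estimate, and your route to the constant $2^{2/p}/(2^{1/p}-1)$ is not the one that delivers it.

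Write $z=x-\GG_m(x)=\sum_{k\in B}a_k\xx_k$. The boundary case you must handle is $j\in B$ with $k:=\rho(j)\in A$: there $z(j)=a_j$ while $x(j)=a_j-a_kd_k^{-1/p}$ (Fact~\ref{fact:y}), and the greedy ordering gives only $|a_j|\le|a_k|$. Nothing prevents $x(j)$ from vanishing (take $p=1$, $d_k=2$, $a_j=1$, $a_k=2$), so $|z(j)|\le C|x(j)|$ fails pointwise on these coordinates. Worse, the sum $\sum_{j\in B,\,\rho(j)\in A}|a_j|^p$ is \emph{not} dominated by $\Vert x\Vert^p$: the vectors $u_m$ constructed in the proof of Proposition~\ref{prop:lower} satisfy $\Vert u_m\Vert=2^{1/p}$ while $\sum_j|\langle\xx_j^*,u_m\rangle|^p>\Gamma(m)\to\infty$. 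So your ``boundary sum $\le\Vert x\Vert^p$'' cannot come out of the disjointness of the intervals $[\sigma(k),\sigma(k+1))$ plus the greedy ordering alone; the greedy condition does \emph{not} prevent the cancellation you are worried about on this part of the boundary. The paper's key device is the reduction of Lemma~\ref{lem:reduction}: writing $z=\sum_{k\in B}a_k\ee_k-\sum_{k\in B}a_k\rr_k$ with the two pieces of equal $\ell_p$-norm, the elementary $L_p$-fact of Lemma~\ref{lem:overlapvectors} gives $\Vert z\Vert\le 2^{1/p}\Vert S_{\Sigma(B)}(z)\Vert$, discarding the bad coordinates in $B\setminus\Sigma(B)$ altogether. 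On $\Sigma(B)$ one has $\rho(j)\in B$, and \emph{there} the greedy inequality does yield $|z(j)|\le|x(j)|$ pointwise, whence $\Vert S_{\Sigma(B)}(z)\Vert\le\Vert x\Vert$ and the constant $2^{1/p}$.

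The constant $2^{2/p}/(2^{1/p}-1)$ is obtained symmetrically, not by a dyadic layering. Lemma~\ref{lem:reduction} applied to $y=\GG_m(x)$ gives $\Vert y\Vert\le 2^{1/p}\Vert S_A(y)\Vert$, and for $j\in A$ one checks directly from $|a_{\rho(j)}|\le|a_j|$ and $d_{\rho(j)}\ge 2$ that $|y(j)|\le(1-2^{-1/p})^{-1}|x(j)|=\tfrac{2^{1/p}}{2^{1/p}-1}|x(j)|$. A telescoping geometric-series argument starting only from the tail bound would not produce this specific constant.
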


For $p=1$, we get the same estimate as in \cite{DilworthMitra2001}. Note that when $p$ goes to zero the estimate grows as $2^{1/p}$. Before we tackle the proof of this result, we prove a couple of auxiliary lemmas.

\begin{lemma}\label{lem:overlapvectors}Let $0<p\le 1$ and $(\Omega,\Sigma,\mu)$ be a measure space. Suppose $f$, $g\in L_p(\mu)$ are such that $\Vert f \Vert=\Vert g \Vert$. Then,
if $A=\{ \omega\in \Omega \colon f(x) \not = 0\}$,
\[
\Vert f+g \Vert \le 2^{1/p} \Vert (f+g) \Ind_A \Vert.
\]
\end{lemma}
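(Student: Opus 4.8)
The plan is to split the integral defining $\Vert f+g\Vert^p$ over $A$ and its complement, and then bound the complementary piece by the piece over $A$, using the hypothesis $\Vert f\Vert=\Vert g\Vert$ together with the $p$-triangle inequality; there is no Pythagorean identity available for $p<2$, so this last estimate is what does the work.

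First I would observe that $A=\{\omega\in\Omega\colon f(\omega)\ne 0\}$ is measurable and that $f=f\Ind_A$ a.e., so $\Vert f\Vert^p=\Vert f\Ind_A\Vert^p$. Since $h\mapsto\Vert h\Vert^p=\int|h|^p\,d\mu$ is additive over the partition $\{A,\Omega\setminus A\}$, and $f$ vanishes off $A$, this gives
\[
\Vert f+g\Vert^p=\Vert (f+g)\Ind_A\Vert^p+\Vert (f+g)\Ind_{\Omega\setminus A}\Vert^p
=\Vert (f+g)\Ind_A\Vert^p+\Vert g\Ind_{\Omega\setminus A}\Vert^p.
\]

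Next I would show $\Vert g\Ind_{\Omega\setminus A}\Vert^p\le\Vert (f+g)\Ind_A\Vert^p$. On one hand, additivity again gives $\Vert g\Ind_{\Omega\setminus A}\Vert^p=\Vert g\Vert^p-\Vert g\Ind_A\Vert^p$, which by the hypothesis $\Vert f\Vert=\Vert g\Vert$ and the identity $\Vert f\Vert^p=\Vert f\Ind_A\Vert^p$ equals $\Vert f\Ind_A\Vert^p-\Vert g\Ind_A\Vert^p$. On the other hand, writing $f\Ind_A=(f+g)\Ind_A-g\Ind_A$ and applying the $p$-triangle inequality yields $\Vert f\Ind_A\Vert^p\le\Vert (f+g)\Ind_A\Vert^p+\Vert g\Ind_A\Vert^p$, i.e.\ $\Vert f\Ind_A\Vert^p-\Vert g\Ind_A\Vert^p\le\Vert (f+g)\Ind_A\Vert^p$. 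Combining these gives the claimed bound.

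Finally, inserting this into the first display gives $\Vert f+g\Vert^p\le 2\Vert (f+g)\Ind_A\Vert^p$, and taking $p$th roots completes the proof. The argument is essentially one line once set up; the only points requiring any care are the finite additivity of $\int|\cdot|^p\,d\mu$ over disjoint measurable sets (which plays the role the Hilbert-space orthogonality identity would play when $p=2$) and deploying $\Vert f\Vert=\Vert g\Vert$ at precisely the right spot.
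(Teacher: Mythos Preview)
Your proof is correct and follows essentially the same route as the paper's: split $\Vert f+g\Vert^p$ over $A$ and $\Omega\setminus A$, use that $f$ vanishes off $A$ to rewrite the complementary piece as $\Vert g\Vert^p-\Vert g\Ind_A\Vert^p=\Vert f\Ind_A\Vert^p-\Vert g\Ind_A\Vert^p$, and then bound this by $\Vert (f+g)\Ind_A\Vert^p$ via the $p$-triangle inequality applied to $f\Ind_A=(f+g)\Ind_A-g\Ind_A$. The paper's argument is line-for-line the same, only written as a single chain of equalities and inequalities.
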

\begin{proof} Since
\begin{align*}
\Vert (f+g) \Ind_{\Omega\setminus A}\Vert^p&=\Vert g \Ind_{\Omega\setminus A}\Vert^p\\
& =\Vert g \Vert^p - \Vert g \Ind_{A}\Vert^p\\
&=\Vert f \Vert^p - \Vert g \Ind_{A}\Vert^p\\
&\leq\Vert (f+g) \Ind_A \Vert^p + \Vert g \Ind_{A}\Vert^p-\Vert g \Ind_{A}\Vert^p,
\end{align*}
applying inequality \eqref{eq:lpembedsinl1} we obtain
\[
\Vert f+g \Vert^p
=\Vert (f+g) \Ind_{A} \Vert^p+\Vert (f+g) \Ind_{\Omega\setminus A}\Vert^p\leq
2\Vert (f+g) \Ind_{A} \Vert^p.\qedhere 
\]
\end{proof}

\begin{lemma}\label{lem:reduction}Let $0<p\le 1$ and $\delta=(d_n)_{n=1}^\infty$ be a sequence of integers in $[2,\infty)$. For every $A\subseteq\NN$ finite and every $x\in[\xx_k \colon k \in A]$ we have $\Vert x\Vert\le 2^{1/p} \Vert S_A(x) \Vert$ and $\Vert x\Vert\le 2^{1/p} \Vert S_{\Sigma(A)}(x) \Vert$.
\end{lemma}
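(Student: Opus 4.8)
The plan is to exploit the key structural identity of Fact~\ref{fact:w}: each coordinate $j$ with $j\ge 2$ is ``touched'' by exactly two basis vectors, namely $\xx_j$ (with value $1$) and $\xx_{\rho(j)}$ (with value $-d_{\rho(j)}^{-1/p}$). Fix a finite set $A\subseteq\NN$ and a vector $x=\sum_{k\in A} a_k\,\xx_k\in[\xx_k\colon k\in A]$. The idea is to split the support of $x$ into two pieces according to whether a coordinate lies ``at the top'' (inside $A$) or ``below'' (inside $\Sigma(A)$), and to show that $\supp(x)\subseteq A\cup\Sigma(A)$, so these two pieces cover everything. Indeed, by Fact~\ref{fact:q} we already have $\supp(x)\subseteq A\cup\Sigma(A)$, and by Fact~\ref{fact:r} the sets $A\cap\Sigma(A)$ issue no difficulty once we observe, using $\sigma(k)\ge k+1$, that a coordinate $j\in\Sigma(A)$ has $j=\rho^{-1}$-type index strictly larger than its ``parent'' $\rho(j)$; in particular $A$ and $\Sigma(A)$ may overlap only in a controlled way, but for the norm estimate it is enough that $A\cup\Sigma(A)\supseteq\supp(x)$ and that $S_A+S_{\Sigma(A)\setminus A}$ restricted to $x$ recovers $x$.

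Concretely, I would argue as follows. Since $\XXB_p$ is a Schauder basis (Proposition~\ref{prop:monotonebasis}) with biorthogonal functionals $\xx_k^*$, and $x\in[\xx_k\colon k\in A]$, the coordinates $a_k=\xx_k^*(x)$ vanish for $k\notin A$. Now write $B=\Sigma(A)\setminus A$, so that $A$ and $B$ are disjoint and $A\cup B\supseteq\supp(x)$ by Fact~\ref{fact:q}. Hence $x=S_A(x)+S_B(x)$ and, because $\ell_p$ is a $p$-Banach space,
\[
\Vert x\Vert^p=\Vert S_A(x)\Vert^p+\Vert S_B(x)\Vert^p.
\]
For the first claimed inequality it therefore suffices to show $\Vert S_B(x)\Vert\le\Vert S_A(x)\Vert$; for the second, since $S_{\Sigma(A)}(x)=S_B(x)$ (as $x$ has no coordinates in $A\cap\Sigma(A)$ beyond those already in $B$ — or more safely, $S_{\Sigma(A)}(x)=S_B(x)+S_{A\cap\Sigma(A)}(x)$ and one bounds the extra term similarly), it suffices to show the reverse bound $\Vert S_A(x)\Vert\le\Vert S_B(x)\Vert$ up to the factor. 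Both reduce to comparing the ``top mass'' $\sum_{k\in A}|a_k|^p$ with the ``bottom mass'' generated on $\Sigma(A)$. Using Fact~\ref{fact:y}, for $j\in\Sigma(k)$-block one has $x(j)=-a_k d_k^{-1/p}$ when $j\notin A$ is in the block below $k\in A$ with $\rho(j)=k$ — wait, more carefully $x(j)=a_j-a_{\rho(j)}d_{\rho(j)}^{-1/p}$, and for $j\in B\subseteq\Sigma(A)$ we have $\rho(j)\in A$; if moreover $j\notin A$ then $a_j=0$, so $x(j)=-a_{\rho(j)}d_{\rho(j)}^{-1/p}$. Summing over the block $[\sigma(k),\sigma(k+1))$, which has $d_k$ elements, gives $\sum_{j}|x(j)|^p=d_k\cdot|a_k|^p d_k^{-1}=|a_k|^p$, i.e.\ each $a_k$ with $k\in A$ contributes exactly $|a_k|^p$ to the bottom mass as well. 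This is the crux: the top and bottom masses are comparable coordinate-by-coordinate, up to the terms where blocks below elements of $A$ themselves meet $A$, which only adds mass and is absorbed into the constant $2^{1/p}$.

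The main obstacle I anticipate is the bookkeeping when $A$ and $\Sigma(A)$ genuinely overlap, i.e.\ when some $j\in A$ also lies in a block $[\sigma(k),\sigma(k+1))$ for another $k\in A$: then $x(j)=a_j-a_k d_k^{-1/p}$ involves both a ``top'' and a ``bottom'' contribution, and the clean identity $\sum_{j\in\text{block}}|x(j)|^p=|a_k|^p$ fails. I would handle this by ordering $A=\{k_1<k_2<\cdots<k_s\}$ and peeling off the largest index first: since $\supp(\xx_{k_s})\subseteq[k_s,\infty)$ and $\sigma(k_s)>k_s$, the block below $k_s$ cannot contain any smaller $k_i$, so the bottom mass of $a_{k_s}$ is untouched and equals $|a_{k_s}|^p$; then proceed inductively. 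The inequality $\Vert S_B(x)\Vert^p\ge$ (a suitable fraction of) $\sum_{k\in A}|a_k|^p$ drops out, and combined with $\Vert S_A(x)\Vert^p=\sum_{k\in A}|a_k|^p$ and the $p$-additivity of disjointly supported vectors yields $\Vert x\Vert^p\le 2\Vert S_A(x)\Vert^p$ and $\Vert x\Vert^p\le 2\Vert S_{\Sigma(A)}(x)\Vert^p$, which is the assertion.
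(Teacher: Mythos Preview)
Your argument hinges on the identity $\Vert S_A(x)\Vert^p=\sum_{k\in A}|a_k|^p$, stated in your last paragraph, and this identity is false. For $j\in A\cap\Sigma(A)$ (that is, $j\in A$ with $\rho(j)\in A$) Fact~\ref{fact:y} gives $x(j)=a_j-a_{\rho(j)}d_{\rho(j)}^{-1/p}$, not $a_j$. Concretely, with $d_n\equiv 2$, $A=\{1,2\}$, $a_1=1$, $a_2=-2^{-1/p}$, one gets $x(2)=0$, so $\Vert S_A(x)\Vert^p=1$ while $\sum_{k\in A}|a_k|^p=3/2$. Your peeling-off induction does not repair this: it addresses the block $[\sigma(k_s),\sigma(k_s+1))$ below the \emph{largest} index and shows its contribution to the bottom mass is clean, but it says nothing about coordinates $j\in A$ whose parent $\rho(j)$ is also in $A$, and those are precisely where $|x(j)|^p\ne|a_j|^p$. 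There is also a directional confusion: the inequality you extract from the induction, $\Vert S_B(x)\Vert^p\ge c\sum|a_k|^p$, points the wrong way for the first bound, since $\Vert x\Vert^p\le 2\Vert S_A(x)\Vert^p$ is equivalent to $\Vert S_B(x)\Vert^p\le\Vert S_A(x)\Vert^p$.

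Your coordinate approach \emph{can} be made to work, but via an inequality rather than an equality. Using $|\alpha-\beta|^p\ge|\alpha|^p-|\beta|^p$ on each term $|x(j)|^p$ with $j\in A$ and regrouping by $k=\rho(j)$ yields
\[
\Vert S_A(x)\Vert^p\ge\sum_{k\in A}|a_k|^p\,d_k^{-1}\,\bigl|[\sigma(k),\sigma(k+1))\setminus A\bigr|=\Vert S_B(x)\Vert^p,
\]
the right-hand equality being an exact computation since $a_j=0$ for $j\in B$. This gives the first bound; the symmetric argument with $A\setminus\Sigma(A)$ replacing $B$ gives the second.

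The paper bypasses this bookkeeping entirely. Instead of splitting $x$ by coordinate sets, it writes $x=y+z$ with $y=\sum_{k\in A}a_k\ee_k$ supported in $A$ and $z=\sum_{k\in A}a_k\rr_k$ supported in $\Sigma(A)$. Since the $\rr_k$ are disjointly supported with $\Vert\rr_k\Vert=1$ (Facts~\ref{fact:e} and~\ref{fact:r}), one has $\Vert y\Vert=\Vert z\Vert$, and the general Lemma~\ref{lem:overlapvectors} (if $\Vert f\Vert=\Vert g\Vert$ in $L_p(\mu)$ then $\Vert f+g\Vert\le 2^{1/p}\Vert(f+g)\Ind_{\supp f}\Vert$), applied once with $f=y$ and once with $f=z$, gives both inequalities at once with no case analysis on $A\cap\Sigma(A)$.
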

\begin{proof} Set $x=\sum_{k\in A} a_k\, \xx_k$. Let $y=\sum_{k\in A} a_k\, \ee_k$ and $z=\sum_{k\in A} a_k\, \rr_k$, so that $x=y+z$. Since $\Vert y\Vert=\Vert z \Vert<\infty$, $y$ is supported on $A$, and $z$ is supported on $\Sigma(A)$, Lemma~\ref{lem:overlapvectors} yields the desired result.
\end{proof}

\begin{proof}[Proof of Theorem~\ref{thm:QG}] For $k\in\NN$ put $a_k=\langle \xx_k^*, x\rangle$. Let $A$ be the $m$th greedy set of $x$, so that $y:=\GG_m(x)=\sum_{k\in A} a_k\, \xx_k$ and, if $B=\NN\setminus A$,
\[
|a_k|\le |a_j|, \quad k\in B, \, j\in A.
\]
Let $z=x-y=\sum_{k\in B} a_k\, \xx_k$.
By Lemma~\ref{lem:reduction}, it suffices to prove that
\begin{equation}\label{eq:PWqg1}
|y(j)|\le \frac{2^{1/p}}{ 2^{1/p} -1} |x(j)| \ \ \ \mbox{ for every $j\in A$ }
\end{equation}
and that
\begin{equation}\label{eq:PWqg2}
|z(j)|\le |x(j)| \ \ \ \ \ \mbox{ for every $j\in\Sigma(B)$.}
\end{equation}
We split this into three cases:
\begin{itemize}
\item If $j\in A\setminus \Sigma(B)= A\setminus (B\cup \Sigma(B))$ (use Fact \ref{fact:r}), then $z(j)=0$ and, hence, $y(j)=x(j)$.
\item If $j\in \Sigma(B)\setminus A=\Sigma(B)\setminus (A\cup \Sigma(A))$ (use Fact \ref{fact:r}), then $y(j)=0$ and, hence, $z(j)=x(j)$.
\item If $j\in A\cap \Sigma(B) = A \cap ( B\cup \Sigma(B)) = \Sigma(B) \cap (A\cup \Sigma(A))$, (use Fact \ref{fact:r}), then from (\ref{eq:PhiSets}) there is $k\in B$ such that $j\in[\sigma(k),\sigma(k+1))$, that is $k=\rho(j)$.
\end{itemize}
This implies (\ref{eq:PWqg1}) and (\ref{eq:PWqg2}).
\end{proof}
\subsection{Democracy properties of Lindenstrauss $p$-bases.} The concepts of democratic and super-democratic bases are by now fairly standard in greedy approximation theory. They have a verbatim translation into the setting of quasi-Banach spaces (see \cite{AABW2019}*{\S 4})). To quantify the democracy of a basis $\BB$ in a quasi-Banach space $\YY$, we consider the lower and upper democracy functions of $\BB$, given by
\[\Phi_m^l[\BB,\YY]= \inf_{|A|\ge m} \Vert \Ind_A[\BB,\YY]\Vert,\quad m\in\NN,\]
and
\[\Phi_m^u[\BB,\YY]=\sup_{|A|\le m} \Vert \Ind_A[\BB,\YY]\Vert, \quad m\in\NN,\]
respectively. Thus, $\BB$ is democratic if and only if
\[
\Delta[\BB,\YY]=\sup_m \frac{ \Phi_m^u[\BB,\YY]}{ \Phi_m^l[\BB,\YY]}<\infty.
\]
Similarly, the super-democracy of a basis $\BB$ in a quasi-Banach space $\YY$ can be quantified by means of the lower and upper super-democracy functions of $\BB$, which are respectively defined by
\[
\Phi_m^{l,s}[\BB,\YY]= \inf
\{ \Vert \Ind_{\gamma,A}[\BB,\YY]\Vert \colon |A|\ge m,\, \gamma \text{ signs} \}, \quad m\in\NN,\]
and
\[\Phi_m^{u,s}[\BB,\YY]=\sup \{ \Vert \Ind_{\gamma,A}[\BB,\YY]\Vert \colon |A|\le m,\, \gamma \text{ signs}\} ,\quad m\in\NN.
\]
The basis $\BB$ is then super-democratic if and only if
\[
\Delta_s[\BB,\YY]=\sup_m \frac{ \Phi_m^{u,s}[\BB,\YY]}{ \Phi_m^{l,s}[\BB,\YY]}<\infty.
\]
We have $\Phi_m^{u,s}[\BB,\YY]\approx \Phi_m^{u}[\BB,\YY]$ for $m\in\NN$ (see \cite{AABW2019}*{Equation (7.3)}), and it is obvious that if $\YY$ is a $p$-Banach space and $\BB$ is semi-normalized then $ \Phi_m^{u,s}[\BB,\YY]\lesssim m^{1/p}$ for every $m\in\NN$. As we next show, in the case when $\YY=\ell_p$ a reverse inequality holds.

\begin{proposition}Let $0<p<\infty$, let $(\YY_j)_{j=1}^\infty$ be a sequence of finite-dimensional quasi-Banach spaces, and let $\BB=(\yy_n)_{n=1}^\infty$ be a basic sequence in the quasi-Banach space $\YY=(\bigoplus_{j=1}^\infty \YY_j)_p$.
Then
\[ \Phi_m^u[\BB,\YY]\gtrsim m^{1/p}, \quad m\in\NN.
\]
\end{proposition}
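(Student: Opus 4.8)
The plan is to establish the equivalent assertion that there is a constant $c>0$ such that for every $m$ one can find $A\subseteq\NN$ with $|A|=m$ and $\Vert\Ind_A[\BB,\YY]\Vert\ge c\,m^{1/p}$; since $\Phi_m^u[\BB,\YY]$ is a supremum over all sets of cardinality at most $m$, this yields the claimed bound. Write $\yy_n=(y_n^{(j)})_{j=1}^\infty$ with $y_n^{(j)}\in\YY_j$, so that $\Vert\yy_n\Vert^p=\sum_j\Vert y_n^{(j)}\Vert_{\YY_j}^p$, and set $c_0=\inf_n\Vert\yy_n\Vert>0$ and $M=\sup_n\Vert\yy_n\Vert<\infty$ (both finite and positive, since $\BB$ is semi-normalized). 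The idea is a gliding-hump argument: locate, inside a subsequence of $\BB$, vectors that are close — with respect to the $\ell_p$-decomposition of $\YY$ — to a disjointly supported block sequence, for which the sum of any $m$ terms has norm of order $m^{1/p}$, and then transfer this estimate to $\Ind_A$ by a perturbation.

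First I would pass, by a diagonal argument using that each $\YY_j$ is finite dimensional (so bounded subsets are relatively compact), to a subsequence $(\yy_{n_k})_k$ with $y_{n_k}^{(j)}\to y^{(j)}$ in $\YY_j$ for every $j$. Putting $\yy=(y^{(j)})_j$, the estimates $\sum_{j\le N}\Vert y^{(j)}\Vert^p=\lim_k\sum_{j\le N}\Vert y_{n_k}^{(j)}\Vert^p\le M^p$ give $\yy\in\YY$, and $\zz_k:=\yy_{n_k}-\yy$ tends to $0$ coordinatewise. The crucial point — and the only genuine use of the basic-sequence hypothesis — is that $\liminf_k\Vert\zz_k\Vert>0$. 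Indeed, otherwise some further subsequence satisfies $\yy_{n_{k_i}}\to\yy$ in norm; denoting by $(\yy_n^*)$ the biorthogonal functionals of $\BB$ (bounded because $\BB$ is $M$-bounded, total because $\BB$ is a Markushevich basis of its closed span), continuity gives $\yy_n^*(\yy)=\lim_i\delta_{n,n_{k_i}}=0$ for every $n$, whence $\yy=0$ by totality, contradicting $\Vert\yy\Vert=\lim_i\Vert\yy_{n_{k_i}}\Vert\ge c_0>0$. Passing to a subsequence we may therefore assume $\Vert\zz_k\Vert\ge c$ for all $k$, for some $c>0$.

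Now fix a small $\epsilon>0$ (to be chosen at the end independently of $m$), let $m$ be given, and denote by $P_I$, for $I\subseteq\NN$, the norm-one coordinate projection of $\YY$ onto $\left(\bigoplus_{j\in I}\YY_j\right)_p$; recall that $\Vert\cdot\Vert^p$ is additive over families supported in pairwise disjoint such blocks. Since $\yy\in\YY$, pick $N_0$ (depending on $m$) with $\Vert P_{(N_0,\infty)}\yy\Vert<\epsilon/m$, and then recursively pick $k_1<k_2<\cdots<k_m$ and $N_0<N_1<\cdots<N_m$ so that $\Vert P_{[1,N_{i-1}]}\zz_{k_i}\Vert<\epsilon\,2^{-i}$ (possible, as $P_{[1,N_{i-1}]}\zz_k\to0$ when $k\to\infty$ by coordinatewise convergence and finite-dimensionality of the range) and $\Vert P_{(N_i,\infty)}\zz_{k_i}\Vert<\epsilon\,2^{-i}$ (possible, as $\zz_{k_i}\in\YY$). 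Set $v_i=P_{(N_{i-1},N_i]}\zz_{k_i}$; the $v_i$ are supported in pairwise disjoint blocks contained in $(N_0,N_m]$, and since $\zz_{k_i}=v_i+P_{[1,N_{i-1}]}\zz_{k_i}+P_{(N_i,\infty)}\zz_{k_i}$ with the three summands disjointly supported, $\Vert v_i\Vert^p=\Vert\zz_{k_i}\Vert^p-\Vert P_{[1,N_{i-1}]}\zz_{k_i}\Vert^p-\Vert P_{(N_i,\infty)}\zz_{k_i}\Vert^p\ge c^p-2\epsilon^p$. Taking $A=\{n_{k_1},\dots,n_{k_m}\}$, decompose
\[
\Ind_A=\sum_{i=1}^m\yy_{n_{k_i}}=m\yy+\sum_{i=1}^m\zz_{k_i}=u+e,
\]
where $u:=mP_{[1,N_0]}\yy+\sum_{i=1}^m v_i$ and $e:=mP_{(N_0,\infty)}\yy+\sum_{i=1}^m(\zz_{k_i}-v_i)$. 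As $mP_{[1,N_0]}\yy$ and the $v_i$ lie in the pairwise disjoint blocks $[1,N_0]$ and $(N_{i-1},N_i]$, we get $\Vert u\Vert^p\ge\sum_{i=1}^m\Vert v_i\Vert^p\ge m(c^p-2\epsilon^p)$, so $\Vert u\Vert^r\ge m^{r/p}(c^p-2\epsilon^p)^{r/p}$, where $r\in(0,1]$ is chosen so that $\YY$ is $r$-Banach (Aoki--Rolewicz). For the error, $\Vert mP_{(N_0,\infty)}\yy\Vert<\epsilon$ and $\Vert\zz_{k_i}-v_i\Vert^p<2\epsilon^p2^{-ip}$, hence $\Vert e\Vert^r\le\Vert mP_{(N_0,\infty)}\yy\Vert^r+\sum_i\Vert\zz_{k_i}-v_i\Vert^r\le C\,\epsilon^r$ with $C$ independent of $m$. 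Thus $\Vert\Ind_A\Vert^r\ge\Vert u\Vert^r-\Vert e\Vert^r\ge m^{r/p}(c^p-2\epsilon^p)^{r/p}-C\epsilon^r$, and fixing $\epsilon$ small enough that $2\epsilon^p\le c^p/2$ and $C\epsilon^r\le\tfrac12(c^p/2)^{r/p}$ bounds the subtracted term by half of the first one for every $m\ge1$, giving $\Vert\Ind_A\Vert\gtrsim m^{1/p}$.

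The main obstacle is precisely what blocks a naive ``block subsequence'' argument: the vectors of $\BB$ may all carry a non-negligible common component, i.e.\ converge coordinatewise to a nonzero $\yy$. Peeling off this $\yy$, showing via totality of the biorthogonal functionals that the residuals $\zz_k$ cannot all shrink to $0$, and then running the hump with the block windows pushed far enough out that $m\yy$ deposits only $\epsilon$-worth of mass on them, is the heart of the matter; everything else is routine $\ell_p$-sum bookkeeping.
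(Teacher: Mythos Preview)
Your proof is correct. You and the paper follow the same broad outline --- diagonalize to a coordinatewise convergent subsequence using finite-dimensionality of the blocks, invoke totality of the biorthogonal functionals to rule out norm convergence, then glide humps --- but you handle the coordinatewise limit $\yy$ differently. The paper cancels $\yy$ by forming differences $\yy_{\psi(2k-1)}-\yy_{\psi(2k)}$ of consecutive terms, obtains from the gliding hump a \emph{single} infinite subsequence equivalent to the unit vector basis of $\ell_p$, and then appeals to the general relations $\Phi_m^u\approx\Phi_{2m}^u\approx\Phi_{2m}^{u,s}$ from \cite{AABW2019} to transfer the estimate on the signed sum $\sum_{k\le m}(\yy_{\psi(2k-1)}-\yy_{\psi(2k)})$ back to $\Phi_m^u$. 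You instead subtract $\yy$ once, redo the gliding hump separately for each $m$ with an $m$-dependent initial cutoff $N_0$ (so that $m\yy$ deposits only $\epsilon$ beyond $N_0$), and bound $\Vert\Ind_A\Vert$ directly. Your route is more self-contained --- it avoids the external $\Phi^u\approx\Phi^{u,s}$ lemma and works with unsigned sums throughout --- at the price of a per-$m$ construction and an explicit Aoki--Rolewicz perturbation step; the paper's route is slicker and produces a genuine $\ell_p$-subsequence inside the difference sequence. One cosmetic point: when you write $\Vert e\Vert^r\le\Vert mP_{(N_0,\infty)}\yy\Vert^r+\sum_i\Vert\zz_{k_i}-v_i\Vert^r$ you are using $r$-subadditivity of the given quasi-norm rather than of the equivalent $r$-norm supplied by Aoki--Rolewicz; the missing equivalence constants are harmlessly absorbed into your ``$C$ independent of $m$''.
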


\begin{proof}
Given $f=(f_j)_{j=1}^\infty\in\YY$ we will refer to $f_j$ as the $j$th coordinate of $f$. Combining the compactness of each $B_{\YY_j}$ with Cantor's diagonal technique yields $\phi\colon\NN\to\NN$ increasing such that $(\yy_{\phi(k)})_{k=1}^\infty$ converges coordinate-wise. Then $(\yy_{\phi(2k-1)} - \yy_{\phi(2k)})_{k=1}^\infty$ is a null sequence. A gliding-hump argument yields an increasing sequence $\psi\colon\NN\to\NN$ such that $\BB'=(\yy_{\psi(2k-1)} - \yy_{\psi(2k)})_{k=1}^\infty$ is equivalent to a disjointly supported sequence (with respect the coordinates). Therefore, $\BB'$ is equivalent to the unit vector system of $\ell_p$. By \cite{AABW2019}*{Equation~(7.1)}, for $m\in\NN$ we have
\[
\Phi_m^u[\BB,\XX]
\approx \Phi_{2m}^u[\BB,\XX]
\approx \Phi_{2m}^{u,s}[\BB,\XX]
\ge \left\Vert \sum_{k=1}^m \yy_{\psi(2k-1)} - \yy_{\psi(2k)}\right\Vert\\
\approx m^{1/p}.\qedhere
\]
\end{proof}

\begin{proposition}\label{prop:dem}Let $0<p\le 1$ and $\delta=(d_n)_{n=1}^\infty$ be a sequence of integers in $[2,\infty)$. Then the Lindenstrauss $p$-basis $\XXB_p=\XXB_p(\delta)$ is a super-democratic basis of $\XX_{p}=\XX_p(\delta)$.
Quantitatively, for $m\in \NN$:
\begin{enumerate}
\item[(i)] $(1-2^{-1/p}) m ^{1/p}\le \Phi^{l,s}_m[\XXB_p,\XX_p],$ and
\item[(ii)] $\Phi^{u,s}_m[\XXB_p,\XX_p] \le 2 m^{1/p}.$
\end{enumerate}
\end{proposition}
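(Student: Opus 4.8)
The plan is to estimate the two super-democracy functions of $\XXB_p(\delta)$ separately, exploiting the explicit combinatorial description of $\supp(\xx_k)$ and $\supp(\xx_k^*)$ together with the additivity-type identities recorded in Facts~\ref{fact:j} and \ref{fact:i}.

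For the upper estimate (ii), fix a finite set $A\subseteq\NN$ with $|A|\le m$ and signs $\gamma=(\gamma_k)_{k\in A}$, and write $f=\Ind_{\gamma,A}=\sum_{k\in A}\gamma_k\,\xx_k$. Decompose $f=\sum_{k\in A}\gamma_k\,\ee_k+\sum_{k\in A}\gamma_k\,\rr_k=:u+v$. By Fact~\ref{fact:e}, $u$ is supported on $A$ and $v$ on $\Sigma(A)$; moreover $\Vert u\Vert^p=|A|\le m$, and since the sets $[\sigma(k),\sigma(k+1))$ are pairwise disjoint (Fact~\ref{fact:r}) and $\Vert\rr_k\Vert=1$ for each $k$, we get $\Vert v\Vert^p=\sum_{k\in A}\Vert\rr_k\Vert^p=|A|\le m$. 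Since $\ell_p$ is $p$-normed, $\Vert f\Vert^p\le\Vert u\Vert^p+\Vert v\Vert^p\le 2m$, which gives $\Vert\Ind_{\gamma,A}\Vert\le 2^{1/p}m^{1/p}\le 2m^{1/p}$ (the stated bound $2m^{1/p}$ is even weaker, so this is more than enough). This handles (ii).

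For the lower estimate (i), fix $A$ with $|A|\ge m$ and signs $\gamma$, and set $f=\sum_{k\in A}\gamma_k\,\xx_k$. The idea is to peel off the vectors $\xx_k$ one at a time in decreasing order of index and apply Fact~\ref{fact:j} at each step. Order $A=\{k_1<k_2<\dots<k_N\}$ with $N=|A|$, and for $1\le i\le N$ let $x_i=\sum_{j\ge i}\gamma_{k_j}\,\xx_{k_j}$, so $f=x_1$ and $x_{N+1}=0$. Because $\supp(x_{i+1})\subseteq[1,\sigma(k_i))$ by Fact~\ref{fact:e} (all indices $k_j$ with $j>i$ satisfy $k_j\ge k_i+1$, and their $\rr$-supports sit above $\sigma(k_i+1)-1$; more carefully one checks $\supp(x_{i+1})\subseteq[1,k_i]\cup\Sigma(\{k_{i+1},\dots,k_N\})$ is disjoint from $[\sigma(k_i),\sigma(k_i+1))$ and bounded by $\sigma(k_N+1)$, but the only thing needed is that it avoids the new coordinates $\{k_i\}\cup[\sigma(k_i),\sigma(k_i+1))$, which follows from Facts~\ref{fact:q} and \ref{fact:r}), Fact~\ref{fact:j} with $x=x_{i+1}$, $\lambda=\gamma_{k_i}$, and the observation $x_{i+1}(k_i)=0$ yields
\[
\Vert x_i\Vert^p=\Vert x_{i+1}+\gamma_{k_i}\xx_{k_i}\Vert^p=\Vert x_{i+1}\Vert^p+2|\gamma_{k_i}|^p=\Vert x_{i+1}\Vert^p+2.
\]
Iterating, $\Vert f\Vert^p=2N\ge 2m$. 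That bound alone ($\Vert\Ind_{\gamma,A}\Vert\ge 2^{1/p}m^{1/p}$) already exceeds the claimed $(1-2^{-1/p})m^{1/p}$, so (i) follows a fortiori; the weaker constant in the statement presumably reflects a cruder argument, and any of the above suffices.

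The one point that needs genuine care — and the place I expect the actual write-up to differ from this sketch — is verifying the support containment $\supp(x_{i+1})\subseteq[1,\sigma(k_i))$ (or whatever disjointness is needed to invoke Fact~\ref{fact:j}), since $\Sigma(\{k_{i+1},\dots,k_N\})$ can extend well beyond $k_N$. The resolution is that Fact~\ref{fact:j} only requires $\supp(x)\subseteq[1,\sigma(k))$ with $k=k_i$ the \emph{smallest} remaining index, and $x_{i+1}$ involves only indices $k_j>k_i$; one checks via Fact~\ref{fact:q} that $\supp(\xx_{k_j})\subseteq[k_j,\infty)\subseteq[k_i+1,\infty)$ and via Fact~\ref{fact:r} that these supports are disjoint from $\{k_i\}\cup[\sigma(k_i),\sigma(k_i+1))$, so the $p$-additivity of the $\ell_p$-norm on the relevant disjoint coordinate blocks gives exactly the identity above. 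Alternatively, process $A$ in \emph{increasing} order of index and use Fact~\ref{fact:i}-style reductions; either way the combinatorics is routine once the partition structure from Facts~\ref{fact:q}, \ref{fact:r}, \ref{fact:w} is invoked.
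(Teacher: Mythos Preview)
Your argument for (ii) is correct and is essentially what the paper does (the paper just says ``clear,'' but the underlying computation is exactly your decomposition $f=u+v$ with $\Vert u\Vert^p=\Vert v\Vert^p=|A|$).

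Your argument for (i) has a genuine gap. The claimed identity $\Vert x_i\Vert^p=\Vert x_{i+1}\Vert^p+2$ is false, because $\supp(x_{i+1})$ need not be disjoint from $\supp(\xx_{k_i})=\{k_i\}\cup[\sigma(k_i),\sigma(k_i+1))$. You correctly observe that $x_{i+1}(k_i)=0$, and Fact~\ref{fact:r} does give that $[\sigma(k_j),\sigma(k_j+1))\cap[\sigma(k_i),\sigma(k_i+1))=\emptyset$ for $k_j\neq k_i$. But $\supp(\xx_{k_j})$ also contains the singleton $\{k_j\}$, and nothing prevents $k_j\in[\sigma(k_i),\sigma(k_i+1))$ for some $j>i$; when this happens the $\ee_{k_j}$-component of $\xx_{k_j}$ collides with the $\rr_{k_i}$-block, and your additivity fails. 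Concretely, take $p=1$, $d_n\equiv 2$, $A=\{1,2\}$, $\gamma\equiv 1$: then $\Vert\xx_2\Vert_1=2$ but $\Vert\xx_1+\xx_2\Vert_1=3\neq 4$. So neither the hypothesis of Fact~\ref{fact:j} (which requires $\supp(x_{i+1})\subseteq[1,\sigma(k_i))$, whereas here $\supp(x_{i+1})\subseteq[k_i+1,\infty)$) nor the hoped-for disjointness holds, and the equality $\Vert f\Vert^p=2|A|$ is simply wrong.

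The paper's proof of (i) avoids any induction: it reads off from Fact~\ref{fact:y} that for each $j\in A$ the $j$th coordinate of $f=\Ind_{\gamma,A}$ satisfies $|f(j)|=|\gamma_j-\gamma_{\rho(j)}\,d_{\rho(j)}^{-1/p}|\ge 1-2^{-1/p}$ (with $\gamma_{\rho(j)}=0$ if $\rho(j)\notin A$), whence $\Vert f\Vert^p\ge\sum_{j\in A}|f(j)|^p\ge(1-2^{-1/p})^p|A|$. This is where the constant $1-2^{-1/p}$ comes from; it is not slack from a cruder argument but the natural output of looking only at the coordinates in $A$.
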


\begin{proof}
Let $A$ be a finite subset of $\NN$ and let $\gamma$ be a sequence of signs. Since $d_k\ge 2$ for every $k\in\NN$, Fact~\ref{fact:y} yields
\[
|\Ind_{\gamma,A}[\XXB_p,\XX_p](j)|\ge 1-2^{-1/p}, \quad j\in A,\]
and so,
\[
\Vert \Ind_{\gamma,A}[\XXB_p,\XX_p]\Vert\ge\left( \sum_{j\in A} \left|\Ind_{\gamma,A}[\XXB_p,\XX_p](j)\right|^p\right)^{1/p} \ge (1-2^{-1/p}) |A|^{1/p}.
\]
This establishes (i).
Inequality (ii) is clear.
\end{proof}

\begin{remark}It is known that all quasi-greedy bases in $\ell_1$ and $\ell_2$ are democratic (see \cite{DST2012}*{Theorem 4.2} and \cite{Wo2000}*{Theorem 3}, respectively) and that if $p\in(1,2)\cup(2,\infty)$ there are quasi-greedy (even, unconditional) bases of $\ell_p$ that are not democratic (see \cite{Pel1960}). We emphasize that the techniques developed to settle the question for $\ell_1$ do not transfer to $\ell_p$ for $0<p<1$, and that all the known examples of quasi-greedy bases in $\ell_p$ are democratic. Thus, the following question seems to be open:
\begin{question}
Is every quasi-greedy basis in $\ell_p$ for $0<p<1$ democratic?
\end{question}
\end{remark}

\begin{corollary}\label{cor:AG+Env} Let $0<p\le 1$ and $\delta=(d_n)_{n=1}^\infty$ be a sequence of integers in $[2,\infty)$.
Then:
\begin{itemize}
\item[(a)] $\XXB_p(\delta)$ is an almost greedy basis of $\XX_p(\delta)$.

\item[(b)] There is a constant $C$ such that for $f\in\XX_p(\delta)$,
\[\Vert (\langle \xx_k^*,f\rangle)_{k=1}^\infty\Vert_{\ell_{p,\infty}} \le C \Vert f \Vert.\]
\end{itemize}
\end{corollary}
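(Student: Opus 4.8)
The plan is to deduce both parts from the characterization of almost greedy bases in quasi-Banach spaces together with the quantitative estimates already in hand. For part~(a), recall from \cite{AABW2019} that a basis in a $p$-Banach space is almost greedy if and only if it is simultaneously quasi-greedy and democratic (equivalently, superdemocratic). Theorem~\ref{thm:QG} gives that $\XXB_p(\delta)$ is quasi-greedy, and Proposition~\ref{prop:dem} gives that it is superdemocratic, with explicit bounds $\Phi_m^{l,s}\ge(1-2^{-1/p})m^{1/p}$ and $\Phi_m^{u,s}\le 2m^{1/p}$, so $\Delta_s[\XXB_p,\XX_p]\le 2/(1-2^{-1/p})<\infty$. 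Combining these two facts yields~(a) immediately, with an almost greedy constant controlled by the quasi-greedy constant from Theorem~\ref{thm:QG} and this superdemocracy constant.

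For part~(b), the strategy is to exploit the well-known fact that a quasi-greedy basis whose upper democracy function grows like $m^{1/p}$ forces the coefficient sequence of every vector to lie in weak $\ell_p$. Concretely, I would use the standard ``quasi-greedy implies weak-$\ell_p$ control by the upper democracy function'' estimate: for a quasi-greedy basis $\BB$ in a quasi-Banach space $\YY$ one has, for every $f\in\YY$ and every $m\in\NN$,
\[
|a_m^*(f)|\,\Phi_m^{u}[\BB,\YY]\lesssim \Vert f\Vert,
\]
where $(a_m^*(f))_m$ denotes the non-increasing rearrangement of $(|\langle \xx_k^*,f\rangle|)_k$; this follows by applying the quasi-greedy projection $\GG_m$ and using that $\GG_m(f)$ dominates, up to the superdemocracy constant, the vector $a_m^*(f)\Ind_{\gamma,A}$ for the $m$th greedy set $A$ and suitable signs $\gamma$. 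Since $\Phi_m^{u}[\XXB_p,\XX_p]\approx \Phi_m^{u,s}[\XXB_p,\XX_p]\ge \Phi_m^{l,s}[\XXB_p,\XX_p]\ge (1-2^{-1/p})m^{1/p}$ by Proposition~\ref{prop:dem}, we obtain $a_m^*(f)\,m^{1/p}\lesssim \Vert f\Vert$ for all $m$, which is precisely the statement $\Vert(\langle\xx_k^*,f\rangle)_k\Vert_{\ell_{p,\infty}}\le C\Vert f\Vert$.

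The main point to get right is the quasi-greedy-to-weak-$\ell_p$ inequality used in~(b): one must check that the argument familiar in the Banach setting survives when only a $p$-triangle inequality is available. This is where the estimate from Theorem~\ref{thm:QG} controlling $\Vert\GG_m(f)\Vert$ (rather than merely $\Vert f-\GG_m(f)\Vert$) enters, since one bounds $\Vert a_m^*(f)\Ind_{\gamma,A}\Vert \le \Delta_s\,\Vert\Ind_{\gamma,A}\Vert^{-1}\cdot(\text{something})$ after identifying $\GG_m(f)$ with a near-multiple of $\Ind_{\gamma,A}$ up to the quasi-greedy constant; the nonlocal convexity only costs a power $1/p$ in the constants, all of which have already been absorbed into the bounds of Theorem~\ref{thm:QG} and Proposition~\ref{prop:dem}. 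Everything else is a routine assembly of results stated earlier in the paper, so no further computation is needed.
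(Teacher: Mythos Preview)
Your proposal is correct and follows essentially the same route as the paper: part~(a) combines Theorem~\ref{thm:QG} and Proposition~\ref{prop:dem} with the quasi-greedy\,+\,democratic characterization of almost greedy bases from \cite{AABW2019}, and part~(b) uses the standard weak-$\ell_p$ coefficient estimate for quasi-greedy bases whose lower (super)democracy function grows like $m^{1/p}$, which the paper records by citing \cite{AABW2019}*{Theorems 3.13 and 8.12}. One small slip: in your displayed inequality the factor should be $\Phi_m^{l,s}$ (or $\Vert\Ind_{\gamma,A}\Vert$ for the specific greedy set) rather than $\Phi_m^{u}$---the argument you sketch only yields $a_m^*(f)\,\Vert\Ind_{\gamma,A}\Vert\lesssim\Vert\GG_m(f)\Vert\lesssim\Vert f\Vert$---but your subsequent verification correctly invokes the lower bound $\Phi_m^{l,s}\ge(1-2^{-1/p})m^{1/p}$, so the conclusion stands.
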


\begin{proof} (a) follows by combining \cite{AABW2019}*{Theorem 5.3} (which extends to quasi-Banach spaces the characterization of almost greedy bases in Banach spaces from \cite{DKKT2003}) with Theorem~\ref{thm:QG} and Proposition~\ref{prop:dem}. (b) follows by \cite{AABW2019}*{Theorems 3.13 and 8.12} in combination with Proposition~\ref{prop:dem}.
\end{proof}

\subsection{Banach envelopes of Lindenstrauss $\XX_p$ spaces and bases.} When dealing with a quasi-Banach space $\YY$ it is often convenient to know what the ``smallest'' Banach space containing $\YY$ is (if there is any), or even what the smallest $q$-Banach space containing $\YY$ is. We refer the reader to \cite{AABW2019}*{Section 9} for rigorous definitions and related properties involving these concepts. Our first result in this section exemplifies how tools from (nonlinear) greedy approximation theory can be efficiently used to deduce functional analytic (linear) properties of bases. Indeed, Proposition~\ref{prop:dem} in combination with \cite{AABW2019}*{Proposition 9.12} immediately yield the following result.

\begin{proposition}\label{newpropqenv} Given $0<p< q\le 1$, the $q$-Banach envelope of the basis $\XXB_p(\delta)$ is equivalent to the canonical basis of $\ell_q$. In other words, the $q$-Banach envelope of $\XX_p(\delta)$ is isomorphic to $\ell_q$ under the coefficient transform $\Fou\colon\XX_p(\delta)\to \FF^\NN$ with respect to the basis $\XXB_p(\delta)$.
\end{proposition}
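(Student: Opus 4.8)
The plan is to invoke the general principle, recorded as \cite{AABW2019}*{Proposition 9.12}, that for a semi-normalized basis $\BB$ of a $p$-Banach space $\YY$ whose upper and lower super-democracy functions both behave like $m^{1/p}$ (more precisely, for which the unit vector system of $\ell_p$ is a ``model space'' in the democratic sense), the $q$-Banach envelope of $\BB$, for $p<q\le1$, is equivalent to the canonical basis of $\ell_q$. So the main task is simply to verify that $\XXB_p(\delta)$ satisfies the hypotheses of that proposition. This is exactly what Proposition~\ref{prop:dem} gives us: combining (i) and (ii) there, we get $(1-2^{-1/p})\,m^{1/p}\le \Phi^{l,s}_m[\XXB_p,\XX_p]\le \Phi^{u,s}_m[\XXB_p,\XX_p]\le 2\,m^{1/p}$ for all $m\in\NN$, so both super-democracy functions are comparable to $m^{1/p}$ with constants depending only on $p$. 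Since $\XX_p(\delta)$ is a $p$-Banach space (being a subspace of $\ell_p$) and $\XXB_p(\delta)$ is semi-normalized with $\Vert\xx_k\Vert=2^{1/p}$ by Fact~\ref{fact:q}, all the hypotheses of \cite{AABW2019}*{Proposition 9.12} are in place.

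First I would state precisely which quantitative input from \cite{AABW2019}*{Section 9} is being used, namely that the $q$-Banach envelope of a basis is characterized (up to equivalence) by the $q$-convexified behavior of its democracy functions, and that when these functions are $\approx m^{1/p}$ the envelope basis is equivalent to the $\ell_q$ unit vector basis whenever $p<q\le1$. Then I would feed in Proposition~\ref{prop:dem} to check the estimate $\Phi^{l,s}_m\approx\Phi^{u,s}_m\approx m^{1/p}$, and conclude directly. For the second assertion (that the envelope of the space $\XX_p(\delta)$ is isomorphic to $\ell_q$ via the coefficient transform), I would observe that the coefficient transform $\Fou$ with respect to $\XXB_p(\delta)$ carries the basis $\XXB_p(\delta)$ to the unit vector system, so that the statement about the envelope of the basis translates verbatim into the statement about the space: the completion of the range of $\Fou$ under the envelope quasi-norm is precisely $\ell_q$, and $\Fou$ extends to an isomorphism of the $q$-Banach envelope of $\XX_p(\delta)$ onto $\ell_q$.

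I do not expect any serious obstacle here; this is a ``plug-in'' argument and the excerpt itself announces it as an immediate consequence. The only point requiring a modicum of care is making sure the hypotheses of \cite{AABW2019}*{Proposition 9.12} match exactly—in particular that it is stated for $p$-Banach spaces and applies to super-democracy functions (as opposed to plain democracy functions), and that the conclusion is about $q$-envelopes for the full range $p<q\le1$ rather than just the Banach envelope $q=1$. Given the way Proposition~\ref{prop:dem} is phrased in terms of $\Phi^{l,s}$ and $\Phi^{u,s}$, the match should be exact. Thus the proof is essentially one sentence: apply \cite{AABW2019}*{Proposition 9.12} to the estimates furnished by Proposition~\ref{prop:dem}.
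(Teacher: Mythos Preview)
Your proposal is correct and matches the paper's own argument exactly: the paper simply states that Proposition~\ref{prop:dem} in combination with \cite{AABW2019}*{Proposition 9.12} immediately yields the result, which is precisely the ``plug-in'' you describe.
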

Going further, in this section we will show the following.

\begin{theorem}\label{thm:Env}Let $0<p <q\le 1$ and $\delta=(d_n)_{n=1}^\infty$ be a sequence of integers in $[2,\infty)$. Then the $q$-Banach envelope of $\XX_p(\delta)$ is isomorphic to $\ell_q$ under the inclusion map. In particular, $\XX_p(\delta)$ is dense in $\ell_q$.
\end{theorem}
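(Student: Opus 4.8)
The plan is to identify the $q$-Banach envelope of $\XX_p(\delta)$ with $\ell_q$ via the inclusion map $\XX_p(\delta)\subseteq\ell_p\subseteq\ell_q$, exploiting the fact that the envelope is characterized by a universal property among $q$-Banach spaces receiving a bounded map from $\XX_p(\delta)$. First I would recall from Proposition~\ref{newpropqenv} that the $q$-Banach envelope of the basis $\XXB_p(\delta)$ is equivalent to the canonical $\ell_q$-basis; equivalently, the coefficient transform $\Fou$ with respect to $\XXB_p(\delta)$ extends to an isomorphism of the envelope $\widehat{\XX_p(\delta)}_q$ onto $\ell_q$. The content of the present theorem is the sharper statement that, under this identification, the canonical map $\XX_p(\delta)\to\widehat{\XX_p(\delta)}_q$ corresponds to the \emph{inclusion} $\XX_p(\delta)\hookrightarrow\ell_q$ rather than to $\Fou$. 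So the real task is to reconcile two a priori different maps into $\ell_q$: the coefficient transform $f\mapsto(\langle\xx_k^*,f\rangle)_k$ and the inclusion $f\mapsto f\in\FF^\NN$.

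The key step is therefore to show that the inclusion $\iota\colon\XX_p(\delta)\to\ell_q$ is bounded and has dense range, and that it too satisfies the universal property of the $q$-envelope. Boundedness is immediate since $\XX_p(\delta)\subseteq\ell_p$ isometrically and $\ell_p\subseteq\ell_q$ contractively for $p<q\le 1$. For density: by Fact~\ref{fact:n} and Lemma~\ref{lem:1}, for every $y\in\ell_p$ and every $N$ there is $x\in\XX_p^{(N)}(\delta)$ with $S_N(x)=S_N(y)$; since finitely supported vectors are dense in $\ell_q$ and $\|x-S_N(x)\|_q\le\|x-S_N(x)\|_p$ is controlled (using bi-monotonicity from Proposition~\ref{prop:monotonebasis} together with the tail estimate $\supp(x-S_N(x))\subseteq[N+1,\sigma(N+1))$ as in the proof of Proposition~\ref{Banach-Mazur-distance}), one gets that $\XX_p(\delta)$ is $\|\cdot\|_q$-dense in $c_{00}$, hence in $\ell_q$. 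This already yields the last sentence of the statement. To finish, I would verify the universal property: given any $q$-Banach space $Z$ and a bounded linear $T\colon\XX_p(\delta)\to Z$, one must factor $T$ boundedly through $\iota$. Apply $T$ to the basis vectors $\xx_k$; because $(\xx_k)$ behaves like a permutatively modified $\ell_p$-block basis (Fact~\ref{fact:q}, Fact~\ref{fact:w}) and $Z$ is $q$-convex with $q>p$, the images $T\xx_k$ must satisfy the Fourier-coefficient estimate of Corollary~\ref{cor:AG+Env}(b), and one checks that the induced map on coordinates extends continuously to $\ell_q$. Since both $\Fou$ and $\iota$ are bounded dense-range maps into $\ell_q$ satisfying this universal property, and the envelope is unique up to isomorphism, the canonical envelope map must coincide (up to isomorphism, and compatibly with the inclusion of $\XX_p(\delta)$) with $\iota$.

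The main obstacle I anticipate is the bookkeeping needed to pass from the coordinate description of envelope-of-the-basis (Proposition~\ref{newpropqenv}, phrased via $\Fou$) to the inclusion description, i.e.\ showing concretely that the ``abstract'' envelope norm $\|f\|_{\widehat{\XX_p}_q}$ is comparable to $\|f\|_{\ell_q}$ for $f\in\XX_p(\delta)\subseteq\FF^\NN$. One direction, $\|f\|_{\ell_q}\le\|f\|_{\widehat{\XX_p}_q}$, follows because $\iota$ is bounded. The reverse direction is the crux: given $f=\sum a_k\xx_k\in\XX_p(\delta)$, one needs $\|f\|_{\widehat{\XX_p}_q}\lesssim\|f\|_{\ell_q}$, which amounts to showing that the $\ell_q$-norm of the \emph{expansion vector} $(a_k)_k$ is controlled by the $\ell_q$-norm of the \emph{coordinate vector} $f=(f(j))_j$. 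By Fact~\ref{fact:y} we have $f(1)=a_1$ and $f(j)=a_j-a_{\rho(j)}d_{\rho(j)}^{-1/p}$, so $(a_k)_k$ is recovered from $(f(j))_j$ by a triangular, geometrically-decaying back-substitution along the tree structure encoded by $\rho$; since $d_k^{-1/p}\le 2^{-1/p}<1$, summing the resulting geometric series in $\ell_q$ (using the $q$-triangle inequality and Fact~\ref{fact:p}) gives $\|(a_k)\|_q\lesssim\|(f(j))\|_q$ with a constant depending only on $p,q$. Feeding this back through Proposition~\ref{newpropqenv} then closes the argument.
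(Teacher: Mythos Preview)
Your final paragraph is on the right track and converges to the paper's argument, but the earlier parts of the proposal are detours, and there is a real gap in the key estimate.

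\textbf{The detours.} Your density argument via Lemma~\ref{lem:1} does not work as written: the vector $x\in\XX_p^{(N)}$ with $S_N(x)=S_N(y)$ has tail $x-S_N(x)$ of $\ell_p$-norm exactly $1$ (see the proof of Proposition~\ref{Banach-Mazur-distance}), so $\|x-y\|_q$ is not small. Likewise, the universal-property sketch (``apply $T$ to basis vectors and use Corollary~\ref{cor:AG+Env}(b)'') is not a proof; that corollary gives an $\ell_{p,\infty}$ estimate for coefficients, not an $\ell_q$ factorisation of an arbitrary map into a $q$-Banach space $Z$. In the end both of these reduce to the same inequality you identify in the last paragraph, so you should simply prove that inequality and skip the rest.

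\textbf{The gap in the key estimate.} You correctly observe that the back-substitution $a_j=\sum_{n=0}^{\Gamma(j)} \bigl(\prod_{r=1}^n d_{\rho^{(r)}(j)}^{-1/p}\bigr) f(\rho^{(n)}(j))$ is exactly $a_j=\langle \xx_j^*,f\rangle$, and that after the $q$-triangle inequality and Fubini one must bound $\sum_{n\ge 0}\sum_{j\in J_{k,n}}|\xx_j^*(k)|^q$ uniformly in $k$. But Fact~\ref{fact:p} alone gives only $|\xx_j^*(k)|\le 2^{-n/p}$, and $|J_{k,n}|$ grows without bound, so a naive ``geometric series'' diverges. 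The missing ingredient is Lemma~\ref{lem:blocksum}(b): the $\ell_p$-mass over each level is exactly one, $\sum_{j\in J_{k,n}}|\xx_j^*(k)|^p=1$. Combining this with the sup bound via H\"older-type splitting $|\xx_j^*(k)|^q=|\xx_j^*(k)|^{q-p}|\xx_j^*(k)|^p$ gives $\sum_{j\in J_{k,n}}|\xx_j^*(k)|^q\le 2^{-n(q-p)/p}$, which \emph{is} summable in $n$. This is precisely Lemma~\ref{lem:lqsum}.

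\textbf{How the paper organises this.} The paper packages your back-substitution as a single linear operator $T\colon\FF^\NN\to\FF^\NN$, $T(x)=(\langle\xx_j^*,x\rangle)_j$, and uses Lemma~\ref{lem:lqsum} plus Fubini to show $T\colon\ell_q\to\ell_q$ is bounded. Since $T$ extends the coefficient transform $\Fou$, and Proposition~\ref{newpropqenv} provides an isomorphism $S\colon\ell_q\to\ell_q$ with $S\circ\Fou=J$ (the inclusion), one gets $T\circ S=\Id_{\ell_q}$; injectivity of $T$ (Fact~\ref{fact:n}) then forces $S$ and $T$ to be mutual inverses, so $J=S\circ\Fou$ realises the envelope. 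Density of $\XX_p(\delta)$ in $\ell_q$ is then a corollary (range of $\Fou$ is $c_{00}$, dense in $\ell_q$, and $S$ is an isomorphism), not an input.
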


To tackle the proof of this result, we need a couple of lemmas.
\begin{lemma}\label{lem:blocksum}Let $0<p\le 1$ and $\delta=(d_n)_{n=1}^\infty$ be a sequence of integers in $[2,\infty)$. Then, for every $k\in\NN$, and every $J_{k,n}$ as in (\ref{eq:Intervals}),
\begin{itemize}
\item[(a)]
$\displaystyle \sum_{n=0}^\infty \sup_{j\in J_{k,n}} |x_j^*(k)|^q \le \frac{1}{1-2^{-q/p}}$, for every $0<q<\infty$,
\item[(b)]
$\displaystyle \sum_{j\in J_{k,n}} |\xx_j^*(k)|^p=1$ for every $n\ge 0$.
\end{itemize}
\end{lemma}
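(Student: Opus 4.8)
\textbf{Proof plan for Lemma~\ref{lem:blocksum}.}
The plan is to work directly from the explicit description of the column $(\xx_j^*(k))_{j=1}^\infty$ given in Fact~\ref{fact:x}. Fix $k\in\NN$. Recall from Fact~\ref{fact:h} that the intervals $(J_{k,n})_{n=0}^\infty$ are pairwise disjoint and that $\xx_j^*(k)=0$ whenever $j\notin\bigcup_{n\ge 0} J_{k,n}$, while for $j\in J_{k,n}$ we have the product formula $\xx_j^*(k)=\prod_{r=1}^n d_{\rho^{(r)}(j)}^{-1/p}$. The two assertions are then of different natures: part (b) is an exact identity coming from a telescoping/counting argument, and part (a) is a convergent geometric estimate that follows from (b) together with the fact that each factor $d_m^{-1/p}\le 2^{-1/p}$.

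For part (b), the key observation is that the map $\rho$ restricted to $J_{k,n+1}=[\sigma^{(n+1)}(k),\sigma^{(n+1)}(k+1))$ takes values in $J_{k,n}$, and that for each fixed $i\in J_{k,n}$ the fibre $\rho^{-1}(i)\cap J_{k,n+1}$ is exactly the block $[\sigma(i),\sigma(i+1))$, which has $d_i$ elements. Using the recursion $\xx_j^*(k)=d_{\rho(j)}^{-1/p}\,\xx_{\rho(j)}^*(k)$ (this is essentially Fact~\ref{fact:m} read backwards, or a one-line consequence of the product formula in Fact~\ref{fact:x}), I would sum over $j\in J_{k,n+1}$ by first summing over each fibre:
\[
\sum_{j\in J_{k,n+1}} |\xx_j^*(k)|^p
= \sum_{i\in J_{k,n}} \sum_{j\in[\sigma(i),\sigma(i+1))} d_i^{-1}\,|\xx_i^*(k)|^p
= \sum_{i\in J_{k,n}} d_i\cdot d_i^{-1}\,|\xx_i^*(k)|^p
= \sum_{i\in J_{k,n}} |\xx_i^*(k)|^p.
\]
Since for $n=0$ we have $J_{k,0}=\{k\}$ and $\xx_k^*(k)=1$ by Fact~\ref{fact:k}, induction on $n$ gives $\sum_{j\in J_{k,n}}|\xx_j^*(k)|^p=1$ for all $n\ge0$, which is (b).

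For part (a), fix $q>0$. For $j\in J_{k,n}$ each of the $n$ factors defining $\xx_j^*(k)$ is at most $2^{-1/p}$ because every $d_m\ge 2$; hence $|\xx_j^*(k)|\le 2^{-n/p}$, i.e.\ $\sup_{j\in J_{k,n}}|\xx_j^*(k)|^q\le 2^{-nq/p}$ (this is exactly Fact~\ref{fact:p} raised to the power $q$). Summing the geometric series,
\[
\sum_{n=0}^\infty \sup_{j\in J_{k,n}} |\xx_j^*(k)|^q
\le \sum_{n=0}^\infty 2^{-nq/p}
= \frac{1}{1-2^{-q/p}},
\]
which is (a). I do not expect any real obstacle here: the only point requiring a little care is the fibre-counting in (b) — making sure that the restriction of $\rho$ to $J_{k,n+1}$ has fibres of size exactly $d_i$ over $i\in J_{k,n}$, which is immediate from $J_{k,n+1}=\Sigma(J_{k,n})$ together with the defining equation~\eqref{sigmafunction} for $\sigma$ — and confirming that the recursion $\xx_j^*(k)=d_{\rho(j)}^{-1/p}\xx_{\rho(j)}^*(k)$ is legitimate, which is transparent from the product expression in Fact~\ref{fact:x} since $\rho^{(r+1)}(j)=\rho^{(r)}(\rho(j))$.
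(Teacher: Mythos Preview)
Your proof is correct and follows essentially the same approach as the paper: part~(a) via Fact~\ref{fact:p} and a geometric series, part~(b) by induction on $n$ using a block decomposition of $J_{k,n}$ and the product formula from Fact~\ref{fact:x}. The only cosmetic difference is that you peel off the \emph{first} factor in the product (partitioning $J_{k,n+1}$ into the fibres $[\sigma(i),\sigma(i+1))$ of $\rho$ over $i\in J_{k,n}$), whereas the paper peels off the \emph{last} factor (partitioning $J_{k,n}$ into the blocks $J_{i,n-1}$ for $i\in[\sigma(k),\sigma(k+1))$ and invoking Fact~\ref{fact:m}); both organize the same telescoping identity.
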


\begin{proof}
(a) is a straightforward consequence of Fact~\ref{fact:p}. We will prove (b) by induction on $n$. If $n=0$, $J_{k,n}=\{ k \} $ and, then, the result follows from Fact~\ref{fact:k}. Assume that $n\in\NN$ and that the result holds for $n-1$. Since $\{\sigma(J_{i,n-1}) \colon i\in[\sigma(k), \sigma(k+1))\}$
is a partition of $J_{k,n}$, applying Fact~\ref{fact:m} we obtain
\begin{align*}
\sum_{j\in J_{k,n}} |\xx_j^*(k)|^p&=\sum_{i=\sigma(k)}^{\sigma(k+1)-1} \sum_{k\in J_{i,n-1}} |\xx_j^*(\sigma(k))|^p \\
&=\sum_{i=\sigma(k)}^{\sigma(k+1)-1} d_k^{-1}\sum_{k\in J_{i,n-1}} |\xx_j^*(k)|^p\\
&=\sum_{i=\sigma(k)}^{\sigma(k+1)-1} d_k^{-1}=1.\qedhere
\end{align*}
\end{proof}

\begin{lemma}\label{lem:lqsum}Let $0<p <q \le 1$ and $\delta=(d_n)_{n=1}^\infty$ be a sequence of integers in $[2,\infty)$. Then there is a constant $C$ such that
$\Vert (\xx_j^*(k))_{j=1}^\infty\Vert_q \le C$ for every ``column'' $k\in\NN$.
\end{lemma}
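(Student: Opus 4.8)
The plan is to bound the $\ell_q$-norm of each column $(\xx_j^*(k))_{j=1}^\infty$ uniformly in $k$ by exploiting the structure described in Fact~\ref{fact:x}, namely that the column is supported on the disjoint integer intervals $(J_{k,n})_{n=0}^\infty$ (see Fact~\ref{fact:h}), and on $J_{k,n}$ one has $\xx_j^*(k)=\prod_{r=1}^n d_{\rho^{(r)}(j)}^{-1/p}$. The idea is to split the $q$-sum over these blocks:
\[
\sum_{j=1}^\infty |\xx_j^*(k)|^q = \sum_{n=0}^\infty \sum_{j\in J_{k,n}} |\xx_j^*(k)|^q,
\]
and to control each inner block-sum using the two parts of Lemma~\ref{lem:blocksum}.

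The key step is to interpolate the exponent $q$ between $p$ and $\infty$ on each block. Since $p<q$, write $q=p+(q-p)$ and estimate $|\xx_j^*(k)|^q = |\xx_j^*(k)|^p\,|\xx_j^*(k)|^{q-p} \le |\xx_j^*(k)|^p \bigl(\sup_{j\in J_{k,n}}|\xx_j^*(k)|\bigr)^{q-p}$ for $j\in J_{k,n}$. Summing over $j\in J_{k,n}$ and applying Lemma~\ref{lem:blocksum}(b), which gives $\sum_{j\in J_{k,n}}|\xx_j^*(k)|^p=1$, yields
\[
\sum_{j\in J_{k,n}} |\xx_j^*(k)|^q \le \Bigl(\sup_{j\in J_{k,n}}|\xx_j^*(k)|\Bigr)^{q-p}.
\]
By Fact~\ref{fact:p}, $\sup_{j\in J_{k,n}}|\xx_j^*(k)|\le 2^{-n/p}$, so this block-sum is at most $2^{-n(q-p)/p}$. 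Summing the geometric series over $n\ge 0$ gives
\[
\sum_{j=1}^\infty |\xx_j^*(k)|^q \le \sum_{n=0}^\infty 2^{-n(q-p)/p}=\frac{1}{1-2^{-(q-p)/p}},
\]
which is a finite constant independent of $k$. Taking $C$ to be the $q$-th root of this quantity finishes the argument.

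I do not expect any serious obstacle here: the proof is essentially a bookkeeping exercise once the block decomposition of the column and the two facts from Lemma~\ref{lem:blocksum} are in hand. The only point requiring a little care is the interpolation inequality $|\xx_j^*(k)|^{q-p}\le (\sup_{j\in J_{k,n}}|\xx_j^*(k)|)^{q-p}$, which is valid precisely because $q-p>0$ and all entries are nonnegative (Fact~\ref{fact:s}); and one should note that Lemma~\ref{lem:blocksum}(a) already packages the geometric-sum estimate, so one could alternatively invoke it directly after the reduction $\sum_{j\in J_{k,n}}|\xx_j^*(k)|^q\le \sup_{j\in J_{k,n}}|\xx_j^*(k)|^{q-p}$, getting the same constant $1/(1-2^{-(q-p)/p})$.
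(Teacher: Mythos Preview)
Your proof is correct and follows essentially the same approach as the paper: both split the column into the blocks $J_{k,n}$, use the interpolation $|\xx_j^*(k)|^q\le \bigl(\sup_{j\in J_{k,n}}|\xx_j^*(k)|\bigr)^{q-p}|\xx_j^*(k)|^p$, apply Lemma~\ref{lem:blocksum}(b) to make each block's $p$-sum equal to $1$, and then sum the geometric series (equivalently, invoke Lemma~\ref{lem:blocksum}(a) with exponent $q-p$) to obtain the constant $C^q=\bigl(1-2^{(p-q)/p}\bigr)^{-1}$.
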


\begin{proof}
Using Lemma~\ref{lem:blocksum} (a) and (b) gives
\begin{align*}
\sum_{j=1}^\infty |\xx_j^*(k)|^q
&= \sum_{n=0}^\infty \sum_{j\in J_{k,n}} |\xx_j^*(k)|^q \\
&\le \sum_{n=0}^\infty \sup_{j\in J_{k,n}} |\xx_j^*(k)|^{q-p} \sum_{j\in J_{k,n}} |\xx_j^*(k)|^p \\
&\le\frac{1}{1-2^{(p-q)/p}}. \qedhere
\end{align*}
\end{proof}

\begin{proof}[Proof of Theorem~\ref{thm:Env}]Define $T\colon \FF^\NN \to \FF^\NN$ by $T(x)=(\langle \xx_j^*, x\rangle)_{j=1}^\infty$. Combining inequality \eqref{eq:lpembedsinl1}, Fubini's theorem, and Lemma~\ref{lem:lqsum} yields
\[
\Vert T(x) \Vert_q^q
\le \sum_{j=1}^\infty \sum_{k=1}^\infty |\xx_j^*(k) x(k)|^q
= \sum_{k=1}^\infty |x(k)|^q \sum_{j=1}^\infty |\xx_j^*(k)|^q \le C^q\Vert x \Vert_q^q,
\]
for all $x\in\FF^\NN$ and some constant $C$.
That is, $T\colon\ell_q \to \ell_q$ is a bounded linear operator that extends the coefficient transform $\Fou\colon\XX_p\to\FF^\NN$ with respect to the basis $\XXB_p(\delta)$ of $\XX_p=\XX_p(\delta)$. Therefore, by Proposition~\ref{newpropqenv}, there is $S\colon\ell_q \to \ell_q$ such that $S\circ\Fou$ is the inclusion map $J$ of $\XX_p$ into $\ell_q$. Pictorially, the diagram
\[
\xymatrix{\ell_q \ar@<+4pt>[r]^S & \ell_q \ar[l]^{T} \\
\XX_p \ar[u]^{\Fou} \ar[ur]_{J} &
}
\]
is commutative. We infer that $T\circ S=\Id_{\ell_q}$. Since $T$ is one-to-one by Fact~\ref{fact:n}, $T$ and $S$ are inverse isomorphisms of one another. Consequently, $\ell_q$ is isomorphic to the Banach envelope of $\XX_p$ under the mapping $S\circ\Fou=J$.
\end{proof}

We close this section with an easy consequence of Theorem~\ref{thm:Env}.
\begin{corollary}\label{cor:Xpnorminginloo}Let $0<p <q\le 1$ and $\delta=(d_n)_{n=1}^\infty$ be a sequence of integers in $[2,\infty)$. Then, for $y\in\FF^\NN$,
\[
\Vert y \Vert_\infty=\sup\{|\langle y, x\rangle| \colon \Vert x\Vert_q \le 1 \}\approx\sup\{|\langle y, x\rangle| \colon \Vert x\Vert_{\XX_p} \le 1\}.
\]
That is, $B_{\XX_p}$ is a norming set for the supremum norm.
\end{corollary}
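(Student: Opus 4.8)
The plan is to reduce the statement to the standard isometric identification $(\ell_{q})^{*}=\ell_{\infty}$ together with Theorem~\ref{thm:Env}, which realizes $\ell_{q}$ as the $q$-Banach envelope of $\XX_{p}(\delta)$ \emph{via the inclusion map}. We may assume $y\in\ell_{\infty}$, since otherwise the first supremum is already $+\infty$ (test against $x=\ee_{n}$) and there is nothing to prove. The equality $\Vert y\Vert_{\infty}=\sup\{|\langle y,x\rangle|\colon\Vert x\Vert_{q}\le 1\}$ itself is immediate: ``$\le$'' is \eqref{eq:lpembedsinl1} with exponent $q$, namely $|\langle y,x\rangle|\le\Vert y\Vert_{\infty}\sum_{k}|x(k)|\le\Vert y\Vert_{\infty}\Vert x\Vert_{q}$, and ``$\ge$'' follows again by testing against $x=\ee_{n}$.

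For the nontrivial equivalence, note that $\XX_{p}(\delta)\subseteq\ell_{p}\subseteq\ell_{1}\cap\ell_{q}$ with $\Vert x\Vert_{q}\le\Vert x\Vert_{p}=\Vert x\Vert_{\XX_{p}}$ for $x\in\XX_{p}(\delta)$, so the inclusion $J\colon\XX_{p}(\delta)\to\ell_{q}$ is a norm-one operator and, for $y\in\ell_{\infty}$, $\psi_{y}\colon x\mapsto\langle y,x\rangle$ defines a bounded linear functional on $\XX_{p}(\delta)$. By Theorem~\ref{thm:Env} there are a constant $C=C(p,q)$ and an isomorphism $\Theta$ from the $q$-Banach envelope of $\XX_{p}(\delta)$ onto $\ell_{q}$, with $\max\{\Vert\Theta\Vert,\Vert\Theta^{-1}\Vert\}\le C$, such that $\Theta$ composed with the canonical envelope map equals $J$. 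Since $\FF$, equipped with $|\cdot|$, is a $1$-Banach and hence a $q$-Banach space, the universal property of the $q$-Banach envelope shows that extension along the canonical map is an isometric bijection from $(\XX_{p}(\delta))^{*}$ onto the dual of the envelope; composing with $\Theta^{*}$, we conclude that the restriction map
\[
(\ell_{q})^{*}\longrightarrow(\XX_{p}(\delta))^{*},\qquad \psi\longmapsto\psi\circ J=\psi|_{\XX_{p}(\delta)},
\]
is an isomorphism with $\Vert\psi\circ J\Vert\approx\Vert\psi\Vert$, the implied constants depending only on $C$.

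It remains only to feed $\psi=\psi_{y}$ into this: by the first paragraph $\Vert\psi_{y}\Vert_{(\ell_{q})^{*}}=\Vert y\Vert_{\infty}$, while by definition $\Vert\psi_{y}\circ J\Vert_{(\XX_{p}(\delta))^{*}}=\sup\{|\langle y,x\rangle|\colon x\in\XX_{p}(\delta),\ \Vert x\Vert_{\XX_{p}}\le 1\}$, because the canonical envelope map is literally the inclusion and so $\psi_{y}\circ J$ is exactly the pairing $\langle y,\cdot\rangle$ restricted to $\XX_{p}(\delta)$. Hence
\[
\sup\{|\langle y,x\rangle|\colon\Vert x\Vert_{\XX_{p}}\le 1\}\approx\Vert y\Vert_{\infty}=\sup\{|\langle y,x\rangle|\colon\Vert x\Vert_{q}\le 1\}.
\]
Given Theorem~\ref{thm:Env}, there is no real obstacle here; the only points deserving care are the bookkeeping of the isomorphism constant through the envelope identification and the verification that the pairing functional is transported to itself — which is exactly where it is used that the envelope is realized \emph{under the inclusion} rather than merely being abstractly isomorphic to $\ell_{q}$ (as in the weaker Proposition~\ref{newpropqenv}).
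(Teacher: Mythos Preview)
Your proof is correct and follows exactly the paper's approach: Theorem~\ref{thm:Env} identifies the $q$-Banach envelope of $\XX_p(\delta)$ with $\ell_q$ under the inclusion $J$, so $J^*\colon(\ell_q)^*\to(\XX_p(\delta))^*$ is an isomorphism, and combining this with the isometric identification $(\ell_q)^*=\ell_\infty$ via the pairing gives the result. The paper compresses this to a single line (``the dual map $J^*$ \dots\ is an isomorphism''), whereas you spell out the envelope universal property and the bookkeeping explicitly; the content is the same.
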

\begin{proof} Just notice that, by Theorem~\ref{thm:Env}, the dual map $J^*$ of the inclusion map $J$ of $\XX_p(\delta)$ into $\ell_q$ is an isomorphism.
\end{proof}

\begin{question} Suppose $0<p<q\le 1$. Is the $q$-envelope of a $\SL_p$-space a $\SL_q$-space?
\end{question}

\section{Conditionality estimates of quasi-greedy bases in quasi-Banach spaces}\label{condisestimates}

\subsection{Upper bounds for $\kk_m$: the general case.}
This section will be devoted to proving that ``near unconditional'' bases (including quasi-greedy bases) of $p$-Banach spaces satisfy the estimate
\[
k_{m}=O((\log m)^{1/p}).
\]

Borrowing the terminology from \cite{AABW2019} we say that a basis $\BB$ of a quasi-Banach space $\YY$ is \emph{suppression unconditional for constants coefficients} (SUCC for short) if there is a constant $C$ such that
\[
\Vert \Ind_{\gamma,B}[\BB,\YY]\Vert \le C \Vert \Ind_{\gamma,A}[\BB,\YY]\Vert\] whenever $B\subseteq A$ and $\gamma$ is a family of signs.

A crucial ingredient in the study of the vestiges of unconditionality enjoyed by quasi-greedy bases in the setting of nonlocally convex quasi-Banach spaces has been the introduction for $m\in\NN$ of the $m$th \emph{restricted truncation operator} $\UU_m\colon \YY\to \YY$, defined by
\[
\UU_m(f)=\UU_m[\BB,\YY](f)= \min_{j\in A} |\yy_j^*(f)| \sum_{j\in A(m,f)} \sgn(\yy_j^*(f)) \, \yy_j,
\]
where $A(m,f)$ is the $m$th greedy set of $f$ (see \cite{AABW2019}*{\S 3.1}).

\begin{lemma}\label{PWF} If $\sup_n\|\UU_m\|<\infty$ the basis $\BB$ is SUCC.
\end{lemma}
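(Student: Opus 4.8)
The plan is to derive the SUCC inequality directly from uniform boundedness of the restricted truncation operators $\UU_m$, by applying $\UU_m$ to suitably chosen test vectors built out of the indicator vectors $\Ind_{\gamma,A}$. First I would fix $B\subseteq A$ finite and a family of signs $\gamma=(\gamma_n)_{n\in A}$, and set $M=\sup_m\|\UU_m\|$ (finite by hypothesis). The idea is that if $f=\Ind_{\gamma,A}[\BB,\YY]=\sum_{n\in A}\gamma_n\yy_n$, then every coordinate functional satisfies $\yy_n^*(f)=\gamma_n\in\{-1,1\}$ for $n\in A$ and $\yy_n^*(f)=0$ for $n\notin A$; hence $A$ itself is an $|A|$th greedy set of $f$, the minimum of $|\yy_n^*(f)|$ over $A$ equals $1$, and $\sgn(\yy_n^*(f))=\gamma_n$. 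Consequently $\UU_{|A|}(f)=\sum_{n\in A}\gamma_n\yy_n=\Ind_{\gamma,A}[\BB,\YY]$, which reproduces $f$ but does not by itself isolate the subset $B$.

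To isolate $B$, the natural device is to perturb the coefficients on $A\setminus B$ so that the greedy set of size $|B|$ picks out exactly $B$. Concretely, I would consider $g=\sum_{n\in B}\gamma_n\yy_n+\tfrac12\sum_{n\in A\setminus B}\gamma_n\yy_n$; then the $|B|$th greedy set of $g$ is $B$, the relevant minimum coefficient modulus on $B$ is $1$, and $\UU_{|B|}(g)=\sum_{n\in B}\gamma_n\yy_n=\Ind_{\gamma,B}[\BB,\YY]$. Thus $\|\Ind_{\gamma,B}\|=\|\UU_{|B|}(g)\|\le M\|g\|$, and it remains only to bound $\|g\|$ by a constant multiple of $\|\Ind_{\gamma,A}\|=\|f\|$. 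Here I would write $g=\tfrac12 f+\tfrac12\Ind_{\gamma,B}$ — wait, that is circular; better to write $g=f-\tfrac12\Ind_{\gamma,A\setminus B}=f-\tfrac12(f-\Ind_{\gamma,B})$, which again reintroduces $\Ind_{\gamma,B}$. The cleaner route is to first apply $\UU_{|A|}$ to the \emph{original} $f$ to learn $\UU_{|A|}(f)=f$ (trivial), and instead feed $\UU_m$ a vector whose large coordinates sit on $B$ directly: take $h=\sum_{n\in B}2\gamma_n\yy_n+\sum_{n\in A\setminus B}\gamma_n\yy_n$; then the $|B|$th greedy set is $B$, $\min_{n\in B}|\yy_n^*(h)|=2$, so $\UU_{|B|}(h)=2\Ind_{\gamma,B}[\BB,\YY]$, giving $2\|\Ind_{\gamma,B}\|\le M\|h\|$. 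Since $\YY$ is a $p$-Banach space, $\|h\|^p\le 2^p\|\Ind_{\gamma,B}\|^p+\|\Ind_{\gamma,A\setminus B}\|^p$, and both pieces are controlled by $\|\Ind_{\gamma,A}\|$ once we know the basis is SUCC — again circular.

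The way out of this apparent circularity is to recall that boundedness of the truncation operators already yields, by the standard argument in \cite{AABW2019}*{\S 3}, that $\|\Ind_{\gamma,B}\|\lesssim\|\Ind_{\gamma,A}\|$ whenever the coefficient pattern makes $B$ a greedy set with the \emph{same} minimum modulus on the complement — i.e. one should choose the perturbation so that the coordinates outside $B$ are all strictly smaller but the ratio is bounded, and then take a direct chain: let $f=\Ind_{\gamma,A}$, apply $\UU_{|B|}$ to $f$ itself. Since all nonzero coordinates of $f$ equal $\pm1$, \emph{any} subset of size $|B|$ is a valid $|B|$th greedy set, so we may choose $A(|B|,f)=B$; then $\min_{n\in A}|\yy_n^*(f)|$ over that greedy set is $1$ and $\UU_{|B|}(f)=\Ind_{\gamma,B}[\BB,\YY]$. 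Therefore $\|\Ind_{\gamma,B}[\BB,\YY]\|=\|\UU_{|B|}[\BB,\YY](f)\|\le M\|f\|=M\|\Ind_{\gamma,A}[\BB,\YY]\|$, which is exactly the SUCC inequality with constant $C=M=\sup_m\|\UU_m\|$. The main (and essentially only) obstacle is to make sure the greedy set $A(|B|,f)$ may legitimately be taken to be $B$; this is where the non-uniqueness of greedy sets for vectors with repeated coefficient moduli — flat spectra — is used decisively, and I would state that observation explicitly before invoking $\UU_{|B|}$.
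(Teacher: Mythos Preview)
Your final argument is correct and essentially matches the paper's. The paper applies $\UU_{|B|}$ to the perturbed vector $\Ind_{\gamma,A}+\epsilon\,\Ind_{\gamma,A\setminus B}$ (with the sign chosen so that $B$ becomes the \emph{unique} $|B|$th greedy set), obtains $\UU_{|B|}$ of this vector equal to $\Ind_{\gamma,B}$, and then lets $\epsilon\to 0$ so that the norm of the perturbed vector tends to $\Vert\Ind_{\gamma,A}\Vert$. Your approach skips the perturbation-and-limit step by applying $\UU_{|B|}$ to $\Ind_{\gamma,A}$ itself and invoking the freedom to choose $B$ as a greedy set among the ties; this is cleaner, provided one reads $\sup_m\Vert\UU_m\Vert<\infty$ as a bound uniform over all admissible greedy-set choices (which is the intended meaning here and in \cite{AABW2019}). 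It is worth noting that your abandoned first attempt with $g=\Ind_{\gamma,B}+\tfrac12\Ind_{\gamma,A\setminus B}$ is exactly the paper's route with $\epsilon=\tfrac12$; the step you were missing to avoid circularity is simply to let the perturbation parameter go to $0$, so that $\Vert g\Vert\to\Vert\Ind_{\gamma,A}\Vert$ by continuity of the quasi-norm.
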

\begin{proof}
Given $A$, $B$ subsets of $\NN$ with $B\subseteq A$, and $\gamma$ a family of signs, for $0<\epsilon <1$ we have
\[
\UU_m(\Ind_{\gamma,A} +\epsilon \Ind_{\gamma,A\setminus B})=\Ind_{\gamma, B},
\]
where $m$ is the cardinality of $B$. This implies that
\[
\|\Ind_{\gamma, B}\|\leq \|\UU_m\| \|\Ind_{\gamma,A}\|.\qedhere
\]
\end{proof}

\begin{lemma}\label{lem:RTO} Let $\BB=(\yy_j)_{j=1}^\infty$ be a basis of a quasi-Banach space $\YY$ for which the restricted truncation operators are uniformly bounded. Then there is a constant $C$ such that
\begin{equation*}
\left\Vert \sum_{j\in B} b_j \, \yy_j\right\Vert \le C \Vert f \Vert
\end{equation*}
whenever $B\subseteq\NN$ is finite and $ |b_j| \le |\yy_k^*(f)|$ for every $j$, $k\in B$.
\end{lemma}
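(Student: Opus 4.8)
The plan is to reduce the statement to a finite sum and then control it by a telescoping/dyadic decomposition of the coefficients, using the uniform bound on the restricted truncation operators together with a $p$-convexity estimate. First I would normalize: set $M=\max_{k\in B}|\yy_k^*(f)|$; if $M=0$ there is nothing to prove, and since $\BB$ is semi-normalized, Lemma~\ref{PWF} already gives that $\BB$ is SUCC, say with constant $C_0$, while the hypothesis gives $\|\UU_m\|\le C_1$ for all $m$. I would also fix $p\in(0,1]$ so that $\YY$ is a $p$-Banach space and recall $\|\cdot\|^p$ is subadditive.

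The key step is a dyadic grouping of $B$ according to the size of $|b_j|$. For $i\ge 0$ let $B_i=\{\,j\in B : M\,2^{-(i+1)} < |b_j| \le M\,2^{-i}\,\}$, so that $B=\bigsqcup_{i\ge 0}B_i$ up to the (finite) set where $b_j=0$, which contributes nothing. On each block $B_i$, put $\varepsilon_j=\sgn(b_j)$ (or, in the complex case, $b_j/|b_j|$) and observe that $\sum_{j\in B_i}b_j\yy_j$ is, coordinatewise in the coefficients, dominated by $M\,2^{-i}\Ind_{\varepsilon,B_i}$. To make this into a norm estimate I would invoke the restricted truncation operator at the ``top level'': since $|b_j|\le M=\max_{k\in B}|\yy_k^*(f)|$ for all $j\in B$, one can arrange that $B\cup(\text{top }|B|\text{ greedy coordinates of }f)$ witnesses $\UU_{|B|}$ applied to a suitable perturbation of $f$ returning $M'\Ind_{\varepsilon,B}$ where $M'$ is the relevant minimum; combined with SUCC this yields $\|\Ind_{\varepsilon,B_i}\|\le C_0\|\Ind_{\varepsilon,B}\|\le (C_0C_1/M)\|f\|$ for every $i$. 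Hence $\|\sum_{j\in B_i}b_j\yy_j\|\le M\,2^{-i}\cdot C_0\|\Ind_{\varepsilon,B}\|\le 2^{-i}C_0C_1\|f\|$.

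Summing the blocks with the $p$-triangle inequality then gives
\[
\left\Vert \sum_{j\in B} b_j\,\yy_j\right\Vert^p
\le \sum_{i\ge 0}\left\Vert \sum_{j\in B_i} b_j\,\yy_j\right\Vert^p
\le (C_0C_1)^p\,\|f\|^p\sum_{i\ge 0}2^{-ip}
= \frac{(C_0C_1)^p}{1-2^{-p}}\,\|f\|^p,
\]
so the lemma holds with $C=C_0C_1\,(1-2^{-p})^{-1/p}$, a constant depending only on $\BB$ and $\YY$. The main obstacle I anticipate is the second step: getting from the pointwise domination $|b_j|\le M$ on each block to a genuine estimate $\|\Ind_{\varepsilon,B_i}\|\lesssim \|f\|/M$, because a priori $M$ need not be attained at a greedy coordinate of $f$ of the right rank. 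The clean way around this is to apply $\UU_m$ not to $f$ itself but to $f$ plus a small multiple of $\Ind_{\varepsilon,A\setminus B}$ exactly as in the proof of Lemma~\ref{PWF}, letting $\epsilon\to 0$, which simultaneously delivers the SUCC constant and the truncation bound; once that technical point is arranged, everything else is the routine dyadic bookkeeping above.
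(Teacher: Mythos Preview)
Your dyadic decomposition is unnecessary, and the step you gloss over is precisely the one the paper handles cleanly. You write
\[
\Bigl\Vert \sum_{j\in B_i} b_j\,\yy_j\Bigr\Vert \le M\,2^{-i}\cdot C_0\,\Vert \Ind_{\varepsilon,B}\Vert,
\]
but neither SUCC nor the uniform bound on $\UU_m$ gives this: SUCC only compares $\Ind_{\gamma,B_i}$ with $\Ind_{\gamma,B}$ for \emph{fixed} signs, and the truncation operator applied to $g=\sum_{j\in B_i}b_j\yy_j$ yields $\min_j|b_j|\,\Vert\Ind_{\varepsilon,B_i}\Vert\le C_1\Vert g\Vert$, which is the wrong direction. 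To pass from arbitrary coefficients $|b_j|\le\mu$ on $B_i$ to $\mu\,\Vert\Ind_{\varepsilon,B_i}\Vert$ you need a genuine convexity lemma (this is \cite{AABW2019}*{Lemma~2.2}: for an SUCC basis there is $C$ with $\Vert\sum_{j\in D}c_j\yy_j\Vert\le C\Vert\Ind_{\gamma,A}\Vert$ whenever $|c_j|\le 1$, $D\subseteq A$, and $\gamma$ are any signs). Your perturbation trick \`a la Lemma~\ref{PWF} does not supply this; it only produces indicators with the signs of $\yy_j^*(f)$, not of $b_j$, and it never converts a varying-coefficient sum into a constant-coefficient one.

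The paper's proof uses exactly that convexity lemma once and then the dyadic layer-cake becomes superfluous. Set $\lambda=\max_{j\in B}|b_j|$ and $A=\{k:|\yy_k^*(f)|\ge\lambda\}$; the hypothesis forces $B\subseteq A$, and \cite{AABW2019}*{Lemma~2.2} gives $\Vert\sum_{j\in B}b_j\yy_j\Vert\le C\lambda\,\Vert\sum_{j\in A}\sgn(\yy_j^*(f))\,\yy_j\Vert$. Since $A$ is a greedy set of $f$ and $\lambda\le\min_{k\in A}|\yy_k^*(f)|$, the right-hand side is at most $C\,\Vert\UU_{|A|}(f)\Vert\le C\,C_1\Vert f\Vert$. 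So once the convexity lemma is in hand, your entire dyadic sum and the factor $(1-2^{-p})^{-1/p}$ are unnecessary overhead; and without that lemma, your block estimate is unjustified.
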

\begin{proof}Assume without loss of generality that $y= \sum_{j\in B} b_j \, \yy_j\not=0$, i.e., $\lambda=\max_{j\in B} |b_j|>0$.
Let
\[
A=\{k\in\NN \colon |\yy_k^*(f)| \ge \lambda\}.
\]
Since $\BB$ is SUCC by Lemma~\ref{PWF}, and $B\subseteq A$, \cite{AABW2019}*{Lemma 2.2} yields
\[
\Vert y \Vert \le C \lambda \left\Vert \sum_{j\in A} \sgn(\yy_j^*(f)) \, \yy_j\right\Vert
\]
for some constant $C$ that only depends on $\BB$. If $|B|=m$, $B$ is the $m$th greedy set of $f$. Since $\lambda\le \min_{k\in B} |\yy_k^*(f)| $,
$\Vert y \Vert\le C \Vert \UU_m(f) \Vert$.
\end{proof}

\begin{theorem}\label{thm:EstimateCC}Let $0<p\le 1$ and $\BB$ be a basis of a $p$-Banach space $\XX$ for which the restricted truncation operators are uniformly bounded. Then
\begin{equation}\label{AAWDKKInequality}
\kk_m[\BB,\XX]\lesssim \log^{1/p}(m) , \quad m\ge 2.
\end{equation}
\end{theorem}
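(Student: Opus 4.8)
The plan is to bound $\|S_A(f)\|$ for an arbitrary $f\in\XX$ and $A\subseteq\NN$ with $|A|\le m$ by splitting the coefficients of $f$ on $A$ into dyadic "layers" according to their absolute value, and then controlling each layer via the uniform bound on the restricted truncation operators through Lemma~\ref{lem:RTO}. Concretely, after normalizing so that $\max_{j\in A}|\yy_j^*(f)|\le 1$, I would set, for each integer $i\ge 0$,
\[
A_i=\{j\in A \colon 2^{-i-1} < |\yy_j^*(f)| \le 2^{-i}\},
\]
so that $A=\bigcup_{i\ge 0} A_i$ (coordinates with $\yy_j^*(f)=0$ contribute nothing to $S_A(f)$). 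On each block $A_i$ the coefficients are comparable to each other in the sense required by Lemma~\ref{lem:RTO} (any two lie within a factor $2$), so $\|S_{A_i}(f)\|\le C\|f\|$ with $C$ the constant from that lemma, uniformly in $i$.

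The second ingredient is to notice that blocks with small index must be empty or else $f$ would be large. Since $\BB$ is SUCC (Lemma~\ref{PWF}), and semi-normalized, one has a lower bound $\|\Ind_{\gamma,A_i}\|\gtrsim |A_i|^{?}$—but more to the point, the restricted truncation operator applied to $f$ at level $\lambda=2^{-i-1}$ gives $\lambda\|\Ind_{\gamma,A_i'}\|\lesssim \|f\|$ where $A_i'$ is the greedy set at that threshold, forcing $2^{-i}\lesssim \|f\|/(\text{something growing in }i)$. The cleaner route is: there are at most $m$ nonzero coordinates on $A$, so at most $m$ of the blocks $A_i$ are nonempty; hence the index set $I=\{i\ge 0\colon A_i\neq\emptyset\}$ has $|I|\le m$, but we need the indices themselves to be bounded, not just their number. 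For that, observe that $\|f\|^p\ge \|S_{A_{i_0}}(f)\|^p \cdot(\text{const})$ for the block $A_{i_0}$ containing the largest coordinate (index $i_0=0$ after normalization is not guaranteed, but the largest coordinate has some fixed size), and compare with the smallest nonzero block. Actually the key inequality is that the smallest relevant threshold satisfies $2^{-i}\gtrsim \|f\|\cdot m^{-1/p}$ roughly, because otherwise the corresponding coordinate is negligible; this caps the number of blocks that matter at $O(\log m)$ after normalizing $\|f\|=1$.

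Having reduced to $O(\log m)$ blocks, each contributing $\le C\|f\|$, I would assemble the bound using the $p$-triangle inequality:
\[
\|S_A(f)\|^p = \Bigl\|\sum_{i\in I} S_{A_i}(f)\Bigr\|^p \le \sum_{i\in I}\|S_{A_i}(f)\|^p \le |I|\cdot C^p\|f\|^p \lesssim (\log m)\,\|f\|^p,
\]
which gives $\|S_A(f)\|\lesssim (\log m)^{1/p}\|f\|$, i.e.\ \eqref{AAWDKKInequality}. The clean packaging is: blocks with coefficient size between $\|f\|$ (the max) and $\|f\|/m^{1/p}$ — there are $O(\log m)$ dyadic scales in that range — while coordinates smaller than $\|f\|/m^{1/p}$ contribute, in aggregate over at most $m$ of them, at most $m\cdot(\|f\|/m^{1/p})^p=\|f\|^p$, a harmless additive constant.

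The main obstacle I expect is making the "truncation at a low threshold" estimate precise enough to justify that coefficients below $\sim\|f\|\,m^{-1/p}$ are collectively negligible and that only $O(\log m)$ dyadic blocks lie above that floor; this is where the hypothesis enters in an essential, quantitative way, and where one must invoke Lemma~\ref{lem:RTO} (hence the uniform bound on $\UU_m$ and the SUCC property) rather than mere unconditionality-for-constants. Everything else is the routine $p$-convexity bookkeeping sketched above.
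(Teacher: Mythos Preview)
Your proposal is correct and follows essentially the same line as the paper's proof: dyadic layering of the coefficients on $A$, controlling each ``big'' layer via Lemma~\ref{lem:RTO}, handling the tail of coefficients below $\sim c\Vert f\Vert\, m^{-1/p}$ trivially by $p$-subadditivity (at most $m$ terms each of norm $\lesssim \Vert f\Vert\, m^{-1/p}$), and assembling with the $p$-triangle inequality. The only cosmetic difference is that the paper takes the dyadic ratio to be $2^{1/p}$ rather than $2$, which makes the layer count come out as $\lfloor\log_2 m\rfloor$ instead of $\sim(1/p)\log_2 m$; either choice gives \eqref{AAWDKKInequality}, and your meandering paragraph about SUCC and lower bounds for $\Vert\Ind_{\gamma,A_i}\Vert$ is unnecessary --- the ``clean packaging'' you arrive at at the end is exactly the argument.
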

\begin{proof} Let $c=\sup_j \Vert \yy_j^*\Vert<\infty$ and $d=\sup_j \Vert \yy_j\Vert<\infty$. For $f\in\YY$ put
\[
B_n=\{ j\in \NN \colon |\yy_j^*(f)|\le c \Vert f \Vert 2^{-n/p} \}, \quad n\in\{0\}\cup\NN.
\]
Of course, $B_0=\NN$. Let $m\in \NN$ and pick $N\in\NN\cup\{0\}$ such that $2^N\le m <2^{N+1}$. For
$A\subseteq\NN$ with $|A|\le m$ we consider the partition $(A_n)_{n=0}^{N}$ of $A$ given by
\[
A_n=A\cap (B_n\setminus B_{n+1})\;\text{if}\; n=0,\dots, N-1, \;\text{and}\; A_{N}=A\cap B_{N}.
\]
Note that if $j$, $k\in A_n$ for some $n\le N-1$, then
\[
|\yy_j^*(f)| \le c \Vert f \Vert 2^{-n/p} \le 2^{1/p} |\yy_k^*(f)|= |\yy_k^*( 2^{1/p} f)|.
\]
By Lemma~\ref{lem:RTO},
\[
\Vert S_{A_n}[\BB,\YY](f)\Vert \le 2^{1/p}\ C \Vert f \Vert, \quad n=0, \dots, N-1.
\]
As for the set $A_N$, we have $|A_N|\le |A|\le m<2^{N+1}$ and
\[
\Vert \yy_j^*(f) \, \yy_j\Vert \le cd 2^{-N/p}, \quad j\in A_N.
\]
Using $p$-convexity we obtain
\[
\Vert S_A[\BB,\YY](f)\Vert^p \le ( c^p d^p |A_{N}| 2^{-N} + N C^p)\Vert f \Vert^p < (2c^p d^p+2N C^p) \Vert f \Vert^p.
\]
Hence,
\[\kk_m[\BB,\YY]\le 2^{1/p}(c^p d^p+ C^p \log_2(m))^{1/p}.\qedhere\]
\end{proof}

\begin{corollary}\label{cor:EstimateCC} If $\BB$ is a quasi-greedy basis of a $p$-Banach space $\XX$, then $\kk_m[\BB,\XX]\lesssim \log^{1/p}(m)$ for $m\ge 2$.
\end{corollary}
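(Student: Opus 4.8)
The plan is to deduce Corollary~\ref{cor:EstimateCC} directly from Theorem~\ref{thm:EstimateCC}. The only thing to verify is that a quasi-greedy basis of a $p$-Banach space has uniformly bounded restricted truncation operators; once that is established, Theorem~\ref{thm:EstimateCC} applies verbatim and gives $\kk_m[\BB,\XX]\lesssim \log^{1/p}(m)$ for $m\ge 2$.

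So the one substantive point is: \emph{if $\BB$ is quasi-greedy in a $p$-Banach space $\YY$, then $\sup_m\Vert \UU_m[\BB,\YY]\Vert<\infty$}. I would invoke the theory of quasi-greedy bases in quasi-Banach spaces developed in \cite{AABW2019}: by \cite{AABW2019}*{Theorems 3.8 and 3.10} (the facts already alluded to in the introduction, namely that quasi-greedy bases in quasi-Banach spaces are unconditional for constant coefficients and satisfy the relevant truncation-type estimates), a quasi-greedy basis is in particular SUCC and its restricted truncation operators $\UU_m$ are uniformly bounded. Concretely, for $f\in\YY$ with $m$th greedy set $A(m,f)$ and $\lambda=\min_{j\in A(m,f)}|\yy_j^*(f)|$, one writes $\UU_m(f)$ as a signed indicator scaled by $\lambda$; applying the greedy projection $\GG_m$ to a suitable perturbation of $f$ (or directly the estimates of \cite{AABW2019}*{\S 3}) bounds $\Vert\UU_m(f)\Vert$ by a fixed multiple of $\Vert f\Vert$. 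This is exactly the hypothesis needed in Theorem~\ref{thm:EstimateCC}.

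Thus the proof reduces to a single line: a quasi-greedy basis of a $p$-Banach space has uniformly bounded restricted truncation operators by \cite{AABW2019}, whence Theorem~\ref{thm:EstimateCC} yields the conclusion. The main (and essentially only) obstacle is making sure the correct statement from \cite{AABW2019} is cited — i.e., that quasi-greediness in the quasi-Banach setting really does imply $\sup_m\Vert\UU_m\Vert<\infty$, which is the nonlocally-convex analogue of the classical fact that quasi-greedy bases have bounded truncation operators. No new calculation is required beyond this citation.

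\begin{proof} Since $\BB$ is a quasi-greedy basis of the $p$-Banach space $\XX$, its restricted truncation operators $\UU_m=\UU_m[\BB,\XX]$ are uniformly bounded (see \cite{AABW2019}*{\S 3}). The conclusion now follows at once from Theorem~\ref{thm:EstimateCC}.
\end{proof}
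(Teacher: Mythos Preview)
Your proposal is correct and follows exactly the same route as the paper: invoke \cite{AABW2019} to conclude that quasi-greedy bases in $p$-Banach spaces have uniformly bounded restricted truncation operators, and then apply Theorem~\ref{thm:EstimateCC}. The only refinement is the precise citation: the paper points to \cite{AABW2019}*{Theorem 3.13} for the fact $\sup_m\Vert\UU_m\Vert<\infty$, rather than Theorems~3.8 and~3.10 (which give unconditionality for constant coefficients and SUCC, not directly the uniform bound on $\UU_m$).
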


\begin{proof}It is straightforward from \cite{AABW2019}*{Theorem 3.13} and Theorem~\ref{thm:EstimateCC}.
\end{proof}

\subsection{An upper bound for $\kk_m[\XXB_{p},\XX_p], 0<p<1$.}
Now we concentrate in the case when the sequence $\delta=(d_{n})_{n=1}^{\infty}$ involved in the construction of $\XXB_p(\delta)$ and $\XXB_p(\delta)^*$ is non-decreasing, so that the function $\sigma$ defined in \eqref{sigmafunction} is \emph{convex}, i.e., $(\sigma(n+1)-\sigma(n))_{n=1}^\infty$ is non-decreasing. Notice that any convex function $\sigma\colon \NN\to \NN$ satisfies the inequality
\begin{equation}\label{eq:convex}
\frac{\sigma(k)-\sigma(j)}{k-j} \le \frac{\sigma(k')-\sigma(j')}{k'-j'},
\quad j<k,\,j'<k',\, k\le k', \, j \le j'.
\end{equation}
Hence, $\sigma$ is convex and non-decreasing if and only if
\[
0\le \sigma(k)-\sigma(k') \le \sigma(k'')-\sigma(k) , \quad 0\le k - k' \le k''-k.
\]
Iterating this formula yields that if $\sigma_1$ and $\sigma_2$ are convex and non-decreasing so is $\sigma_1\circ\sigma_2$.

We start by proving some additional properties of the integer intervals $J_{k,n}=[\sigma^{(n)}(k) , \sigma^{(n)}(k+1))$ defined in \eqref{eq:Intervals}.

\begin{lemma}\label{Iproperties}
Let $0<p\le 1$ and $\delta=(d_n)_{n=1}^\infty$ be a non-decreasing sequence of integers in $[2,\infty)$. For every $k\ge 1$ and every $n\in\NN\setminus\{0\}$ we have:
\begin{itemize}
\item[(a)] $(x_j^*(k))_{j\in J_{k,n}}$ is non-increasing;
\item[(b)] $\min \{ |x_j^*(k)| \colon j \in J_{k,n} \} \geq 2^{1/p} \max \{ |x_j^*(k)| \colon j \in J_{k,n+1}\}$;
\item[(c)] $ |J_{k,n}| \le |J_{k+1,n}|$ for every $n\ge 0$.
\end{itemize}
\end{lemma}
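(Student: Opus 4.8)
\textbf{Proof strategy for Lemma~\ref{Iproperties}.}

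\medskip

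The plan is to prove all three parts by induction on $n$, using Facts~\ref{fact:x}, \ref{fact:m}, and the convexity/monotonicity properties of $\sigma$ recorded just above. The key tool throughout is Fact~\ref{fact:x}, which expresses $\xx_j^*(k)$ as a product $\prod_{r=1}^n d_{\rho^{(r)}(j)}^{-1/p}$ when $j\in J_{k,n}$, so estimating the column entries reduces to tracking the values $d_{\rho^{(r)}(j)}$ as $j$ runs through $J_{k,n}$.

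\medskip

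For part (a), I would argue that as $j$ increases within the interval $J_{k,n}=[\sigma^{(n)}(k),\sigma^{(n)}(k+1))$, each of the iterated left-inverses $\rho^{(r)}(j)$ is non-decreasing in $j$ (since each $\rho$ is non-decreasing, being a left inverse of the increasing $\sigma$, and compositions of non-decreasing maps are non-decreasing), hence $d_{\rho^{(r)}(j)}$ is non-decreasing in $j$ because $\delta$ is non-decreasing; consequently the product $\prod_{r=1}^n d_{\rho^{(r)}(j)}^{-1/p}$ is non-increasing in $j$. This gives (a) at once. For part (b), the point is that $J_{k,n+1}$ sits entirely to the right of $J_{k,n}$ (by Fact~\ref{fact:h}), so the extreme entries compare via (a); more precisely, I would use Fact~\ref{fact:m} together with the relation $J_{k,n+1}=\bigcup_{i\in[\sigma(k),\sigma(k+1))}\sigma(J_{i,n})$ (already exploited in the proof of Lemma~\ref{lem:blocksum}) to show that each entry in column $k$ indexed by $J_{k,n+1}$ is obtained from an entry indexed by $J_{k,n}$ multiplied by $d_k^{-1/p}\le 2^{-1/p}$; combining with (a) to match up max and min yields the factor $2^{1/p}$.

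\medskip

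Part (c) is the step I expect to carry the real weight. Here the claim $|J_{k,n}|\le|J_{k+1,n}|$ for all $n\ge 0$ is where the convexity of $\sigma$ (equivalently, monotonicity of $\delta$) is genuinely used. The base case $n=0$ is just $|J_{k,0}|=1=|J_{k+1,0}|$, and the case $n=1$ is the statement $d_k\le d_{k+1}$. For the inductive step, writing $|J_{k,n+1}|=\sigma^{(n+1)}(k+1)-\sigma^{(n+1)}(k)=\sigma^{(n)}(\sigma(k+1))-\sigma^{(n)}(\sigma(k))$, I would invoke the convexity inequality \eqref{eq:convex} for the convex non-decreasing map $\sigma^{(n)}$ (convexity and monotonicity are preserved under composition, as noted above \eqref{eq:convex}) applied to the two pairs $(\sigma(k),\sigma(k+1))$ and $(\sigma(k+1),\sigma(k+2))$: since $\sigma(k+1)-\sigma(k)=d_k\le d_{k+1}=\sigma(k+2)-\sigma(k+1)$ and the second interval lies to the right of the first, \eqref{eq:convex} gives
\[
\sigma^{(n)}(\sigma(k+1))-\sigma^{(n)}(\sigma(k)) \le \sigma^{(n)}(\sigma(k+2))-\sigma^{(n)}(\sigma(k+1)),
\]
which is exactly $|J_{k,n+1}|\le|J_{k+1,n+1}|$. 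The only delicate bookkeeping is to make sure the hypotheses of \eqref{eq:convex} ($j<k$, $j'<k'$, $k\le k'$, $j\le j'$) are met, which they are because $\sigma$ is strictly increasing with gaps $d_j\ge 2$; one may also want the slightly stronger induction hypothesis that $\sigma^{(n)}$ is convex and non-decreasing, which is immediate from the remark preceding \eqref{eq:convex}. I do not anticipate any obstacle beyond this careful matching of indices.
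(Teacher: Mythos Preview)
Your arguments for (a) and (c) are correct and essentially match the paper's (the paper just says ``(c) follows from the fact that $\sigma^{(n)}$ is convex,'' and for (a) invokes Fact~\ref{fact:x} together with the monotonicity of $\rho$).

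For (b), however, your detailed route has a gap. The assertion that ``each entry in column $k$ indexed by $J_{k,n+1}$ is obtained from an entry indexed by $J_{k,n}$ multiplied by $d_k^{-1/p}$'' is not correct as stated. If $j\in J_{k,n+1}$ and you separate the factor $r=n+1$ in Fact~\ref{fact:x}, you get $\xx_j^*(k)=d_k^{-1/p}\prod_{r=1}^n d_{\rho^{(r)}(j)}^{-1/p}$, but the residual product $\prod_{r=1}^n d_{\rho^{(r)}(j)}^{-1/p}$ is \emph{not} equal to $\xx_{j'}^*(k)$ for any $j'\in J_{k,n}$ (for such $j'$ one has $\rho^{(n)}(j')=k$, whereas $\rho^{(n)}(j)\in J_{k,1}$ and $\sigma(k)>k$). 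If instead you separate the factor $r=1$, you obtain $\xx_j^*(k)=d_{\rho(j)}^{-1/p}\xx_{\rho(j)}^*(k)$ with $\rho(j)\in J_{k,n}$, but then the multiplier is $d_{\rho(j)}^{-1/p}$ rather than $d_k^{-1/p}$, and $\rho(j)$ can be the leftmost point of $J_{k,n}$ (where by (a) the \emph{maximum} sits); ``combining with (a)'' then only yields $\max_{J_{k,n+1}}\xx_\cdot^*(k)\le 2^{-1/p}\max_{J_{k,n}}\xx_\cdot^*(k)$, which is strictly weaker than (b). Fact~\ref{fact:m} does not help either, since it relates different \emph{columns}, not different rows within column $k$.

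The paper's argument is precisely your first instinct (``the extreme entries compare via (a)'') carried out directly without the detour: for any $j\in J_{k,n}$ and $i\in J_{k,n+1}$ one has $j<i$, hence $\rho^{(r)}(j)\le\rho^{(r)}(i)$ and thus $d_{\rho^{(r)}(j)}\le d_{\rho^{(r)}(i)}$ for $r=1,\dots,n$; peeling off the extra factor $d_{\rho^{(n+1)}(i)}^{-1/p}=d_k^{-1/p}\le 2^{-1/p}$ and comparing the remaining $n$ factors term by term gives
\[
\xx_i^*(k)=\prod_{r=1}^{n+1} d_{\rho^{(r)}(i)}^{-1/p}\le 2^{-1/p}\prod_{r=1}^{n} d_{\rho^{(r)}(i)}^{-1/p}\le 2^{-1/p}\prod_{r=1}^{n} d_{\rho^{(r)}(j)}^{-1/p}=2^{-1/p}\xx_j^*(k)
\]
for \emph{all} such $j$ and $i$, which is exactly (b).
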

\begin{proof}
(a) follows from Fact~\ref{fact:x} taking into account that $\rho$ is non-decreasing. In order prove (b) we pick $j\in I_{k,n}$ and $i\in I_{k+1,n}$. Since $j<i$ by Fact~\ref{fact:h}, the same argument yields
\[
\xx_i^*(k) = \prod_{r=1}^{n+1} d_{\rho^{(r)}(i)}^{-1/p} \le 2^{-1/p} \prod_{r=1}^{n} d_{\rho^{(r)}(i)}^{-1/p} \le 2^{-1/p} \prod_{r=1}^{n} d_{\rho^{(r)}(j)}^{-1/p} = 2^{-1/p} \xx_j^*(k).
\]
(c) follows from the fact that $\sigma^{(n)}$ is convex.
\end{proof}

Given $A\subseteq\NN$ finite, let us consider the linear map $P_A\colon \FF^\NN \to \FF^\NN$ defined by
\[
P_A(f)=\sum_{k\in A} \langle \xx_k^*, f \rangle \, \xx_k.
\]
The restriction of $P_A$ to $\XX_p$ is the coordinate projection $S_A[\XXB_p,\XX_p]$ on the set $A$ with respect to the basis $\XXB_p$. Let us also consider the auxiliary linear operator $T_A\colon \FF^\NN \to \FF^\NN$ given by
\[
T_A(f)=\sum_{k\in A} \langle \xx_k^*, f \rangle \, \ee_k.
\]
\begin{lemma} \label{LemmaOperators} Let $0<p\le 1$ and $\delta=(d_n)_{n=1}^\infty$ be a sequence of integers in $[2,\infty)$. For any finite set $A\subseteq \NN$ we have
\[
\Vert S_A[\XXB_p,\XX_p]\Vert
\leq \|P_A\|_{\ell_p\to \ell_p} \leq 2^{1/p}\|T_A\|_{\ell_p\to\ell_p}.
\]
\end{lemma}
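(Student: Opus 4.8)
The statement has two inequalities. The first one, $\Vert S_A[\XXB_p,\XX_p]\Vert \le \Vert P_A\Vert_{\ell_p\to\ell_p}$, should be essentially immediate: $P_A$ is a linear extension to all of $\FF^\NN$ (restricted to $\ell_p$) of the coordinate projection $S_A[\XXB_p,\XX_p]$, since for $f\in\XX_p$ one has $\langle\xx_k^*,f\rangle = \xx_k^*(f)$ by the remark following Proposition~\ref{prop:monotonebasis} (the dual basis is $\XXB_p^*$). Because $\XX_p$ is a (closed) subspace of $\ell_p$ carrying the induced quasi-norm, and because $P_A$ maps $\XX_p$ into $\XX_p$, the operator norm of the restriction is at most the operator norm of $P_A$ on $\ell_p$. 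So the first inequality is just a restriction-of-operator-norm observation, and I would dispatch it in one sentence.

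The real content is the second inequality, $\Vert P_A\Vert_{\ell_p\to\ell_p}\le 2^{1/p}\Vert T_A\Vert_{\ell_p\to\ell_p}$. Here I would write, for $f\in\ell_p$, $P_A(f)=\sum_{k\in A}\langle\xx_k^*,f\rangle\,\xx_k$ and use $\xx_k=\ee_k-\rr_k$ (equation~\eqref{eq:LinBasis}) to split
\[
P_A(f)=\sum_{k\in A}\langle\xx_k^*,f\rangle\,\ee_k - \sum_{k\in A}\langle\xx_k^*,f\rangle\,\rr_k = T_A(f) - R_A(f),
\]
where $R_A(f)=\sum_{k\in A}\langle\xx_k^*,f\rangle\,\rr_k$. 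By Fact~\ref{fact:e}, the vectors $\rr_k$ for $k\in A$ are disjointly supported (their supports are the intervals $[\sigma(k),\sigma(k+1))$, which are pairwise disjoint) and each has $\Vert\rr_k\Vert=1$, hence $\Vert R_A(f)\Vert^p = \sum_{k\in A}|\langle\xx_k^*,f\rangle|^p = \Vert T_A(f)\Vert^p$, so $\Vert R_A(f)\Vert = \Vert T_A(f)\Vert$. Since $\ell_p$ is $p$-Banach,
\[
\Vert P_A(f)\Vert^p \le \Vert T_A(f)\Vert^p + \Vert R_A(f)\Vert^p = 2\,\Vert T_A(f)\Vert^p \le 2\,\Vert T_A\Vert_{\ell_p\to\ell_p}^p\,\Vert f\Vert^p,
\]
which gives $\Vert P_A\Vert_{\ell_p\to\ell_p}\le 2^{1/p}\Vert T_A\Vert_{\ell_p\to\ell_p}$.

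I do not anticipate a serious obstacle here — the only thing to be careful about is making sure $P_A$ and $T_A$ genuinely define bounded operators on $\ell_p$ (so that the finiteness of everything is legitimate), but since $A$ is finite and each $\xx_k^*$ has $\Vert\xx_k^*\Vert_\infty=1$ (Fact~\ref{fact:f}) while $\xx_k,\rr_k\in\ell_p$, this is automatic: $\Vert T_A\Vert_{\ell_p\to\ell_p}\le|A|^{1/p}$, say, and similarly for $P_A$. The potentially delicate point — whether the norm of the restriction $P_A|_{\XX_p}$ as an operator $\XX_p\to\XX_p$ really is bounded by $\Vert P_A\Vert_{\ell_p\to\ell_p}$ — is resolved precisely because $\XX_p$ carries the quasi-norm inherited from $\ell_p$ and is $P_A$-invariant (Fact~\ref{fact:c} guarantees $\xx_k\in\XX_p$). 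So the proof is short: state the first inequality as a restriction observation, then carry out the two-line disjointness computation for the second.
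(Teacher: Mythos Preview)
Your proof is correct and takes essentially the same approach as the paper. The only cosmetic difference is that the paper packages your second-inequality computation as a factorization $P_A = R\circ T_A$, where $R\colon\ell_p\to\ell_p$ is the operator $\ee_k\mapsto\xx_k$; equation~\eqref{eq:lpTopBanach} and Fact~\ref{fact:q} give $\Vert R\Vert = 2^{1/p}$ in one stroke, which is exactly your disjoint-support calculation $\Vert\xx_k\Vert^p = \Vert\ee_k\Vert^p + \Vert\rr_k\Vert^p = 2$ rephrased at the operator level.
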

\begin{proof}The left hand-side inequality is obvious. As for the inequality on the right, it suffices to note that $P_A=R\circ T_A$ where $R:\ell_p\rightarrow \ell_p$ is a linear operator such that $R(\ee_k)=\xx_k$ for every $k\in \NN$. Fact \ref{fact:q} implies that $\|R\|=2^{1/p}$.
\end{proof}

\begin{lemma}\label{lem:lpnormcolumn}Let $0<p\le 1$ and $\delta=(d_n)_{n=1}^\infty$ be a non-decreasing sequence of integers in $[2,\infty)$. Then,
\begin{enumerate}
\item[(i)] $
\sum_{j\in A} |\xx_j^*(k)|^p\le 1+\Gamma(m)$ for every $A\subseteq\NN$ with $|A|\le m $.
\item[(ii)] $\sum_{j=1}^m |\xx_j^*(1)|^p>\Gamma(m)$ for every $m\in\NN$.
\end{enumerate}
\end{lemma}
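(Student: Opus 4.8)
The two inequalities concern the $p$-th power sums of the entries in a ``column'' $k$ (for (i), any $k$; for (ii), the column $k=1$). The natural bookkeeping device is the partition of $\NN$ into the integer intervals $J_{k,n}=[\sigma^{(n)}(k),\sigma^{(n)}(k+1))$, whose union over $n\ge 0$ is all of $\cup_{n\ge 0} J_{k,n}$ and which contains the support of $(\xx_j^*(k))_j$. By Lemma~\ref{lem:blocksum}(b), the full block sum over $J_{k,n}$ is exactly $1$ for each $n\ge 0$, so the whole column has infinite $p$-sum; the point of (i) is that restricting to a set $A$ of size $\le m$ can only ``see'' finitely many of these blocks, and the refinement structure $J_{k,n}<J_{k,n+1}$ (Fact~\ref{fact:h}) together with the fact that the relevant exponents $\Gamma(j)$ grow tells us how many.

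\textbf{Proof of (i).} Fix $k$ and $A\subseteq\NN$ with $|A|\le m$. Only those $j\in A$ with $j\in\cup_{n\ge 0} J_{k,n}$ contribute. First I would dispose of the block $J_{k,0}=\{k\}$, which contributes at most $1$ since $\xx_k^*(k)=1$ (Fact~\ref{fact:k}) and $\Vert\xx_j^*\Vert_\infty=1$ (Fact~\ref{fact:f}), so every entry is $\le 1$. For the remaining blocks $J_{k,n}$ with $n\ge 1$: since $(J_{k,n})_{n\ge 0}$ is an increasing sequence of consecutive integer intervals (Fact~\ref{fact:h}) and $\sigma(1)=2$ forces $J_{1,n}=J_n=[\Lambda(n),\Lambda(n+1))$, any $j\in J_{k,n}$ satisfies $j\ge\sigma^{(n)}(k)\ge\sigma^{(n)}(1)=\Lambda(n)$, hence $\Gamma(j)\ge n$. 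Consequently any $j\in A$ lying in some $J_{k,n}$ with $n\ge 1$ has $n\le\Gamma(j)$; but more usefully I want a bound on how many distinct blocks $J_{k,n}$ meet $A$. Here I would use that the intervals are \emph{consecutive}: if $A$ meets $J_{k,n_1}<J_{k,n_2}<\cdots<J_{k,n_r}$ with $1\le n_1<\cdots<n_r$, then $A$ contains at least one point $\ge\sigma^{(n_r)}(k)\ge\Lambda(n_r)$, and since $A$ has $m$ elements all $\ge\Lambda(0)=1$ and at least one is $\ge\Lambda(n_r)$... actually the clean argument is: pick the \emph{largest} element $j^\ast$ of $A$; then $j^\ast\ge\Lambda(\Gamma(j^\ast))$, and $|A|\le m$ with all elements $\le j^\ast$, and we need $\Gamma(j^\ast)\le\Gamma(m)$. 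This holds because $\Gamma$ is non-decreasing and $j^\ast\le m$ would give it — but $j^\ast$ need not be $\le m$. The correct route: every $j\in J_{k,n}$ with $n\ge 1$ has $\xx_j^*(k)\le 2^{-n/p}$ by Fact~\ref{fact:p}, while within a fixed block the $p$-sum is $\le 1$ by Lemma~\ref{lem:blocksum}(b); so the $n$-th block contributes at most $\min\{1,|J_{k,n}\cap A|\,2^{-n}\}$. Summing a geometric-type series and matching it against $\Gamma(m)$ — using that $A$ has $m$ elements distributed among blocks whose sizes $|J_{k,n}|$ grow (Lemma~\ref{Iproperties}(c), comparing to $|J_{1,n}|=|J_n|$) — yields the bound $1+\Gamma(m)$. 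I would carry this out by noting the total contribution of blocks $n\ge 1$ is at most the number of blocks that $A$ can ``fill'', and the size of $J_n$ exceeds the number of elements of $\{1,\dots,m\}$ beyond $\Lambda(\Gamma(m))$, forcing at most $\Gamma(m)$ nonempty blocks with index $\ge 1$; each contributes $\le 1$.

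\textbf{Proof of (ii).} This is the matching lower bound and should be softer. For $k=1$ I have $\{1,\dots,m\}\supseteq\cup_{n=0}^{N} J_{1,n}$ where $N$ is chosen so that $\Lambda(N+1)-1\le m$, i.e. $N=\Gamma(m)$ if $\Lambda(\Gamma(m)+1)-1\le m$; in general $\{1,\dots,m\}$ contains the full blocks $J_{1,0},\dots,J_{1,\Gamma(m)-1}$ (since $J_{1,n}=[\Lambda(n),\Lambda(n+1))$ and $\Lambda(\Gamma(m))\le m$). Hence by Lemma~\ref{lem:blocksum}(b),
\[
\sum_{j=1}^m |\xx_j^*(1)|^p\ \ge\ \sum_{n=0}^{\Gamma(m)-1}\sum_{j\in J_{1,n}} |\xx_j^*(1)|^p\ =\ \sum_{n=0}^{\Gamma(m)-1} 1\ =\ \Gamma(m),
\]
and the inequality is strict because at least one further index $j=\Lambda(\Gamma(m))\in\{1,\dots,m\}$ lies in $J_{1,\Gamma(m)}$ and contributes $|\xx_j^*(1)|^p>0$.

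\textbf{Main obstacle.} The delicate point is (i): converting ``$|A|\le m$'' into ``$A$ meets at most $\Gamma(m)$ of the blocks $J_{k,n}$ with $n\ge 1$, each contributing $\le 1$.'' The cleanest way is to observe that a set meeting $J_{k,n_1}<\cdots<J_{k,n_r}$ (indices $\ge 1$) must contain a point in $J_{k,n_r}$, hence a point $\ge\sigma^{(n_r)}(k)\ge\Lambda(n_r)$; combined with $|A|=m$, a counting argument using the lower bound $|J_{1,j}|=\Lambda(j+1)-\Lambda(j)$ and monotonicity of block sizes (Lemma~\ref{Iproperties}(c)) shows $r\le\Gamma(m)$. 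Alternatively — and this is likely what the authors intend — one sidesteps the counting by the crude but sufficient estimate: blocks with $n\ge 1$ contribute in total at most $\sum_{n\ge 1}\min\{1,c_n 2^{-n}\}$ where $c_n=|A\cap J_{k,n}|$ and $\sum_n c_n\le m$; optimizing, this is at most $\log_2 m+O(1)$, and then one identifies $\Gamma(m)\asymp\log_2 m$ via $\Lambda(n)=\sigma^{(n)}(1)\ge 2^{n}$, upgrading to the exact $1+\Gamma(m)$ by being careful about the first block. I expect the paper to take whichever of these is shortest given the Facts already catalogued.
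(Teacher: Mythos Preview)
Your proof of (ii) is correct and matches the paper's approach exactly.

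For (i), however, there is a genuine gap. Your first attempt (``$A$ meets at most $\Gamma(m)$ blocks $J_{k,n}$ with $n\ge 1$'') is simply false: take $A$ consisting of one point from each of $J_{k,1},\dots,J_{k,m}$; then $|A|=m$ but $A$ meets $m$ blocks, and $m$ can be arbitrarily larger than $\Gamma(m)$. Your fallback (``optimize $\sum_{n\ge 1}\min\{1,c_n 2^{-n}\}$ subject to $\sum c_n\le m$'') does give a finite bound, but only of order $\log_2 m$. Since $\Gamma(m)$ can be much smaller than $\log_2 m$ when $\delta$ is unbounded (e.g.\ $\Gamma(m)\approx\log\log m$ when $d_n\approx n^a$), this does not recover the claimed inequality $1+\Gamma(m)$.

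The idea you are missing is a \emph{rearrangement} step. By Lemma~\ref{Iproperties}(a) and (b), the sequence $(|\xx_j^*(k)|)_j$ restricted to its support $\cup_{n\ge 0}J_{k,n}$ is non-increasing in $j$. Hence the maximum of $\sum_{j\in A}|\xx_j^*(k)|^p$ over $|A|\le m$ is attained when $A$ is the set of the first $m$ indices in the support, and this set is contained in $A'=\cup_{n=0}^{N}J_{k,n}$ where $N=N(k,m)$ is the least integer with $m\le\sum_{n=0}^N|J_{k,n}|$. Lemma~\ref{lem:blocksum}(b) then gives $\sum_{j\in A'}|\xx_j^*(k)|^p=1+N$. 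Finally, Lemma~\ref{Iproperties}(c) yields $|J_{1,n}|\le|J_{k,n}|$ for all $n$, so $N(k,m)\le N(1,m)$, and $N(1,m)=\min\{n:m<\Lambda(n+1)\}=\Gamma(m)$. This is the paper's argument; note that all three parts of Lemma~\ref{Iproperties} are used, and parts (a)--(b) are precisely what you never invoked.
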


\begin{proof}Let $N=N(k,m)$ be the smallest positive integer such that
$m\le\sum_{n=0}^N |J_{k,n}|.$
Using Fact~\ref{fact:h}, Lemma~\ref{Iproperties}, and Lemma~\ref{lem:blocksum}(a), if $A'=\cup_{n=0}^N J_{k,n}$,
\[
\sum_{j\in A} |\xx_j^*(k)|^p\le \sum_{j\in A'} |\xx_j^*(k)|^p
=\sum_{n=0}^N \sum_{j \in J_{k,n}} |\xx_j^*(k)|^p=1+N.
\]
By Lemma~\ref{Iproperties}(c), $N\le N(1,m)$. Moreover, by Fact~\ref{fact:h},
\[
N(1,m)=\min\{ n \in\NN \colon m<\Lambda(n+1) \}=\Gamma(m).
\]
Finally, we note that the arguments we have used also give
\[
\sum_{j=1}^m |\xx_j^*(1)|^p>\sum_{n=0}^{N(1,m)-1} \sum_{j \in J_{1,n}} |\xx_j^*(k)|^p=N(1,m).\qedhere
\]
\end{proof}

\begin{proposition}\label{prop:upper}Let $0<p\le 1$ and $\delta=(d_n)_{n=1}^\infty$ be a non-decreasing sequence of integers in $[2,\infty)$. Then,
\[
\kk_m[\XXB_p,\XX_p] \le 2^{1/p} (1+\Gamma(m))^{1/p}, \quad m\in\NN.
\]
\end{proposition}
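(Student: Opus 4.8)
The plan is to reduce, via Lemma~\ref{LemmaOperators}, the estimation of $\kk_m[\XXB_p,\XX_p]$ to a bound on $\|T_A\|_{\ell_p\to\ell_p}$ that is uniform over all $A\subseteq\NN$ with $|A|\le m$. Since $\ell_p$ is a $p$-Banach space, formula \eqref{eq:lpTopBanach} applies to the operator $T_A\colon\ell_p\to\ell_p$, so it suffices to control $\|T_A(\ee_j)\|_{\ell_p}$ for each coordinate $j\in\NN$.

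First I would fix $j\in\NN$ and compute, straight from the definition of $T_A$, that $T_A(\ee_j)=\sum_{k\in A}\langle\xx_k^*,\ee_j\rangle\,\ee_k=\sum_{k\in A}\xx_k^*(j)\,\ee_k$, whence
\[
\|T_A(\ee_j)\|_{\ell_p}^p=\sum_{k\in A}|\xx_k^*(j)|^p.
\]
The key observation is that this is precisely the quantity estimated in Lemma~\ref{lem:lpnormcolumn}(i): after renaming the summation index and the fixed coordinate, that lemma asserts $\sum_{k\in A}|\xx_k^*(j)|^p\le 1+\Gamma(m)$ whenever $|A|\le m$. Hence $\|T_A(\ee_j)\|_{\ell_p}\le(1+\Gamma(m))^{1/p}$ for every $j\in\NN$, and so, by \eqref{eq:lpTopBanach}, $\|T_A\|_{\ell_p\to\ell_p}\le(1+\Gamma(m))^{1/p}$. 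Feeding this into Lemma~\ref{LemmaOperators} gives $\|S_A[\XXB_p,\XX_p]\|\le 2^{1/p}(1+\Gamma(m))^{1/p}$ for every $A$ with $|A|\le m$, and taking the supremum over such sets $A$ yields the claimed estimate.

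Since everything rests on Lemmas~\ref{LemmaOperators} and \ref{lem:lpnormcolumn}, there is no genuine analytic obstacle here; the only point requiring care is keeping track of which index is the functional index and which is the coordinate index, so that Lemma~\ref{lem:lpnormcolumn}(i) is applied to the correct sum. (Alternatively, one may bypass \eqref{eq:lpTopBanach} and argue directly with \eqref{eq:lpembedsinl1} and Fubini's theorem, namely $\|T_A(f)\|_{\ell_p}^p=\sum_{k\in A}|\langle\xx_k^*,f\rangle|^p\le\sum_{j}|f(j)|^p\sum_{k\in A}|\xx_k^*(j)|^p\le(1+\Gamma(m))\|f\|_{\ell_p}^p$, reaching the same bound.) Note, finally, that the monotonicity hypothesis on $\delta$ is used only through Lemma~\ref{lem:lpnormcolumn}, so it does not re-enter the argument at this stage.
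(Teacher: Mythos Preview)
Your proof is correct and follows essentially the same approach as the paper: reduce via Lemma~\ref{LemmaOperators} and \eqref{eq:lpTopBanach} to bounding $\|T_A(\ee_j)\|_{\ell_p}^p=\sum_{k\in A}|\xx_k^*(j)|^p$, then apply Lemma~\ref{lem:lpnormcolumn}(i). The only cosmetic difference is that the paper keeps the indices aligned with the lemma's notation (writing $\|T_A(\ee_k)\|^p=\sum_{j\in A}|\xx_j^*(k)|^p$), whereas you swap them and then rename; your caution about tracking which index is which is well placed, and your alternative Fubini argument is a harmless variant of the same computation.
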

\begin{proof}In light of Lemma~\ref{LemmaOperators} and equation~\eqref{eq:lpTopBanach}, it suffices to prove that $\Vert T_A(\ee_k)\Vert \le (1+\Gamma(m))^{1/p}$ whenever $k\in\NN$ and $|A|\le m$. Since
\[
\Vert T_A(\ee_k)\Vert^p=\sum_{j\in A} |\xx_j^*(k)|^p,
\]
Lemma~\ref{lem:lpnormcolumn} provides the desired estimate.
\end{proof}

\subsection{Optimality of the upper bound for $\kk_m[\XXB_{p},\XX_p], 0<p<1$.}
Our next task is to show that the estimate provided by Proposition~\ref{prop:upper} is optimal. Like in \cite{AAWo} we will consider the alternative conditionality constants of a basis $\BB=(\yy_n)_{n=1}^\infty$ of a quasi-Banach space $\YY$, defined for $m\in\NN$ by
\[
\widetilde{\kk}_m[\BB,\YY]=\sup \{ \Vert S_A[\BB,\YY](f)\Vert \colon A\subseteq\NN,\, f\in\YY^{(m)}[\BB], \, \Vert f \Vert\le 1\}.
\]
These constants depend on the particular ordering we choose for the basis, while $ \kk_m[\BB,\YY]$ do not.
It is obvious that $\widetilde{\kk}_m[\BB,\YY]\le \kk_m[\BB,\YY]$ for every $m\in\NN$.

\begin{proposition}\label{prop:lower}Let $0<p\le 1$ and $\delta=(d_n)_{n=1}^\infty$ be a sequence of integers in $[2,\infty)$. Then,
\[
2^{-2/p} (1+\Gamma(m))^{1/p} \le \widetilde{\kk}_m[\XXB_p,\XX_p] , \quad m\in\NN.
\]
\end{proposition}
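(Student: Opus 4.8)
The plan is to exhibit, for each $m\in\NN$, a concrete norm-one vector in $\XX_p^{(m)}$ and a coordinate set on which its projection is large. Guided by the fact that the first column of the matrix $(\xx_j^*(k))$ carries the most mass, the candidate is the vector realizing \emph{exact} cancellation along that column,
\[
f=f_m=\sum_{k=1}^{m}\xx_k^*(1)\,\xx_k\ \in\ \XX_p^{(m)}.
\]
First I would compute $\Vert f\Vert$. Reading off the coordinates of $f$ from Fact~\ref{fact:y}, and using $\xx_1^*(1)=1$ (Fact~\ref{fact:k}) together with the identity $\xx_j^*(1)=d_{\rho(j)}^{-1/p}\,\xx_{\rho(j)}^*(1)$ for $j\ge2$ (immediate from \eqref{eq:DualLinBasis}; alternatively from Fact~\ref{fact:x} and Fact~\ref{fact:h}), one checks that $f(1)=1$, that $f(j)=\xx_j^*(1)-\xx_{\rho(j)}^*(1)d_{\rho(j)}^{-1/p}=0$ for $2\le j\le m$, and that $f(j)=-\xx_j^*(1)$ for $m<j<\sigma(m+1)$ while $f(j)=0$ for $j\ge\sigma(m+1)$ (because $\rho(j)\le m$ exactly when $j<\sigma(m+1)$). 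Since $\{1,\dots,\sigma(m+1)-1\}$ is the disjoint union of $\{1\}$ and the blocks $[\sigma(k),\sigma(k+1))$ with $1\le k\le m$, on each of which $\sum_j|\xx_j^*(1)|^p=d_k\cdot d_k^{-1}|\xx_k^*(1)|^p=|\xx_k^*(1)|^p$ (using $\sigma(k+1)-\sigma(k)=d_k$ by \eqref{sigmafunction}), a short computation gives $\sum_{j=m+1}^{\sigma(m+1)-1}|\xx_j^*(1)|^p=1$ and hence $\Vert f\Vert^p=2$ for every $m$. Thus $g:=2^{-1/p}f$ belongs to $\XX_p^{(m)}$ with $\Vert g\Vert=1$.

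Next I would choose the coordinate set. As the expansion of $f$ is supported on $[1,m]$, biorthogonality (Proposition~\ref{prop:e}) gives $S_A[\XXB_p,\XX_p](f)=\sum_{k\in A\cap[1,m]}\xx_k^*(1)\xx_k$, and Fact~\ref{fact:y} (again with $\xx_j^*(1)=d_{\rho(j)}^{-1/p}\xx_{\rho(j)}^*(1)$) shows that, writing $B=A\cap[1,m]$, the coordinate $S_A(f)(j)$ for $j\ge2$ equals $\xx_j^*(1)$ if $j\in B$ and $\rho(j)\notin B$, equals $-\xx_j^*(1)$ if $\rho(j)\in B$ and $j\notin B$, and equals $0$ otherwise, while $S_A(f)(1)$ equals $1$ or $0$ according as $1\in B$ or not. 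Hence $|S_A(f)(j)|=\xx_j^*(1)$ for all $1\le j\le m$ as soon as $1\in B$ and, for every $2\le j\le m$, exactly one of $j$ and $\rho(j)$ lies in $B$. The ``even level'' set
\[
A=B=\{\,j\in[1,m]\colon\Gamma(j)\ \text{is even}\,\}
\]
achieves this, because $\Gamma(\rho(j))=\Gamma(j)-1$, $\rho(j)<j\le m$, and $\Gamma(1)=0$. Therefore
\[
\Vert S_A(f)\Vert^p\ \ge\ \sum_{j=1}^{m}|\xx_j^*(1)|^p\ \ge\ \Gamma(m),
\]
where the last inequality holds because $\bigcup_{n=0}^{\Gamma(m)-1}J_n=[1,\Lambda(\Gamma(m)))\subseteq[1,m]$ and $\sum_{j\in J_n}|\xx_j^*(1)|^p=\sum_{j\in J_{1,n}}|\xx_j^*(1)|^p=1$ by Fact~\ref{fact:h} and Lemma~\ref{lem:blocksum}(b).

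Combining the two steps, $\widetilde{\kk}_m[\XXB_p,\XX_p]\ge\Vert S_A(g)\Vert=2^{-1/p}\Vert S_A(f)\Vert$, so $\widetilde{\kk}_m[\XXB_p,\XX_p]^p\ge\tfrac12\Gamma(m)$. For $m\ge2$ we have $\Gamma(m)\ge1$, whence $\tfrac12\Gamma(m)\ge\tfrac14\bigl(1+\Gamma(m)\bigr)=2^{-2}\bigl(1+\Gamma(m)\bigr)$ and the claimed bound follows by taking $p$-th roots; the remaining case $m=1$ is immediate, since then $\Gamma(1)=0$, $\XX_p^{(1)}=[\xx_1]$, and $\widetilde{\kk}_1[\XXB_p,\XX_p]\ge\Vert S_{\{1\}}(2^{-1/p}\xx_1)\Vert=1\ge2^{-2/p}$. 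I expect the crux to lie in the two design choices rather than in any estimate: the vector $f$ must stay ``flat'' — its norm is exactly $2^{1/p}$ regardless of how many terms it has — while the projection $S_A$ must undo the cancellation of $f$ in \emph{every} one of the coordinates $1,\dots,m$ simultaneously, which is precisely what the alternating even-level set $A$ guarantees; once $f$ and $A$ are fixed, everything reduces to bookkeeping with the facts collected in Section~\ref{Linpspaces}.
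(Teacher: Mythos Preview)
Your proof is correct. In fact your vector $f_m=\sum_{k=1}^m \xx_k^*(1)\,\xx_k$ is exactly the paper's $u_m$: the paper defines $u_m$ recursively by $u_1=\xx_1$, $u_{k+1}=u_k-u_k(k+1)\xx_{k+1}$, and one checks (as you implicitly do) that $u_k(k+1)=-\xx_{k+1}^*(1)$, so $u_m=f_m$ and $\Vert u_m\Vert=2^{1/p}$ for every $m$.

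The route diverges after that. The paper does \emph{not} exhibit a projection set. Instead it builds a companion $v_m$ whose basis coefficients have the same moduli as those of $u_m$ but whose norm satisfies $\Vert v_m\Vert^p>1+\Gamma(m)$, and then invokes the general $p$-Banach inequality $\widetilde\kk_m\ge 2^{-1/p}\Vert v_m\Vert/\Vert u_m\Vert$ (which follows from $v=S_{A^+}(u)-S_{A^-}(u)$ for suitable $A^\pm$). Your argument is more concrete: you name the bad set $A=\{j\le m:\Gamma(j)\text{ even}\}$ and verify directly that $|S_A(f)(j)|=\xx_j^*(1)$ for every $j\le m$, because the parity condition forces exactly one of $j,\rho(j)$ into $A$. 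Both approaches ultimately rest on the same mass estimate $\sum_{j\le m}|\xx_j^*(1)|^p\ge\Gamma(m)$ (Lemma~\ref{lem:blocksum}(b) summed over the levels $J_{1,n}$, $0\le n<\Gamma(m)$). What your approach buys is an explicit witness $(g,A)$ for the conditionality constant and no need to track a second recursive sequence; what the paper's approach buys is the constant $1+\Gamma(m)$ under the $p$-th power in one stroke, whereas you obtain $\tfrac12\Gamma(m)$ and then pass to $\tfrac14(1+\Gamma(m))$ for $m\ge2$.
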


\begin{proof}
We recursively define $(u_k)_{k=1}^\infty$ and $(v_k)_{k=1}^\infty$ in $\XX_p$. We start with $u_1=v_1=\xx_1$. Assuming that  $u_k$ and $v_k$ have been  constructed for $k\in \mathbb N$, we define
\begin{align*}
u_{k+1}&=u_{k} - u_{k}(k+1) \, \xx_{k+1},\\
v_{k+1}&=v_{k} - \sgn(v_k(k+1)) \, u_{k}(k+1) \, \xx_{k+1}.
\end{align*}
Using Fact~\ref{fact:e}, by induction on $k\in\NN$ we obtain that
\begin{itemize}
\item[(i)] $u_k$ and $v_k$ are linear combinations of $\XXB_p^{(k)}$ with real scalars,
\item[(ii)] $\supp(u_k) \subseteq \{1\}\cup[k+1,\sigma(k+1)) $, and
\item[(iii)] $u_{k}(1)=1$
\end{itemize}
Whence, by Proposition~\ref{prop:e},
\begin{itemize}
\item[(iv)] $\langle \xx_{k+1}^*,u_{k+1}\rangle=-u_k(k+1)$ for every $k\in\NN$, and
\item[(v)] $|\langle\xx^*_s,u_k\rangle|=|\langle\xx^*_s,v_k\rangle|$ for every $k$, $s\in \NN$.
\end{itemize}
From Fact~\ref{fact:i} we get $\Vert u_k\Vert=2^{1/p}$ for every $k\in\NN$. Now we aim at estimating $\|v_k\|$ from below.
By Fact~\ref{fact:g} and (ii), (iii) and (iv), for every $k\in\NN$ we have
\[
u_k(k+1)=-\langle\xx_{k+1}^*,u_{k+1}\rangle=-\xx_{k+1}^*(1) \, u_{k+1}(1)=-\xx_{k+1}^*(1).
\]
Therefore, by Fact~\ref{fact:s}, $u_k(k+1)\le 0$. Then, applying Fact~\ref{fact:j}, for  $k\in\NN$ we get
\begin{align*}
\Vert v_{k+1}\Vert^p-\Vert v_{k}\Vert^p
&=(|v_{k}(k+1)|+|u_{k}(k+1)|)^p+|u_{k}(k+1)|^p-|v_{k}(k+1)|^p\\
&\ge |u_{k}(k+1)|^p
=|\xx_{k+1}^*(1)|^p.
\end{align*}
Combining this inequality with Lemma~\ref{lem:lpnormcolumn} yields
\begin{align*}
\Vert v_{m}\Vert^p&= \Vert v_1\Vert^p+\sum_{k=2}^{m} \Vert v_{k}\Vert^p-\Vert v_{k-1}\Vert^p\\
&\ge 2+\sum_{k=2}^m |\xx_{k}^*(1)|^p\\
&=1+\sum_{k=1}^m |\xx_{k}^*(1)|^p\\
&>1+\Gamma(m),
\end{align*}
for every $m\in\NN$. Finally, by (i) and (v),
\[
\widetilde{\kk}_m[\XXB_p,\XX_p]\ge 2^{-1/p} \frac{ \Vert v_k\Vert}{\Vert u_k\Vert} \ge 2^{-2/p} (1+\Gamma(m))^{1/p}. \qedhere
\]
\end{proof}
Putting together Propositions~\ref{prop:upper} and \ref{prop:lower} we can state the following theorem.

\begin{theorem}\label{thm:main}Let $0<p\le 1$ and $\delta=(d_n)_{n=1}^\infty$ be a non-decreasing sequence of integers in $[2,\infty)$. Then,
\[
\kk_m[\XXB_p,\XX_p] \approx \widetilde{\kk}_m[\XXB_p,\XX_p]\approx \Gamma^{1/p}(m), \quad m\ge 2.
\]
\end{theorem}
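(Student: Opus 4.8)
The plan is to assemble Theorem~\ref{thm:main} as an immediate corollary of the two estimates already in hand, plus one routine observation to close the gap between the various quantities. Recall that by definition $\widetilde{\kk}_m[\XXB_p,\XX_p]\le \kk_m[\XXB_p,\XX_p]$ for every $m$, so the chain of comparisons will be driven by a lower bound for $\widetilde{\kk}_m$ and an upper bound for $\kk_m$, both of order $\Gamma^{1/p}(m)$.

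First I would invoke Proposition~\ref{prop:upper}, which gives $\kk_m[\XXB_p,\XX_p]\le 2^{1/p}(1+\Gamma(m))^{1/p}$ for all $m\in\NN$; since $\delta$ is assumed non-decreasing, this proposition applies. Then I would invoke Proposition~\ref{prop:lower}, valid for any sequence $\delta$ in $[2,\infty)$ and a fortiori here, which yields $2^{-2/p}(1+\Gamma(m))^{1/p}\le \widetilde{\kk}_m[\XXB_p,\XX_p]$. Combining these with the trivial inequality $\widetilde{\kk}_m\le \kk_m$, we obtain for every $m\in\NN$
\[
2^{-2/p}(1+\Gamma(m))^{1/p}\le \widetilde{\kk}_m[\XXB_p,\XX_p]\le \kk_m[\XXB_p,\XX_p]\le 2^{1/p}(1+\Gamma(m))^{1/p},
\]
so all three quantities are comparable, with constants depending only on $p$, to $(1+\Gamma(m))^{1/p}$.

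It remains to replace $(1+\Gamma(m))^{1/p}$ by $\Gamma^{1/p}(m)$ in the range $m\ge 2$, which is where the restriction $m\ge 2$ in the statement comes from. The point is that $\Gamma(1)=0$ but $\Gamma(2)=1$, so for $m\ge 2$ we have $\Gamma(m)\ge 1$ and hence $\Gamma(m)\le 1+\Gamma(m)\le 2\Gamma(m)$; taking $p$-th roots gives $\Gamma^{1/p}(m)\le (1+\Gamma(m))^{1/p}\le 2^{1/p}\Gamma^{1/p}(m)$. Substituting this two-sided estimate into the displayed chain finishes the proof. There is no real obstacle here: the genuine work was done in Propositions~\ref{prop:upper} and~\ref{prop:lower}, and the only thing to be careful about is excluding $m=1$ so that $\Gamma(m)$ does not vanish in the denominator of the comparison $\approx$.
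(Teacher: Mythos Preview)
Your proposal is correct and follows exactly the paper's approach: the theorem is obtained by combining Proposition~\ref{prop:upper} (upper bound for $\kk_m$) and Proposition~\ref{prop:lower} (lower bound for $\widetilde{\kk}_m$) together with the trivial inequality $\widetilde{\kk}_m\le \kk_m$. Your explicit remark that $\Gamma(m)\ge 1$ for $m\ge 2$, so that $(1+\Gamma(m))^{1/p}\approx \Gamma^{1/p}(m)$, is the only additional detail, and it is the right justification for the restriction $m\ge 2$ in the statement.
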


\subsection{Lebesgue constant estimates for Lindenstrauss $p$-bases}
We close this section with an estimate for the performance of the greedy algorithm implemented in the space $\XX_p(\delta)$ with respect to the Lindenstrauss $p$-basis $\XXB_p(\delta)$. To put this small addition in context, we recall that for $m\in \NN$, the \emph{best $m$-term approximation error} of $f\in \YY$ with respect to $\BB$ is given by
\[
\sigma_{m}(f)=\sigma_{m}(f;\BB, \YY):=\inf\left\{\left\Vert f- z\right\Vert \right\},
\]
the infimum being taken over all $m$-term linear combinations $z$ of vectors from $\BB$. A relevant question in the literature raised by Temlyakov at the turn of the century, is to compare the error in the approximation of $f$ by $\GG_{m}(f)$, measured by $\Vert f- \GG_m(f) \Vert$, with $\sigma_{m}(f)$. For a fixed basis $\BB$ in $\YY$ and $m\in \NN$, the $m$th \emph{Lebesgue constant}, $\Leb_m[\BB,\YY]$, is the smallest constant $C$ such that
\[
\Vert f- \GG_m(f) \Vert \le \Leb_m[\BB,\YY] \sigma_{m}(f),
\]
for all $f\in \YY$. This is sometimes referred to as a \emph{Lebesgue-type inequality for the greedy algorithm}.

The growth of the Lebesgue constants as $m$ increases has been studied in \cites{GHO2013,BBGHO2018} in the framework of Banach spaces. As for non-locally convex quasi-Banach spaces, let us point out that, if we put
\begin{equation*}
\kk_m^c[\BB,\YY] =\sup_{|A|\le m} \Vert\Id_\YY- S_A[\BB,\YY]\Vert, \quad m\in\NN,
\end{equation*}
the proof of \cite{AABW2019}*{Theorem 6.2} yields that for a super-democratic basis $\BB$ of a $p$-Banach space $\YY$ we have
\begin{equation}\label{eq:LebEstimates}
\kk_m^c[\BB,\YY] \le \Leb_m[\BB,\YY] \le \left(1 + \frac{ \Delta_s^p[\BB,\YY]}{(2^p-1)^{2}}\right)^{1/p} \kk_m^c[\BB,\YY]
\end{equation}
for every $m\in\NN$. Note also that $\kk_m^c[\BB,\YY]\approx \kk_m[\BB,\YY]$ for $m\in\NN$.
\begin{corollary}Let $0<p\le 1$ and $\delta$ be a non-decreasing sequence of integers in $[2,\infty)$. Then
\[
\Leb_m[\XXB_p(\delta),\XX_{p}(\delta) ] \approx \Gamma^{1/p}(m), \quad m\ge 2.
\]
\end{corollary}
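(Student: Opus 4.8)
The plan is to read off the result by combining the general Lebesgue-constant estimate \eqref{eq:LebEstimates} with the conditionality estimates already established for the Lindenstrauss $p$-basis. First I would check that the hypotheses of \eqref{eq:LebEstimates} are met: by Proposition~\ref{prop:dem} the basis $\XXB_p(\delta)$ is super-democratic, and moreover parts (i) and (ii) of that proposition give the quantitative bound $\Delta_s[\XXB_p,\XX_p]\le 2/(1-2^{-1/p})$, a constant depending only on $p$. Consequently the factor $\bigl(1+\Delta_s^p[\XXB_p,\XX_p]/(2^p-1)^2\bigr)^{1/p}$ appearing in \eqref{eq:LebEstimates} is dominated by a finite constant $C_p$ depending only on $p$.

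Feeding this into \eqref{eq:LebEstimates} yields
\[
\kk_m^c[\XXB_p,\XX_p]\le \Leb_m[\XXB_p,\XX_p]\le C_p\,\kk_m^c[\XXB_p,\XX_p],\qquad m\in\NN,
\]
so that $\Leb_m[\XXB_p,\XX_p]\approx \kk_m^c[\XXB_p,\XX_p]$. Next I would use the observation recorded just after \eqref{eq:LebEstimates}, namely $\kk_m^c[\BB,\YY]\approx \kk_m[\BB,\YY]$, to replace $\kk_m^c$ by $\kk_m$, obtaining $\Leb_m[\XXB_p,\XX_p]\approx \kk_m[\XXB_p,\XX_p]$. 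Finally, since $\delta$ is assumed non-decreasing, Theorem~\ref{thm:main} applies and gives $\kk_m[\XXB_p,\XX_p]\approx \Gamma^{1/p}(m)$ for $m\ge 2$. Chaining the three equivalences produces $\Leb_m[\XXB_p(\delta),\XX_p(\delta)]\approx \Gamma^{1/p}(m)$ for $m\ge 2$, as claimed.

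There is essentially no genuine obstacle here; the only points requiring (minor) attention are the verification that $\Delta_s[\XXB_p,\XX_p]$ is bounded independently of $m$ — which is exactly the content of the quantitative half of Proposition~\ref{prop:dem} — and the remark that the non-decreasing hypothesis on $\delta$ is precisely what licenses the appeal to Theorem~\ref{thm:main}, more specifically to its upper-bound ingredient Proposition~\ref{prop:upper}.
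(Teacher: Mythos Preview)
Your proof is correct and follows essentially the same approach as the paper, which simply says to combine \eqref{eq:LebEstimates} with Theorem~\ref{thm:main} and Proposition~\ref{prop:dem}. You have merely expanded the details of this combination.
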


\begin{proof}Just combine \eqref{eq:LebEstimates} with Theorem~\ref{thm:main} and Proposition~\ref{prop:dem}.
\end{proof}

\section{Non-equivalent almost-greedy bases in $\ell_p$ and $\XX_p$, $0<p\le 1$.}\label{noneqalmostgreedylp}

\noindent In this section we give a neat application to the structure of the spaces $\ell_{p}$, $0<p\le 1$, in that they contain an uncountable set of mutually non-equivalent (conditional) almost greedy bases. In fact, these bases are not even permutatively equivalent. We will construct each of these bases in a space isomorphic to $\ell_{p}$ instead of in $\ell_{p}$ itself.

Let $0<p\le 1$. Given a sequence of integers $\delta$ in $[2,\infty)$ and an unbounded sequence of positive integers $\eta=(N_k)_{k=1}^\infty$ we consider the following direct sum of finite-dimensional Lindenstrauss $p$-bases:
\[
\LLB_p[\delta,\eta]=\left(\bigoplus_{k=1}^\infty \XXB_p^{(N_k)}(\delta)\right)_p.
\]

\begin{lemma}\label{lem:doubling}Let $\delta$ be a sequence of integers in $[2,\infty)$. Then the sequence $\Gamma$ is doubling. To be precise, it satisfies
\[
\Gamma(2m)\le \Gamma(m)+1 \le 2\Gamma(m), \quad m\ge 2.
\]
\end{lemma}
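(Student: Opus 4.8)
The plan is to work directly from the defining relation between $\Gamma$ and $\Lambda$. Recall that $\Gamma$ is the left inverse of $\Lambda$, so that $\Gamma(k)=n$ precisely when $\Lambda(n)\le k<\Lambda(n+1)$; equivalently, as already recorded in the proof of Lemma~\ref{lem:lpnormcolumn}, $\Gamma(k)=\min\{n\in\NN\colon k<\Lambda(n+1)\}$. In particular, to show $\Gamma(2m)\le n+1$ it suffices to verify the single inequality $2m<\Lambda(n+2)$.

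First I would dispose of the second inequality $\Gamma(m)+1\le 2\Gamma(m)$, which is just $\Gamma(m)\ge 1$. This holds whenever $m\ge 2$ because $\Lambda(1)=\sigma(1)=2\le m$, so $\Gamma(m)\ge 1$ by the characterization above.

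For the main inequality $\Gamma(2m)\le\Gamma(m)+1$, the one elementary observation needed is that $\sigma(k)\ge 2k$ for every $k\in\NN$: indeed, by \eqref{sigmafunction} and the hypothesis $d_j\ge 2$,
\[
\sigma(k)=2+\sum_{j=1}^{k-1}d_j\ge 2+2(k-1)=2k.
\]
Applying this with $k=\Lambda(n+1)$ gives $\Lambda(n+2)=\sigma(\Lambda(n+1))\ge 2\Lambda(n+1)$. Now put $n=\Gamma(m)$, so that $m<\Lambda(n+1)$; then
\[
2m<2\Lambda(n+1)\le\Lambda(n+2),
\]
and hence $\Gamma(2m)\le n+1=\Gamma(m)+1$, as desired.

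There is no genuine obstacle here; the only point requiring care is keeping the index conventions for $\sigma$, $\Lambda=\sigma^{(\cdot)}(1)$, and their left inverses $\rho$, $\Gamma$ consistent with those fixed in Section~\ref{Linpspaces}, so that the passage from ``$2m<\Lambda(n+2)$'' to ``$\Gamma(2m)\le n+1$'' is exactly the defining property of the left inverse.
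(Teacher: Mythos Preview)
Your proof is correct and follows essentially the same route as the paper's own proof: the key step in both is the observation that $\sigma(k)\ge 2k$, which yields $\Lambda(n+2)\ge 2\Lambda(n+1)$ and hence $2m<\Lambda(n+2)$ when $n=\Gamma(m)$. You additionally spell out the trivial second inequality $\Gamma(m)\ge 1$ for $m\ge 2$, which the paper leaves implicit.
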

\begin{proof}It is clear by definition that $2k\le \sigma(k)$ for every $k\in\NN$. Then, if $n=\Gamma(m)$ we have $2m<2\Lambda(n+1)\le \Lambda(n+2)$ so that $\Gamma(2m)\le n+1$.
\end{proof}
\begin{theorem}\label{thm:Baseslp}
Let $0<p\le 1$, $\delta$ be a non-decreasing sequence of integers in $[2,\infty)$ and $\eta$ an unbounded sequence of positive integers. Then $\LLB_p[\delta,\eta]$ is an almost greedy Schauder basis of a space isomorphic to $\ell_p$. Moreover
\[
\Leb_m[\LLB_p[\delta,\eta],\ell_p] \approx \kk_m[\LLB_p[\delta,\eta],\ell_p] \approx \Gamma^{1/p}(m), \quad m\ge 2.
\]
In the case when $\sum_{j=1}^k N_k \lesssim N_{k+1}$ for $k\in\NN$, we also have
\[
\widetilde{\kk}_m[\LLB_p[\delta,\eta],\ell_p] \approx \Gamma^{1/p}(m), \quad m\ge 2.
\]
\end{theorem}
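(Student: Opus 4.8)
The plan is to work in the $p$-Banach space $E=\bigl(\bigoplus_{k=1}^\infty \XX_p^{(N_k)}(\delta)\bigr)_p$, of which $\LLB_p[\delta,\eta]$ is the natural block basis. A routine direct-sum argument---using that each $\XXB_p^{(N_k)}(\delta)$ is a finite, hence Schauder, basis of $\XX_p^{(N_k)}(\delta)$ and is bi-monotone by Proposition~\ref{prop:monotonebasis}---shows that $\LLB_p[\delta,\eta]$, listed block by block, is a bi-monotone Schauder basis of $E$, while Corollary~\ref{cor:isomorphism} gives that $E$ is $2^{1/p}$-isomorphic to $\ell_p$. Since $\kk_m$, $\widetilde{\kk}_m$ and $\Leb_m$ change by at most a multiplicative constant under an isomorphism carrying a basis onto a basis, it suffices to estimate these for $\LLB_p[\delta,\eta]$ inside $E$. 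Throughout I write $A=\bigsqcup_k A^{(k)}$ for the splitting of a set $A\subseteq\NN$ according to the blocks of $\LLB_p[\delta,\eta]$, and I identify each $\XX_p^{(N_k)}(\delta)$ with the corresponding $\ell_p$-summand of $E$; note that on this summand the greedy projections, coordinate projections and norms of $\XX_p(\delta)$ all restrict, because the coordinates of any element of $\XX_p^{(N)}(\delta)$ vanish beyond the $N$th.

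For almost greediness I would verify quasi-greediness and democracy separately and then invoke \cite{AABW2019}*{Theorem 5.3}. Fixing $f=(f_k)_k\in E$ and a greedy set $A$ of $f$ with $|A|=m$, the global greedy inequality read inside a fixed block forces $A^{(k)}$ to be a greedy set of $f_k$ of cardinality $m_k:=|A^{(k)}|$, so the $k$th coordinate of $\GG_m(f)$ is a valid value of $\GG_{m_k}(f_k)$; Theorem~\ref{thm:QG} then bounds its norm by $C_p\|f_k\|$, where $C_p:=\min\{3^{1/p},2^{2/p}/(2^{1/p}-1)\}$ is independent of $k$ and $\delta$, and summing $p$th powers over $k$ yields $\|\GG_m(f)\|\le C_p\|f\|$. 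For democracy, splitting $\Ind_{\gamma,A}$ over the blocks and applying Proposition~\ref{prop:dem} gives $(1-2^{-1/p})m^{1/p}\le\Phi_m^{l,s}[\LLB_p[\delta,\eta],E]$ and $\Phi_m^{u,s}[\LLB_p[\delta,\eta],E]\le 2m^{1/p}$, so $\LLB_p[\delta,\eta]$ is super-democratic with $\Delta_s\le 2/(1-2^{-1/p})$. Hence $\LLB_p[\delta,\eta]$ is almost greedy.

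The same block-splitting gives the upper estimate: for $|A|\le m$ each $|A^{(k)}|\le m$, and combining Proposition~\ref{prop:upper} with $\kk_\ell[\XXB_p^{(N)},\XX_p^{(N)}]\le\kk_\ell[\XXB_p,\XX_p]$ and the monotonicity of $\Gamma$ bounds $\|S_{A^{(k)}}(f_k)\|$ by $2^{1/p}(1+\Gamma(m))^{1/p}\|f_k\|$, whence $\kk_m[\LLB_p[\delta,\eta],E]\le 2^{1/p}(1+\Gamma(m))^{1/p}$. For the reverse inequality, since $\eta$ is unbounded pick $k_0$ with $N_{k_0}\ge m$; testing coordinate projections supported in the $k_0$th block gives $\kk_m[\LLB_p[\delta,\eta],E]\ge\kk_m[\XXB_p^{(N_{k_0})},\XX_p^{(N_{k_0})}]\ge\widetilde{\kk}_m[\XXB_p^{(N_{k_0})},\XX_p^{(N_{k_0})}]$, and because elements of $\XX_p^{(m)}(\delta)$ have vanishing coordinates past $m$ one has $\widetilde{\kk}_m[\XXB_p^{(N)},\XX_p^{(N)}]=\widetilde{\kk}_m[\XXB_p,\XX_p]$ for every $N\ge m$, which is $\ge 2^{-2/p}(1+\Gamma(m))^{1/p}$ by Proposition~\ref{prop:lower}. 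Thus $\kk_m[\LLB_p[\delta,\eta],E]\approx\Gamma^{1/p}(m)$ for $m\ge 2$; since $\LLB_p[\delta,\eta]$ is super-democratic with controlled $\Delta_s$ and $E$ is a $p$-Banach space, \eqref{eq:LebEstimates} and $\kk_m^c\approx\kk_m$ then give $\Leb_m[\LLB_p[\delta,\eta],E]\approx\kk_m[\LLB_p[\delta,\eta],E]\approx\Gamma^{1/p}(m)$.

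Finally, assume $\sum_{j=1}^k N_j\le C N_{k+1}$ for all $k$; only the lower bound $\widetilde{\kk}_m[\LLB_p[\delta,\eta],E]\gtrsim\Gamma^{1/p}(m)$ is new, as $\widetilde{\kk}_m\le\kk_m\lesssim\Gamma^{1/p}(m)$ always. Let $r$ be the index of the block containing the $m$th vector of $\LLB_p[\delta,\eta]$ and put $m_r=m-\sum_{j<r}N_j\ge 1$. If $m_r\ge m/2$, the first $m_r$ vectors of block $r$ lie among the first $m$ vectors of $\LLB_p[\delta,\eta]$, so $\widetilde{\kk}_m[\LLB_p[\delta,\eta],E]\ge\widetilde{\kk}_{m_r}[\XXB_p,\XX_p]\gtrsim\Gamma^{1/p}(m_r)\gtrsim\Gamma^{1/p}(m)$, the last step by the doubling of $\Gamma$ (Lemma~\ref{lem:doubling}). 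If $m_r<m/2$, then $r\ge 2$, block $r-1$ is entirely available, and $\sum_{j\le r-1}N_j=m-m_r>m/2$; feeding this into the hypothesis with $k=r-2$ (trivially $0\le CN_1$ when $r=2$) yields $(1+C)N_{r-1}>m/2$, i.e.\ $N_{r-1}\gtrsim m$, so $\widetilde{\kk}_m[\LLB_p[\delta,\eta],E]\ge\widetilde{\kk}_{N_{r-1}}[\XXB_p,\XX_p]\gtrsim\Gamma^{1/p}(N_{r-1})\approx\Gamma^{1/p}(m)$, again by Lemma~\ref{lem:doubling}. I expect the main obstacle to be this last dichotomy---showing that under the growth condition on $(N_k)$ a single block already witnesses the full conditionality $\Gamma^{1/p}(m)$---along with the bookkeeping ensuring that the conditionality and (super)democracy constants of the finite blocks $\XXB_p^{(N_k)}(\delta)$ agree, up to absolute constants, with those of $\XXB_p(\delta)$.
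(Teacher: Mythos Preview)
Your proof is correct and follows the same route as the paper, which simply writes ``Combine Lemma~\ref{lem:doubling}, \cite{AADK}*{Lemma 2.3}, Proposition~\ref{prop:monotonebasis}, Corollary~\ref{cor:isomorphism}, Theorem~\ref{thm:QG}, Proposition~\ref{prop:dem} and Theorem~\ref{thm:main} (see also \cite{AABW2019}*{$\S$ 10.2})''; you have unpacked the direct-sum bookkeeping that the paper delegates to the black-box \cite{AADK}*{Lemma 2.3} and \cite{AABW2019}*{\S 10.2}. Your dichotomy argument for the lower bound on $\widetilde{\kk}_m$ under the growth hypothesis---locating a single block of size $\gtrsim m$ among the first $m$ basis vectors and invoking the doubling of $\Gamma$---is precisely the mechanism those cited results encode.
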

\begin{proof}
Combine Lemma~\ref{lem:doubling}, \cite{AADK}*{Lemma 2.3}, Proposition~\ref{prop:monotonebasis}, Corollary~\ref{cor:isomorphism}, Theorem~\ref{thm:QG}, Proposition~\ref{prop:dem} and Theorem~\ref{thm:main} (see also \cite{AABW2019}*{$\S$ 10.2}).
\end{proof}
We emphasize that, as we next show, the sequence $\eta$ chosen for building the direct sum plays no significant role.
We say that a subbasis of a Markushevich basis is complemented if the coordinate projection onto the subspace generated by the subbasis is bounded. Of course, in the lack of unconditionality, there are subbases that are not complemented so it seems hopeless trying to extend to Markushevich bases the Schr\"oder-Bernstein theorem for unconditional bases (see \cite{Wo97}*{Proposition 2.11}). In this situation the decomposition method comes to our aid.

The proof of the decomposition method for Markushevich bases in quasi-Banach spaces stated in Lemma~\ref{lem:PDC} is similar to the proof of Pe\l czy\'{n}ski's decomposition method for Banach spaces (see \cite{Pel1969} or \cite{AlbiacKalton2016}*{Theorem 2.2.3}) so we omit it. If $\BB_1$ is permutatively equivalent to a complemented subbasis of $\BB_2$, we write $\BB_1\lesssim_c \BB_2$.

\begin{lemma}\label{lem:PDC}Let $\BB_1$ and $\BB_2$ be Markushevich bases of quasi-Banach spaces. Assume that $\BB_1\lesssim_c \BB_2$, that $\BB_2\lesssim_c \BB_1$ and that, for some $0<q\le \infty$, $\ell_q(\BB_1)\sim\BB_1$ (we replace $\ell_q$ with $c_0$ if $q=\infty$).Then $\BB_1\sim \BB_2$.
\end{lemma}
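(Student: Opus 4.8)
The plan is to reproduce, at the level of Markushevich bases and permutative equivalence, Pe\l czy\'{n}ski's classical telescoping argument; the only ingredient genuinely specific to this setting is the translation of the relation $\lesssim_c$ into a direct-sum decomposition of bases. For Markushevich bases $\mathcal C$, $\mathcal D$ of quasi-Banach spaces write $\mathcal C\oplus\mathcal D$ for the Markushevich basis of $[\mathcal C]\oplus[\mathcal D]$ (equipped, say, with the $\ell_q$-norm) obtained by concatenating $\mathcal C$ and $\mathcal D$; up to permutative equivalence this is independent of the chosen $q$. The preliminary task is to record the elementary calculus of $\oplus$ modulo $\sim$: first, that $\oplus$ is associative and commutative up to $\sim$ and that $\mathcal C\sim\mathcal C'$ implies $\mathcal C\oplus\mathcal D\sim\mathcal C'\oplus\mathcal D$ and $\ell_q(\mathcal C)\sim\ell_q(\mathcal C')$; second, that $\ell_q(\mathcal C\oplus\mathcal D)\sim\ell_q(\mathcal C)\oplus\ell_q(\mathcal D)$ and $\ell_q(\mathcal C)\sim\mathcal C\oplus\ell_q(\mathcal C)$, the former by the obvious Fubini-type identification of $\bigl(\bigoplus_n([\mathcal C]\oplus[\mathcal D])\bigr)_q$ with $\bigl(\bigoplus_n[\mathcal C]\bigr)_q\oplus\bigl(\bigoplus_n[\mathcal D]\bigr)_q$ and the latter by splitting off the first block; and third, that $\mathcal C\lesssim_c\mathcal D$ if and only if $\mathcal D\sim\mathcal C\oplus\mathcal E$ for some Markushevich basis $\mathcal E$. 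This last point is where the hypothesis on coordinate projections is used: if $(\zz_j)_{j\in S}$ is a complemented subbasis of $\mathcal D=(\zz_j)_{j\in\NN}$ permutatively equivalent to $\mathcal C$, then boundedness of the coordinate projection onto $[\zz_j\colon j\in S]$ gives a decomposition $[\mathcal D]=[\zz_j\colon j\in S]\oplus[\zz_j\colon j\notin S]$ with bounded projections, so that $\mathcal D\sim(\zz_j)_{j\in S}\oplus(\zz_j)_{j\notin S}\sim\mathcal C\oplus\mathcal E$ with $\mathcal E=(\zz_j)_{j\notin S}$; the converse is immediate.

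With this dictionary the argument is purely formal. By the above, the hypotheses read $\BB_2\sim\BB_1\oplus\mathcal A_1$, $\BB_1\sim\BB_2\oplus\mathcal A_2$ for suitable Markushevich bases $\mathcal A_1$, $\mathcal A_2$, together with $\BB_1\sim\ell_q(\BB_1)$. Using the calculus just described one first gets $\BB_2\sim\BB_1\oplus\BB_2$:
\begin{align*}
\BB_2 &\sim \BB_1\oplus\mathcal A_1 \sim \ell_q(\BB_1)\oplus\mathcal A_1 \sim \bigl(\BB_1\oplus\ell_q(\BB_1)\bigr)\oplus\mathcal A_1 \\
 &\sim \BB_1\oplus\bigl(\ell_q(\BB_1)\oplus\mathcal A_1\bigr)\sim\BB_1\oplus\bigl(\BB_1\oplus\mathcal A_1\bigr)\sim\BB_1\oplus\BB_2 ,
\end{align*}
the last step using $\ell_q(\BB_1)\sim\BB_1$ once more. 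Symmetrically,
\begin{align*}
\BB_1 &\sim \ell_q(\BB_1)\sim\ell_q(\BB_2\oplus\mathcal A_2)\sim\ell_q(\BB_2)\oplus\ell_q(\mathcal A_2) \\
 &\sim \BB_2\oplus\ell_q(\BB_2)\oplus\ell_q(\mathcal A_2)\sim\BB_2\oplus\ell_q(\BB_2\oplus\mathcal A_2)\sim\BB_2\oplus\ell_q(\BB_1)\sim\BB_2\oplus\BB_1 .
\end{align*}
Combining, $\BB_1\sim\BB_2\oplus\BB_1\sim\BB_1\oplus\BB_2\sim\BB_2$, which is the assertion; the case $q=\infty$ is identical with $c_0$-sums replacing $\ell_q$-sums.

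I do not anticipate a conceptual obstacle: once the three bookkeeping facts above are in place, the rest is the verbatim Pe\l czy\'{n}ski manipulation, which is exactly why the authors omit it. The only point demanding (routine) care is the verification underlying those facts, namely that $\oplus$ and $\ell_q(\cdot)$ are well defined on permutative-equivalence classes and that the concatenated and interleaved systems are again semi-normalized, $M$-bounded fundamental Markushevich bases with the concatenated biorthogonal functionals. This rests only on the fact that isomorphisms and block permutations preserve the defining properties of a Markushevich basis and that all index sets involved are countable, so the various reindexings are unproblematic; in the quasi-Banach setting one additionally notes that $\bigl(\bigoplus_n\YY_n\bigr)_q$ is a genuine quasi-Banach space for every $0<q\le\infty$, with quasi-triangle constant controlled by those of the $\YY_n$ and of $\ell_q$.
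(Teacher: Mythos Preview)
Your proposal is correct and is precisely the approach the paper has in mind: the authors explicitly omit the proof, noting that it is ``similar to the proof of Pe\l czy\'nski's decomposition method for Banach spaces,'' and your argument is exactly that telescoping manipulation transplanted to permutative equivalence classes of Markushevich bases, together with the (routine) translation of $\lesssim_c$ into a direct-sum splitting via the bounded coordinate projection. The bookkeeping facts you isolate (associativity/commutativity of $\oplus$ up to $\sim$, compatibility with $\ell_q(\cdot)$, and $\ell_q(\mathcal C)\sim\mathcal C\oplus\ell_q(\mathcal C)$) are the only things one has to check beyond the classical proof, and they are as straightforward as you indicate.
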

\begin{proposition}\label{prop:equivalence}Let $0<p\le 1$, $\delta$ be a non-decreasing sequence of integers in $[2,\infty)$, and $\eta$ and $\eta'$ be unbounded sequences of positive integers. Then
$
\LLB_p[\delta,\eta]\sim \LLB_p[\delta,\eta'].
$
\end{proposition}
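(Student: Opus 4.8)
The plan is to obtain the equivalence from the decomposition method for Markushevich bases, Lemma~\ref{lem:PDC}, applied with $q=p$. All the spaces involved are isomorphic to $\ell_p$ (Theorem~\ref{thm:Baseslp}), so each $\LLB_p[\delta,\eta]$ is a Markushevich basis and the lemma is applicable once we have produced (i) for every unbounded sequence of positive integers $\eta$, an auxiliary basis $\BB_0$ with $\ell_p(\BB_0)\sim\BB_0$, and (ii) the relations $\BB_0\lesssim_c\LLB_p[\delta,\eta]$ and $\LLB_p[\delta,\eta]\lesssim_c\BB_0$, and likewise with $\eta'$ in place of $\eta$. The natural candidate for $\BB_0$ is $\LLB_p[\delta,\widehat\eta]$, where $\widehat\eta$ is an $\ell_p$-self-similar rearrangement-with-repetitions of $\eta$ to be specified below.

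First I would record the elementary block-embedding fact: if $N\le M$ then, by bi-monotonicity of $\XXB_p(\delta)$ (Proposition~\ref{prop:monotonebasis}), the initial segment $\XXB_p^{(N)}(\delta)=(\xx_k)_{k=1}^N$ is, in its natural order, a complemented subbasis of $\XXB_p^{(M)}(\delta)$: the coordinate projection $S_{[1,N]}[\XXB_p(\delta),\XX_p^{(M)}(\delta)]$ has norm at most $1$, and the inclusion $\XX_p^{(N)}(\delta)\subseteq\XX_p^{(M)}(\delta)$ is isometric. Summing over the blocks of an $\ell_p$-direct sum turns this into a statement about the bases $\LLB_p$. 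Given unbounded $\eta=(N_k)_{k=1}^\infty$ and $\eta'=(N'_j)_{j=1}^\infty$, the unboundedness of $\eta'$ lets us pick an increasing sequence $(j_k)_{k=1}^\infty$ with $N'_{j_k}\ge N_k$ for all $k$; mapping the $k$-th block of $\LLB_p[\delta,\eta]$ isometrically onto the span of the first $N_k$ basis vectors of the $j_k$-th block of $\LLB_p[\delta,\eta']$ identifies $\LLB_p[\delta,\eta]$, in the right order (so that this is a genuine equivalence, not merely a permutative one), with a subbasis of $\LLB_p[\delta,\eta']$. The coordinate projection of $\LLB_p[\delta,\eta']$ onto the closed span of this subbasis is ``discard all blocks outside $\{j_k\colon k\in\NN\}$, then apply $S_{[1,N_k]}$ inside the $k$-th remaining block'', a composition of norm-one operators, hence bounded by $1$. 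Therefore $\LLB_p[\delta,\eta]\lesssim_c\LLB_p[\delta,\eta']$, and by symmetry (using now that $\eta$ is unbounded) $\LLB_p[\delta,\eta']\lesssim_c\LLB_p[\delta,\eta]$ too, for all unbounded $\eta,\eta'$.

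Next I would fix the $\ell_p$-self-similar model. For an unbounded $\eta$ let $\widehat\eta$ be any sequence of positive integers in which each value of $\eta$ occurs countably infinitely often and no other value occurs; then $\widehat\eta$ is again unbounded. Using the associativity and commutativity of $\ell_p$-direct sums, $\ell_p(\LLB_p[\delta,\widehat\eta])$ equals, up to a permutation of the blocks of the direct sum (hence up to a permutative equivalence of the bases), a basis of the form $\LLB_p[\delta,\mu]$ in which each value of $\widehat\eta$ occurs with multiplicity $\aleph_0\cdot\aleph_0=\aleph_0$, that is, with the same multiplicity as in $\widehat\eta$ itself. A further block permutation then yields $\LLB_p[\delta,\mu]\sim\LLB_p[\delta,\widehat\eta]$, so that $\ell_p(\LLB_p[\delta,\widehat\eta])\sim\LLB_p[\delta,\widehat\eta]$.

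To finish, fix unbounded $\eta$ and $\eta'$ and put $\BB_0=\LLB_p[\delta,\widehat\eta]$. The previous paragraph gives $\ell_p(\BB_0)\sim\BB_0$, and the block-embedding step gives $\BB_0\lesssim_c\LLB_p[\delta,\eta]$, $\LLB_p[\delta,\eta]\lesssim_c\BB_0$, and likewise $\BB_0\lesssim_c\LLB_p[\delta,\eta']$, $\LLB_p[\delta,\eta']\lesssim_c\BB_0$. Applying Lemma~\ref{lem:PDC} twice, first with $\BB_1=\BB_0$ and $\BB_2=\LLB_p[\delta,\eta]$ and then with $\BB_2=\LLB_p[\delta,\eta']$, yields $\LLB_p[\delta,\eta]\sim\BB_0\sim\LLB_p[\delta,\eta']$, which is the assertion. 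I expect the only point requiring real care to be the second step: verifying that the subbasis exhibited there is complemented in the precise sense used by Lemma~\ref{lem:PDC}, and that the finite block inclusions are isometric and come with norm-one projections — which is exactly where the bi-monotonicity of $\XXB_p(\delta)$ enters — whereas everything else is formal bookkeeping with $\ell_p$-direct sums.
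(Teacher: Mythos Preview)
Your proof is correct and follows essentially the same route as the paper's: both arguments pass through an auxiliary sequence with the $\ell_p$-self-similarity property $\ell_p(\BB_0)\sim\BB_0$, then establish the two $\lesssim_c$ relations needed to invoke Lemma~\ref{lem:PDC}, using bi-monotonicity of $\XXB_p(\delta)$ for the block-wise complemented embeddings. The only cosmetic differences are that the paper takes $\BB_0=\LLB_p[\delta,\eta_{\mathrm{univ}}]$ with $\eta_{\mathrm{univ}}$ containing \emph{every} positive integer infinitely often (rather than only the values of $\eta$), and that it phrases the transitivity step as a reduction (``it suffices to assume $\eta$ universal'') instead of two explicit applications of the decomposition lemma.
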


\begin{proof}It suffices to prove the statement in the case when $\eta=(n_k)_{k=1}^\infty$ is ``universal'', i.e.,
\[
|\{ k\in\NN \colon n_k=m\}|=\infty
\]
for every $m\in\NN$. Then we have $\LLB_p[\delta,\eta']\lesssim_c \LLB_p[\delta,\eta]$ and $\ell_p(\LLB_p[\delta,\eta])\sim\LLB_p[\delta,\eta]$. Since $\eta'$ is unbounded, there is a subsequence $\eta''=(n_k'')_{k=1}^\infty$ of $\eta'$ such that $n_k\le n_k''$ for every $k\in\NN$. Then, since $\XXB_p(\delta)$ is a Schauder basis, $\LLB_p[\delta,\eta]\lesssim_c \LLB_p[\delta,\eta'']$. In turn, $\LLB_p[\delta,\eta'']\lesssim_c \LLB_p[\delta,\eta']$ and so $\LLB_p[\delta,\eta]\lesssim_c \LLB_p[\delta,\eta']$.
An appeal to Lemma~\ref{lem:PDC} completes the proof.
\end{proof}
We plan to use Theorem~\ref{thm:Baseslp} for showing the existence of permutatively non-equivalent almost greedy (Schauder) bases of $\ell_p$. To that end we need to find sequences $\delta$ whose associated functions $\Gamma$ have different ratios of growth. Let us start with some examples.

\begin{example}\label{ex:classical} In the classical case $d_n=2$ for all $n\in\NN$, we have $\Lambda(m)=2^{n-1}$ for every $m\in\NN$. Consequently,
\[
\lfloor \log_2(m)\rfloor \le \Gamma(m)\le \lceil \log_2(m)\rceil, \quad m\in\NN.
\]
This way, in light of Theorem~\ref{thm:Baseslp} and Corollary~\ref{cor:EstimateCC}, we obtain a quasi-greedy basis of $\ell_p$ ``as conditional as possible.''
\end{example}

\begin{example}Assume there is $a>0$ such that that $d_n\approx n^{a}$ for $n\in\NN$. Then $\sigma(n)\approx n^{1+a}$ for $n\in \NN$. We infer that there are $n_0\in\NN$ and $0<C_1\le C_2<\infty$ such that $R=\sigma(n_0) > C_2^{1/a}$ and $C_1 n^{1+a} \le \sigma(n) \le C_2 n^{1+a}$ for every $n\ge \NN$. Then, for $n\ge n_0$,
\[
C_1^{\sum_{j=0}^{n-n_0-1} (1+a)^j } R^{(1+a)^{n-n_0}} \le \Lambda(n)\le C_2^{\sum_{j=0}^{n-n_0-1} (1+a)^j } R^{(1+a)^{n-n_0}}.
\]
Since
\[
\sum_{j=0}^{n-n_0-1} (1+a)^j\approx\frac{(1+a)^{n-n_0}-1}{a},\] we infer that
\[
\log_R( \Lambda(n))\approx (1+a)^n, \quad n\ge 0.\]
Hence for $m$ large enough,
\[
\Gamma(m) \approx \log_{1+a} (\log_R( m )) \approx \log (\log ( m )).
\]
\end{example}

The above example hints at the difficulty of estimating the function $\Gamma$ associated to a given sequence $\delta$. Our strategy here will be to start from where we want to arrive, i.e., we will see that under a mild condition on $\Gamma$, there is a sequence $\delta$ whose associated function is (equivalent to) $\Gamma$.

\begin{proposition}\label{eq:ChoosingGamma}Let $(M_n)_{n=0}^\infty$ be an increasing sequence of integers such that $M_0=1$, $M_1=2$, $M_2\ge 4 $, and for $n\in \NN$,
\begin{equation}\label{eq:goodMn}
\left\lceil \frac{M_{n+1}-M_{n}}{M_{n}-M_{n-1}} \right\rceil \le \left\lfloor \frac{M_{n+2}-M_{n+1}}{M_{n+1}-M_{n}} \right\rfloor.
\end{equation}
Then there exists a non-decreasing sequence $\delta$ of integers in $[2,\infty)$ whose associated sequence $\Lambda$ is given by $\Lambda(n)=M_n$ for every $n\in\NN\cup\{0\}$.
\end{proposition}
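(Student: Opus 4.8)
The plan is to reconstruct $\delta$ from the prescribed data by reading off the required increments of $\sigma$ block by block. Recall from \eqref{sigmafunction} that $\sigma(1)=2$ and $d_k=\sigma(k+1)-\sigma(k)$ for $k\ge1$, and that $\Lambda(n)=\sigma^{(n)}(1)$ with $\sigma^{(0)}=\Id$. Hence it suffices to produce a non-decreasing sequence $\delta$ of integers in $[2,\infty)$ whose associated $\sigma$ satisfies $\sigma(M_n)=M_{n+1}$ for every $n\ge0$: since $M_0=1$ this gives $\Lambda(0)=1=M_0$, and inductively $\Lambda(n+1)=\sigma(\Lambda(n))=\sigma(M_n)=M_{n+1}$, so that $\Lambda(n)=M_n$ for all $n$.

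First I would fix the block data. For $n\ge0$ set $L_n=M_{n+1}-M_n\in\NN$, the length of the integer interval $B_n=[M_n,M_{n+1})$, and put $a_n=L_{n+1}/L_n=\frac{M_{n+2}-M_{n+1}}{M_{n+1}-M_n}$. Since $B_n$ has exactly $L_n$ elements and $\sigma(M_{n+1})-\sigma(M_n)=\sum_{k\in B_n}d_k$, the requirement $\sigma(M_n)=M_{n+1}$ for all $n$ is equivalent to $\sum_{k\in B_n}d_k=L_{n+1}$, i.e.\ the entries of $\delta$ on $B_n$ must average to $a_n$. Note $L_0=1$, so $B_0=\{1\}$ and we are forced to take $d_1=L_1=M_2-2$, which is $\ge2$ because $M_2\ge4$. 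For the construction inside a general block, write the Euclidean division $L_{n+1}=\lfloor a_n\rfloor\,L_n+r_n$ with $0\le r_n<L_n$, and on $B_n$ assign the value $\lfloor a_n\rfloor$ to the first $L_n-r_n$ indices and the value $\lfloor a_n\rfloor+1$ to the last $r_n$ indices. Then $(d_k)_{k\in B_n}$ is a non-decreasing string of integers whose sum is $\lfloor a_n\rfloor L_n+r_n=L_{n+1}$, exactly as needed; telescoping together with the induction above yields $\sigma(M_n)=M_{n+1}$ for all $n$, hence $\Lambda(n)=M_n$.

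It then remains to verify that $\delta$ is globally non-decreasing and bounded below by $2$. Monotonicity only has to be checked across consecutive blocks: the largest value used on $B_{n-1}$ is at most $\lceil a_{n-1}\rceil$ (it is $\lfloor a_{n-1}\rfloor+1$ when $r_{n-1}>0$, and $a_{n-1}=\lceil a_{n-1}\rceil$ when $r_{n-1}=0$, which happens exactly when $a_{n-1}\in\ZZ$), while the smallest value used on $B_n$ is $\lfloor a_n\rfloor$, and the hypothesis \eqref{eq:goodMn} read at the index $n$ says precisely $\lceil a_{n-1}\rceil\le\lfloor a_n\rfloor$. For the lower bound, the smallest entry on $B_n$ is $\lfloor a_n\rfloor$; here $\lfloor a_0\rfloor=a_0=M_2-2\ge2$, and for $n\ge1$ iterating \eqref{eq:goodMn} gives $\lfloor a_n\rfloor\ge\lceil a_{n-1}\rceil\ge\cdots\ge\lceil a_0\rceil\ge a_0\ge2$.

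I do not anticipate a real obstacle: the whole argument is driven by condition \eqref{eq:goodMn}, which is designed so that the naive block-by-block (``greedy'') assignment of increments stays non-decreasing at the seams. The only points demanding a little care are the degenerate initial block $B_0=\{1\}$ (where $L_0=1$ forces $r_0=0$ and $d_1=M_2-2$) and the case $a_n\in\ZZ$, in which $r_n=0$ and no ``$+1$'' entry is used on $B_n$, so the relevant estimates only improve.
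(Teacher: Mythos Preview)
Your proof is correct and follows essentially the same idea as the paper's: both constructions fill the integer block $[M_{n-1},M_n)$ with the two values $\lfloor a_{n-1}\rfloor$ and $\lceil a_{n-1}\rceil$ so that the increments of $\sigma$ over the block sum to $L_n$, and both use hypothesis~\eqref{eq:goodMn} to guarantee monotonicity at the seams. The only difference is packaging: the paper presents $\sigma$ as (the restriction to $\NN$ of) a convex piecewise-affine interpolant $h=\max\{f_n,g_n\}$ with slopes $\lfloor A_n\rfloor$ and $\lceil A_n\rceil$ on $[M_{n-1},M_n]$, whereas you write the $d_k$ directly via Euclidean division $L_{n+1}=\lfloor a_n\rfloor L_n+r_n$; the resulting sequence $\delta$ is the same.
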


\begin{proof}
 For $n\in\NN$ and $t\in\RR$, put $f_n(t)= M_n + \lfloor A_n\rfloor(t-M_{n-1})$ and $g_n(t)= M_{n+1}-\lceil A_n\rceil (M_n-t)$, where 
\[
A_n= \frac{M_{n+1}-M_{n}}{M_{n}-M_{n-1}}.
\]
We have $g_n(M_{n-1})\le M_n=f_n(M_{n-1})$ and $f_n(M_n)\le M_{n+1}=g_n(M_n)$. Hence, there is
$h\colon[1,\infty)\to\RR$ such that  for $n\in\NN$ and $M_{n-1}\le t \le M_n$,
\[
h(t)=\max \{ f_n(t), g_n(t)\}, 
\]
and  $h(M_{n-1})=M_n$ for every $n\in\NN$. Since $\lceil A_n \rceil \le \lfloor A_{n+1} \rfloor$ for every $n\in\NN$, $h$ is convex. Therefore, since $h(1)=h(M_0)=M_1=2$, $h(2)=h(M_1)=M_2\ge 4$, and $h(n)\in\ZZ$  for every $n\in\NN$,
 the sequence $\delta=(h(n+1)-h(n))_{n=1}^\infty$ satisfies the desired property.
\end{proof}

\begin{proposition}\label{thm:concave}Let $\phi\colon[0,\infty) \to [0,\infty)$ be an increasing concave function with $\phi(0)=0$. Then there exists a non-decreasing sequence $\delta$ of integers in $[2,\infty)$ whose associated function $\Gamma$ satisfies $\Gamma(m) \approx \phi(\log (m))$ for $m\ge 2$.
\end{proposition}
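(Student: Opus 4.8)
The plan is to construct the sequence $(M_n)_{n=0}^\infty$ satisfying the hypotheses of Proposition~\ref{eq:ChoosingGamma} in such a way that its left inverse behaves like $\phi(\log(\cdot))$, and then invoke that proposition to produce $\delta$. Specifically, recall that $\Gamma$ is the left inverse of $\Lambda$, and if $\Lambda(n)=M_n$ then $\Gamma(m)=\max\{n\colon M_n\le m\}$, so to get $\Gamma(m)\approx \phi(\log m)$ we want $M_n$ to satisfy (roughly) $\log M_n \approx \phi^{-1}(n)$, i.e.\ $M_n \approx e^{\psi(n)}$ where $\psi=\phi^{-1}$ is an increasing \emph{convex} function with $\psi(0)=0$ (the inverse of an increasing concave function through the origin). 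The natural candidate is therefore $M_n = \lceil e^{\psi(n)} \rceil$ or some similar integer-valued approximant, and the main work is checking that such a choice — after possibly passing to a subsequence of scales or padding the first few terms — can be arranged to meet the three explicit requirements $M_0=1$, $M_1=2$, $M_2\ge 4$, and above all the doubly-nested floor/ceiling monotonicity condition \eqref{eq:goodMn}.

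First I would reduce to the case where $\phi$ is ``nice'': since we only need $\Gamma(m)\approx\phi(\log m)$ up to multiplicative constants, and since replacing $\phi$ by $c\phi$ for a constant $c$ only rescales, I may assume without loss of generality that $\phi$ grows slowly enough (e.g.\ $\phi(t)\le t$ for large $t$, which can be forced by comparing with $\min\{\phi(t),t\}$, still concave) so that the increments $M_{n+1}-M_n$ grow but not too wildly. The quantities controlling \eqref{eq:goodMn} are the ratios $A_n = (M_{n+1}-M_n)/(M_n-M_{n-1})$, and the condition asks $\lceil A_n\rceil \le \lfloor A_{n+1}\rfloor$, i.e.\ roughly that the ratio of consecutive gaps is non-decreasing with integer gaps of at least $1$ between successive ceiling/floor levels. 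If $M_n$ is genuinely exponential-of-convex, the gaps $M_{n+1}-M_n$ are log-convex, so $A_n$ is non-decreasing; the only real issue is the rounding, and that is handled by spacing things out: if the underlying continuous model has $A_n$ increasing and diverging, then for $n$ large the jump from $\lceil A_n\rceil$ to $A_{n+1}$ exceeds $1$ automatically, and for the finitely many small $n$ one adjusts by hand (this is exactly the role of the hypotheses $M_1=2$, $M_2\ge4$ in the statement — they are the base of such an induction).

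Concretely, I would proceed as follows. Set $\psi = \phi^{-1}$ (defined on $[0,\infty)$, increasing, convex, $\psi(0)=0$), and — after the normalization above — choose a rapidly increasing sequence of ``sampling times'' $0 = t_0 < t_1 < t_2 < \cdots$ and define $M_n$ to be the nearest integer to $\exp(\psi(t_n))$, with $t_n$ chosen inductively so that: (i) $M_0=1$, $M_1=2$, $M_2\ge4$; (ii) the gaps $M_{n+1}-M_n$ are strictly increasing and the ratios $A_n$ satisfy $\lceil A_n\rceil \le \lfloor A_{n+1}\rfloor$ (possible because $\psi$ convex makes $\psi(t_{n+2})-\psi(t_{n+1})$ much larger than $\psi(t_{n+1})-\psi(t_n)$ once the $t_n$ are spread out, which forces $A_{n+1}$ to exceed $A_n+1$); (iii) the map $n\mapsto t_n$ is, up to constants, the inverse of $m\mapsto \psi^{-1}(\log m)=\phi(\log m)$ composed with the sampling, so that $\Gamma(m)=\max\{n\colon M_n\le m\}$ is comparable to $\phi(\log m)$. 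Then Proposition~\ref{eq:ChoosingGamma} produces the desired $\delta$ with $\Lambda(n)=M_n$, and by construction $\Gamma(m)\approx\phi(\log m)$ for $m\ge2$.

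The hard part will be item (ii): verifying the nested rounding inequality \eqref{eq:goodMn} for a concrete integer sequence. The cleanest route is probably \emph{not} to write down an explicit formula for $M_n$ but to build the sequence of gaps $g_n := M_{n+1}-M_n$ directly by greedy recursion — choose $g_0=1$ (so $M_1=2$), $g_1\ge 2$ (so $M_2\ge4$), and thereafter let $g_{n+1}$ be the smallest integer that is both $\ge \lceil g_n^2/g_{n-1}\rceil$ (this guarantees $A_{n+1}=g_{n+1}/g_n \ge g_n/g_{n-1}=A_n$ after one checks the ceiling bookkeeping, hence $\lceil A_n\rceil\le \lfloor A_{n+1}\rfloor$ once $A_{n+1}\ge A_n+1$, which one arranges by taking $g_{n+1}$ slightly larger when needed) \emph{and} keeps $M_n = 1+\sum_{j<n} g_j$ in the prescribed comparability class with $e^{\psi}$. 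One must then prove this two-sided constraint is consistent — i.e.\ that the lower bound forced by convexity of $\Gamma^{-1}$ does not overshoot the target growth $e^{\psi(n)}$ — which is where the concavity of $\phi$ (equivalently, the super-additivity/convexity of $\psi$) is used essentially: convexity of $\psi$ says $e^{\psi}$ already grows at least log-exponentially, so a log-convex gap sequence tracking it is available. I would present this as a lemma on constructing log-convex integer sequences with prescribed growth, and then the proposition follows by combining it with Proposition~\ref{eq:ChoosingGamma} and the identification of $\Gamma$ as the left inverse of $\Lambda$.
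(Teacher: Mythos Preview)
Your overall strategy is correct and matches the paper's: set $\psi=\phi^{-1}$, aim for $M_n\approx e^{\psi(n)}$, and invoke Proposition~\ref{eq:ChoosingGamma}. However, there is a genuine gap. Your argument that ``for $n$ large the jump from $\lceil A_n\rceil$ to $A_{n+1}$ exceeds $1$ automatically'' requires the ratios $A_n$ to diverge, which in turn needs $\psi(x+1)-\psi(x)\to\infty$, i.e.\ $\psi$ strictly superlinear. But a concave $\phi$ with $\phi(0)=0$ may perfectly well satisfy $\phi(x)\approx x$ (e.g.\ $\phi=\mathrm{id}$), in which case $\psi(x)\approx x$, $e^{\psi(n)}\approx c^n$ for some $c>1$, and the ratios $A_n$ stay bounded near $c$; then $\lceil A_n\rceil \le \lfloor A_{n+1}\rfloor$ can fail for every $n$ (take $c=e$). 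Your normalization $\phi\le\mathrm{id}$ does nothing to exclude this borderline case, and neither the sampling-times idea nor the greedy gap recursion resolves it: both rely on being able to inflate $A_{n+1}$ past $A_n+1$, which is impossible if the target growth forces $A_n$ to remain bounded. The paper handles the dichotomy explicitly: if $\phi(x)\approx x$ it simply quotes Example~\ref{ex:classical} (constant $d_n=2$, $\Lambda(n)=2^n$); otherwise $\psi(x)/x\to\infty$, and only then does the divergence argument go through.

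For the strictly superlinear case the paper's execution is also more direct than either of your proposed routes. Rather than introducing sampling times $t_n$ (which, as you yourself note in item~(iii), must then be shown to be linear in $n$ up to constants, essentially undoing the freedom you hoped to gain) or a greedy gap recursion with a separate consistency lemma, the paper takes $M(x)=\lfloor e^{\psi(x)}\rfloor$ outright, shows via the asymptotics $M(x+1)-M(x)\sim e^{\psi(x+1)}$ that the ratio $H(x)=(M(x+2)-M(x+1))/(M(x+1)-M(x))$ satisfies $H(x+1)-H(x)\to\infty$, so \eqref{eq:goodMn} holds for all $x\ge a$ with some fixed $a$, and then splices a prefix $M_n=2^n$ for $0\le n\le b$ onto the shifted tail $M(n-b+a)-M(a)+2^b$. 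No inductive construction is needed, and $\Gamma(m)\approx\phi(\log m)$ follows because the prefix shifts indices by a bounded amount and $\phi\circ\log$ is doubling on $[2,\infty)$.
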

\begin{proof}
If $\phi(x)\approx x$ for $x\ge 0$, the result follows from Example~\ref{ex:classical}. So, we assume that $\lim_{x\to\infty} \psi(x)/x=\infty$, where $\psi=\phi^{-1}$ is the inverse of $\phi$. Then, since $\psi$ is convex,
\[
\lim_{x\to\infty} \psi(x+1)-\psi(x)=\infty.
\]
Consequently,
\[
F(x):=e^{\psi(x+1)}-e^{\psi(x)}= e^{\psi(x+1)} (1-e^{\psi(x)-\psi(x+1)}) \sim e^{\psi(x+1)}, \quad x\to \infty.
\]
Let $M(x)=\lfloor e^{\psi(x)} \rfloor$, $x\ge 0$. We have
\[
\left| M(x+1)-M(x)- F(x) \right| \le 1, \quad x\ge 0.
\]
We infer that $ M(x+1)-M(x)\sim e^{\psi(x+1)}$ as $x$ goes to $\infty$. Therefore
\[
H(x):=\frac{ M(x+2)-M(x+1)}{M(x+1)-M(x)}
\sim F(x+1)
\sim e^{\psi(x+2) }, \quad x\to\infty.
\]
Iterating the argument we obtain
\[
\frac{H(x+1)}{H(x)} \sim F(x+2) \sim e^{\psi(x+3) }, \quad x\to\infty.
\]
We infer that $\lim_{x\to\infty} H(x+1)-H(x)=\infty$. Consequently, there is $a\in\NN$ such that $2\le M(a+1)-M(a)$ and
\[
5\le \left\lceil \frac{ M(x+2)-M(x+1)}{M(x+1)-M(x)}\right\rceil\le \left\lfloor \frac{M(x+3)-M(x+2)}{M(x+2)-M(x+1)} \right\rfloor, \quad x\ge a.
\]
Let $b\in\NN$ be such that $2^b\le M(a+1)-M(a)<2^{b+1}$. Define
\[
M_n=\begin{cases} 2^n & \text{ if } 0\le n \le b, \\ M_n=M(n-b+a)-M(a)+2^b &\text{ if } b\le n.\end{cases}
\]
It is routine to check that $M_2\ge 4$ and that the sequence $(M_k)_{k=1}^\infty$ satisfies \eqref{eq:goodMn}. Then, by Proposition~\ref{eq:ChoosingGamma} there exists a non-decreasing sequence $\delta$ of integers in $[2,\infty)$ whose associated function $\Gamma$ is the left inverse of $(M_k)_{k=1}^\infty$. If $m\ge 2^b$ and $\Gamma(m)=n$ we have
\[
b-a-1 + \phi(\log ( m+M(a) -2^b +1)) < n< b-a + \phi(\log ( m+M(a) -2^b +1)).
\]
Since the function $\phi\circ\log$ is doubling on the interval $[2,\infty)$, we infer that $\Gamma(m) \approx \phi(\log (m))$ for $m\ge 2$.
\end{proof}

\begin{theorem} \label{thm:twospaces} Let $\YY$ be either the space $\XX_p$ or the space $\ell_p$, $0<p\leq 1$. For every concave increasing function $\phi\colon[0,\infty) \to [0,\infty)$ with $\phi(0)=0$ there exists an almost greedy Schauder basis $\BB$ for $\YY$ with $k_m[\BB, \YY]\approx \phi^{1/p}(\log m)$ for $m\geq 2$.
\end{theorem}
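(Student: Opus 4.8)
The plan is to combine the two pillars already assembled in the paper: Theorem~\ref{thm:Baseslp}, which handles the case $\YY=\ell_p$, and Theorem~\ref{thm:main}, which handles $\YY=\XX_p$, together with Proposition~\ref{thm:concave}, which converts the analytic datum $\phi$ into a concrete non-decreasing sequence $\delta$ of integers in $[2,\infty)$ whose associated function $\Gamma$ satisfies $\Gamma(m)\approx \phi(\log m)$ for $m\ge 2$. Given $\phi$, I would first invoke Proposition~\ref{thm:concave} to produce such a $\delta$.

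For $\YY=\ell_p$, I would then pick \emph{any} unbounded sequence $\eta$ of positive integers (say $\eta=(k)_{k=1}^\infty$) and set $\BB=\LLB_p[\delta,\eta]$. By Theorem~\ref{thm:Baseslp} this is an almost greedy Schauder basis of a space isomorphic to $\ell_p$, and it satisfies $\kk_m[\BB,\ell_p]\approx \Gamma^{1/p}(m)\approx \phi^{1/p}(\log m)$ for $m\ge 2$. Transporting the basis through the isomorphism $(\bigoplus_{k=1}^\infty \XX_p^{(N_k)}(\delta))_p\cong \ell_p$ furnished by Corollary~\ref{cor:isomorphism} gives an actual basis of $\ell_p$ with the same conditionality and almost greediness, since both properties are preserved under isomorphisms carrying one basis to the other (equivalence of bases preserves $\kk_m$ up to constants and preserves the almost greedy property).

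For $\YY=\XX_p$, the basis $\BB=\XXB_p(\delta)$ itself does the job: by Proposition~\ref{prop:monotonebasis} it is a (bi-monotone) Schauder basis, by Corollary~\ref{cor:AG+Env}(a) it is almost greedy, and by Theorem~\ref{thm:main} one has $\kk_m[\XXB_p(\delta),\XX_p(\delta)]\approx \Gamma^{1/p}(m)\approx \phi^{1/p}(\log m)$ for $m\ge 2$. Thus in both cases the required basis is explicitly exhibited.

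The only genuine point requiring care — and the step I expect to be the main obstacle — is the hypothesis in Propositions~\ref{eq:ChoosingGamma} and \ref{thm:concave} that $\delta$ be \emph{non-decreasing}: Theorem~\ref{thm:main} (and hence the $\XX_p$ half of the statement) and the quantitative clause of Theorem~\ref{thm:Baseslp} both rely on $\sigma$ being convex. This is exactly what Proposition~\ref{thm:concave} guarantees, using the concavity of $\phi$; so the logical structure is clean, and the proof is essentially an assembly of the cited results. The small remaining bookkeeping — checking that $\phi^{1/p}(\log m)$ is, like $\Gamma^{1/p}(m)$, comparable to its value at $2m$ so that the asymptotic equivalences match up cleanly, and that the degenerate case $\phi(x)\approx x$ (already singled out in the proof of Proposition~\ref{thm:concave} via Example~\ref{ex:classical}) is covered — is routine. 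Accordingly, the proof reduces to: apply Proposition~\ref{thm:concave} to get $\delta$; for $\YY=\ell_p$ take $\BB=\LLB_p[\delta,\eta]$ and appeal to Theorem~\ref{thm:Baseslp} and Corollary~\ref{cor:isomorphism}; for $\YY=\XX_p$ take $\BB=\XXB_p(\delta)$ and appeal to Proposition~\ref{prop:monotonebasis}, Corollary~\ref{cor:AG+Env}, and Theorem~\ref{thm:main}.
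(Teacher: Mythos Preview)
Your proposal is correct and follows essentially the same route as the paper's proof: invoke Proposition~\ref{thm:concave} to produce a non-decreasing $\delta$ with $\Gamma(m)\approx\phi(\log m)$, then apply Theorem~\ref{thm:main} (together with Proposition~\ref{prop:monotonebasis} and Corollary~\ref{cor:AG+Env}) for $\YY=\XX_p$ and Theorem~\ref{thm:Baseslp} for $\YY=\ell_p$. Your write-up is simply a more detailed unpacking of the paper's two-line proof.
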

\begin{proof}
For $\XX_p$ it follows from Proposition \ref{thm:concave} and and Theorem \ref{thm:main} and for $\ell_p$ from Proposition \ref{thm:concave} and Theorem \ref{thm:Baseslp}.
\end{proof}

\begin{corollary}\label{cor:potential}
Suppose $0<p\leq 1$. For every $0< a\le 1/p$ the spaces $\XX_p$ and $\ell_p$ contain
an almost greedy basis with conditionality constants $\kk_m \approx (\log m)^a$ for $m\ge 2$.
\end{corollary}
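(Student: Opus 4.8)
The plan is simply to specialize Theorem~\ref{thm:twospaces} to the right concave function. Given $0<a\le 1/p$, I would set $\phi(x)=x^{ap}$ for $x\ge 0$. Because $0<ap\le 1$, this $\phi$ is increasing, concave, and satisfies $\phi(0)=0$, so it fulfils the hypotheses of Theorem~\ref{thm:twospaces}. Applying that theorem first with $\YY=\XX_p$ and then with $\YY=\ell_p$ produces in each case an almost greedy Schauder basis $\BB$ for $\YY$ with
\[
\kk_m[\BB,\YY]\approx \phi^{1/p}(\log m)=\big((\log m)^{ap}\big)^{1/p}=(\log m)^a,\quad m\ge 2,
\]
which is exactly the asserted conclusion.

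There is essentially no obstacle: the only point worth checking is that $\phi(x)=x^{ap}$ is concave, and this is precisely where the restriction $a\le 1/p$ enters, since concavity of $x\mapsto x^{ap}$ on $[0,\infty)$ is equivalent to $ap\le 1$. It is also worth remarking (for context, not as part of the proof) that this range is sharp: by Corollary~\ref{cor:EstimateCC} any quasi-greedy basis of a $p$-Banach space has $\kk_m=O((\log m)^{1/p})$, so no almost greedy basis of $\XX_p$ or $\ell_p$ can have conditionality constants growing like $(\log m)^a$ with $a>1/p$. Thus the corollary records exactly the achievable exponents.
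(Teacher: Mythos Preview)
Your proof is correct and is essentially identical to the paper's own argument: the paper simply says to apply Theorem~\ref{thm:twospaces} with $\phi(x)=x^c$ for $0<c\le 1$, which is exactly your choice $c=ap$.
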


\begin{proof}
Just apply Theorem~\ref{thm:twospaces} with $\phi(x)=x^c$, $0<c\le 1$.
\end{proof}

The following was result was proved for the case $\ell_1$ in \cite{DHK2006} using completly different techniques.

\begin{corollary}\label{dkkplessthan1} Both the spaces $\XX_p$ and $\ell_p$ for $0<p\leq 1$
contain a continuum of mutually permutatively non-equivalent almost greedy bases.
\end{corollary}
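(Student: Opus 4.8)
The plan is to deduce Corollary~\ref{dkkplessthan1} from Theorem~\ref{thm:twospaces} by exhibiting an uncountable family of concave increasing functions whose associated conditionality constants have pairwise incomparable rates of growth, so that the corresponding almost greedy bases cannot be permutatively equivalent. First I would recall the stability principle, alluded to in the introduction, that $\BB_1 \sim \BB_2$ implies $\kk_m[\BB_1,\YY] \approx \kk_m[\BB_2,\YY]$: since $S_A[\BB_2,\YY]$ is conjugate to $S_{\pi(A)}[\BB_1,\YY]$ via the permutative isomorphism $T$, one has $\kk_m[\BB_2,\YY] \le \|T\|\,\|T^{-1}\|\,\kk_m[\BB_1,\YY]$ and symmetrically. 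Hence if two of our bases had conditionality constants that are not of the same order of magnitude, they are not permutatively equivalent.

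Next I would produce the family of exponents. For each $c\in(0,1]$ set $\phi_c(x)=x^c$, which is increasing, concave, and vanishes at $0$; by Theorem~\ref{thm:twospaces} there is an almost greedy Schauder basis $\BB_c$ of $\YY$ (where $\YY$ is $\XX_p$ or $\ell_p$) with $\kk_m[\BB_c,\YY] \approx (\log m)^{c/p}$ for $m\ge 2$. For $c\ne c'$ the sequences $(\log m)^{c/p}$ and $(\log m)^{c'/p}$ satisfy neither $(\log m)^{c/p}\lesssim (\log m)^{c'/p}$ nor the reverse, since their ratio is an unbounded power of $\log m$ (in whichever direction). Therefore $\BB_c \not\sim \BB_{c'}$ whenever $c\ne c'$, and $\{\BB_c : c\in(0,1]\}$ is a continuum of mutually permutatively non-equivalent almost greedy bases of $\YY$. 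This is essentially the content of Corollary~\ref{cor:potential} together with the stability remark, so the proof is short.

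I do not expect a serious obstacle here, as all the heavy lifting is already in Theorem~\ref{thm:twospaces} (hence in Propositions~\ref{prop:monotonebasis}, \ref{thm:concave}, Theorem~\ref{thm:main} and Theorem~\ref{thm:Baseslp}). The one point that deserves a line of care is the direction of the stability inequality: one must check that a \emph{permutative} equivalence — not merely an equivalence — preserves the growth of $\kk_m$, which is immediate because $\sup_{|A|\le m}\|S_A[\BB,\YY]\|$ is manifestly invariant under reindexing the basis, so conjugating by a bounded isomorphism changes it only by the constant $\|T\|\,\|T^{-1}\|$. A brief proof would therefore read: by Theorem~\ref{thm:twospaces} applied to $\phi(x)=x^c$ for $0<c\le 1$ we obtain almost greedy Schauder bases $\BB_c$ of $\YY$ with $\kk_m[\BB_c,\YY]\approx (\log m)^{c/p}$; if $\BB_c\sim\BB_{c'}$ then $(\log m)^{c/p}\approx(\log m)^{c'/p}$, forcing $c=c'$; hence $(\BB_c)_{0<c\le 1}$ is the desired continuum.
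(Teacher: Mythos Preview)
Your proposal is correct and follows essentially the same route as the paper: the paper's proof is simply ``It is immediate from Corollary~\ref{cor:potential},'' which in turn applies Theorem~\ref{thm:twospaces} with $\phi(x)=x^c$ for $0<c\le 1$ to obtain bases with $\kk_m\approx(\log m)^{c/p}$, exactly as you do. Your explicit verification of the stability of $\kk_m$ under permutative equivalence is a useful elaboration of the remark the authors make only in the introduction.
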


\begin{proof}It is immediate from Corollary~\ref{cor:potential}.
\end{proof}
\section{Lindenstrauss dual bases}\label{Lindualbases}
\noindent By Proposition~\ref{prop:monotonebasis}, the sequence $\XXB_p^*(\delta)$ is a Schauder basis of its closed linear span $[\XXB_p^*(\delta)]$ in $\XX_p^*(\delta)$. We will write
\[
\XX_{p,0}^*(\delta) :=[\XXB_p^*(\delta)].
\]
If $0<p<1$ there is not much to say about this basis apart from that, by Proposition~\ref{newpropqenv}, it is equivalent to the unit vector system of $c_0$. If $p=1$, since $\XXB_1(\delta)$ is a conditional basis, so is $\XXB_1^*(\delta)$. In fact, by \cite{AlbiacKalton2016}*{Proposition 3.2.3} and Proposition~\ref{prop:monotonebasis}, we have
\[
\kk_m[\XXB_1^*(\delta), \XX_{1,0}^*(\delta)]=\kk_m[\XXB_1(\delta), \XX_{1}(\delta)]\approx \Gamma(m), \quad m\ge 2.
\]
In this section, we will discuss the basis $\XXB_1^*(\delta)$ and extend the results from \cite{BBGHO2018}, where the authors deal with the classical case $d_{n}=2$.

First, we focus on the Banach space $\XX_{1,0}^*(\delta)$. Note that, since $\XX_1(\delta)$ is a $\SL_1$-space, $\XX_1^*(\delta)$ is isomorphic to $\ell_\infty$ \cite{LinRos1969}. Note also that if $p<1$, by Corollary~\ref{cor:Xpnorminginloo} and Fact~\ref{fact:n}, the dual map $J^*\colon \ell_\infty \to \XX_p^*(\delta)$ restricts to an isomorphism from $c_0$ onto $\XX_{p,0}^*(\delta)$. Next, we show that this result holds even for $p=1$.

\begin{proposition}\label{prop:normingco} For fixed $0<p \le 1$, let $\XX_p=\XX_p(\delta)$, where $\delta=(d_n)_{n=1}^\infty$ is a sequence of integers in $[2,\infty)$. For every $y\in c_{0}$ we have
\[
\Vert y \Vert_{\XX_p^*}= \sup\{|\langle y, x\rangle| \colon x\in B_{\XX_p} \}
\le \Vert y \Vert_\infty \le 2^{1/p} \Vert y \Vert_{\XX_p^*}.
\]
That is, $B_{\XX_p}$ is a norming set for $(c_0,\Vert \cdot\Vert_\infty)$, and the map
$y\mapsto \langle y,\cdot\rangle$ is an isomorphism from $c_0$ onto $ \XX_{p,0}^*(\delta)$.
\end{proposition}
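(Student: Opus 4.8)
The plan is to establish the chain of inequalities
$\Vert y\Vert_{\XX_p^*}\le \Vert y\Vert_\infty\le 2^{1/p}\Vert y\Vert_{\XX_p^*}$
for $y\in c_0$, the first equality $\Vert y\Vert_{\XX_p^*}=\sup\{|\langle y,x\rangle|\colon x\in B_{\XX_p}\}$ being merely the definition of the norm induced on $\XX_{p,0}^*(\delta)$ by the pairing (it makes sense because, by Fact~\ref{fact:n}, every $y\in c_{00}$ — hence, by a density argument, every $y\in c_0$ — gives a well-defined bounded functional on $\XX_p$, since $\XXB_p$ is a bi-monotone basis of $\XX_p$ by Proposition~\ref{prop:monotonebasis} and the dual basis is $\XXB_p^*$). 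From these inequalities both assertions follow: $B_{\XX_p}$ is a norming set for $(c_0,\Vert\cdot\Vert_\infty)$, and $y\mapsto\langle y,\cdot\rangle$ is an isomorphism (with constant at most $2^{1/p}$) from $c_0$ onto $\XX_{p,0}^*(\delta)$.

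First I would prove the middle-to-right inequality $\Vert y\Vert_\infty\le 2^{1/p}\Vert y\Vert_{\XX_p^*}$. Fix $j\in\NN$ realizing (or nearly realizing) $\Vert y\Vert_\infty=\sup_j|y(j)|$. By Lemma~\ref{lem:2} there exists $x\in\XX_p^{(N)}$ for any $N\ge j$ with $\Vert x\Vert=2^{1/p}$ and $S_N(x)=\ee_j$; choosing $N\ge j$ large enough that the tail of $x$ beyond coordinate $N$ does not interfere — more precisely, using that $\supp(x)\subseteq[1,\sigma(N+1))$ and that $y\in c_0$ so the pairing converges — we get $|\langle y,x\rangle|$ close to $|y(j)|$ while $\Vert x\Vert=2^{1/p}$. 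Hence $|y(j)|\le 2^{1/p}\Vert y\Vert_{\XX_p^*}$, and taking the supremum over $j$ gives the claim. (For $p<1$ this was already known via Corollary~\ref{cor:Xpnorminginloo} and Fact~\ref{fact:n}; the point here is that the same elementary argument via Lemma~\ref{lem:2} works verbatim at $p=1$.)

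For the left inequality $\Vert y\Vert_{\XX_p^*}\le\Vert y\Vert_\infty$, take any $x\in B_{\XX_p}$, write $x=\sum_k a_k\xx_k$ with $a_k=\langle\xx_k^*,x\rangle$, and use Facts~\ref{fact:q} and~\ref{fact:e}: the support of $x$ in $\FF^\NN$ is contained in $\NN$ and, by Fact~\ref{fact:w}, each coordinate $x(i)$ is a finite combination of the $a_k$'s, but the cleaner route is to note that $\langle y,x\rangle=\sum_i y(i)x(i)$ and that $\Vert x\Vert_p^p=\sum_i|x(i)|^p\ge\sum_i|x(i)|$ is false in general — instead I would invoke $\ell_p\hookrightarrow\ell_1$ via \eqref{eq:lpembedsinl1}, which gives $\sum_i|x(i)|\le(\sum_i|x(i)|^p)^{1/p}=\Vert x\Vert\le 1$, whence $|\langle y,x\rangle|\le\sum_i|y(i)||x(i)|\le\Vert y\Vert_\infty\sum_i|x(i)|\le\Vert y\Vert_\infty$. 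Taking the supremum over $x\in B_{\XX_p}$ yields $\Vert y\Vert_{\XX_p^*}\le\Vert y\Vert_\infty$.

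I expect the only genuinely delicate point — and the reason this proposition is stated separately for $p=1$ rather than folded into Corollary~\ref{cor:Xpnorminginloo} — to be the justification that $y\mapsto\langle y,\cdot\rangle$ actually maps $c_0$ into $\XX_{p,0}^*(\delta)=[\XXB_p^*(\delta)]$ and is onto it, not merely into $\XX_p^*(\delta)$. For surjectivity: the finitely supported $y$'s have $\langle y,\cdot\rangle\in\operatorname{span}(\XXB_p^*)$ by biorthogonality (Proposition~\ref{prop:e}), and these are dense in $c_0$ in $\Vert\cdot\Vert_\infty$; combined with the just-proved two-sided estimate, the map is a bounded bijection with bounded inverse between $c_0$ and the closure $\XX_{p,0}^*(\delta)$ of that span, completing the proof.
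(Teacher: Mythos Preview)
Your proposal is correct and follows essentially the same route as the paper. The nontrivial inequality $\Vert y\Vert_\infty\le 2^{1/p}\Vert y\Vert_{\XX_p^*}$ is obtained in both cases by invoking Lemma~\ref{lem:2} to produce, for given $j$ and large $N$, a vector $x\in\XX_p^{(N)}$ with $\Vert x\Vert=2^{1/p}$ and $S_N(x)=\ee_j$, and then using $y\in c_0$ to kill the tail contribution; the only cosmetic difference is that the paper first fixes $N$ with $\Vert y-S_N(y)\Vert_\infty\le\varepsilon$ and then lets $j$ range over $[1,N]$, whereas you fix $j$ first and send $N\to\infty$. Your explicit treatment of the easy inequality (via $\ell_p\hookrightarrow\ell_1$) and of the surjectivity onto $\XX_{p,0}^*(\delta)$ (via Fact~\ref{fact:n} and density of $c_{00}$) fills in details the paper leaves to the reader.
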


\begin{proof}Given $\varepsilon>0$ and $j\in\NN$, pick $N\in\NN$ such that $\Vert y-S_N(y)\Vert_\infty \le \varepsilon$. Let $1 \le j \le N$. Use Lemma~\ref{lem:2} to choose $x\in\XX_p$ such that $S_N(x)=2^{-1/p}\ee_j$ and $\Vert x\Vert=1$. Then
\begin{align*}
|y(j)|&=2^{1/p} | \langle y,S_N(x)\rangle|\\
&=2^{1/p} | \langle S_N(y),x\rangle|\\
&\le 2^{1/p} | \langle y,x\rangle|+ 2^{1/p} | \langle y-S_N(y),x\rangle|\\
&\le 2^{1/p} \Vert y \Vert_{\XX_p^*}+2^{1/p}\varepsilon.
\end{align*}
Since $\varepsilon$ is arbitrary, taking the supremum on $j$ we obtain the desired inequality.
\end{proof}

Note that, as a consequence of Proposition~\ref{prop:normingco}, the basis $\XXB_1^*(\delta)$ is neither quasi-greedy \cite{DKKT2003}*{Corollary 8.6}, nor super-democratic \cite{AADK}*{Proposition 4.20}. Next, we carry out a quantitive study of this basis.   From now on, as no confusion is possible, we will write  $\XXB_1=\XXB_1(\delta)$, 
$\XXB_1^*=\XXB_1^*(\delta)$, $\XX_1=\XX_1(\delta)$, and
$\XX_{1,0}^*=\XX_{1,0}^*(\delta)$.
We start by estimating the democracy functions and quasi-greedy constants of $\XXB_1^*$.
The $m$th Lebesgue (quasi-greedy) constant $\Leb^{q}_m[\BB,\YY]$ of a basis $\BB$ of a quasi-Banach space $\YY$ will be the smallest constant $C$ such that
\[
\max\{ \Vert \GG_m(f) \Vert , \Vert f- \GG_m(f) \Vert\} \le C \Vert f \Vert, \quad f\in\YY.
\]
\begin{lemma}\label{lem:lowerestimatedualbasis}Let $\delta$ be a non-decreasing sequence of integers in $[2,\infty)$. Then $\XXB^*_{1}$ is not SUCC, therefore it is neither quasi-greedy nor superdemocratic.
Quantitatively, for every $m\in\NN$:
\begin{enumerate}
\item[(i)] $\Phi_m^{l,s}[\XXB_1^*,\XX_{1,0}^*] \le 2$,
\item[(ii)] $\Phi_m^{u}[\XXB_1^*,\XX_{1,0}^*] >\Gamma(m)$, and
\item[(iii)] $\Leb_m^q[\XXB_1^*,\XX_{1,0}^*] \ge \frac{1}{8} \Gamma(m)$.
\end{enumerate}
\end{lemma}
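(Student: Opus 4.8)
The plan rests on one reduction, Proposition~\ref{prop:normingco} with $p=1$: every element $\Ind_{\gamma,A}[\XXB_1^*,\XX_{1,0}^*]=\sum_{k\in A}\gamma_k\xx_k^*$ lies in $c_{00}\subseteq c_0$, so its norm in $\XX_{1,0}^*$ is $\sup\{|\langle\sum_{k\in A}\gamma_k\xx_k^*,x\rangle|:x\in B_{\XX_1}\}$ and is squeezed between $\tfrac12\|\sum_{k\in A}\gamma_k\xx_k^*\|_\infty$ and $\|\sum_{k\in A}\gamma_k\xx_k^*\|_\infty$. Hence every \emph{upper} estimate for a democracy function reduces to bounding an $\ell_\infty$-norm of a sum $\sum_{k\in A}\xx_k^*$, and every \emph{lower} estimate reduces to exhibiting a good test vector in $B_{\XX_1}$. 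For (i), given $m$ I would pick $n$ with $|J_n|=\Lambda(n+1)-\Lambda(n)\ge m$, which is possible because $\sigma(k)\ge 2k$ forces $\Lambda(n+1)\ge 2\Lambda(n)$, hence $|J_n|\ge\Lambda(n)\ge 2^n$. Taking $A=J_n$ with positive signs, I would compute $\Ind_{J_n}=\sum_{j\in J_n}\xx_j^*$ coordinate by coordinate: for each $k<\Lambda(n+1)$, Fact~\ref{fact:h} shows that $\{j\in J_n:\xx_j^*(k)\neq0\}$ is exactly the single block $J_{k,\,n-\Gamma(k)}$, and Lemma~\ref{lem:blocksum}(b) gives $\sum_{j\in J_{k,\,n-\Gamma(k)}}\xx_j^*(k)=1$, while for $k\ge\Lambda(n+1)$ the sum vanishes. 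So $\Ind_{J_n}$ is the plain indicator sequence $\Ind_{[1,\Lambda(n+1))}$, whence $\|\Ind_{J_n}\|_\infty=1$ and $\Phi^{l,s}_m\le\|\Ind_{J_n}\|_{\XX_{1,0}^*}\le1\le2$.

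For (ii) the plan is to take $A=\{1,\dots,m\}$ with positive signs: the first coordinate of $\Ind_A=\sum_{k=1}^m\xx_k^*$ is $\sum_{k=1}^m\xx_k^*(1)$, which already exceeds $\Gamma(m)$ by Lemma~\ref{lem:lpnormcolumn}(ii), so $\|\Ind_A\|_\infty>\Gamma(m)$. To upgrade this to the genuine $\XX_{1,0}^*$-norm I would pair $\Ind_A$ with a test vector of $B_{\XX_1}$: the vector $u_m$ from the proof of Proposition~\ref{prop:lower} has $\XXB_1$-coefficient sequence $(\xx_k^*(1))_{k\le m}$ (an induction using Fact~\ref{fact:e}), so that $\langle\Ind_A,u_m\rangle=\sum_{k=1}^m\xx_k^*(1)$, while $\|u_m\|$ is controlled via Fact~\ref{fact:i}; combining this pairing with the column identities of Lemmas~\ref{lem:blocksum} and~\ref{lem:lpnormcolumn} yields $\Phi^u_m\ge\|\Ind_A\|_{\XX_{1,0}^*}>\Gamma(m)$. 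Together with (iii) below, (i)–(ii) exhibit a failure of SUCC for $\XXB_1^*$; since $\XXB_1^*$ is a basic sequence, the standard implications then apply — quasi-greedy bases have bounded restricted truncation operators and so, by Lemma~\ref{PWF}, are SUCC, and a super-democratic basis $\BB$ is SUCC because $\|\Ind_{\gamma,B}\|\le\Phi^{u,s}_{|B|}\lesssim\Phi^{l,s}_{|B|}\le\|\Ind_{\gamma,A}\|$ whenever $B\subseteq A$ — so $\XXB_1^*$ is neither quasi-greedy nor super-democratic.

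For (iii) I would build $f\in\XX_{1,0}^*$ on which the $m$th greedy projection blows up, following the Lindenstrauss and Dilworth–Mitra schemes \cite{DilworthMitra2001}. Concretely, $f$ would be a finite sum of consecutive signed blocks whose first block is a small multiple of the vector $\Ind_A$, $A=\{1,\dots,m\}$, from (ii), the later blocks disjointly placed and of strictly smaller "height" (so the $m$th greedy set of $f$ is forced to be $A$), with the signs arranged so that the successive cancellations — quantified via Fact~\ref{fact:i} and the column identities of Lemmas~\ref{lem:blocksum} and~\ref{lem:lpnormcolumn} — keep $\|f\|_{\XX_{1,0}^*}$ of order $1$. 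Then $\max\{\|\GG_m(f)\|,\|f-\GG_m(f)\|\}\ge\|\GG_m(f)\|=\|\Ind_A\|_{\XX_{1,0}^*}>\Gamma(m)$ while $\|f\|\lesssim1$, and tracking the $p=1$ constants gives $\Leb^q_m\ge\tfrac18\Gamma(m)$.

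The hard part is precisely the lower bounds (ii) and especially (iii). Because the $\XX_{1,0}^*$-norm is a dual norm realized through $B_{\XX_1}$, each such bound forces one to produce concrete elements there — a good test vector for (ii), an explicit "bad" $f$ for (iii) — and the recurring nuisance is that the obvious candidates $u_m$ drag along their tails $\rr_k$, which carry unit $\ell_1$-mass and cost a fixed factor. Extracting the stated constants, namely a full $\Gamma(m)$ in (ii) rather than a fraction of it, and a \emph{uniformly} bounded $\|f\|$ in the telescoping construction for (iii), is where the delicate accounting with Facts~\ref{fact:e},~\ref{fact:i} and Lemmas~\ref{lem:blocksum},~\ref{lem:lpnormcolumn} has to be spent.
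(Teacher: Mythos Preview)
Your approach to (ii) matches the paper's: take $x_m=\sum_{j=1}^m\xx_j^*$ and read off the first coordinate via Lemma~\ref{lem:lpnormcolumn}(ii). (Both your pairing with $u_m$ and the paper's use of Proposition~\ref{prop:normingco} incur a factor $\tfrac12$, so what either argument literally yields is $\Phi_m^u>\tfrac12\Gamma(m)$.)

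Your proof of (i) is correct but \emph{different} from the paper's, and this difference is exactly what costs you (iii). You take $A=J_n$ with constant signs and obtain the pleasant identity $\sum_{j\in J_n}\xx_j^*=\Ind_{[1,\Lambda(n+1))}$ via Lemma~\ref{lem:blocksum}(b). The paper instead keeps $A=\{1,\dots,m\}$ but uses \emph{alternating} signs, setting $y_m=\sum_{j=1}^m(-1)^j\xx_j^*$, and bounds $\|y_m\|_\infty\le 2$ by an alternating-series estimate along each column, using Lemma~\ref{Iproperties}(a) and Lemma~\ref{lem:blocksum}(a). The payoff of the paper's choice is that $y_m$ and $x_m$ live on the \emph{same} support with coefficients of the \emph{same} modulus, so (iii) drops out in one line: splitting $\{1,\dots,m\}$ into evens $E$ and odds $O$, both $\Ind_E[\XXB_1^*]$ and $\Ind_O[\XXB_1^*]$ arise as greedy projections (respectively greedy remainders) of arbitrarily small perturbations of $y_m$, whence $\|x_m\|\le\|\Ind_E\|+\|\Ind_O\|\lesssim \Leb_m^q\,\|y_m\|$, and combining with $\|x_m\|>\tfrac12\Gamma(m)$ and $\|y_m\|\le 2$ gives $\Leb_m^q\ge\tfrac18\Gamma(m)$.

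Because your witness for (i) lives on $J_n$ rather than on $\{1,\dots,m\}$, you lose this shortcut. Your plan for (iii) --- a bespoke $f$ built from signed blocks with tuned cancellation --- remains only a sketch that you yourself flag as ``the hard part'', and that is a genuine gap: you have not produced such an $f$, and the obvious attempts run into the obstacle that cancelling the large first coordinate of $c\,x_m$ (of order $c\,\Gamma(m)$) requires many further $\xx_j^*$'s whose own contributions must then be controlled. Relatedly, your witnesses for (i) and (ii) have \emph{disjoint} supports, so they do not by themselves exhibit the failure of SUCC as you claim; the paper's $x_m$ and $y_m$ do, again via the even/odd split. The fix is simply to redo (i) the paper's way, with alternating signs on $\{1,\dots,m\}$; then both (iii) and the SUCC failure are immediate.
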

\begin{proof}Let $x_m= \sum_{j=1}^m \xx_j^*$ and $y_m=\sum_{j=1}^m(-1)^j \xx_j^*$. By Proposition~\ref{prop:normingco}, Lemma~\ref{lem:lpnormcolumn}, and Fact~\ref{fact:s},
\[
\left\Vert x_m \right\Vert\ge \frac{1}{2} \left\Vert x_m\right\Vert_\infty\ge \frac{1}{2} \sum_{j=1}^m x_j^*(1) >\frac{1}{2}\Gamma(m).
\]
Let $k\in\NN$. By Fact~\ref{fact:s} the sequence $((-1)^j \xx_j^*(k))_{j=1}^\infty$ is alternating. Since
\[
J_{k,n,m}=J_{k,n}\cap\{1,\dots,m\}, \quad k,m \in\NN ,\, n\in\NN\cup\{0\}
\]
is an integer interval, taking also into account Lemma~\ref{Iproperties}(a), we obtain
\[
\left| \sum_{j\in J_{k,n,m}} (-1)^j \xx_j^*(k) \right|\le A_{k,n}:=\sup_{j\in J_{k,n}} |\xx_j^*(k)|.
\]
Since $(J_{k,n,m})_{n=1}^\infty$ are pairwise disjoint by Fact~\ref{fact:h}, Lemma~\ref{lem:blocksum} (a) yields
\[
\Vert y_m\Vert \le \Vert y_m\Vert_\infty
= \sup_k \left| \sum_{n=0}^\infty \sum_{j\in J_{k,n,m}} (-1)^j \xx_j^*(k) \right|
\le \sup_k \sum_{n=0}^\infty A_{k,n} \le 2.
\]
We infer that $\XXB_{1}^*$ is not SUCC and that, if $C_m=\Leb_m^q[\XXB_1^*,\XX_{1,0}]$,
\[
\Gamma(m) \le 2 \Vert x_m \Vert \le 4 C_m\Vert y_m \Vert \le 8 C_m.\qedhere
\]
\end{proof}
We also provide an estimate for the lower democracy function.
\begin{lemma}Let $\delta$ be a non-decreasing sequence of integers in $[2,\infty)$. Then,
\[\Phi_m^{l}[\XXB_1^*,\XX^*_{1,0}] \le 2,\quad m\in\NN.\]
\end{lemma}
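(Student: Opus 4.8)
The plan is to exhibit, for each $m\in\NN$, a single set $A$ with $|A|\ge m$ for which $\Vert\Ind_A[\XXB_1^*,\XX_{1,0}^*]\Vert_{\XX_1^*}\le 2$; since $\Phi_m^l$ is the infimum of $\Vert\Ind_A\Vert$ over all such sets, this will suffice. By Proposition~\ref{prop:normingco}, any finitely supported $y\in\FF^\NN$ satisfies $\Vert y\Vert_{\XX_1^*}\le\Vert y\Vert_\infty$, and each $\xx_j^*$ belongs to $c_{00}$ (Fact~\ref{fact:g}); moreover $\xx_j^*(k)\ge 0$ for all $j,k$ (Fact~\ref{fact:s}). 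Hence it is enough to find $A$ with $|A|\ge m$ such that
\[
\Bigl\Vert\sum_{j\in A}\xx_j^*\Bigr\Vert_\infty=\sup_{k\in\NN}\sum_{j\in A}\xx_j^*(k)\le 2.
\]

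The choice will be $A=\{\Lambda(i)\colon 0\le i\le m-1\}$; since $\Lambda$ is strictly increasing, $|A|=m$. The key combinatorial point is that $A$ meets each integer interval $J_{k,n}=[\sigma^{(n)}(k),\sigma^{(n)}(k+1))$ from \eqref{eq:Intervals} in at most one point. Indeed, if $i\ge n$ then $\Lambda(i)=\sigma^{(i)}(1)=\sigma^{(n)}(\sigma^{(i-n)}(1))=\sigma^{(n)}(\Lambda(i-n))$, and the strict monotonicity of $\sigma^{(n)}$ on the integers shows $\Lambda(i)\in J_{k,n}$ exactly when $\Lambda(i-n)=k$; if $i<n$ then $\Lambda(i)<\Lambda(n)=\sigma^{(n)}(1)\le\sigma^{(n)}(k)$, so $\Lambda(i)\notin J_{k,n}$. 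Since $\Lambda$ is injective, for fixed $k$ and $n$ there is at most one index $i$ with $\Lambda(i-n)=k$, which proves $|A\cap J_{k,n}|\le 1$.

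Now fix a column $k$. By Fact~\ref{fact:x}, $\xx_j^*(k)=0$ unless $j\in\bigcup_{n\ge0}J_{k,n}$, and these intervals are pairwise disjoint by Fact~\ref{fact:h}. Using $|A\cap J_{k,n}|\le 1$ and $\xx_j^*(k)\ge 0$,
\[
\sum_{j\in A}\xx_j^*(k)=\sum_{n=0}^\infty\sum_{j\in A\cap J_{k,n}}\xx_j^*(k)\le\sum_{n=0}^\infty\sup_{j\in J_{k,n}}\xx_j^*(k)\le\frac{1}{1-2^{-1}}=2,
\]
where the last inequality is Lemma~\ref{lem:blocksum}(a) with $p=q=1$. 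Taking the supremum over $k$ gives $\Vert\sum_{j\in A}\xx_j^*\Vert_\infty\le 2$, hence $\Vert\Ind_A[\XXB_1^*,\XX_{1,0}^*]\Vert\le 2$ with $|A|=m$, and therefore $\Phi_m^l[\XXB_1^*,\XX_{1,0}^*]\le 2$. The only step requiring any care is the combinatorial claim that $\{\Lambda(i)\}$ is spread out across the $J_{k,n}$; the rest is a direct combination of the norming property (Proposition~\ref{prop:normingco}) with the geometric-decay estimate (Lemma~\ref{lem:blocksum}(a)), so no serious obstacle is anticipated. (Note the argument in fact needs no monotonicity hypothesis on $\delta$.)
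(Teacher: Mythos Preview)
Your proof is correct and follows essentially the same approach as the paper: both choose the set $A$ inside $\{\Lambda(i)\colon i\ge 0\}$, observe that this set meets each interval $J_{k,n}$ in at most one point, and then invoke Lemma~\ref{lem:blocksum}(a) with $p=q=1$ to bound the column sums by $2$. Your write-up is simply more explicit about the combinatorial claim $|A\cap J_{k,n}|\le 1$ (which the paper asserts without justification), and your closing remark that the monotonicity hypothesis on $\delta$ is not actually used is correct.
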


\begin{proof}Let $B=\{\Lambda(n) \colon n\ge 0\}$. It is clear that, for every $k\in\NN$ and $n\ge 0$, $J_{k,n}\cap B$ is either empty or a singleton. By Lemma~\ref{lem:blocksum}(a),
\[
\sum_{j\in B} |\xx_j^*(k)| \le 2, \quad k \in \NN,
\]
whence
\[
\left\Vert \sum_{n\in A} \xx_j^*\right\Vert \le \left\Vert \sum_{n\in A} \xx_j^*\right\Vert_\infty \le 2
\] for every $A\subseteq B$ finite.
\end{proof}

We close our study of the democracy functions of the dual bases of Lindenstrauss bases with an upper bound for the upper superdemocracy constant.
\begin{lemma}\label{lem:bounddualdemocracy}Let $\delta$ be a non-decreasing sequence of integers in $[2,\infty)$. Then,
\[
\Phi_m^{u,s}[\XXB_1^*,\XX_{1,0}^*] \le 1+\Gamma(m) , \quad m\in\NN.
\]
\end{lemma}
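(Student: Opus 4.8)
The plan is to reduce the computation of $\Vert\Ind_{\gamma,A}[\XXB_1^*,\XX_{1,0}^*]\Vert$ to a coordinatewise (sup-norm) estimate, and then to quote the column estimate already established in Lemma~\ref{lem:lpnormcolumn}. The point is that all the real work has been front-loaded: Proposition~\ref{prop:normingco} (with $p=1$) tells us that on $c_0$ the $\XX_1^*$-norm is dominated by the sup-norm, and Lemma~\ref{lem:lpnormcolumn}(i) tells us that every $m$-term restriction of a column of the matrix $(\xx_j^*(k))_{j,k}$ has $\ell_1$-norm at most $1+\Gamma(m)$.

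First I would fix a finite set $A\subseteq\NN$ with $|A|\le m$ and a family of signs $\gamma=(\gamma_j)_{j\in A}$, and set $y:=\Ind_{\gamma,A}[\XXB_1^*,\XX_{1,0}^*]=\sum_{j\in A}\gamma_j\,\xx_j^*$. By the Remark following Proposition~\ref{prop:monotonebasis} this is precisely the element $\sum_{j\in A}\gamma_j\,\xx_j^*$ of $\FF^\NN$, which is finitely supported (Fact~\ref{fact:g}); in particular $y\in c_0$ and $\Vert y\Vert_{\XX_{1,0}^*}=\Vert y\Vert_{\XX_1^*}$. Proposition~\ref{prop:normingco} then gives $\Vert y\Vert_{\XX_1^*}\le\Vert y\Vert_\infty$, so it remains only to bound $\Vert y\Vert_\infty$.

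Next I would estimate $|y(k)|$ for each $k\in\NN$. Using the positivity of the columns (Fact~\ref{fact:s}),
\[
|y(k)|=\left|\sum_{j\in A}\gamma_j\,\xx_j^*(k)\right|\le\sum_{j\in A}|\xx_j^*(k)|,
\]
and the right-hand side is at most $1+\Gamma(m)$ by Lemma~\ref{lem:lpnormcolumn}(i) with $p=1$ (this is the place where the standing hypothesis that $\delta$ be non-decreasing is used, since that lemma relies on Lemma~\ref{Iproperties}). Taking the supremum over $k$ gives $\Vert y\Vert_\infty\le 1+\Gamma(m)$, hence $\Vert y\Vert_{\XX_{1,0}^*}\le 1+\Gamma(m)$; taking the supremum over all admissible $A$ and $\gamma$ yields $\Phi_m^{u,s}[\XXB_1^*,\XX_{1,0}^*]\le 1+\Gamma(m)$.

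There is essentially no serious obstacle here; the only step requiring a touch of care is the identification of $\Ind_{\gamma,A}[\XXB_1^*,\XX_{1,0}^*]$ with a finitely supported scalar sequence, so that Proposition~\ref{prop:normingco} is literally applicable and the passage from the $\XX_1^*$-norm to the $\ell_\infty$-norm is legitimate.
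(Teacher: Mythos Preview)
Your proof is correct and follows essentially the same approach as the paper: bound the $\XX_1^*$-norm by the $\ell_\infty$-norm, then apply the column estimate of Lemma~\ref{lem:lpnormcolumn}(i). The paper's argument is terser (it does not explicitly invoke Proposition~\ref{prop:normingco} or Fact~\ref{fact:s}, since the inequality $\Vert y\Vert_{\XX_1^*}\le\Vert y\Vert_\infty$ and the triangle inequality suffice on their own), but the substance is identical.
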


\begin{proof}Let $A\subseteq\NN$ with $|A|\le m$ and let $\gamma$ be a sequence of signs. Applying Lemma~\ref{lem:lpnormcolumn} we obtain
\[
\Vert \Ind_{\gamma,A}[\XXB_1^*,\XX_{1,0}^*]\Vert
\le \Vert \Ind_{\gamma,A}[\XXB_1^*,\XX_{1,0}^*]\Vert_\infty
\le \sup_k \sum_{j\in A} |\xx_j^*(k)|\\
\le 1+\Gamma(m).\qedhere
\]
\end{proof}
Given a basis $\BB$ of a quasi-Banach space $\YY$ and $m\in\NN$, the $m$th Lebesgue (almost greedy) constant $\Leb_m^a[\BB,\YY]$ is the optimal constant $C$ such that
\[
\Vert f- \GG_m[\BB,\YY](f) \Vert \le C \Vert f - S_A[\BB,\YY](f) \Vert
\]
for every $A\subseteq \NN$ with $|A|=m$. It is clear that
\[
\left(-1+(\Leb_m^q[\BB,\YY])^p\right)^{1/p} \le \Leb_m^a[\BB,\YY] \le \Leb_m[\BB,\YY].
\]

\begin{theorem}Let $\delta$ be a non-decreasing sequence of integers in $[2,\infty)$. Then for $m\ge 2$,\begin{align*}
\Phi_m^{u}[\XXB_1^*,\XX_{1,0}^*]
&\approx
\Phi_m^{s,u}[\XXB_1^*,\XX_{1,0}^*]
\approx \Leb_m^q[\XXB_1^*,\XX_{1,0}^*]
\approx \Leb_m^a[\XXB_1^*,\XX_{1,0}^*]\\
&\approx \Leb_m [\XXB_1^*,\XX_{1,0}^*]
\approx \kk_m [\XXB_1^*,\XX_{1,0}^*]
\approx \Gamma(m)
\end{align*}
and 
\[
\Phi_m^{l}[\XXB_1^*,\XX_{1,0}^*] \approx \Phi_m^{l,s}[\XXB_1^*,\XX_{1,0}^*]\approx 1.
\]
\end{theorem}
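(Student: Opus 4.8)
The statement breaks into two independent assertions: that the seven quantities in the displayed chain are all comparable to $\Gamma(m)$, and that $\Phi_m^l[\XXB_1^*,\XX_{1,0}^*]\approx\Phi_m^{l,s}[\XXB_1^*,\XX_{1,0}^*]\approx1$. Almost all of the needed estimates are already on record in this section; the plan is to wire the lemmas together with a few universal inequalities between the constants, the only genuinely new ingredient being the upper bound $\Leb_m[\XXB_1^*,\XX_{1,0}^*]\lesssim\Gamma(m)$.

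First I would collect the lower bounds ``$\gtrsim\Gamma(m)$''. Lemma~\ref{lem:lowerestimatedualbasis}(ii) gives $\Phi_m^u[\XXB_1^*,\XX_{1,0}^*]\gtrsim\Gamma(m)$, and since $\Phi_m^{s,u}\approx\Phi_m^u$ the same holds with $\Phi_m^{s,u}$. The identity $\kk_m[\XXB_1^*,\XX_{1,0}^*]=\kk_m[\XXB_1,\XX_1]$ noted at the start of the section, combined with Theorem~\ref{thm:main} for $p=1$, yields $\kk_m\approx\kk_m^c\approx\Gamma(m)$. Since $\GG_m(f)=S_A[\XXB_1^*,\XX_{1,0}^*](f)$ with $|A|\le m$, we have $\max\{\|\GG_m(f)\|,\|f-\GG_m(f)\|\}\le\max\{\kk_m,\kk_m^c\}\,\|f\|$, so $\Leb_m^q\lesssim\kk_m$, and with Lemma~\ref{lem:lowerestimatedualbasis}(iii) this gives $\Leb_m^q\approx\Gamma(m)$. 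The universal inequalities $\Leb_m^q-1\le\Leb_m^a\le\Leb_m$ stated just before the theorem (with $p=1$) then force $\Leb_m^a\gtrsim\Gamma(m)$ — on the finitely many $m\ge2$ for which $\Gamma$ is bounded one instead uses $\Leb_m^a\ge1$ — and hence $\Leb_m\gtrsim\Gamma(m)$ as well. Thus every quantity in the chain is $\gtrsim\Gamma(m)$.

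The hard part is the reverse inequality $\Leb_m[\XXB_1^*,\XX_{1,0}^*]\lesssim\Gamma(m)$, which I would prove directly. By Proposition~\ref{prop:normingco} the norm of $\XX_{1,0}^*$ is comparable (within a factor $2$) to the sup norm of the associated sequence in $c_0$, and under this identification the $j$th coordinate functional of $\XXB_1^*$ is $\xx_j$, with $\|\xx_j\|_{\XX_1}=2$ by Fact~\ref{fact:q}. Fix $f\in\XX_{1,0}^*$; we may assume its $m$th greedy set $A$ has $|A|=m$, write $\GG_m(f)=\sum_{j\in A}a_j\xx_j^*$, and choose an $m$-term linear combination $z=\sum_{j\in B}b_j\xx_j^*$, $|B|\le m$, with $\|f-z\|_\infty\lesssim\sigma_m(f)$. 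The $j$th coordinate of $f-z$ is $\langle\xx_j,f-z\rangle$, hence has modulus $\le2\|f-z\|_{\XX_1^*}\le2\|f-z\|_\infty\lesssim\sigma_m(f)$; in particular $|a_j|\lesssim\sigma_m(f)$ for $j\in A\setminus B$ and $|a_j-b_j|\lesssim\sigma_m(f)$ for $j\in A\cap B$. If $A\not\subseteq B$, any $k_0\in A\setminus B$ gives $\min_{k\in A}|a_k|\le|a_{k_0}|\lesssim\sigma_m(f)$, so by the greedy property $|a_j|\lesssim\sigma_m(f)$ for all $j\notin A$ and hence $|b_j|\lesssim\sigma_m(f)$ for $j\in B\setminus A$; if $A\subseteq B$ then $|A|=|B|=m$ forces $A=B$. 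In every case $f-\GG_m(f)=(f-z)+\bigl(z-\GG_m(f)\bigr)$, where $z-\GG_m(f)$ is a linear combination of $(\xx_j^*)_{j\in A\cup B}$ with $|A\cup B|\le2m$ whose coefficients are all $\lesssim\sigma_m(f)$ in modulus. Evaluating coordinatewise, applying Lemma~\ref{lem:lpnormcolumn}(i) (for $p=1$, $\sup_n\sum_{j\in C}|\xx_j^*(n)|\le1+\Gamma(|C|)$) and the doubling estimate $\Gamma(2m)\le\Gamma(m)+1$ of Lemma~\ref{lem:doubling} gives $\|z-\GG_m(f)\|_\infty\lesssim\sigma_m(f)\,\Gamma(m)$, so $\|f-\GG_m(f)\|_{\XX_1^*}\le\|f-\GG_m(f)\|_\infty\lesssim\sigma_m(f)\,\Gamma(m)$. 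The delicate point — the main obstacle — is controlling the coefficients of $z-\GG_m(f)$ using both the coordinate functionals and the greedy property before invoking the column estimate; note the super-democracy-based bound \eqref{eq:LebEstimates} is of no use, since $\XXB_1^*$ is not super-democratic ($\Delta_s=\infty$).

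It then remains to collect the surviving upper bounds and the lower-democracy claim. Lemma~\ref{lem:bounddualdemocracy} gives $\Phi_m^{s,u}\lesssim\Gamma(m)$, hence $\Phi_m^u\lesssim\Phi_m^{s,u}\lesssim\Gamma(m)$; $\Leb_m^a\le\Leb_m\lesssim\Gamma(m)$ by the previous paragraph; and $\Leb_m^q\lesssim\kk_m\approx\Gamma(m)$ are already in hand. Together with the lower bounds this yields $\Phi_m^u\approx\Phi_m^{s,u}\approx\Leb_m^q\approx\Leb_m^a\approx\Leb_m\approx\kk_m\approx\Gamma(m)$ for $m\ge2$. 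Finally, $\Phi_m^{l,s}\le\Phi_m^l$ always, $\Phi_m^l\le2$ by the preceding lemma on the lower democracy function, while for any signs $\gamma$ and any $A$ with $|A|\ge m$, evaluating $\Ind_{\gamma,A}[\XXB_1^*,\XX_{1,0}^*]$ at the coordinate $\max A$ gives (Facts~\ref{fact:g} and~\ref{fact:k}) $\|\Ind_{\gamma,A}\|_\infty\ge1$, hence $\|\Ind_{\gamma,A}\|_{\XX_1^*}\ge\tfrac12$ by Proposition~\ref{prop:normingco}; thus $1\lesssim\Phi_m^{l,s}\le\Phi_m^l\le2$, i.e., $\Phi_m^l\approx\Phi_m^{l,s}\approx1$.
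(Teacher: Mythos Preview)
Your proof is correct. The structure matches the paper's in that both recognize that the only non-trivial inequality is the upper bound $\Leb_m[\XXB_1^*,\XX_{1,0}^*]\lesssim\Gamma(m)$, with everything else following from the lemmas of this section together with standard chains of inequalities among the constants; you simply make this wiring explicit, and your lower bound $\Phi_m^{l,s}\ge\tfrac12$ via evaluation at $\max A$ is a clean touch.

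Where you genuinely diverge from the paper is in how $\Leb_m\lesssim\Gamma(m)$ is obtained. The paper imports the Lebesgue-type inequality of \cite{BBGHO2018}*{Theorem 1.1}, which bounds $\Leb_m$ by a telescoping sum involving the upper super-democracy functions $D_m=\Phi_m^{u,s}[\XXB_1,\XX_1]$ and $D_m^*=\Phi_m^{u,s}[\XXB_1^*,\XX_{1,0}^*]$; feeding in $D_m\le 2m$ and $D_m^*\le 1+\Gamma(m)$ the sum collapses to give $\Leb_m\le 4+3\Gamma(m)$. Your route instead works directly: identify $\XX_{1,0}^*$ with $c_0$ via Proposition~\ref{prop:normingco}, control the coefficients of $z-\GG_m(f)$ on the set $A\cup B$ using the greedy property, and then invoke the column estimate of Lemma~\ref{lem:lpnormcolumn}(i) together with the doubling of $\Gamma$. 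This is a self-contained argument that avoids the black-box from \cite{BBGHO2018}, at the price of being more hands-on; the paper's version is shorter once that result is granted, and also illustrates that the general machinery of \cite{BBGHO2018} is sharp for this family of bases. Both approaches ultimately rest on Lemma~\ref{lem:lpnormcolumn} through either $\Phi_m^{u,s}\lesssim\Gamma(m)$ (paper) or the pointwise column bound (you), so the conceptual core is the same.
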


\begin{proof}We only need to prove that $L_m:=\Leb_m [\XXB_1^*,\XX_{1,0}^*]\lesssim \Gamma(m)$ for $m\ge 2$.
Put $D_m= \Phi_m^{u,s}[\XXB_1, \XX_{1}]$, $D_m^*= \Phi_m^{u,s}[\XXB_1^*,\XX_{1,0}^*]$ and $E_m=1+\Gamma(m)$ for $m\in\NN$, and use the convention $D_0^*=E_0=0$. Since $D_m\le 2m$ by Proposition~\ref{prop:dem} and $D_m^*\le E_m$ by Lemma~\ref{lem:bounddualdemocracy}, 
for every $N\in\NN$ we have
\[
\sum_{m=1}^N \frac{ D_m (D_m^*-D_{m-1}^*)}{m} 
\le 2 \sum_{m=1}^N D_m^*-D_{m-1}^*
= 2 D_N^*
\le 2E_N.
\]
Hence, by \cite{BBGHO2018}*{Theorem 1.1},
$
L_m \le 1 +3 E_m
$
for all $m\in\NN$,
i.e.,
\[
L_m\le 4 + 3 \Gamma(m),\quad m\in\NN.\qedhere
\]
\end{proof}

\begin{bibsection}
\begin{biblist}

\bib{AABW2019}{article}{
author={Albiac, F.},
author={Ansorena, J.~L.},
author={Bern\'a, P.},
author={Wojtaszczyk, P.},
title={Greedy approximation for biorthogonal systems in quasi-Banach spaces},
journal={arXiv:1903.11651 [math.FA]},
}

\bib{AADK}{article}{
author={Albiac, F.},
author={Ansorena, J.~L.},
author={Dilworth, S.~J.},
author={Kutzarova, Denka},
title={Building highly conditional almost greedy and quasi-greedy bases in Banach spaces},
journal={J. Funct. Anal.},
volume={276},
date={2019},
number={6},
pages={1893--1924},
}

\bib{AADKBis}{article}{
author={Albiac, F.},
author={Ansorena, J.~L.},
author={Dilworth, S.~J.},
author={Kutzarova, Denka},
title={Non-superreflexivity of Garling sequence spaces and applications to the existence of special types of conditional bases},
journal={Studia Math.},
date={2019},
doi={10.4064/sm180910-1-2},
}

\bib{AAGHR2015}{article}{
author={Albiac, F.},
author={Ansorena, J.~L.},
author={Garrig{\'o}s, G.},
author={Hern{\'a}ndez, E.},
author={Raja, M.},
title={Conditionality constants of quasi-greedy bases in super-reflexive Banach spaces},
journal={Studia Math.},
volume={227},
date={2015},
number={2},
pages={133--140},
}

\bib{AAWo}{article}{
author={Albiac, F.},
author={Ansorena, J.~L.},
author={Wojtaszczyk, P.},
title={Conditional quasi-greedy bases in non-superreflexive Banach spaces},
journal={Constr. Approx.},
volume={49},
date={2019},
number={1},
pages={103--122},
}

\bib{AlbiacKalton2016}{book}{
author={Albiac, F.},
author={Kalton, N.~J.},
title={Topics in Banach space theory, 2nd revised and updated edition},
series={Graduate Texts in Mathematics},
volume={233},
publisher={Springer International Publishing},
date={2016},
pages={xx+508},
}

\bib{BBGHO2018}{article}{
author={Bern\'{a}, P.~M.},
author={Blasco, O.},
author={Garrig\'{o}s, G.},
author={Hern\'{a}ndez, E.},
author={Oikhberg, T.},
title={Embeddings and Lebesgue-type inequalities for the greedy algorithm in Banach spaces},
journal={Constr. Approx.},
volume={48},
date={2018},
number={3},
pages={415--451},
}

\bib{DHK2006}{article}{
author={Dilworth, S.~J.},
author={Hoffmann, M.},
author={Kutzarova, Denka},
title={Non-equivalent greedy and almost greedy bases in $\ell_p$},
journal={J. Funct. Spaces Appl.},
volume={4},
date={2006},
number={1},
pages={25--42},
}

\bib{DKK2003}{article}{
author={Dilworth, S.~J.},
author={Kalton, N.~J.},
author={Kutzarova, Denka},
title={On the existence of almost greedy bases in Banach spaces},
journal={Studia Math.},
volume={159},
date={2003},
number={1},
pages={67--101},
}

\bib{DKKT2003}{article}{
author={Dilworth, S.~J.},
author={Kalton, N.~J.},
author={Kutzarova, Denka},
author={Temlyakov, V.~N.},
title={The thresholding greedy algorithm, greedy bases, and duality},
journal={Constr. Approx.},
volume={19},
date={2003},
number={4},
pages={575--597},
}
\bib{DilworthMitra2001}{article}{
author={Dilworth, S.~J.},
author={Mitra, D.},
title={A conditional quasi-greedy basis of $\ell_1$},
journal={Studia Math.},
volume={144},
date={2001},
number={1},
pages={95--100},
}

\bib{DKW2002}{article}{
author={Dilworth, S.~J.},
author={Kutzarova, Denka},
author={Wojtaszczyk, P.},
title={On approximate $\ell_1$ systems in Banach spaces},
journal={J. Approx. Theory},
volume={114},
date={2002},
number={2},
pages={214--241},
issn={0021-9045},
}

\bib{GHO2013}{article}{
author={Garrig{\'o}s, G.},
author={Hern{\'a}ndez, E.},
author={Oikhberg, T.},
title={Lebesgue-type inequalities for quasi-greedy bases},
journal={Constr. Approx.},
volume={38},
date={2013},
number={3},
pages={447--470},
}

\bib{Gogyan2010}{article}{
author={Gogyan, S.},
title={An example of an almost greedy basis in $L^1(0,1)$},
journal={Proc. Amer. Math. Soc.},
volume={138},
date={2010},
number={4},
pages={1425--1432},
}

\bib{DST2012}{article}{
author={Dilworth, S.~J.},
author={Soto-Bajo, M.},
author={Temlyakov, V.~N.},
title={Quasi-greedy bases and Lebesgue-type inequalities},
journal={Studia Math.},
volume={211},
date={2012},
number={1},
pages={41--69},
}

\bib{GW2014}{article}{
author={Garrig{\'o}s, G.},
author={Wojtaszczyk, P.},
title={Conditional quasi-greedy bases in Hilbert and Banach spaces},
journal={Indiana Univ. Math. J.},
volume={63},
date={2014},
number={4},
pages={1017--1036},
}
\bib{HR1970}{article}{
author={Holub, J.~R.},
author={Retherford, J.~R.},
title={Some curious bases for $c_0$ and $\mathcal C[0, 1]$},
journal={Studia Math.},
volume={34},
date={1970},
number={3},
pages={227-240},
}

\bib{Kalton1977}{article}{
author={Kalton, N.~J.},
title={Orlicz sequence spaces without local convexity},
journal={Math. Proc. Cambridge Philos. Soc.},
volume={81},
date={1977},
number={2},
pages={253--277},
}

\bib{Kalton1984}{article}{
author={Kalton, N.~J.},
title={Locally complemented subspaces and $\SL_{p}$-spaces for $0<p<1$},
journal={Math. Nachr.},
volume={115},
date={1984},
pages={71--97},
}

\bib{KoTe1999}{article}{
author={Konyagin, S.~V.},
author={Temlyakov, V.~N.},
title={A remark on greedy approximation in Banach spaces},
journal={East J. Approx.},
volume={5},
date={1999},
number={3},
pages={365--379},
}
\bib{Lin1964}{article}{
author={Lindenstrauss, J.},
title={On a certain subspace of $\ell_{1}$},
journal={Bull. Acad. Polon. Sci. S\'{e}r. Sci. Math. Astronom. Phys.},
volume={12},
date={1964},
pages={539--542},
}

\bib{LinPel1968}{article}{
author={Lindenstrauss, J.},
author={Pe\l czy\'{n}ski, A.},
title={Absolutely summing operators in $\SL_{p}$-spaces and their applications},
journal={Studia Math.},
volume={29},
date={1968},
pages={275--326},
}

\bib{LinRos1969}{article}{
author={Lindenstrauss, J.},
author={Rosenthal, H.~P.},
title={The $\SL_{p}$-spaces},
journal={Israel J. Math.},
volume={7},
date={1969},
pages={325--349},
}

\bib{Nielsen2007}{article}{
author={Nielsen, M.},
title={An example of an almost greedy uniformly bounded orthonormal basis for $L_p(0,1)$},
journal={J. Approx. Theory},
volume={149},
date={2007},
number={2},
pages={188--192},
}
\bib{Pel1960}{article}{
author={Pe{\l}czy{\'n}ski, A.},
title={Projections in certain Banach spaces},
journal={Studia Math.},
volume={19},
date={1960},
pages={209--228},
}

\bib{Pel1969}{article}{
author={Pe{\l}czy{\'n}ski, A.},
title={Universal bases},
journal={Studia Math.},
volume={32},
date={1969},
pages={247--268},
}

\bib{Singer1970}{book}{
author={Singer, I.},
title={Bases in Banach spaces. I},
note={Die Grundlehren der mathematischen Wissenschaften, Band 154},
publisher={Springer-Verlag, New York-Berlin},
date={1970},
pages={viii+668},
}

\bib{Stiles1972}{article}{
author={Stiles, W.~J.},
title={Some properties of $\ell_{p}$, $0<p<1$},
journal={Studia Math.},
volume={42},
date={1972},
pages={109--119},
}

\bib{Temlyakov2006}{article}{
   author={Temlyakov, V. N.},
   title={Greedy approximations with regard to bases},
   conference={
      title={International Congress of Mathematicians. Vol. II},
   },
   book={
      publisher={Eur. Math. Soc., Z\"{u}rich},
   },
   date={2006},
   pages={1479--1504},
}

\bib{Temlyakov2015}{book}{
   author={Temlyakov, V. N.},
   title={Sparse approximation with bases},
   series={Advanced Courses in Mathematics. CRM Barcelona},
   publisher={Birkh\"{a}user/Springer, Basel},
   date={2015},
   pages={xii+261},
}

\bib{Wo97}{article}{
author={Wojtaszczyk, P.},
title={Uniqueness of unconditional bases in quasi-Banach spaces with applications to Hardy spaces. II},
journal={Israel J. Math.},
volume={97},
date={1997},
pages={253--280},
}

\bib{Wo2000}{article}{
author={Wojtaszczyk, P.},
title={Greedy algorithm for general biorthogonal systems},
journal={J. Approx. Theory},
volume={107},
date={2000},
number={2},
pages={293--314},
}

\end{biblist}
\end{bibsection}

\end{document}